\newtheorem{thm}{Theorem}[section]
\newtheorem{lem}[thm]{Lemma}
\newtheorem{pro}[thm]{Proposition}
\newtheorem{coro}[thm]{Corollary}
\theoremstyle{definition}
\newtheorem{defn}[thm]{Definition}
\newtheorem{remk}[thm]{Remark}
\newcommand{\n}{\mathbf n}
\newcommand{\B}{\mathbf B}
\newcommand{\R}{\mathbb R}
\newcommand{\dist}{\operatorname{dist}}
\newcommand{\X}{\mathcal X}
\newcommand{\Y}{\mathcal Y}
\newcommand{\G}{\mathcal G}
\newcommand{\tr}{\operatorname{tr}}
\newcommand{\rank}{\operatorname{rank}}
\newcommand{\dv}{\operatorname{div}}
\newcommand{\wcon}{\rightharpoonup}
\numberwithin{equation}{section}
\begin{document}

% author information

       % first author

       \author{Seonghak Kim}
       \address{Institute for Mathematical Sciences\\ Renmin University of China \\  Beijing 100872, PRC}
       \email{kimseo14@gmail.com}

       % second author

       \author{Baisheng Yan}

       % the address where the research was carried out
       \address{Department of Mathematics\\ Michigan State University\\ East Lansing, MI 48824, USA}

       % current address, usually not needed because it is the same as the
       % regular address

       \email{yan@math.msu.edu}

       % title

       \title[forward-backward parabolic equations]{On Lipschitz solutions for some  forward-backward parabolic  equations}

\subjclass[2010]{35M13, 35K20, 35D30, 49K20}
\keywords{forward-backward parabolic equations, partial differential inclusions, convex integration, Baire's category method, infinitely many Lipschitz solutions}

\begin{abstract}
We investigate the existence and properties of Lipschitz solutions for some  forward-backward parabolic equations in all dimensions. Our main approach to existence  is motivated by reformulating such equations into   partial differential inclusions  and relies on a  Baire's category  method. In this way, the existence of infinitely many Lipschitz solutions to certain  initial-boundary value problem  of  those equations  is  guaranteed under  a pivotal  density condition. Finally, we study two important cases of  forward-backward anisotropic diffusion   in which the density condition can be realized and therefore the existence results follow together with micro-oscillatory behavior of solutions. The first case is a generalization of the Perona-Malik model in image processing and the other  that of H\"ollig's  model related to the Clausius-Duhem inequality in the second law of thermodynamics.
\end{abstract}
\maketitle

\section{Introduction}
The evolution process of many quantities in applications can be modeled by a diffusion partial differential equation of the form
\begin{equation}\label{de0}
u_t=\dv (A(Du)) \quad \mbox{in $\Omega\times (0,T)$,}
\end{equation}
where $\Omega\subset \R^n$ is a bounded domain, $T>0$ is any fixed  number,  and $u=u(x,t)$ is the density of the quantity at position  $x$ and time $t$, with  $Du=(u_{x_1},\cdots,u_{x_n})$ and $u_t$ denoting its spatial gradient and    rate of change, respectively.
The vector function $A\colon \R^n\to \R^n$ here represents the \emph{diffusion flux} of the evolution  process. The usual heat equation corresponds to the case of \emph{isotropic} diffusion given by the Fourier law: $A(p)=kp \; (p\in\R^n)$, where $k>0$ is the diffusion constant.

For standard diffusion equations, the flux $A(p)$ is assumed to be {\em monotone}; namely,
\begin{equation*}%\label{mono-0}
(A(p)-A(q))\cdot (p-q)\ge 0 \quad (p,\, q\in\R^n).
\end{equation*}
In this case, equation (\ref{de0}) is parabolic and can be studied by the standard methods of parabolic equations and monotone operators \cite{Br, LSU, Ln}. In particular,  when  $A(p)$ is given by a smooth convex  function $W(p)$ through
$A(p)=D_p W(p) \;  (p\in\R^n),$
 (\ref{de0}) can be viewed and thus studied as a certain gradient flow generated by the energy functional
\[
I(u)=\int_\Omega W(Du(x))\,dx.
\]

However, for certain  applications of the evolution process  to some important physical problems, the underlying diffusion fluxes $A(p)$ may not be monotone, yielding   non-parabolic equations (\ref{de0}).
In this paper, we study the diffusion equation (\ref{de0}) for some   non-monotone diffusion fluxes $A(p).$ We focus on  the initial-boundary value problem
\begin{equation}\label{ib-P}
\begin{cases} u_t =\dv (A(Du) )& \mbox{in $\Omega_T$,} \\
A(Du)\cdot \n =0 & \mbox{on $\partial \Omega\times (0,T)$,}\\
u =u_0 & \mbox{on $\Omega\times \{t=0\},$}
\end{cases}
\end{equation}
where $\Omega_T=\Omega\times (0,T)$, $\n$ is the outer unit normal on $\partial\Omega$, $u_0=u_0(x)$ is a given initial datum, and
the flux  $A(p)$ is of the form
\begin{equation}\label{fun-A}
A(p)=f(|p|^2)p\quad (p\in\R^n),
\end{equation}
given by a function $f\colon [0,\infty)\to \R$ with \emph{profile} $\sigma(s)=s f(s^2)$ having  one of the graphs  in Figures \ref{fig1}, \ref{fig2} and \ref{fig2-2} below.
(Precise structural assumptions on $\sigma(s)=s f(s^2)$ will be given in Section \ref{sec:structure}.)

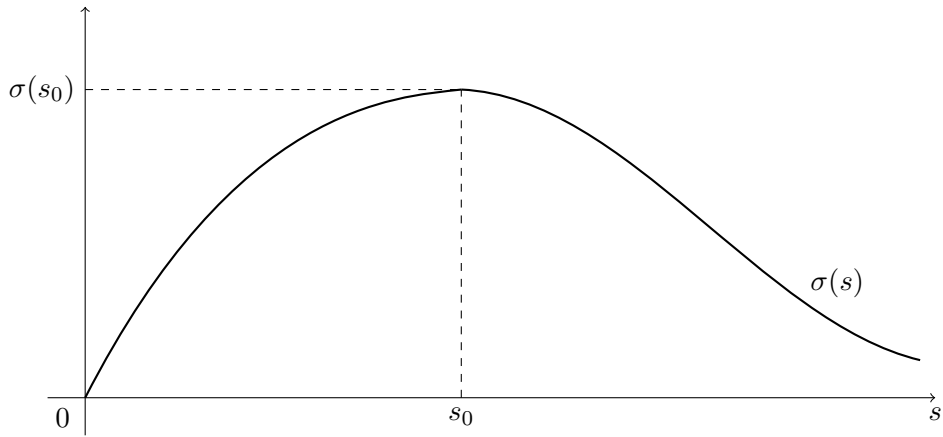
\begin{figure}[ht]
\begin{center}
\begin{tikzpicture}[scale = 1]
    \draw[->] (-.5,0) -- (11.3,0);
	\draw[->] (0,-.5) -- (0,5.2);
 \draw[dashed] (0,4.1)--(5,4.1);
    \draw[dashed] (5, 0)  --  (5, 4.1) ;
	\draw[thick]   (0, 0) .. controls  (2, 3.9) and (4.1,4) ..(5,4.1);
	\draw[thick]   (5, 4.1) .. controls  (7.2, 4) and (9,1) ..(11.1,0.5 );
%\draw[thick]   (0.9,1.5) .. controls  (1,1.6) and (2,2.6) ..(11,2.8);
	\draw (11.3,0) node[below] {$s$};
    \draw (-0.3,0) node[below] {{$0$}};
    \draw (10, 1.2) node[above] {$\sigma(s)$};
    \draw (0, 4.1) node[left] {$\sigma(s_0)$};
% \draw[dashed] (0,1.5)--(9.2, 1.5);
% \draw[dashed] (0,3)--(7.4, 3);
% \draw[dashed] (7.4,0)--(7.4, 3);
%\draw[dashed] (9.2,0)--(9.2, 1.5);
   \draw (5, 0) node[below] {$s_0$};
% \draw[dashed] (0.9,0)--(0.9, 1.5);
    \end{tikzpicture}
\end{center}
\caption{ \textbf{Case I:} Perona-Malik type of profiles   $\sigma(s)$.}
\label{fig1}
\end{figure}

\begin{figure}[ht]
\begin{center}
\begin{tikzpicture}[scale =1]
    \draw[->] (-.5,0) -- (11.3,0);
	\draw[->] (0,-.5) -- (0,5.2);
 \draw[dashed] (0,1.7)--(5.6,1.7);
 \draw[dashed] (0,3.3)--(9.1,3.3);
 \draw[dashed] (9,0)--(9,3.3);
    \draw[dashed] (5.6, 0)  --  (5.6, 1.7) ;
   \draw[dashed] (0.85, 0)  --  (0.85, 1.7) ;
   \draw[dashed] (2.8, 0)  --  (2.8, 3.3) ;
	\draw[thick]   (0, 0) .. controls (2,5) and  (4, 3)   ..(5,2);
	\draw[thick]   (5, 2) .. controls  (6, 1) and (9,3) ..(11.1, 5 );
%\draw[thick]   (0.9,1.5) .. controls  (1,1.6) and (2,2.6) ..(11,2.8);
	\draw (11.3,0) node[below] {$s$};
    \draw (-0.3,0) node[below] {{$0$}};
    \draw (10.5, 3.5) node[above] {$\sigma(s)$};
 \draw (0, 3.3) node[left] {$\sigma(s_1)$};
 \draw (0, 1.7) node[left] {$\sigma(s_2)$};
% \draw[dashed] (0,1.5)--(9.2, 1.5);
% \draw[dashed] (0,3)--(7.4, 3);
% \draw[dashed] (7.4,0)--(7.4, 3);
%\draw[dashed] (9.2,0)--(9.2, 1.5);
   \draw (5.6, 0) node[below] {$s_2$};
   \draw (2.8, 0) node[below] {$s_1$};
   \draw (0.85, 0) node[below] {$s_1^*$};
   \draw (9, 0) node[below] {$s_2^*$};
% \draw[dashed] (0.9,0)--(0.9, 1.5);
    \end{tikzpicture}
\end{center}
\caption{ \textbf{Case II:}  H\"ollig  type of profiles  $\sigma(s)$ with $\sigma(s_2)>0$.}
\label{fig2}
\end{figure}

\begin{figure}[ht]
\begin{center}
\begin{tikzpicture}[scale =1]
    \draw[->] (-.5,0) -- (11.3,0);
	\draw[->] (0,-.5) -- (0,5.2);
 %\draw[dashed] (0,1.7)--(2,1.7);
 \draw[dashed] (0,3.3)--(9,3.3);
 \draw[dashed] (9,0)--(9,3.3);
    %\draw[dashed] (5.6, 0)  --  (5.6, 1.7) ;
   %\draw[dashed] (0.92, 0)  --  (0.92, 1.7) ;
   \draw[dashed] (2.8, 0)  --  (2.8, 3.3) ;
	\draw[thick]   (0, 0) .. controls (3.56,6.8) and  (3.57, 1)   ..(5.6,0);
	\draw[thick]   (5.6, 0) .. controls  (7.5,0.3) and (8,1.5) ..(10, 5 );
%\draw[thick]   (0.9,1.5) .. controls  (1,1.6) and (2,2.6) ..(11,2.8);
	\draw (11.3,0) node[below] {$s$};
    \draw (-0.3,0) node[below] {{$0$}};
    \draw (10.5, 3.5) node[above] {$\sigma(s)$};
 \draw (0, 3.3) node[left] {$\sigma(s_1)$};
 %\draw (0, 1.7) node[left] {$\sigma(s_2)$};
% \draw[dashed] (0,1.5)--(9.2, 1.5);
% \draw[dashed] (0,3)--(7.4, 3);
% \draw[dashed] (7.4,0)--(7.4, 3);
%\draw[dashed] (9.2,0)--(9.2, 1.5);
   \draw (5.6, 0) node[below] {$s_2$};
   \draw (2.8, 0) node[below] {$s_1$};
   %\draw (0.85, 0) node[below] {$s_1^*$};
   \draw (9, 0) node[below] {$s_2^*$};
% \draw[dashed] (0.9,0)--(0.9, 1.5);
    \end{tikzpicture}
\end{center}
\caption{ \textbf{Case II:}  H\"ollig  type of profiles  $\sigma(s)$ with $\sigma(s_2)=0$.}
\label{fig2-2}
\end{figure}

The  two cases  in Figures \ref{fig1}, \ref{fig2} and \ref{fig2-2} correspond to the applications in image processing  proposed by Perona and Malik \cite{PM} and in phase transitions of thermodynamics studied by H\"ollig \cite{Ho}.
For these diffusions, we have
\begin{equation*}%\label{np0}
\sigma'(\sqrt{s})=f(s)+2sf'(s)<0 \quad \mbox{for some range of $s>0$.}
\end{equation*}
In these cases, the diffusion is \emph{anisotropic} since the diffusion  matrix $(A^i_{p_j})$, where
\[
A_{p_j}^i(p)=f(|p|^2)\delta_{ij} + 2 f'(|p|^2) p_ip_j \quad (i,\, j=1,2,\cdots,n),
\]
has   the eigenvalues  $f(|p|^2)$ (of multiplicity $n-1$) and  $f(|p|^2)+2|p|^2f'(|p|^2);$ hence the diffusion coefficients could also be negative, and problem (\ref{ib-P}) becomes {\em forward-backward parabolic}. Moreover, setting
\[
W(p) =\int_0^{|p|} \sigma(r)\,dr,\quad
I(u)=\int_\Omega W(Du)\,dx,
\]
the initial-boundary value problem (\ref{ib-P}) becomes a  {$L^2$-gradient flow} of the energy functional $I(u);$ however,  $I(u)$  is \emph{non-convex}.
Consequently, neither the standard methods of parabolic equations and monotone operators nor the non-linear semigroup theory can  be applied to study  (\ref{ib-P}).

Forward-backward parabolic equations have found many  important applications  in  the mathematical modeling  of some  physical problems, but mathematically, due to the backward parabolicity, the initial-boundary value problem (\ref{ib-P}) of such equations is highly ill-posed, and in many cases even the notion and existence of reasonable solutions remain largely unsettled.

In \cite{PM}, the original Perona-Malik model (\ref{ib-P}) in image processing used the profile
\begin{equation}\label{PM-fun}
\sigma(s)=se^{-s^2/2s_0^2}\quad \mbox{or} \quad
\sigma(s)=\frac{s}{1+ {s^2}/{s_0^2}}
\end{equation}
  for denoising and edge enhancement of a computer vision; in this case, we call (\ref{de0}) the \emph{Perona-Malik equation}.
In this model, $u(x,t)$ represents an improved version of the initial gray level $u_0(x)$ of a noisy picture.
The anisotropic diffusion $\dv(A(Du))$ is forward parabolic in the \emph{subcritical} region where $|Du|<s_0$ and  backward parabolic in the  \emph{supercritical} region where $|Du|>s_0.$ The expectation of the model is that disturbances with small gradient in the subcritical region will be smoothed out by the forward parabolic diffusion, while sharp edges corresponding to  large gradient  in the supercritical region  will be enhanced by the backward parabolic equation. Such  expected phenomenology has been implemented and observed in some numerical experiments \cite{Es, PM}, showing the stability and effectiveness of the model.
 Mathematically, there have been extensive  works on the Perona-Malik type of equations with profiles $\sigma(s)$ as in Figure \ref{fig1}; however, most of  these works  have focused  on the analysis of numerical or approximate solutions by different methods.  For example, in dimension $n=1$,  the singular perturbation and its $\Gamma$-limit related  to the Perona-Malik type of equations were investigated in \cite{BF,BFG}. In \cite{Gu}, a mild regularization of the Perona-Malik equation with a viscous term was used to extract a unique approximate Young measure solution in any dimension. The works \cite{CZ,TTZ} studied  Young measure solutions for the  Perona-Malik equation in dimensions $n=1$ and $n=2$. Also the works   \cite{Es,EG} focused on the numerical schemes for the Perona-Malik model. Recently, classical solutions for the Perona-Malik equation were studied  in \cite{GG1,GG2}, where the existence of solutions was proved for certain initial data $u_0$  that can be \emph{transcritical}  in the sense that the sets $\{|Du_0(x)|>s_0\}$ and $\{|Du_0(x)|<s_0\}$ are both nonempty ($s_0$ is the number in Figure \ref{fig1}); however, their initial data cannot be arbitrarily prescribed.

For the H\"ollig model, the one-dimensional forward-backward parabolic problem (\ref{ib-P}) under the piecewise linearity assumption on the profile $\sigma(s)$ (see Figure \ref{fig2-1}) was studied in \cite{Ho}, motivated by the Clausius-Duhem inequality in the second law of thermodynamics  in continuum mechanics. For such a special profile, it was proved that there exist infinitely many $L^2$-weak solutions to (\ref{ib-P}) in dimension $n=1$. The piecewise linearity of $\sigma(s)$  in $n=1$  was much relaxed later to include a more general class of profiles $\sigma(s)$ (as in Figure \ref{fig2}) in the work of Zhang \cite{Zh1},  using  an entirely different approach  from H\"ollig's.

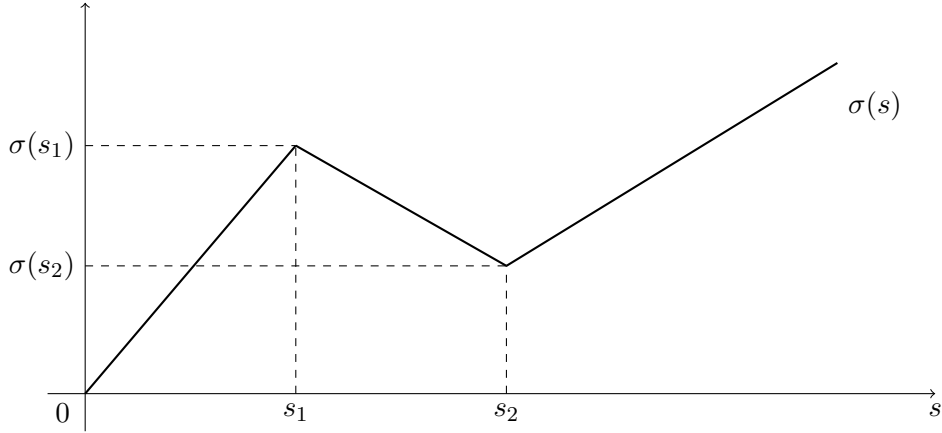
\begin{figure}[ht]
\begin{center}
\begin{tikzpicture}[scale =1]
    \draw[->] (-.5,0) -- (11.3,0);
	\draw[->] (0,-.5) -- (0,5.2);
 \draw[dashed] (0,1.7)--(5.6,1.7);
 \draw[dashed] (0,3.3)--(2.8,3.3);
 %\draw[dashed] (9,0)--(9,3.3);
    \draw[dashed] (5.6, 0)  --  (5.6, 1.7) ;
   %\draw[dashed] (0.85, 0)  --  (0.85, 1.7) ;
   \draw[dashed] (2.8, 0)  --  (2.8, 3.3) ;
	\draw[thick]   (0, 0) -- (2.8,3.3);
    \draw[thick]   (2.8,3.3) -- (5.6,1.7);
	\draw[thick]   (5.6,1.7) -- (10, 4.4 );
%\draw[thick]   (0.9,1.5) .. controls  (1,1.6) and (2,2.6) ..(11,2.8);
	\draw (11.3,0) node[below] {$s$};
    \draw (-0.3,0) node[below] {{$0$}};
    \draw (10.5, 3.5) node[above] {$\sigma(s)$};
 \draw (0, 3.3) node[left] {$\sigma(s_1)$};
 \draw (0, 1.7) node[left] {$\sigma(s_2)$};
% \draw[dashed] (0,1.5)--(9.2, 1.5);
% \draw[dashed] (0,3)--(7.4, 3);
% \draw[dashed] (7.4,0)--(7.4, 3);
%\draw[dashed] (9.2,0)--(9.2, 1.5);
   \draw (5.6, 0) node[below] {$s_2$};
   \draw (2.8, 0) node[below] {$s_1$};
   %\draw (0.85, 0) node[below] {$s_1^*$};
   %\draw (9, 0) node[below] {$s_2^*$};
% \draw[dashed] (0.9,0)--(0.9, 1.5);
    \end{tikzpicture}
\end{center}
\caption{H\"ollig's piecewise linear profile  $\sigma(s)$.}
\label{fig2-1}
\end{figure}

The question concerning the existence of \emph{exact} weak solutions to problem (\ref{ib-P}) of the Perona-Malik   and H\"ollig types  had remained open until {Zhang}  \cite{Zh,Zh1}  first established that   the \emph{one-dimensional} problem  (\ref{ib-P}) of each type has infinitely many   Lipschitz   solutions  for any suitably given smooth initial data $u_0.$
Zhang's pivotal idea was  to reformulate equation (\ref{de0}) in $n=1$ into a $2\times 2$ non-homogeneous partial differential inclusion  and then to  prove the existence by using a modified method of convex integration, following the ideas of \cite{Ki,MSv1}.  The study of general partial differential inclusions  has stemmed from the successful understanding of homogeneous differential inclusions of the form $Du(x)\in K$, first encountered in the study of crystal microstructures  \cite{BJ,CK,MSv1}; recently, the methods  of  differential inclusions  have been successfully applied to other important problems; see, e.g.,  \cite{CFG,DS,MP,MSv2,Sh,Ya1}.

In general,  a  function  $u\in W^{1,\infty}(\Omega_T)$ is called a \emph{Lipschitz solution} to problem (\ref{ib-P})   provided that equality
\begin{equation}\label{def:sol}
 \int_\Omega  (u(x,s)\zeta(x,s)-u_0(x) \zeta(x,0) )dx =
 \int_0^s \int_\Omega (u \zeta_t -A(Du)\cdot D\zeta ) dxdt
\end{equation}
holds for  each $\zeta\in C^\infty(\bar\Omega_T)$ and each $s\in [0,T].$ Let $\zeta\equiv 1$; then it is immediate  from the definition that any Lipschitz solution $u$ to (\ref{ib-P}) conserves the total mass over time:
\[
\int_\Omega  u(x,t)dx = \int_\Omega  u_0(x)dx \quad\forall t\in[0,T].
\]

In \cite{KY}, we extended Zhang's method  of partial differential inclusions to the Perona-Malik type of equations in all dimensions $n$ for balls $\Omega=B_R(0):=\{x\in\R^n\,|\, |x|<R\}$ and  non-constant radially symmetric  smooth  initial data $u_0.$ In this case, the $n$-dimensional equation for radial solutions can still be reformulated into a $2\times  2$ non-homogeneous partial differential inclusion, and the existence of infinitely many radial Lipschitz solutions to (\ref{ib-P}) is established.
However, for general domains and initial data, the $n$-dimensional problem (\ref{ib-P}) can only be recast as a $(1+n)\times (n+1)$ non-homogeneous partial differential inclusion that has some uncontrollable gradient components, making the construction of Lipschitz solutions for this inclusion impractical.  In our recent work \cite{KY1}, we overcame this difficulty by developing  a suitably modified density method, still motivated by the method of differential inclusions  but based  on a Baire's category argument. In this work, it  was proved  that  for all bounded convex  domains $\Omega\subset\R^n$ with $\partial \Omega\in C^{2+\alpha}$ and  initial data $u_0\in C^{2+\alpha}(\bar\Omega)$ with $Du_0\cdot \n=0$ on $\partial\Omega$, there exist infinitely many Lipschitz solutions to (\ref{ib-P}) for the \emph{exact} Perona-Malik diffusion flux $A(p)=\frac{p}{1+|p|^2}$ ($p\in\R^n$). However, the proof heavily relies on the explicit formula of the rank-one convex hull of a certain matrix set defined by this special flux; such an  explicit formula for  the general Perona-Malik type of flux $A(p)$ (with profile $\sigma(s)$ as in Figure \ref{fig1}) is unattainable.

The main purpose of this paper is to generalize the method of \cite{KY1} to   problem (\ref{ib-P})   in all dimensions for   general diffusion profiles of the Perona-Malik and H\"ollig types. To state our main theorems, we  make the following assumptions on the domain $\Omega$ and initial datum $u_0$:
\begin{equation}\label{assume-1}
\begin{cases} \mbox{$\Omega\subset\R^n$ is a bounded    domain  with $\partial \Omega$ of $C^{2+\alpha}$,}\\
\mbox{$u_0\in C^{2+\alpha}(\bar\Omega)$ is non-constant with $Du_0\cdot \n |_{\partial \Omega}=0,$} \end{cases}
\end{equation}
where $\alpha\in (0,1)$ is a given number.
Without specifying the precise conditions on the profiles  $\sigma(s)$ for \textbf{Cases I} and {\bf II}, we first state our main existence theorems as below.

\begin{thm}\label{thm:1} Assume condition $(\ref{assume-1})$ is fulfilled.  Let $A(p)$ be given by  $(\ref{fun-A})$ with the profile  $\sigma(s).$ Then problem  $(\ref{ib-P})$ has infinitely many Lipschitz  solutions in the following two cases.

 {\bf Case I:}  $\sigma(s)$ is of the \emph{Perona-Malik type} as    in {Figure\,\ref{fig1}}, and in addition $\Omega$ is convex.

 {\bf Case II:} $\sigma(s)$ is of  the \emph{H\"ollig type} as    in  {Figure\,\ref{fig2}}, and  in addition $u_0$ satisfies   $|Du_0(x_0)|\in(s_1^*,s_2^*)$ at
 some $x_0\in\Omega.$
\end{thm}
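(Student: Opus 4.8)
The plan is to carry out, in the $n$-dimensional forward-backward setting, the program of \cite{KY1}: recast $(\ref{ib-P})$ as a non-homogeneous partial differential inclusion, establish a general \emph{density-conditional} existence theorem by a Baire category argument, and then verify the density condition for each of the two profile classes. Concretely, writing the equation in conservation form $\partial_t u-\dv_x(A(Du))=0$ and introducing a flux potential, one recasts $(\ref{ib-P})$ as: find $W$ in a suitable Sobolev class with $\mathcal L W=0$ --- a constant-coefficient first-order linear system encoding $u_t=\dv v$ (with $v$ the flux field, so that $Du$ and $v$ appear among the components of $\nabla W$) together with the boundary and initial data of $(\ref{ib-P})$ --- subject to the pointwise constraint $(Du(x,t),v(x,t))\in\K$ for a.e.\ $(x,t)\in\Omega_T$, where
\[
\K=\{(p,A(p)):p\in\R^n\}\subset\R^n\times\R^n
\]
is the graph of the flux. (For general $\Omega$ this is a $(1+n)\times(n+1)$ matrix inclusion with uncontrollable components, which is why we use Baire category rather than direct convex integration.) One fixes a bounded open set $U\subset\R^n\times\R^n$ containing $\K$ in a suitable relaxed-hull sense, calls \emph{subsolution} any $W$ with $\mathcal L W=0$, the prescribed data, and $(Du,v)\in U$ a.e., and works on the closure $\X$ of the subsolutions in the topology of uniform convergence of $u$ together with weak-$*$ convergence of the fluxes; then $\X$ is a complete metric space. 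First I would prove the general statement that, \emph{provided the density condition holds} --- roughly: on every small cube every point of $U$ can be $L^1$-approximated by subsolutions with $(Du,v)$ valued arbitrarily close to $\K$, using only $\mathcal L$-compatible laminations supported away from the parabolic boundary --- the set of $W\in\X$ solving the inclusion exactly is residual in $\X$, hence nonempty, and, varying the reference subsolution, one gets infinitely many distinct Lipschitz solutions of $(\ref{ib-P})$, all exhibiting the micro-oscillations inherited from oscillating subsolutions.

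It then remains to verify the density condition in the two cases, i.e.\ to (i) build an open set $U$ with $\K$ in its relaxed hull and with the density property, and (ii) construct a reference subsolution meeting $(\ref{assume-1})$ with $(Du,v)$ valued in $U$ on a set of positive measure; both steps are dictated by the shape of $\sigma$. For \textbf{Case I}, using the qualitative features of Figure \ref{fig1} ($\sigma$ increasing on $[0,s_0]$, decreasing thereafter), $U$ is assembled from the forward branch $\{(p,A(p)):|p|\le s_0\}$ and the region it sweeps out against the decreasing branch, so that the backward part of $\K$ is recovered by $\mathcal L$-laminates mixing forward states; the reference subsolution is produced by solving an auxiliary forward linear parabolic problem with datum $u_0$, which gives a smooth $u$ with $Du\cdot\n=0$ on $\partial\Omega$ and, since $u_0$ is non-constant, with $Du$ in the interior region governing the density property on a set of positive measure --- and here, as in \cite{KY1}, the convexity of $\Omega$ is used, both in the attendant hull/density geometry and in controlling this auxiliary problem together with the Neumann condition. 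For \textbf{Case II}, using Figure \ref{fig2} ($\sigma$ increasing on $[0,s_1]$, decreasing on $[s_1,s_2]$, increasing on $[s_2,\infty)$, with $s_1^*\in(0,s_1)$ and $s_2^*>s_2$ determined by $\sigma(s_1^*)=\sigma(s_2)$ and $\sigma(s_2^*)=\sigma(s_1)$), the density-relevant set is the region enclosed by the two outer increasing branches $\sigma|_{[0,s_1^*]}$ and $\sigma|_{[s_2^*,\infty)}$; the hypothesis $|Du_0(x_0)|\in(s_1^*,s_2^*)$ guarantees, by continuity, a space-time neighborhood of $(x_0,0)$ on which the reference subsolution can be chosen with $(Du,v)$ strictly inside that region, so that a genuinely oscillating solution results.

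The crux --- and the main obstacle --- is step (i) for \emph{general} profiles: unlike the exact Perona-Malik flux of \cite{KY1}, no closed form for the relevant rank-one convex hull is available, so one must design $U$ directly from the monotonicity branches of $\sigma$, prove that $U$ is open in the $\mathcal L$-wave-cone directions and that $\K$ lies in its relaxed hull, and show that from any point of $U$ one can build $\mathcal L$-admissible laminations converging in $L^1$ to $\K$-valued maps while respecting the linear constraint and the fixed data. Step (ii) --- constructing the reference subsolution compatible with $(\ref{assume-1})$ --- is where the two extra hypotheses enter: convexity of $\Omega$ in Case I and the pointwise gradient condition on $u_0$ in Case II. Once the density condition is in place, the complete-metric-space setup, the lower semicontinuity of the defect functional $W\mapsto\int_{\Omega_T}\dist((Du,v),\K)\,dx\,dt$, and the Baire argument all go through as in \cite{KY1}.
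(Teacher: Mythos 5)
Your architecture coincides with the paper's: reformulate $(\ref{ib-P})$ as a non-homogeneous partial differential inclusion, prove a general existence theorem conditional on a density hypothesis via Baire category (Theorem \ref{thm:main}), build the relaxed set from the monotone branches of $\sigma$, and produce the reference subsolution by solving a forward parabolic problem. However, the proposal defers exactly the steps where the proof lives, and two of them are genuine gaps rather than routine verifications. First, the handling of the linear constraint $\dv v=u$: when an oscillation $\varphi$ is grafted onto $u$, the divergence identity must be restored by adding $g=\mathcal R\varphi$, a right inverse of the divergence on boxes, and while $g_t$ is controlled by $\varphi_t$, the spatial gradient $Dg$ is not controlled at all. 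This is why the paper (i) never asks $v\in W^{1,\infty}$ of the limit object --- Proposition \ref{gen-lem} shows that $v\in W^{1,2}((0,T);L^2)$ with $v_t=A(Du)$ plus the weak divergence identity suffices --- (ii) measures closeness to the constraint set only through the pair $(Du,v_t)$, never through $Dv$, and (iii) glues only \emph{finitely} many perturbation patches at each density step (see the remark in Step 6 of the proof of Theorem \ref{thm-density-1}). Your ``$\mathcal L$-compatible laminations'' and ``weak-$*$ convergence of the fluxes'' gesture at this but do not resolve it; a scheme that tracks the full space-time Jacobian, or that patches countably many cubes, fails here. Second, openness of the relaxed set and the approximation property are not automatic ``from the monotonicity branches'': the paper must prove (Theorem \ref{thm:main2}) that the set $\mathcal S$ of diagonal parts of laminates between $F_-$ and $F_+$ is open and admits locally $C^1$ selections of the rank-one directions and endpoints, via an implicit function theorem whose Jacobian is shown invertible only when $r_1$ is sufficiently close to $r_2$ --- this is the origin of the threshold $l=l_r$ in Theorems \ref{thm:PM-1} and \ref{thm:H-1}, which is invisible in your outline.

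A further, smaller correction: the reference subsolution is not obtained from a \emph{linear} parabolic problem but from the quasilinear uniformly parabolic problem $u^*_t=\dv(\tilde A(Du^*))$ with a modified profile $\tilde\sigma$ chosen so that $\tilde\sigma=\sigma$ below $s_-(r_1)$ (and, in \textbf{Case II}, above $s_+(r_2)$) and so that the graph of $\tilde A$ lies in $\mathcal S$ in between (Lemmas \ref{lem:modi-PM}, \ref{lem:modi-H}, \ref{lem-a}, \ref{lem-b}); and the convexity of $\Omega$ in \textbf{Case I} enters solely through the gradient maximum principle $\|Du^*\|_{L^\infty(\Omega_T)}=\|Du_0\|_{L^\infty(\Omega)}$ of Theorem \ref{existence-gr-max} (Hopf's lemma plus $\partial |Du^*|^2/\partial\n\le 0$ on convex domains), which keeps $(Du^*,v^*_t)$ inside $\mathcal S\cup\mathcal F$; it plays no role in the hull geometry. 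With these ingredients supplied, the remainder of your plan does go through as in the paper.
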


The precise assumptions on the profiles $\sigma(s)$ and detailed statements of the theorems for both cases will be given later (see   Theorems \ref{thm:PM-1} and \ref{thm:H-1}) along with some discussions  about a  certain implication  of our result on the Perona-Malik model in image processing and the breakdown of uniqueness for the H\"ollig type of equations.

In certain cases, among infinitely many Lipschitz solutions, problem (\ref{ib-P}) admits a unique classical solution;  we have the following result (see also \cite{KK}).

\begin{thm}\label{thm:2} Assume condition $(\ref{assume-1})$ is fulfilled and $\Omega$ is convex.  Let $A(p)$ be given by  $(\ref{fun-A})$ with the profile  $\sigma(s).$ Assume $\|Du_0\|_{L^\infty(\Omega)}<s_0$ if $\sigma(s)$ is of the \emph{Perona-Malik type}  or $\|Du_0\|_{L^\infty(\Omega)}<s_1$ if  $\sigma(s)$ is of the \emph{H\"ollig type}. Then problem $(\ref{ib-P})$ has  a unique solution $u\in C^{2+\alpha,1+\alpha/2}(\bar{\Omega}_T)$ satisfying
$\|Du\|_{L^\infty(\Omega_T)}=\|Du_0\|_{L^\infty(\Omega)}.$
\end{thm}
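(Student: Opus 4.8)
The plan is to produce the asserted solution as the unique classical solution of a globally uniformly parabolic truncation of $(\ref{ib-P})$, to show by an a priori gradient estimate that the truncation is never activated, and to deduce uniqueness from monotonicity on the subcritical range. Write $s_c$ for the first critical value of $\sigma$, i.e. $s_c=s_0$ in the Perona--Malik case and $s_c=s_1$ in the H\"ollig case, and set $c_0:=\|Du_0\|_{L^\infty(\Omega)}<s_c$.

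First I would fix $c_1\in(c_0,s_c)$ and replace $\sigma$ on $(c_1,\infty)$ by a smooth extension $\tilde\sigma\colon[0,\infty)\to\R$ with $\tilde\sigma\equiv\sigma$ on $[0,c_1]$, $0<\lambda\le\tilde\sigma'\le\Lambda$ on $[0,\infty)$, and $\tilde f(s):=\tilde\sigma(\sqrt s)/\sqrt s>0$ for all $s\ge0$; this is possible because $\sigma$ is strictly increasing and positive on $[0,c_1]$. Since the eigenvalues of $\tilde a^{ij}(p):=\tilde f(|p|^2)\delta_{ij}+2\tilde f'(|p|^2)p_ip_j$ are $\tilde f(|p|^2)$ and $\tilde\sigma'(|p|)$, both pinched between two positive constants, the truncated problem (with flux $\tilde A(p)=\tilde f(|p|^2)p$, the same initial datum, and the conormal condition $\tilde A(Du)\cdot\n=0$, which because $\tilde f>0$ is equivalent to $Du\cdot\n=0$) is uniformly parabolic, and the order-zero compatibility condition holds since $Du_0\cdot\n|_{\partial\Omega}=0$. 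With $\partial\Omega\in C^{2+\alpha}$ and $u_0\in C^{2+\alpha}(\bar\Omega)$, the classical Schauder theory for quasilinear parabolic conormal problems \cite{LSU} yields a unique solution $u\in C^{2+\alpha,1+\alpha/2}(\bar\Omega_T)$ of the truncated problem.

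The key step is the a priori bound $\|Du(\cdot,t)\|_{L^\infty(\Omega)}\le\|Du_0\|_{L^\infty(\Omega)}$ for all $t\in[0,T]$, established for any $C^{2+\alpha,1+\alpha/2}(\bar\Omega_T)$ solution $u$ of $(\ref{ib-P})$ or of its truncation. Putting $v=|Du|^2$, differentiating $u_t=a^{ij}(Du)u_{x_ix_j}$ in $x_k$, multiplying by $u_{x_k}$, summing in $k$, and using $\sum_k u_{x_k}u_{x_kx_l}=\tfrac12\partial_l v$, one obtains
\[
\partial_t v-a^{ij}(Du)\,\partial_{ij}v-b^l\,\partial_l v=-2\,a^{ij}(Du)\,u_{x_kx_i}u_{x_kx_j},\qquad b^l:=\partial_{p_l}a^{ij}(Du)\,u_{x_ix_j},
\]
with $b^l$ bounded (as $u\in C^{2,1}$) and the right-hand side $\le0$ wherever $a^{ij}(Du)$ is positive semidefinite; hence $v$ is a subsolution of a uniformly parabolic operator without zeroth-order term as long as $|Du|$ stays in the parabolic range. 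On $\partial\Omega\times(0,T)$, since $Du\cdot\n=0$ the gradient $Du$ is tangential, and differentiating this identity along $\partial\Omega$ gives $\partial_\n v=-2\,\mathrm{I\!I}(Du,Du)\le0$, the second fundamental form $\mathrm{I\!I}$ of $\partial\Omega$ being nonnegative by convexity of $\Omega$. The parabolic maximum principle together with Hopf's lemma on the lateral boundary then forces $\max_{\bar\Omega_T}v$ to be attained at $t=0$. To ensure $|Du|$ remains in the parabolic range I would use a continuity argument: let $T_0$ be the supremum of $\tau\in[0,T]$ with $\|Du\|_{L^\infty(\bar\Omega\times[0,\tau])}<s_c$; then $T_0>0$ by continuity of $Du$, the estimate just obtained gives $\|Du\|_{L^\infty(\bar\Omega\times[0,\tau])}\le c_0$ for every $\tau<T_0$, and this precludes $T_0<T$. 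Applied to the truncated solution this gives $|Du|\le c_0<c_1$, so $\tilde A(Du)=A(Du)$ and $u$ solves $(\ref{ib-P})$ classically (hence is a Lipschitz solution); the reverse inequality being immediate from $u(\cdot,0)=u_0$, the identity $\|Du\|_{L^\infty(\Omega_T)}=\|Du_0\|_{L^\infty(\Omega)}$ follows.

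For uniqueness in $C^{2+\alpha,1+\alpha/2}(\bar\Omega_T)$, the a priori bound gives $\|Du_i\|_{L^\infty(\Omega_T)}\le c_0$ for any two such solutions $u_1,u_2$, and on the convex set $\{|p|\le c_0\}$ the Jacobian $D_pA(p)=(a^{ij}(p))$ is positive definite, so $A$ is strictly monotone there: $(A(p)-A(q))\cdot(p-q)\ge\lambda|p-q|^2$. Since $w=u_1-u_2$ satisfies $w(\cdot,0)=0$ and $A(Du_i)\cdot\n=0$, testing $w_t=\dv(A(Du_1)-A(Du_2))$ with $w$ gives
\[
\tfrac12\frac{d}{dt}\int_\Omega|w|^2\,dx=-\int_\Omega\bigl(A(Du_1)-A(Du_2)\bigr)\cdot(Du_1-Du_2)\,dx\le-\lambda\int_\Omega|Dw|^2\,dx\le0,
\]
so $w\equiv0$. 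I expect the main obstacle to be the a priori gradient estimate --- in particular turning the conormal condition into the sign condition $\partial_\n|Du|^2\le0$ via nonnegativity of the second fundamental form of the convex domain, and the continuity argument keeping the equation parabolic; once these are in hand, existence, the $L^\infty$-gradient identity, and uniqueness are routine.
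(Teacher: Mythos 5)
Your proposal is correct and follows essentially the same route as the paper: the paper likewise replaces $\sigma$ beyond the subcritical range by a uniformly increasing $\tilde\sigma$ (Lemmas \ref{lem:modi-PM}/\ref{lem:modi-H}), solves the resulting uniformly parabolic conormal problem, and invokes the gradient maximum principle for convex $\Omega$ (Theorem \ref{existence-gr-max}, proved by exactly your $v=|Du|^2$ computation with Hopf's lemma and the boundary sign condition $\partial v/\partial\n\le 0$ from convexity) to conclude the truncation is never active. The only cosmetic differences are that the paper applies the maximum principle directly to the truncated equation (so no continuity argument is needed) and deduces uniqueness by observing that any classical solution with $\|Du\|_{L^\infty(\Omega_T)}=M_0$ is itself a solution of the truncated parabolic problem, whereas you re-derive monotone uniqueness by an energy estimate — which is the paper's Proposition \ref{unique} in disguise.
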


This theorem will  be also separated into \textbf{Cases I} and \textbf{II} in Section \ref{sec:structure} in order to be compared with our detailed main results,  Theorems \ref{thm:PM-1} and \ref{thm:H-1}.

Finally, we have the following \emph{coexistence} result. %For this, we assume $\sigma(s)$ is of \emph{Perona-Malik type}.

\begin{thm}\label{coex}  Assume that $\Omega=B_R(0)$ is a  ball in $\R^n$ and that $u_0$ is \emph{radial} and satisfies condition $(\ref{assume-1})$. Then there are infinitely many radial and non-radial Lipschitz solutions to \emph{(\ref{ib-P})}.
\end{thm}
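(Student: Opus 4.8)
We must exhibit two infinite families of Lipschitz solutions of (\ref{ib-P}): one consisting of radial functions and one of non-radial functions. The plan is to obtain them by two complementary uses of the convex integration / Baire category machinery behind Theorem~\ref{thm:1}. For the \emph{radial} solutions, I would use that on a ball with radial data the equation is effectively one-dimensional: writing $u(x,t)=v(|x|,t)$, problem (\ref{ib-P}) reduces to a weighted forward-backward equation for $v=v(r,t)$ on $(0,R)\times(0,T)$, which, exactly as in \cite{KY} for \textbf{Case I} and by the same radial reduction followed by the one-dimensional forward-backward theory of \cite{Zh1} for \textbf{Case II}, can be recast as a $2\times 2$ non-homogeneous partial differential inclusion in the single space variable $r$, now with controllable gradient components. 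The structural hypotheses on $\sigma$ in each case guarantee the corresponding density condition for this reduced inclusion; moreover, in \textbf{Case II} the transcriticality hypothesis $|Du_0(x_0)|\in(s_1^*,s_2^*)$ holds, $u_0$ being radial, on the entire sphere $\{|x|=|x_0|\}$, which is precisely what the one-dimensional construction needs. Hence the reduced inclusion has infinitely many Lipschitz solutions $v$, and each of them yields a radial Lipschitz solution $u$ of (\ref{ib-P}).

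For the \emph{non-radial} solutions, I would not construct anything new but rather examine the proof of Theorem~\ref{thm:1} when $\Omega=B_R(0)$. That proof produces a complete metric space $\X$ --- the closure, in a suitable weak topology, of an admissible family of subsolutions all sharing the initial and boundary data of (\ref{ib-P}) --- in which the generic element is a Lipschitz solution of (\ref{ib-P}); let $\mathcal S\subset\X$ be the associated residual set of solutions (a dense $G_\delta$). Since radial symmetry is preserved under the relevant convergence, the set $\X_{\mathrm{rad}}$ of radial elements of $\X$ is closed in $\X$. The key point is that $\X_{\mathrm{rad}}$ has \emph{empty interior} in $\X$: given $w\in\X_{\mathrm{rad}}$ and $\varepsilon>0$, I would locate a small space-time cylinder $Q=B_\rho(x_1)\times(t_1,t_2)\Subset\Omega_T$ with $x_1\neq 0$ on which $Dw$ takes values in the interior of the relevant rank-one hull, and perform a single rank-one lamination supported in $Q$ along a direction incompatible with rotational symmetry, obtaining $w'\in\X$ with $\|w'-w\|<\varepsilon$ and $w'$ non-radial. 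Then $\X\setminus\X_{\mathrm{rad}}$ is residual in $\X$, and since $\mathcal S$ is residual too and $\X$ has no isolated points, $\mathcal S\setminus\X_{\mathrm{rad}}$ is an uncountable residual set; each of its elements is a non-radial Lipschitz solution of (\ref{ib-P}). (Running the same argument inside $\X_{\mathrm{rad}}$ --- which, via the reduction of the first paragraph, is essentially the one-dimensional metric space used there --- reconfirms that the radial solutions are infinitely many and shows that the two families are disjoint.)

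The step I expect to be the main obstacle is the ``empty interior'' claim, i.e.\ showing that arbitrarily close to any radial subsolution $w\in\X_{\mathrm{rad}}$ one can always find a cylinder $Q$ whose spatial section avoids the origin and on which $Dw$ lies strictly inside the hull, so that a symmetry-breaking rank-one perturbation is admissible there. This is precisely where one must invoke that $u_0$ is non-constant --- and, in \textbf{Case II}, transcritical at $x_0$ --- so that the gradient of every admissible subsolution is forced into the ``active'' range on an open subset of $\Omega_T$ that can be chosen away from the $t$-axis; combined with the in-approximation (layered perturbation) structure already built into the proof of Theorem~\ref{thm:1}, this should deliver the required $Q$. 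A secondary, more routine point is to check that the one-dimensional construction used for the radial family genuinely corresponds to the restriction of the $n$-dimensional construction to $\X_{\mathrm{rad}}$, so that the genericity conclusions transfer without loss.
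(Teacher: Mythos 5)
Your treatment of the radial family coincides with the paper's: both reduce to the radially symmetric problem and invoke \cite{KY} (the paper writes out only \textbf{Case I} and remarks that \textbf{Case II} is analogous). For the non-radial family, however, your argument hinges on the claim that the set $\X_{\mathrm{rad}}$ of radial elements has empty interior in $\X$, i.e.\ that \emph{every} radial $w\in\X$ admits an arbitrarily close non-radial element obtained by a symmetry-breaking lamination. This is a genuine gap. The lamination supplied by Lemma \ref{approx-lem} and Theorem \ref{main-lemma} is available only where the pair $(Du,v_t)$ lies in the \emph{open} relaxed set $\mathcal S$ on a whole space-time box, and a general element of $\X$ (or even of $\mathcal U$) need not have this property anywhere: $\X=\overline{\mathcal U}$ contains limits whose pairs $(Du,v_t)$ are pinned a.e.\ to the target sets $\mathcal F$, $\mathcal B$ or $\mathcal C$ — for instance the radial Lipschitz solutions themselves — and at such elements no admissible rank-one perturbation inside $\mathcal S$ exists. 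Your proposed fix (non-constancy of $u_0$, transcriticality in \textbf{Case II}) constrains the boundary function $u^*$, not an arbitrary element of $\X_{\mathrm{rad}}$, so it does not close the gap.

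The paper sidesteps this by needing only \emph{one} non-radial element of $\mathcal U$ that is not a solution: it performs exactly your surgery, but on the explicit smooth pair $(u^*,v^*)$, for which one knows $(Du^*,v^*_t)=(Du^*,\tilde A(Du^*))\in\mathcal S$ with $v^*_t\neq A(Du^*)$ on a nonempty open set that can be chosen at positive distance from the axis $\{0\}\times[0,T]$; this yields a non-radial $u^*_{nr}\in\mathcal U$ that is not a Lipschitz solution. If there were only finitely many non-radial solutions, then a sequence in the residual set $\G$ converging to $u^*_{nr}$ in $L^\infty$ would have a subsequence either constantly equal to one of them (forcing $u^*_{nr}$ to be a solution) or consisting entirely of radial functions (forcing $u^*_{nr}$ to be radial); either alternative is a contradiction. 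Replacing your empty-interior claim by this weaker, verifiable statement repairs the argument; as written, the key step you yourself flag as the main obstacle is not established.
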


%Although the same result holds for the profile $\sigma(s)$ of \emph{H\"ollig type}, we do not pursue it in this paper.

%The existence of infinitely many radial Lipschitz solutions to (\ref{ib-P}) was proved in  the authors' recent paper \cite{KY}. We remark that these radial solutions are not obtained through   Theorem \ref{thm:1}.

The rest of the paper is organized as follows. Section \ref{sec:approach} begins with a motivational approach to problem (\ref{ib-P}) as a non-homogeneous partial differential inclusion and its limitation. Then the general existence theorem, Theorem \ref{thm:main},   is formulated under a key density hypothesis, and some general properties of Lipschitz solutions to (\ref{ib-P}) are also provided. In section \ref{sec:structure}, precise structural conditions on the profiles $\sigma(s)$ of the Perona-Malik and H\"ollig types are specified, and the detailed statements for the main result, Theorem \ref{thm:1}, are introduced as Theorems \ref{thm:PM-1} and \ref{thm:H-1} that are followed by  some  remarks and related results. Section \ref{sec:prelim} prepares some useful results that may be of independent interest. As the core analysis of the paper, the geometry of related matrix sets is scrutinized in Section \ref{sec:geometry}, leading to the relaxation result on a homogeneous differential inclusion, Theorem \ref{main-lemma}. Section \ref{sec:add-set} is devoted to the construction of suitable boundary functions and admissible sets for \textbf{Cases I} and \textbf{II} in the contexts of Theorems \ref{thm:PM-1} and \ref{thm:H-1}, respectively. Then the pivotal density hypothesis in each case is realized in Section \ref{sec:den-proof}, completing the proofs of Theorems \ref{thm:PM-1} and \ref{thm:H-1}. Lastly, Section \ref{sec:r-nr} deals with the proof of the coexistence result, Theorem \ref{coex}.

Throughout the paper, we use the boldface letters for \textbf{Cases I} and \textbf{II} for clear distinction. The paper follows a parallel exposition to deal with  both cases simultaneously. But for a better  readability, we strongly recommend for a reader to follow each case  one at a time.

\section{Existence by a general density approach}\label{sec:approach}

Here and below,  we assume without loss of generality that the initial datum $u_0\in W^{1,\infty}(\Omega)$ in problem (\ref{ib-P}) satisfies
\begin{equation}\label{av-0}
\int_\Omega u_0(x)\,dx=0,
\end{equation}
since otherwise we may solve (\ref{ib-P}) with initial datum $\tilde u_0=u_0-\bar u_0,$ where $\bar u_0=\frac{1}{|\Omega|}\int_\Omega u_0\,dx$.

\subsection{Motivation: non-homogeneous partial differential inclusion} To motivate our approach, let us reformulate problem (\ref{ib-P}) into a non-homogeneous partial differential inclusion.

First, assume that we have a function $\Phi=(u^*,v^*)\in W^{1,\infty}(\Omega_T;\R^{1+n})$  satisfying
\begin{equation}\label{bdry-1}
\begin{cases}
 u^*(x,0)=u_0(x), &x\in \Omega,\\
\dv v^*(x,t)=u^*(x,t), &  \mbox{a.e. $(x,t)\in\Omega_T$},
\\
v^*(\cdot,t) \cdot \n|_{\partial\Omega} =0, &  t\in [0,T],
\end{cases}
\end{equation}
which will be called a \emph{boundary function} for the initial datum $u_0.$

Let $A\in C(\R^n;\R^n)$ be the diffusion flux. For each $l\in\R$, let $K(l)$   be a subset of the matrix space $\mathbb M^{(1+n)\times (n+1)}$  defined by
\begin{equation}\label{set-K-l}
 K(l)= \left\{ \begin{pmatrix} p & c\\ B & A(p)\end{pmatrix}\,\Big | \,
  p\in\R^n,\, c\in\R,\,B\in\mathbb{M}^{n\times n},\,  \tr B=l
 \right\}.
\end{equation}

Suppose that a function $w=(u,v)\in W^{1,\infty}(\Omega_T;\R^{1+n})$ solves  the Dirichlet problem of non-homogeneous partial differential inclusion
\begin{equation}\label{pdi-D}
\begin{cases} \nabla w(x,t)\in  K(u(x,t)),  &  \textrm{a.e. $(x,t)\in\Omega_T$,}\\
w(x,t)= \Phi(x,t), &(x,t)\in\partial \Omega_T,
\end{cases}
\end{equation}
where $\nabla w$ is the space-time Jacobian matrix of $w$:
\[
\nabla w=\begin{pmatrix}Du & u_t\\ Dv & v_t\end{pmatrix}.
\]
% viewed as point-wise  in $\mathbb M^{(1+n)\times (n+1)}.$
Then it can be easily shown that $u$ is a Lipschitz solution to  $(\ref{ib-P})$.

Although solving problem (\ref{pdi-D}) is sufficient to obtain a Lipschitz solution to (\ref{ib-P}), all known general existence results \cite{DM1,MSy} are not applicable to solve    (\ref{pdi-D}). If  $n=1$,  (\ref{pdi-D})   implies $v_x=u$ and thus $v\in W^{1,\infty}$ if $u\in W^{1,\infty}$; so Zhang \cite{Zh, Zh1} was able to solve (\ref{pdi-D}) for the Perona-Malik and H\"ollig types in dimension $n=1$ by using a modified convex integration method.  However, for $n\ge 2$, since it is impossible to bound $\|Dv\|_{L^\infty(\Omega_T)}$ in terms of $\dv v$, the function $v$  may not be in $W^{1,\infty}(\Omega_T;\R^n)$ even when $u\in W^{1,\infty}(\Omega_T);$ therefore, solving (\ref{pdi-D}) in $W^{1,\infty}(\Omega_T;\R^{1+n})$ is impractical in dimensions $n\ge 2$. To overcome this difficulty,  we make  the following key observation.

\begin{pro} \label{gen-lem} Suppose $u\in W^{1,\infty}(\Omega_T)$ is such that $u(x,0)=u_0(x),$   there exists a vector function $v\in   W^{1,2}((0,T);L^2(\Omega;\R^n))$ with weak time-derivative $v_t$  satisfying
\begin{equation}\label{v-t}
v_t=A(Du) \quad \mbox{a.e.\, in $\Omega_T$,}
\end{equation}
 and  for each $\zeta\in C^\infty(\bar\Omega_T)$ and each   $t\in [0,T],$
\begin{equation}\label{div-v=u}
\int_\Omega v(x,t)\cdot D\zeta(x,t)\,dx=-\int_\Omega u(x,t)\zeta(x,t)\,dx.
\end{equation}
Then $u$ is a Lipschitz solution to $(\ref{ib-P})$.
\end{pro}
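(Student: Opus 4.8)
The plan is to verify that the function $u$, together with the given $v$, satisfies the defining identity (\ref{def:sol}) for a Lipschitz solution. The starting point is the weak formulation encoded in (\ref{v-t}) and (\ref{div-v=u}): the first says $v_t = A(Du)$ as an $L^2(\Omega;\R^n)$-valued (indeed $L^\infty$) function, and the second says that $-\dv v(\cdot,t) = u(\cdot,t)$ in the weak sense, \emph{including} the natural boundary behavior $v(\cdot,t)\cdot\n|_{\partial\Omega}=0$ (this is why the test functions $\zeta$ in (\ref{div-v=u}) are not required to vanish on $\partial\Omega$). The key idea is to differentiate the scalar function $t\mapsto \int_\Omega v(x,t)\cdot D\zeta(x,t)\,dx$ in time and use both hypotheses.

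First I would fix $\zeta\in C^\infty(\bar\Omega_T)$ and $s\in[0,T]$, and consider the function
\[
g(t):=\int_\Omega v(x,t)\cdot D\zeta(x,t)\,dx.
\]
Since $v\in W^{1,2}((0,T);L^2(\Omega;\R^n))$ and $\zeta, D\zeta$ are smooth in $\bar\Omega_T$, the product rule for Bochner-Sobolev functions gives that $g$ is absolutely continuous on $[0,T]$ with
\[
g'(t)=\int_\Omega v_t(x,t)\cdot D\zeta(x,t)\,dx + \int_\Omega v(x,t)\cdot D\zeta_t(x,t)\,dx
\]
for a.e.\ $t$. Into the first term I substitute (\ref{v-t}), getting $\int_\Omega A(Du)\cdot D\zeta\,dx$; into the second I apply (\ref{div-v=u}) with the smooth test function $\zeta_t(\cdot,t)$ in place of $\zeta(\cdot,t)$, getting $-\int_\Omega u(x,t)\zeta_t(x,t)\,dx$. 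Integrating $g'$ over $(0,s)$ and using (\ref{div-v=u}) itself at the endpoints $t=s$ and $t=0$ to rewrite $g(s)$ and $g(0)$ as $-\int_\Omega u(x,s)\zeta(x,s)\,dx$ and $-\int_\Omega u_0(x)\zeta(x,0)\,dx$ respectively (using $u(x,0)=u_0(x)$), rearranging yields exactly (\ref{def:sol}). This shows $u$ is a Lipschitz solution; note $u\in W^{1,\infty}(\Omega_T)$ is part of the hypothesis, so no regularity needs to be proved.

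The main technical point to be careful about — the part I expect to require the most attention — is the justification of the product rule for $g(t)=\langle v(t), D\zeta(t)\rangle_{L^2(\Omega)}$ when $v$ is only a Hilbert-space-valued $W^{1,2}$ function in time. This is standard (it follows by approximating $v$ by smooth-in-time functions, or by pairing the $L^2$-valued weak derivative against the smooth map $t\mapsto D\zeta(\cdot,t)\in L^2(\Omega;\R^n)$ and using that $t\mapsto D\zeta(\cdot,t)$ is Lipschitz into $L^2$), but it is the one place where one genuinely uses $v\in W^{1,2}((0,T);L^2)$ rather than merely $v\in L^\infty$. Everything else is a direct manipulation of the two hypotheses, with the boundary term on $\partial\Omega$ never appearing precisely because (\ref{div-v=u}) already bakes in the zero-flux condition through the use of non-vanishing test functions $\zeta$.
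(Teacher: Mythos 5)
Your proposal is correct and follows essentially the same route as the paper: both arguments differentiate in time the quantity $\int_\Omega u\zeta\,dx=-\int_\Omega v\cdot D\zeta\,dx$, substituting (\ref{v-t}) into the $v_t$ term and (\ref{div-v=u}) (applied with $\zeta_t$ as test function) into the other, and then integrate from $0$ to $s$. The paper merely packages the product rule as a weak-differentiability statement proved by testing against $\psi\in C_c^\infty(0,T)$, which is the same technical content you flag as the key point.
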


\begin{proof}  To verify (\ref{def:sol}), given any $\zeta\in C^\infty(\bar\Omega_T)$, let
\[
g(t)=\int_\Omega u(x,t)\zeta(x,t)dx,\;\;  h(t)=\int_\Omega u(x,t)\zeta_t(x,t)dx \quad (t\in [0,T]).
\]
 Then by (\ref{div-v=u}), for each $\psi\in C^\infty_c(0,T),$
\[
\int_0^T \psi_t g  dt  = -\int_0^T\int_\Omega \psi_t v\cdot D\zeta dxdt,\;\;
\int_0^T \psi h  dt = -\int_0^T\int_\Omega \psi v\cdot D\zeta_t dxdt.
\]
Since $v\in W^{1,2}((0,T);L^2(\Omega;\R^n))$ and $v_t=A(Du)$ a.e. in $\Omega_T$, we have
\[
\int_0^T\int_\Omega (\psi D\zeta)_t\cdot v dxdt=  - \int_0^T\int_\Omega  A(Du)\cdot \psi    D\zeta \,dxdt.
\]
As  $(\psi  D\zeta )_t=\psi_t D\zeta + \psi D\zeta_t$, combining  the previous equations, we obtain
\[
\int_0^T \psi_t g\,dt= \int_0^T \psi \left (-h+\int_\Omega   A(Du)\cdot D\zeta\,dx\right )dt,
\]
which proves that $g$ is weakly differentiable in $(0,T)$ with its weak derivative
\[
g'(t)=h(t)-\int_\Omega A(Du(x,t))\cdot D\zeta(x,t)\,dx \; \; \mbox{ a.e. $t\in(0,T)$.}
\]
From this, upon integration,  (\ref{def:sol}) follows for each $s\in [0,T].$
\end{proof}

\subsection{Admissible set and the density approach}\label{subsec-density} Let  $\Phi=(u^*,v^*)$   be any boundary function for $u_0$  defined by  (\ref{bdry-1}) above.
Denote by $W^{1,\infty}_{u^*}(\Omega_T)$, $W^{1,\infty}_{v^*}(\Omega_T;\R^n)$ the usual \emph{Dirichlet classes} with boundary traces $u^*, \, v^*,$ respectively.

We say that $\mathcal U\subset W^{1,\infty}_{u^*}(\Omega_T)$ is  an \emph{admissible set}  provided that it is  nonempty and  bounded in $W^{1,\infty}_{u^*}(\Omega_T)$ and that for each $u\in \mathcal U$,  there exists a vector function  $v\in W_{v^*}^{1,\infty}(\Omega_T; \R^n)$ satisfying
\[
\mbox{$\dv v=u$ \, a.e. in $\Omega_T$,\quad$\|v_t\|_{L^\infty(\Omega_T)}\le R$,}
\]
 where $R>0$ is any fixed number. If\; $\mathcal U$ is an admissible set,  for each  $\epsilon>0$,  let $\mathcal U_\epsilon$ be   the set of all  $u\in\mathcal U$ such that  there exists a  function $v\in W_{v^*}^{1,\infty}(\Omega_T; \R^n)$ satisfying
\[
\begin{split}
&\mbox{$\dv v=u$ \, a.e. in $\Omega_T$,\quad$\|v_t\|_{L^\infty(\Omega_T)}\le R$,}\\
&\int_{\Omega_T} |v_t(x,t)-A(Du(x,t))|\,dxdt \leq\epsilon|\Omega_T|.\end{split}
\]

The following general existence theorem relies on a pivotal density hypothesis   of\;  $\mathcal U_\epsilon$ in $\mathcal U.$

\begin{thm}\label{thm:main} Let\, $\mathcal U\subset W^{1,\infty}_{u^*}(\Omega_T)$ be an admissible set satisfying the  {\em density property:}
\begin{equation}\label{den-0}
\mbox{$\mathcal U_\epsilon$  is dense in $\mathcal U$ under the $L^\infty$-norm for each  $\epsilon>0$.}
\end{equation}
Then,  given any $\varphi\in \mathcal U$, for each $\delta>0,$
there exists a Lipschitz solution $u\in  W_{u^*}^{1,\infty}(\Omega_T)$ to   $(\ref{ib-P})$ satisfying
$\|u-\varphi\|_{L^\infty(\Omega_T)}<\delta.$
Furthermore,  if $\mathcal U$ contains a function  which is not a Lipschitz solution to $(\ref{ib-P}),$ then $ (\ref{ib-P})$ itself admits  infinitely  many Lipschitz solutions.
\end{thm}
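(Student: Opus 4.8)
The plan is to prove Theorem~\ref{thm:main} by a Baire category argument carried out directly on the scalar functions $u$, so that the auxiliary fields $v$ never enter the metric space and are used only through the density hypothesis. First I would take as ambient complete metric space $\mathcal X:=\overline{\mathcal U}$, the closure of $\mathcal U$ in the $L^\infty$-norm; since $\mathcal U$ is equi-Lipschitz, $\mathcal X$ is a compact subset of $C(\bar\Omega_T)$, is contained in $W^{1,\infty}_{u^*}(\Omega_T)$ with the same Lipschitz bound, and every $u\in\mathcal X$ satisfies $u(\cdot,0)=u_0$. Next I would introduce the \emph{residual functional} of the weak formulation (\ref{def:sol}): for $\zeta$ in the test class $\mathcal T:=\{\zeta\in C^1(\bar\Omega_T):\zeta(\cdot,T)=0,\ \|\zeta\|_{C^1(\bar\Omega_T)}\le 1\}$, put
\[
\langle R[u],\zeta\rangle:=\int_{\Omega_T}\big(u\,\zeta_t-A(Du)\cdot D\zeta\big)\,dxdt+\int_\Omega u_0\,\zeta(\cdot,0)\,dx,\qquad \Psi(u):=\sup_{\zeta\in\mathcal T}|\langle R[u],\zeta\rangle|.
\]
A routine check --- inserting cutoffs in time in (\ref{def:sol}) and using $u(\cdot,0)=u_0$ and $u\in C([0,T];L^2(\Omega))$ --- shows that $\Psi$ is bounded on $\mathcal X$ and that, for $u\in\mathcal X$, $\Psi(u)=0$ if and only if $u$ is a Lipschitz solution to (\ref{ib-P}). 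Thus it suffices to prove that $\{\Psi=0\}$ is dense in $\mathcal X$.

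The second step is to show that $\Psi$ is a function of Baire class one on $\mathcal X$. Let $\rho_k$ be a standard mollifier at scale $1/k$ and, mollifying $u-u^*$ after zero-extension (with the customary care near $\partial\Omega_T$), let $u^{(k)}$ be the corresponding regularization of $u\in\mathcal X$; define $\Psi_k$ by the same formula as $\Psi$ with $(u,Du)$ replaced by $(u^{(k)},Du^{(k)})$. Since mollification at a fixed scale is continuous from $C(\bar\Omega_T)$ into $C^1$ and $A$ is continuous, each $\Psi_k$ is continuous on $\mathcal X$; and since $u^{(k)}\to u$ uniformly while $Du^{(k)}\to Du$ in every $L^p(\Omega_T)$ with $p<\infty$ --- so that $A(Du^{(k)})\to A(Du)$ in $L^1(\Omega_T)$, using the uniform $L^\infty$-bound on $Du$ and the continuity of $A$ --- one gets $\Psi_k(u)\to\Psi(u)$ for each fixed $u$. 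Hence $\Psi=\lim_k\Psi_k$ is Baire class one on the complete metric space $\mathcal X$, and by the classical Baire theorem on first-class functions the set $\mathcal C$ of its points of continuity is residual, in particular dense and nonempty.

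The third step brings in the density hypothesis (\ref{den-0}) to show $\Psi\equiv 0$ on $\mathcal C$. Fix $u_*\in\mathcal C$ and $\epsilon_j\downarrow 0$. Since $u_*\in\overline{\mathcal U}$, choose $\hat u_j\in\mathcal U$ with $\|\hat u_j-u_*\|_{L^\infty}\to 0$, then, by (\ref{den-0}), choose $u_j\in\mathcal U_{\epsilon_j}$ with $\|u_j-\hat u_j\|_{L^\infty}<1/j$, so $u_j\to u_*$ in $\mathcal X$; by definition of $\mathcal U_{\epsilon_j}$ there is $v_j\in W^{1,\infty}_{v^*}(\Omega_T;\R^n)$ with $\dv v_j=u_j$ a.e. and $\int_{\Omega_T}|(v_j)_t-A(Du_j)|\,dxdt\le\epsilon_j|\Omega_T|$. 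For $\zeta\in\mathcal T$, integrating by parts in $x$ (using $v_j\cdot\n=0$ on $\partial\Omega$) and then in $t$ (using $\zeta(\cdot,T)=0$, $v_j(\cdot,0)=v^*(\cdot,0)$ and $\dv v^*(\cdot,0)=u_0$) yields
\[
\langle R[u_j],\zeta\rangle=\int_{\Omega_T}\big((v_j)_t-A(Du_j)\big)\cdot D\zeta\,dxdt,
\]
hence $\Psi(u_j)\le\|D\zeta\|_{L^\infty}\,\epsilon_j|\Omega_T|\le\epsilon_j|\Omega_T|\to 0$. Continuity of $\Psi$ at $u_*$ then forces $\Psi(u_*)=\lim_j\Psi(u_j)=0$, so every point of $\mathcal C$ is a Lipschitz solution.

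Finally, given $\varphi\in\mathcal U\subset\mathcal X$ and $\delta>0$, density of $\mathcal C$ provides $u\in\mathcal C$ with $\|u-\varphi\|_{L^\infty}<\delta$, which is the desired Lipschitz solution in $W^{1,\infty}_{u^*}(\Omega_T)$; and if $\mathcal U$ contains some $\psi$ that is not a Lipschitz solution, then $\psi\notin\mathcal C$ while $\mathcal C$ is dense, so there exist Lipschitz solutions arbitrarily $L^\infty$-close to $\psi$ but distinct from it, and taking $\delta<\min_i\|u^{(i)}-\psi\|_{L^\infty}$ rules out the possibility that (\ref{ib-P}) has only finitely many solutions $u^{(1)},\dots,u^{(N)}$. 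I expect the main obstacle to be the second step: one must verify carefully that the regularized residuals $\Psi_k$ are genuinely continuous on $\mathcal X$ and converge pointwise to $\Psi$ --- in particular the convergence $A(Du^{(k)})\to A(Du)$ in $L^1$ and the handling of mollification near $\partial\Omega_T$ --- since this is exactly what upgrades the otherwise discontinuous functional $\Psi$ to one for which Baire's theorem applies. A secondary delicate point is the integration-by-parts identity of the third step, which is the precise mechanism converting admissibility (existence of $v_j$ with $\dv v_j=u_j$) and the density hypothesis into smallness of $\Psi$.
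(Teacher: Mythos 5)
Your proof is correct, but it follows a genuinely different route from the paper's. The paper applies Baire's category theorem to the spatial gradient operator $D\colon(\mathcal X,L^\infty)\to(L^1,\|\cdot\|_{L^1})$: it shows $D$ is Baire-one via mollification, and at each continuity point $u$ it takes the approximants $u_j\in\mathcal U_{1/j}$, uses continuity to get $Du_j\to Du$ strongly in $L^1$, extracts a weak $L^2$-limit $v$ of the auxiliary fields $v_j$ (using the uniform bound $\|(v_j)_t\|_{L^\infty}\le R$), passes to the limit to get $v_t=A(Du)$ and the weak divergence identity, and concludes via Proposition \ref{gen-lem}. You instead apply Baire's theorem to a scalar ``defect'' functional $\Psi$ measuring the failure of the weak formulation (\ref{def:sol}), prove $\Psi$ is Baire-one by the same mollification device, and kill $\Psi$ at its continuity points by the exact identity $\langle R[u_j],\zeta\rangle=\int_{\Omega_T}((v_j)_t-A(Du_j))\cdot D\zeta$, which converts the $L^1$-smallness in the definition of $\mathcal U_{\epsilon_j}$ directly into $\Psi(u_j)\le\epsilon_j|\Omega_T|$. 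Your route is more economical: it bypasses Proposition \ref{gen-lem} and the weak-compactness limit passage entirely, and it does not even use the uniform bound on $\|(v_j)_t\|_{L^\infty}$ — only the boundary data (\ref{bdry-1}) of the $v_j$ and the $\epsilon$-closeness of $(v_j)_t$ to $A(Du_j)$. What the paper's heavier argument buys is the limit field $v$ and the strong/pointwise convergences $Du_j\to Du$, $(v_j)_t\to v_t$, which are reused verbatim later to extract the fine gradient structure of the solutions in the proofs of Theorems \ref{thm:PM-1} and \ref{thm:H-1}; your argument, tailored to Theorem \ref{thm:main} alone, discards that information. Two routine points you should tighten: the integration by parts in time requires $D\zeta_t$, so the test class $\mathcal T$ should consist of smooth $\zeta$ (the value of the supremum is unchanged by density in $C^1$); and the equivalence ``$\Psi(u)=0$ iff (\ref{def:sol}) holds for every $s\in[0,T]$'' does need the time-cutoff approximation you sketch, together with $u(\cdot,0)=u_0$ for every $u\in\mathcal X$, but both are straightforward for equi-Lipschitz $u$. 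Your closing cardinality argument (choosing $\delta$ below the distance from a non-solution $\psi\in\mathcal U$ to each member of a putative finite solution set) is also valid and slightly cleaner than the paper's.
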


\begin{proof}
For clarity, we divide the proof into several steps.

1. Let $\X$ be the closure of\, $\mathcal U$ in the metric space $L^\infty(\Omega_T).$
Then $(\mathcal X,L^\infty)$ is a nonempty complete metric space.  By assumption, each $\mathcal U_\epsilon$ is dense in $\X.$ Moreover, since $\mathcal U$ is bounded in $W_{u^*}^{1,\infty}(\Omega_T)$,  we have  $\X\subset W_{u^*}^{1,\infty}(\Omega_T)$.

2. Let  $\Y =L^1(\Omega_T;\R^{n})$. For $h>0$, define $T_h\colon  \X \to \Y$ as follows. Given any $u\in \X$, write $u=u^* +w$ with $w\in W_0^{1,\infty}(\Omega_T)$ and define
\[
T_h (u) =Du^* + D(\rho_h * w),
\]
where $\rho_h(z)=h^{-N}\rho(z/h)$, with $z=(x,t)$ and $N=n+1$, is the standard $h$-mollifier in $\R^{N}$, and $\rho_h * w$ is the usual convolution in $\R^{N}$ with $w$ extended to be zero outside $\bar{\Omega}_T.$
Then, for each $h>0$, the map $T_h \colon   (\X, L^\infty) \to (\Y, L^1)$ is continuous, and for each $u\in \X$,
\[
\lim_{h\to 0^+} \|T_h (u)-Du\|_{L^1(\Omega_T)}=\lim_{h\to 0^+} \|\rho_h * Dw-Dw\|_{L^1(\Omega_T)}=0.
\]
Therefore, the spatial gradient operator
$D\colon \mathcal X\to \mathcal  Y$ is the pointwise limit of a sequence of continuous maps $T_h \colon \X\to \Y$; hence $D\colon \mathcal X\to \mathcal  Y$ is  a {\em Baire-one map}.
By Baire's category theorem (e.g., \cite[Theorem 10.13]{BBT}), there exists a
{\em residual set} $\G\subset \mathcal X$ such that the operator $D$ is
continuous at each point of $\mathcal G.$  Since $\X\setminus \G$  is of the {\em first category}, the set $\G$ is {\em dense} in $\X$. Therefore, given any $\varphi\in \X,$ for each $\delta>0$, there exists a function $u\in \G$ such that $\|u-\varphi\|_{L^\infty(\Omega_T)}<\delta.$

3. We now prove that each   $u\in \G$ is a Lipschitz solution to (\ref{ib-P}).   Let $u\in \G$ be given. By the density of $\mathcal U_\epsilon$ in $(\X,L^\infty)$ for each $\epsilon>0$, for every $j\in\mathbb N$, there exists a function $ u_j\in\mathcal U_{1/j}$ such that $\|u_j-u\|_{L^\infty(\Omega_T)} <1/j$. Since the  operator $D\colon (\X, L^\infty)\to (\Y, L^1)$ is continuous at $u$,  we have  $Du_j\to Du$ in $L^1(\Omega_T;\R^n).$  Furthermore, from (\ref{bdry-1}) and the definition of\, $\mathcal U_{1/j}$, there exists a  function $v_j\in W^{1,\infty}_{v^*}(\Omega_T;\R^n)$ such that for each $\zeta\in C^\infty(\bar\Omega_T)$ and each   $t\in [0,T],$
\begin{equation}\label{div-v3}
\begin{split} & \int_\Omega v_j(x,t)\cdot D\zeta(x,t)\,dx  =-\int_\Omega u_j(x,t)\zeta(x,t)\,dx,\\
 \|( & v_j)_t\|_{L^\infty(\Omega_T)}  \le R,\quad  \int_{\Omega_T} |(v_j)_t-A(Du_j)|\,dxdt \leq\frac{1}{j}|\Omega_T|.\end{split}
\end{equation}
Since $v_j(x,0)=v^*(x,0)\in  W^{1,\infty}(\Omega;\R^n)$ and  $\|(v_j)_t\|_{L^\infty(\Omega_T)}  \le R$, it follows that both sequences $\{v_j\}$ and  $\{(v_j)_t\}$ are bounded in $L^2(\Omega_T;\R^n)\approx L^2((0,T);L^2(\Omega;\R^n)).$  So we may  assume
\[
\mbox{$v_j \wcon v $ and $(v_j)_t\wcon v_t$ in $L^2((0,T);L^2(\Omega;\R^n))$}
\]
 for some $v\in W^{1,2}((0,T);L^2(\Omega;\R^n)),$ where $\wcon$ denotes the weak convergence.  Upon taking the limit as $j\to \infty$ in (\ref{div-v3}), since  $v\in C([0,T];L^2(\Omega;\R^n))$ and $A\in C(\R^n;\R^n)$,  we obtain
\[
\begin{split}
    \int_\Omega  v(x,t)&\cdot D\zeta(x,t)\,dx    =  -\int_\Omega u(x,t)\zeta(x,t)\,dx \quad (t\in [0,T]), \\
 &v_t(x,t)= A(Du(x,t)) \quad   a.e. \; (x,t)\in \Omega_T. \end{split}
\]
Consequently, by Proposition \ref{gen-lem}, $u$ is a Lipschitz solution to (\ref{ib-P}).

4. Finally, assume $\mathcal U$ contains a function which is not a Lipschitz solution to (\ref{ib-P}); hence   $\G\ne \mathcal U.$ Then $\G$ cannot be a finite set, since otherwise the $L^\infty$-closure $\X=\overline{\G}  =\overline{\mathcal U}$ would be a  finite set, making  $\mathcal U=\G.$ Therefore, in this case,   (\ref{ib-P}) admits infinitely many Lipschitz solutions.
The proof is complete.
\end{proof}

The following result implies that the density approach streamlined in Theorem \ref{thm:main} can be useful only when   problem (\ref{ib-P}) is \emph{non-parabolic}, that is, when $A(p)$ is non-monotone.

\begin{pro}\label{unique}
If $A\colon \R^n\to \R^n$ is monotone, then   $(\ref{ib-P})$  can have at most one Lipschitz solution.
\end{pro}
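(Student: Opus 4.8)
The plan is to run the classical monotonicity (energy) argument. Suppose $u_1,u_2\in W^{1,\infty}(\Omega_T)$ are two Lipschitz solutions to (\ref{ib-P}) and set $w=u_1-u_2$. Subtracting the identity (\ref{def:sol}) written for $u_2$ from the one written for $u_1$, we obtain, for every $\zeta\in C^\infty(\bar\Omega_T)$ and every $s\in[0,T]$,
\begin{equation*}
\int_\Omega w(x,s)\,\zeta(x,s)\,dx=\int_0^s\!\!\int_\Omega\Big(w\,\zeta_t-\big(A(Du_1)-A(Du_2)\big)\cdot D\zeta\Big)\,dx\,dt .
\end{equation*}
The idea is to insert $\zeta=w$ into this identity and invoke the monotonicity of $A$.

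Since $w$ is only Lipschitz, the first step is to extend the displayed identity to all test functions $\zeta\in W^{1,\infty}(\Omega_T)$. Given such a $\zeta$, choose $\zeta_k\in C^\infty(\bar\Omega_T)$ with $\sup_k\|\zeta_k\|_{W^{1,\infty}(\Omega_T)}<\infty$ and $\zeta_k\to\zeta$ in $W^{1,2}(\Omega_T)$; this is standard (use a bounded extension operator off $\Omega_T$, whose existence is guaranteed by the smoothness of $\partial\Omega$, and then mollify). Because $w$ and $A(Du_1)-A(Du_2)$ lie in $L^\infty(\Omega_T)\subset L^2(\Omega_T)$, the right-hand side converges as $k\to\infty$ for every $s$; the left-hand side converges for a.e.\ $s$, since $\zeta_k(\cdot,s)\to\zeta(\cdot,s)$ in $L^2(\Omega)$ for a.e.\ $s$ by Fubini. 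As both sides are continuous functions of $s$ — the left-hand side because $w,\zeta$ are continuous on $\bar\Omega_T$ and $\Omega$ is bounded, the right-hand side because it is an indefinite integral — the identity in fact holds for all $s\in[0,T]$ and all $\zeta\in W^{1,\infty}(\Omega_T)$.

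Now I would take $\zeta=w$. Using the chain rule $\frac{d}{dt}\int_\Omega w(x,t)^2\,dx=2\int_\Omega w\,w_t\,dx$ for a.e.\ $t$ (valid since $w\in W^{1,\infty}(\Omega_T)$) together with $w(\cdot,0)=u_0-u_0=0$, one gets $\int_0^s\!\int_\Omega w\,w_t\,dx\,dt=\frac12\int_\Omega w(x,s)^2\,dx$. Substituting into the extended identity and rearranging yields
\begin{equation*}
\frac12\int_\Omega w(x,s)^2\,dx=-\int_0^s\!\!\int_\Omega\big(A(Du_1)-A(Du_2)\big)\cdot\big(Du_1-Du_2\big)\,dx\,dt\le 0
\end{equation*}
for every $s\in[0,T]$, the inequality being precisely the monotonicity of $A$. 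Hence $w(\cdot,s)=0$ for all $s$, that is, $u_1\equiv u_2$. The only step requiring genuine care is the density argument extending (\ref{def:sol}) to Lipschitz test functions in a way that controls the time-slice term $\int_\Omega w(x,s)\zeta(x,s)\,dx$ at \emph{every} $s$ (not merely a.e.); this is exactly what the continuity-in-$s$ observation above provides. The remaining ingredients — the chain rule for $\|w(\cdot,t)\|_{L^2(\Omega)}^2$ and the monotonicity inequality — are routine.
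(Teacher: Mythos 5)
Your proof is correct and follows essentially the same route as the paper: subtract the two weak formulations, insert $\zeta=u_1-u_2$ by approximation, and combine the chain rule for $\|w(\cdot,t)\|_{L^2(\Omega)}^2$ with the monotonicity of $A$ to force $w\equiv 0$. The only difference is that you spell out the density/continuity-in-$s$ justification that the paper compresses into the phrase ``by approximations,'' which is a welcome addition rather than a deviation.
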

\begin{proof}
Let $u,\,\tilde u\in W^{1,\infty}(\Omega_T)$ be any two Lipschitz solutions to (\ref{ib-P}). Use the identity (\ref{def:sol}) for both $u$ and $\tilde u$, subtract the two  equations, take $\zeta=u-\tilde u$ (by approximations), and then apply the monotonicity of $A(p)$:
\[
(A(Du)-A(D\tilde u))\cdot (Du-D\tilde u) \ge 0
\]
to obtain
\[
\int_\Omega (u(x,s)-\tilde u(x,s))^2 dx\le \int_0^s\int_\Omega (u-\tilde u)(u-\tilde u)_t \,dxdt
\]
\[
=\frac12 \int_\Omega (u(x,s)-\tilde u(x,s))^2 dx \quad (0\le s\le T).
\]
Thus $u\equiv \tilde u$ in $\Omega_T.$
\end{proof}

As a general property for Lipschitz solutions to (\ref{ib-P}) for all continuous fluxes $A(p)$ satisfying  a \emph{positivity} condition below, we prove the following result; clearly, the positivity condition is  satisfied by the fluxes $A(p)$ given by (\ref{fun-A}) with the profiles $\sigma(s)$ of the Perona-Malik and H\"ollig types illustrated in Figures \ref{fig1}, \ref{fig2} and \ref{fig2-2}. Note that the positivity condition is consistent with the Clausius-Duhem inequality in the second law of thermodynamics (e.g., \cite[p. 79]{Dy} and \cite[p. 116]{Tr}).

\begin{pro}\label{pro:minmax} Let $u_0\in W^{1,\infty}(\Omega)$ and $A\colon \R^n\to \R^n$ satisfy the \emph{positivity  condition:}
\[
A(p)\cdot p\ge 0\;\; (p\in \R^n).
\]
Then any Lipschitz solution $u$ to $(\ref{ib-P})$ satisfies
\begin{equation}\label{minmax}
\min_{\bar\Omega}u_0\le u  \le \max_{\bar\Omega}u_0 \;\;\; \mbox{in $\Omega_T.$}
\end{equation}
\end{pro}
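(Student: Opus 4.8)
The plan is to establish the two-sided bound \eqref{minmax} by testing the weak formulation \eqref{def:sol} against a suitable truncation of $u$ and exploiting the positivity condition $A(p)\cdot p\ge 0$ to kill the flux term. Write $M=\max_{\bar\Omega}u_0$ and $m=\min_{\bar\Omega}u_0$; by symmetry (replacing $u$ by $-u$, $u_0$ by $-u_0$, and $A(p)$ by $-A(-p)$, which still satisfies the positivity condition) it suffices to prove $u\le M$ in $\Omega_T$. Set $w=(u-M)^+=\max\{u-M,0\}$, which lies in $W^{1,\infty}(\Omega_T)$ since $u$ does, and note $w(x,0)=(u_0(x)-M)^+=0$ for a.e.\ $x\in\Omega$.

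First I would justify using $\zeta=w(\cdot,\cdot)$ (more precisely $\zeta=w$ restricted up to time $s$) as a test function in \eqref{def:sol}: since $w$ is only Lipschitz and not smooth, this requires a standard density/approximation argument, mollifying $w$ in space-time and passing to the limit, using that all terms in \eqref{def:sol} are continuous under $W^{1,\infty}$-weak-$\ast$ / strong $L^2$ convergence of the test functions and their gradients. Granting this, \eqref{def:sol} with $\zeta=w$ gives, for each $s\in[0,T]$,
\[
\int_\Omega \bigl(u(x,s)-M\bigr)w(x,s)\,dx=\int_0^s\!\!\int_\Omega\bigl(u\,w_t-A(Du)\cdot Dw\bigr)\,dx\,dt,
\]
where the initial term dropped because $w(\cdot,0)=0$ and I have subtracted the trivial identity $\int_\Omega M w(x,s)\,dx=\int_0^s\!\!\int_\Omega M w_t\,dx\,dt$ (legitimate since $M$ is constant and $w\in W^{1,\infty}$). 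The left-hand side equals $\int_\Omega w(x,s)^2\,dx$ because $(u-M)w=w^2$ pointwise. On the right, $u\,w_t=(u-M)w_t+M w_t$; combining with the subtracted identity, $\int_0^s\!\!\int_\Omega(u-M)w_t=\frac12\int_\Omega w(x,s)^2\,dx$ since $(u-M)w_t=w\,w_t=\tfrac12(w^2)_t$ a.e.\ and $w(\cdot,0)=0$.

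For the flux term, the key observation is that $Dw=Du$ on the set $\{u>M\}$ and $Dw=0$ a.e.\ on $\{u\le M\}$, so
\[
-\int_0^s\!\!\int_\Omega A(Du)\cdot Dw\,dx\,dt=-\int_0^s\!\!\int_{\{u>M\}}A(Du)\cdot Du\,dx\,dt\le 0
\]
by the positivity condition. Putting the pieces together yields $\int_\Omega w(x,s)^2\,dx\le\tfrac12\int_\Omega w(x,s)^2\,dx$, hence $\int_\Omega w(x,s)^2\,dx\le0$, so $w(\cdot,s)\equiv0$ for every $s$, i.e.\ $u\le M$ in $\Omega_T$. Applying the same argument to $-u$ gives $u\ge m$, completing the proof. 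The main obstacle I anticipate is the rigorous justification of the test-function step: one must confirm that \eqref{def:sol}, stated for smooth $\zeta$, extends to $\zeta=w\in W^{1,\infty}(\Omega_T)$ (including handling the boundary trace of $w$, which is fine since the boundary condition in \eqref{ib-P} is of Neumann type and no boundary term on $\partial\Omega\times(0,T)$ appears in \eqref{def:sol}), and that the chain-rule identities $(u-M)w_t=\tfrac12(w^2)_t$ and $Dw=\chi_{\{u>M\}}Du$ hold a.e.\ — both are standard facts about truncations of Sobolev (here Lipschitz) functions, but worth citing or sketching.
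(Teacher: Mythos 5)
Your proof is correct, but it takes a genuinely different route from the paper's. The paper tests the weak formulation against $\phi=u^{2k-1}$, uses the positivity condition to show that $\|u(\cdot,t)\|_{L^{2k}(\Omega)}$ is non-increasing, lets $k\to\infty$ to obtain the symmetric bound $\|u\|_{L^\infty(\Omega_T)}\le\|u_0\|_{L^\infty(\Omega)}$, and then converts this into the asymmetric two-sided bound (\ref{minmax}) through a three-case analysis involving constant shifts $u\mapsto -u+m_1+m_2$ and the flux reflection $A(p)\mapsto -A(-p)$. You instead run a Stampacchia truncation: testing against $\zeta=(u-M)^+$ yields $\int_\Omega w(\cdot,s)^2\,dx\le\tfrac12\int_\Omega w(\cdot,s)^2\,dx$ directly, because $Dw=\chi_{\{u>M\}}Du$ makes the flux term nonpositive under the same positivity hypothesis; the lower bound follows from the same reflection trick the paper also uses. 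Your approach is more direct — it delivers the one-sided maximum bound in one stroke and avoids both the $k\to\infty$ limit and the case analysis — at the cost of invoking the standard facts that $D(u-M)^+=\chi_{\{u>M\}}Du$ and $(u-M)w_t=\tfrac12(w^2)_t$ a.e.\ for Lipschitz $u$ (the gradient of a Lipschitz function vanishes a.e.\ on its zero set). The extension of (\ref{def:sol}) from smooth to $W^{1,\infty}$ test functions that you flag as the main obstacle is exactly the approximation step the paper itself performs at the start of its proof, so both arguments rest on the same foundation there; note also that $w(\cdot,0)=0$ requires $u(\cdot,0)=u_0$, which follows from (\ref{def:sol}) at $s=0$.
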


\begin{proof}
Let $u\in W^{1,\infty}(\Omega_T)$ be any Lipschitz solution to (\ref{ib-P}). By (\ref{def:sol}),
for all $\zeta\in C^\infty(\bar\Omega_T)$,
\[
 \int_{\Omega_T} u_t(x,t)\zeta(x,t)dxdt =-\int_{\Omega_T} A(D u)\cdot D \zeta dxdt;
\]
hence by approximation, this equality holds for all  $\zeta\in W^{1,\infty}(\Omega_T).$ Taking $\zeta(x,t)=\phi(x,t)\psi(t)$ with arbitrary $\phi\in W^{1,\infty}(\Omega_T)$ and $\psi\in W^{1,\infty}(0,T)$, we deduce that
\[
\int_\Omega u_t (x,t)\phi(x,t)\,dx = -\int_\Omega A(Du(x,t))\cdot D\phi(x,t)\,dx
\]
for a.e.\,$t\in (0,T)$ and all $\phi\in W^{1,\infty}(\Omega_T)$. Now taking $\phi=u^{2k-1}$ with $k=1,2,\cdots$, we have for a.e.\,$t\in (0,T),$
\[
\begin{split}
 \frac{d}{dt}\Big(&\int_\Omega u^{2k} dx\Big)=2k\int_\Omega u_t \phi\,dx=-2k\int_\Omega A(Du)\cdot D\phi dx \\
 &=-2k(2k-1)\int_\Omega u^{2k-2} A(Du)\cdot Du dx \le 0.
\end{split}
\]
From this we deduce that $L^{2k}$-norm of $u(\cdot,t)$ is non-increasing on $t\in [0,T]$; in particular,
\[
\|u(\cdot,t)\|_{L^{2k}(\Omega)}\le \|u_0\|_{L^{2k}(\Omega)}\quad \forall\; t\in [0,T], \;\; k=1,2,\cdots.
\]
Letting $k\to \infty$, we obtain $\|u(\cdot,t)\|_{L^{\infty}(\Omega)}\le \|u_0\|_{L^{\infty}(\Omega)}$; hence
\begin{equation}\label{max-norm}
\|u\|_{L^{\infty}(\Omega_T)} = \|u_0\|_{L^{\infty}(\Omega)}.
\end{equation}
Now let $
m_1=\min_{\bar\Omega} u_0$ and $m_2=\max_{\bar\Omega} u_0.$
We  show $m_1\le u(x,t)\le m_2$ for all $(x,t)\in \Omega_T$  to complete the proof. We proceed with three cases.

{\bf (a)}: $m_2>0$ and $|m_1|\le m_2.$ In this case,  $\|u_0\|_{L^\infty(\Omega)}= m_2$; so by (\ref{max-norm})
\[
u(x,t)\le \|u\|_{L^\infty(\Omega_T)}=\|u_0\|_{L^\infty(\Omega)}= m_2.
\]
To obtain the lower bound, let $\tilde u_0=-u_0+m_2+m_1$ and $\tilde u=-u+m_2+m_1.$ Then $\tilde u$ is a Lipschitz solution to (\ref{ib-P}) with new flux function $\tilde A(p)=-A(-p)$ and initial data $\tilde u_0.$ Since $m_1\le \tilde u_0(x)\le m_2$, as above, we have $\tilde u(x,t)\le m_2$; hence $u(x,t)\ge m_1$ for all $(x,t)\in \Omega_T.$

{\bf (b)}: $m_2>0$ and $m_1<-m_2.$ Let $\tilde u_0=-u_0$ and $\tilde u=-u.$ Then $\tilde u$ is a Lipschitz solution to (\ref{ib-P}) with new flux function $\tilde A(p)=-A(-p)$ and initial data $\tilde u_0.$ Since $-m_2\le \tilde u_0(x)\le -m_1$ for all $x\in \Omega$ and $-m_1>0, \; |-m_2|=m_2\le -m_1,$ it follows from Case (a) that $-m_2\le \tilde u(x,t)\le -m_1$ and hence $m_1\le u(x,t)\le m_2$ for all $(x,t)\in \Omega_T.$

{\bf (c)}: $m_2\le 0.$ In this case $m_1\le 0.$ If $m_1=0$ then $m_2=0$ and hence $u_0\equiv 0;$ so, by (\ref{max-norm}), $u\equiv 0$. Now assume $m_1<0.$ Let again as  in Case (b) $\tilde u_0=-u_0$ and $\tilde u=-u.$  Since $-m_2\le \tilde u_0(x)\le -m_1$ for all $x\in \Omega$ and $-m_1>0, \; |-m_2|=-m_2\le -m_1,$ it follows again from Case (a) that $-m_2\le \tilde u(x,t)\le -m_1$ and hence $m_1\le u(x,t)\le m_2$ for all $(x,t)\in \Omega_T.$
\end{proof}

The rest of the paper is devoted to the construction of   suitable boundary functions $\Phi=(u^*,v^*)$ and admissible sets $\mathcal U\subset W_{u^*}^{1,\infty}(\Omega_T)$ fulfilling the  {density property} (\ref{den-0}) for \textbf{Cases I} and \textbf{II}.

\section{Structural conditions on the profiles: \\ detailed statements of   main theorems}\label{sec:structure}
In this section, we assume the domain $\Omega$ and initial datum $u_0$ satisfy (\ref{assume-1}). We consider the diffusion  fluxes  $A(p)$ given by  (\ref{fun-A}) and  present the  detailed statements of our main theorems by specifying  the structural conditions on the profiles $\sigma(s)=s f(s^2)$ illustrated   in Figures \ref{fig1}, \ref{fig2} and \ref{fig2-2}. %Hereafter, we use the boldface letters for \textbf{Cases I} and \textbf{II} for clear distinction.

\subsection{Case I: Perona-Malik type of equations} In this case, we assume  the following structural condition on the profile   $\sigma(s).$

\textbf{Hypothesis (PM):} (See Figures \ref{fig1} and \ref{fig4}.)
\begin{itemize}
\item[(i)]  There exists a number $s_0>0$ such that
\[
f\in  C^0([0,\infty))\cap C^{3}([0,s_0^2))\cap C^1(s_0^2,\infty).
\]
\item[(ii)]
$\sigma'(s)>0\;\;\forall s\in[0,s_0),\; \sigma'(s)<0\;\;\forall s\in(s_0,\infty)$, and
\[
\lim_{s\to\infty}\sigma(s)=0.
\]
\end{itemize}
In this case, for  each $r\in(0,\sigma(s_0))$, let $s_-(r)\in(0,s_0)$ and $s_+(r)\in(s_0,\infty)$ denote the unique numbers with $r=\sigma(s_\pm(r))$. Then by (ii),
\begin{equation}\label{pro-pm-ii}
\lim_{r\to 0^+}s_-(r)=0, \quad \lim_{r\to 0^+}s_+(r)=\infty.
\end{equation}
Note that both profiles  in (\ref{PM-fun}) for the  Perona-Malik model \cite{PM}   satisfy Hypothesis (PM).

The following is the first main result of this paper in detail.

\begin{thm}\label{thm:PM-1} Let $\Omega$ be convex and $M_0=\|Du_0\|_{L^\infty(\Omega)}$. Then for each $r\in(0,\sigma(M_0))$, there exists a number $l=l_r\in(0,r)$ such that for all $\tilde r\in(l, r)$ and all but at most countably many  $\bar r\in(0,\tilde r)$, there exist two disjoint   open sets $\,\Omega_T^1,\,\Omega_T^2\subset\Omega_T$ with $|\Omega_T^1\cup\Omega_T^2|=|\Omega_T|$ and infinitely many Lipschitz solutions $u$ to $(\ref{ib-P})$ satisfying
\[
u\in C^{2+\alpha,1+\alpha/2}(\bar{\Omega}^1_T),\quad u_t=\dv(A(Du))\;\; \mbox{pointwise in}\;\;\Omega_T^1,
\]
\[
|Du(x,t)|<s_-(\bar r)\;\;\forall(x,t)\in\Omega_T^1, \quad \Omega_0^{\bar r}\subset\partial\Omega_T^1,
\]
\begin{equation*}%\label{PM-Gradsets}
|S|+|L|=|\Omega^2_T|,\;\;|L|>0,
\end{equation*}
where
\[
\Omega_0^{\bar r}=\{(x,0) \,|\, x\in\Omega,\,|Du_0(x)|<s_-(\bar r) \},
\]
\[
S=\{(x,t)\in \Omega_T^2\;|\; s_-(\bar r) \le |Du(x,t)|\le s_-(r)\},
\]
and
\[
L=\{(x,t)\in \Omega_T^2\;|\;  s_+(r)\le|Du(x,t)|\le s_+(\tilde r)\}.
\]
\end{thm}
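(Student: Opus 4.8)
The strategy is to apply the general existence machinery (Theorem~\ref{thm:main}) by constructing, for the Perona--Malik profile, a boundary function $\Phi=(u^*,v^*)$ for $u_0$ together with an admissible set $\mathcal U\subset W^{1,\infty}_{u^*}(\Omega_T)$ enjoying the density property~(\ref{den-0}). The plan is to first solve, on a suitable subregion, a genuinely parabolic problem for the ``subcritical'' flux (the portion of $\sigma$ on $[0,s_0)$), which gives the smooth piece $u\in C^{2+\alpha,1+\alpha/2}(\bar\Omega_T^1)$ with $|Du|<s_-(\bar r)$; here the convexity of $\Omega$ enters through the $C^{2+\alpha}$ parabolic Schauder theory and the need for the oblique boundary condition $A(Du)\cdot\n=0$ to be compatible. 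This parabolic piece occupies a region $\Omega_T^1$ whose parabolic boundary contains $\Omega_0^{\bar r}$; the value $l=l_r$ and the thresholds $\tilde r,\bar r$ are chosen to keep the gradient bounds of this classical solution strictly inside the relevant intervals, which is why only countably many $\bar r$ need to be excluded (the bad ones being where $|Du_0|=s_-(\bar r)$ on a positive-measure set of $\partial\Omega_T^1$, an event that can happen for at most countably many levels).

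On the complement $\Omega_T^2=\Omega_T\setminus\bar\Omega_T^1$ one runs the convex-integration/Baire argument. The key is the geometry of the matrix set $K(l)$ from~(\ref{set-K-l}): one must show that the relevant rank-one (or laminate) convex hull is large enough that the set $\mathcal U_\epsilon$ of approximate solutions is dense. Concretely, I would build $\mathcal U$ from piecewise-affine (in space-time) perturbations whose gradients oscillate between the ``slow'' branch values near $[s_-(\bar r),s_-(r)]$ and the ``fast'' branch values near $[s_+(r),s_+(\tilde r)]$, chosen so that $\sigma$ takes the same value $\tilde r$ (or values between $\bar r$ and $\tilde r$) on both branches — this is exactly the mechanism by which the flux $A(Du)=f(|Du|^2)Du$ becomes ``averageable'' even though $|Du|$ jumps, and it forces the decomposition $|S|+|L|=|\Omega_T^2|$ with $|L|>0$. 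The measure $|L|>0$ is guaranteed because any function in $\mathcal U$ that is \emph{not} already a Lipschitz solution must spend positive measure on the fast branch; combined with the final clause of Theorem~\ref{thm:main}, this simultaneously yields infinitely many solutions.

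The steps, in order, are: (1) fix $r\in(0,\sigma(M_0))$ and define $l_r$, then choose $\tilde r\in(l_r,r)$; (2) solve the subcritical parabolic IBVP on the region where $|Du_0|<s_-(\bar r)$ to get $\Omega_T^1$ and the classical solution, using convexity and Schauder estimates, excluding the countably many bad $\bar r$; (3) construct the boundary function $\Phi$ on $\Omega_T^2$ matching this classical piece on $\partial\Omega_T^1$ and respecting~(\ref{bdry-1}) — this requires finding $v^*$ with $\dv v^*=u^*$ and the correct normal trace, handled via a potential-theoretic (elliptic) solve at each time slice; (4) define $\mathcal U$ via the two-branch laminate construction and verify admissibility (boundedness in $W^{1,\infty}$, existence of $v$ with $\dv v=u$ and $\|v_t\|_\infty\le R$); (5) prove the density property~(\ref{den-0}) by a covering/iteration argument adding finer and finer oscillations (this is the analogue of the ``in-approximation'' step of convex integration); (6) invoke Theorem~\ref{thm:main} to extract infinitely many Lipschitz solutions, and read off the stated gradient bounds and the measure identities on $S$ and $L$ from the construction.

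The main obstacle is step~(5): establishing the density of $\mathcal U_\epsilon$ in $\mathcal U$ requires a quantitative lamination lemma showing that near \emph{any} admissible $u$ one can perturb (keeping the boundary trace and the divergence constraint $\dv v=u$) so as to push $\int_{\Omega_T}|v_t-A(Du)|$ below $\epsilon|\Omega_T|$ while staying in $\mathcal U$; because the constraint couples $v$ to $u$ through the divergence and the admissible set must be preserved, the perturbations cannot be localized freely and one needs the precise structure of the rank-one directions in $K(l)$ compatible with $\tr B=l$. Carrying this out for a \emph{general} Perona--Malik profile $\sigma$ — rather than the explicit $\sigma(s)=s/(1+s^2)$ of~\cite{KY1}, where the rank-one convex hull was known in closed form — is exactly the new difficulty this paper must overcome, and it is deferred to the geometry of matrix sets in Section~\ref{sec:geometry} (Theorem~\ref{main-lemma}) and the density proof in Section~\ref{sec:den-proof}.
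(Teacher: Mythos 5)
Your high-level architecture (boundary function $\Phi=(u^*,v^*)$, admissible set $\mathcal U$, density of $\mathcal U_\epsilon$, Baire category via Theorem \ref{thm:main}) matches the paper, and you correctly identify the density step and the rank-one geometry of $K_0$ for a general profile as the core difficulty. However, your step (2) contains a genuine gap that would derail the construction. You propose to solve a ``subcritical'' parabolic problem only on the region where $|Du_0|<s_-(\bar r)$ and then glue a convex-integration construction on the complement. That region of subcriticality is not known a priori as a space-time set (it is a free boundary), and matching a classical solution across $\partial\Omega_T^1$ to an oscillatory Lipschitz construction while preserving the trace, the $W^{1,\infty}$ bound, and the constraint $\dv v=u$ is precisely what one cannot do directly. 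The paper avoids this entirely by the profile-modification device (Lemma \ref{lem:modi-PM}): one replaces $\sigma$ by a uniformly increasing $\tilde\sigma$ agreeing with $\sigma$ on $[0,s_-(\tilde r)]$, solves the resulting \emph{uniformly parabolic} Neumann problem on \emph{all} of $\Omega_T$ to get a single global $u^*\in C^{2+\alpha,1+\alpha/2}(\bar\Omega_T)$, and only afterwards defines $\Omega_T^1=\{|Du^*|<s_-(\bar r)\}$ and $\Omega_T^2=\{|Du^*|>s_-(\bar r)\}$ as sublevel/superlevel sets of $|Du^*|$. The admissible set then \emph{requires} $u\equiv u^*$ and $v\equiv v^*$ on $\Omega_T^1$, so no gluing problem ever arises; all perturbations in the density proof are supported in boxes inside $\Omega_T^2$.

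Two smaller but related inaccuracies. First, the countably many excluded $\bar r$ are those for which the space-time level set $\{|Du^*|=s_-(\bar r)\}$ has positive $(n+1)$-measure — a condition on the modified classical solution $u^*$, not on $|Du_0|$ along $\partial\Omega_T^1$ as you state. Second, convexity of $\Omega$ is not needed for Schauder theory or compatibility; it is used for the gradient maximum principle $\|Du^*\|_{L^\infty(\Omega_T)}=\|Du_0\|_{L^\infty(\Omega)}=M_0$ (via the Alikakos--Rostamian boundary estimate $\partial|Du^*|^2/\partial\n\le 0$), which is what keeps $(Du^*,v^*_t)$ inside $\mathcal S\cup\mathcal F$ and makes $u^*$ admissible. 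Finally, your argument for $|L|>0$ should be sharpened: the paper argues that if $|L|=0$ then $|Du|\le s_-(r)$ a.e., so $u$ solves the modified monotone problem and by uniqueness (Proposition \ref{unique}) equals $u^*$, contradicting $M_0>s_-(r)$.
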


\begin{remk}\label{remk-PM-1}
As we will see later, once the numbers $\tilde r,\,\bar r$ are chosen as above, the corresponding Lipschitz solutions in the theorem  are all identically equal to a single function $u^*$ in $\Omega^1_T$, which is the classical solution to some modified uniformly parabolic Neumann problem. Here the function $u^*$ depends on $\tilde r$ but not on $\bar r$, whereas $\Omega^1_T,\,\Omega^2_T$ do on $\bar r$:
\[
\Omega^1_T=\{(x,t)\in \Omega_T\, | \, |Du^*(x,t)|<s_-(\bar r)\},
\]
\[
\Omega^2_T=\{(x,t)\in \Omega_T\, | \, |Du^*(x,t)|>s_-(\bar r)\}.
\]
The choice of $\bar r$ is made to guarantee that the interface $\Omega_T\setminus (\Omega^1_T\cup\Omega^2_T)$ has $(n+1)$-dimensional measure zero; this will be crucial for the proof of the theorem later. Choosing $\bar r=\tilde r$ may not be safe in this regard, since we have not enough information on the function $u^*$ to be sure that the interface measure $|\{|Du^*|=s_-(\tilde r)\}|=0$. This forces us to sacrifice the benefit of the choice $\bar r=\tilde r$ that would separate the space-time domain $\Omega_T$ into two disjoint parts  where the Lipschitz solutions are $C^{2+\alpha,1+\alpha/2}$ in one but nowhere $C^1$ in the other.
\end{remk}

\begin{remk}
By (\ref{pro-pm-ii}),  if $0<r\ll \sigma(M_0)$,  the corresponding Lipschitz solutions $u$ have \emph{large} and \emph{small} gradient regimes $L$ and $\Omega^1_T\cup S$ in $\Omega_T$ up to measure zero,  representing the almost constant and sharp edge parts of $u$ in $\Omega_T$, respectively. Although there is a fine mixture of the disjoint regimes $L,\,S\subset\Omega^2_T$ due to a micro-structured ramping with alternate gradients of finite size,   such properties together with (\ref{minmax}) for solutions $u$ are \emph{somehow}  reflected in numerical simulations; see Figure \ref{fig3}, taken from Perona and Malik \cite{PM}.  On the other hand, it has been observed in \cite{BF, BFG} that as the limits of solutions to a class of regularized equations, infinitely many different evolutions may  arise under the same initial datum $u_0$. Our non-uniqueness result seems to reflect this pathological behavior of forward-backward problem (\ref{ib-P}).
\end{remk}

\begin{figure}[ht]
\begin{center}
\includegraphics[scale=0.4]{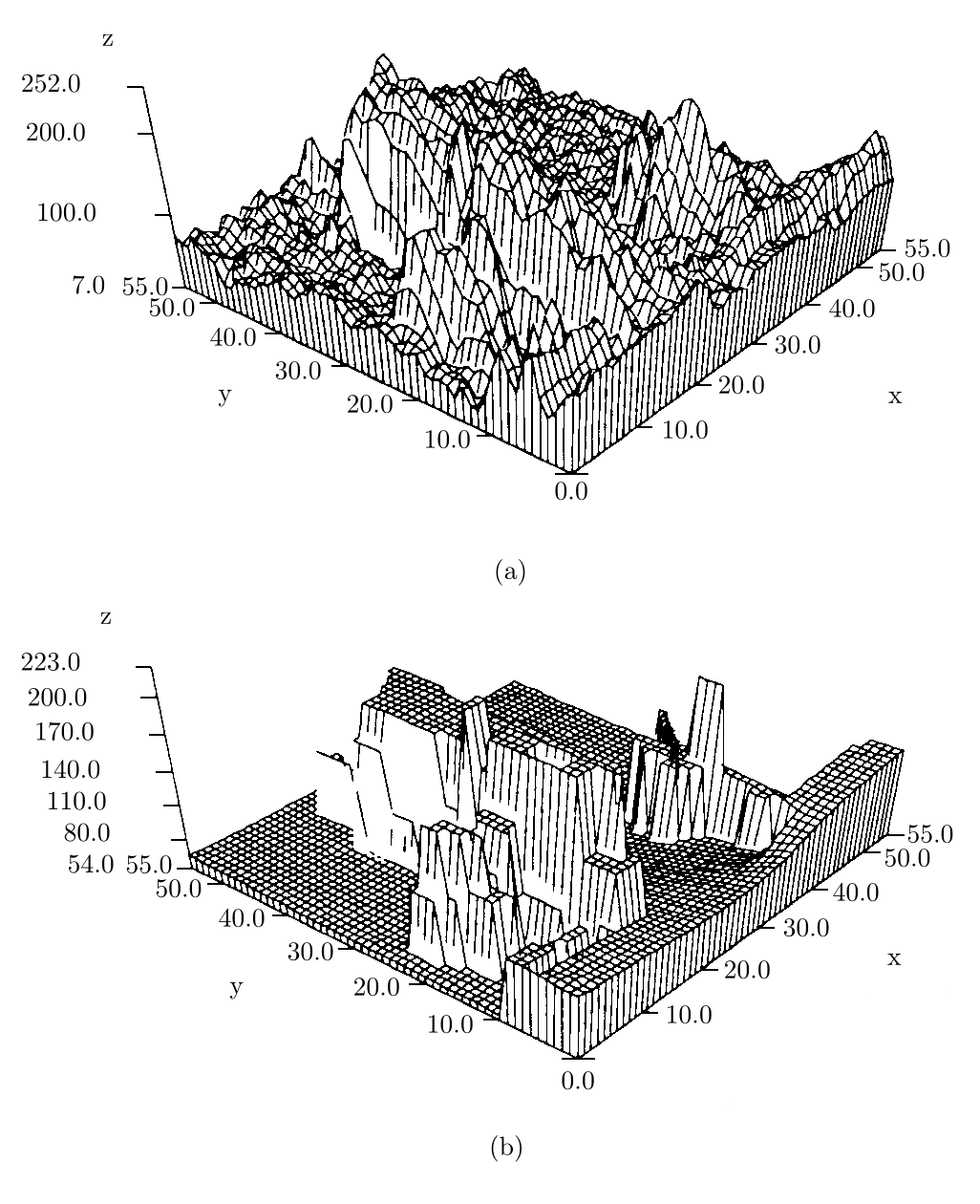}
\end{center}
\caption{Scale-space using anisotropic diffusion profile $\sigma(s)=\frac{s}{1+s^2/s^2_0}.$ Three dimensional plot of the brightness of Fig. 12 in \cite{PM}. (a) Original image, (b) after smoothing with anisotropic diffusion.}
\label{fig3}
\end{figure}

We now restate Theorem \ref{thm:2} only for \textbf{Case I}. The proof of this appears in Subsection \ref{subsec-modi}.

\begin{thm}\label{thm:PM-2}
Let $\Omega$ and $M_0$ be as in Theorem \ref{thm:PM-1}. If $M_0<s_0$, then \emph{(\ref{ib-P})} has a unique solution $u\in C^{2+\alpha,1+\alpha/2}(\bar \Omega_T)$ satisfying  $\|Du\|_{L^\infty(\Omega_T)}=\|Du_0\|_{L^\infty(\Omega)}$.
\end{thm}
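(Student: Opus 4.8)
The plan is to reduce Theorem \ref{thm:PM-2} to the classical theory of uniformly parabolic quasilinear Neumann problems. The key observation is that when $M_0=\|Du_0\|_{L^\infty(\Omega)}<s_0$, the diffusion is \emph{uniformly forward parabolic along the relevant part of the gradient range}: for $|p|\le M_0$ the eigenvalues $f(|p|^2)$ and $f(|p|^2)+2|p|^2f'(|p|^2)=\sigma'(|p|)/(2|p|)\cdot 2|p|$ — more precisely $\sigma'(|p|)$ for the radial direction and $f(|p|^2)=\sigma(|p|)/|p|$ for the tangential ones — are bounded below by a positive constant by Hypothesis (PM)(ii). So first I would modify the flux $A$ outside the ball $\{|p|\le M_0\}$: choose $\tilde A\in C^{1}(\R^n;\R^n)$ of the form $\tilde A(p)=\tilde f(|p|^2)p$ with $\tilde f=f$ on $[0,M_0^2]$ and $\tilde f$ extended so that $\tilde\sigma(s)=s\tilde f(s^2)$ is strictly increasing with bounded, uniformly positive derivative on all of $[0,\infty)$ (e.g.\ continue linearly). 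This makes the modified problem uniformly parabolic with smooth ($C^{1+\alpha}$, even $C^{2+\alpha}$ on the region that matters) coefficients and a conormal (Neumann) boundary condition $\tilde A(Du)\cdot\n=0$.

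Next I would invoke the standard existence and regularity theory for quasilinear uniformly parabolic equations with conormal boundary conditions — this is exactly the setting of Ladyzhenskaya--Solonnikov--Ural'tseva \cite{LSU} (and the parabolic analogue used in \cite{KY1}): under (\ref{assume-1}), i.e.\ $\partial\Omega\in C^{2+\alpha}$, $u_0\in C^{2+\alpha}(\bar\Omega)$ with the compatibility condition $Du_0\cdot\n=0$ on $\partial\Omega$ (which here gives exactly the first-order compatibility $\tilde A(Du_0)\cdot\n=0$), the modified Neumann initial-boundary value problem has a unique classical solution $u\in C^{2+\alpha,1+\alpha/2}(\bar\Omega_T)$. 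The convexity of $\Omega$ enters to run a maximum-principle argument on $|Du|$: for the modified problem one shows $\|Du\|_{L^\infty(\Omega_T)}\le\|Du_0\|_{L^\infty(\Omega)}=M_0$. The cleanest route is to note that $w=|Du|^2$ satisfies a parabolic inequality $w_t\le\dv(\ldots)+(\ldots)\cdot Dw$ in $\Omega_T$, while the conormal condition together with convexity of $\Omega$ (nonnegativity of the second fundamental form of $\partial\Omega$) forces $\partial_\n w\le 0$ on the lateral boundary, so the parabolic maximum principle gives $\sup_{\Omega_T}w\le\sup_{\bar\Omega}|Du_0|^2$; the reverse inequality $\|Du\|_{L^\infty(\Omega_T)}\ge\|Du_0\|_{L^\infty(\Omega)}$ is trivial since $u(\cdot,0)=u_0$.

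With the gradient bound in hand, $|Du(x,t)|\le M_0<s_0$ everywhere in $\bar\Omega_T$, so on the range actually traversed by $Du$ the modified flux $\tilde A$ coincides with the original $A$; hence $u$ is in fact a classical (and therefore Lipschitz, and therefore weak in the sense of (\ref{def:sol})) solution of the \emph{original} problem (\ref{ib-P}), with $\|Du\|_{L^\infty(\Omega_T)}=\|Du_0\|_{L^\infty(\Omega)}$ as claimed. Uniqueness among \emph{all} Lipschitz solutions of (\ref{ib-P}) then follows from Proposition \ref{unique}: restricted to the ball $\{|p|\le M_0\}$ the flux $A(p)=f(|p|^2)p$ is monotone (indeed it is the gradient of the convex function $W(p)=\int_0^{|p|}\sigma(r)\,dr$, convex there because $\sigma$ is increasing on $[0,s_0)$), and since by Proposition \ref{pro:minmax} — or rather by the gradient maximum principle above applied to any Lipschitz solution — every Lipschitz solution has $\|Du\|_{L^\infty(\Omega_T)}\le$ something controllable... here lies the one genuine subtlety: a priori an arbitrary Lipschitz solution of (\ref{ib-P}) need not satisfy a gradient bound. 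The main obstacle, therefore, is establishing uniqueness in the full class $W^{1,\infty}(\Omega_T)$ rather than just among classical solutions. I would handle this by observing that the monotone-flux uniqueness argument of Proposition \ref{unique} only uses the inequality $(A(Du)-A(D\tilde u))\cdot(Du-D\tilde u)\ge 0$ pointwise a.e.; one takes $u$ to be the classical solution just constructed (so $|Du|\le M_0$) and $\tilde u$ an arbitrary Lipschitz solution, and one replaces $A$ by a globally monotone extension $\tilde A$ as above — since $\tilde A$ agrees with $A$ wherever $|Du|\le M_0$ and is monotone everywhere, the Gr\"onwall/energy computation of Proposition \ref{unique} goes through verbatim with $A$ replaced by $\tilde A$ in both identities (legitimate because $\tilde u$, being a Lipschitz solution of (\ref{ib-P}) with flux $A$, is also one with flux $\tilde A$ only if $|D\tilde u|\le M_0$ — so in fact one first shows the gradient bound propagates, or alternatively one appeals directly to the comparison/uniqueness statement for uniformly parabolic conormal problems in \cite{LSU} applied to $\tilde A$, using that any Lipschitz solution of (\ref{ib-P}) is automatically a weak solution of the $\tilde A$-problem once one knows it stays in the good gradient range). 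The place to be careful is exactly this propagation of the $L^\infty$-gradient bound to a general weak solution; I expect this to require either the De Giorgi--Nash--Moser machinery for the modified parabolic equation or a direct comparison argument, and it is the step I would write out in full detail.
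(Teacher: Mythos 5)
Your existence argument is correct and is essentially the paper's: one modifies the profile outside $[0,M_0]$ to get a uniformly parabolic flux $\tilde A$ agreeing with $A$ on $\{|p|\le M_0\}$ (the paper does this via Lemma \ref{lem:modi-PM} with $s_-(r_1)=M_0$), solves the Neumann problem classically, and uses the gradient maximum principle — convexity of $\Omega$ plus the boundary inequality $\partial_\n|Du|^2\le 0$ — to conclude $\|Du\|_{L^\infty(\Omega_T)}=M_0<s_0$, so that $\tilde A(Du)=A(Du)$ and $u$ solves the original problem. This is exactly Theorem \ref{existence-gr-max} combined with the modification lemma. One small technical point: the gradient maximum principle as proved in the paper needs $\tilde f\in C^3([0,\infty))$, so your linear continuation of $\tilde\sigma$ should be smoothed at the junction; Hypothesis (PM)(i) gives $f\in C^3([0,s_0^2))$, so this is harmless.

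The genuine problem is in the uniqueness part, and it is not the "subtlety" you identify but a misreading of the claim. The theorem asserts uniqueness only within the class of solutions $u\in C^{2+\alpha,1+\alpha/2}(\bar\Omega_T)$ satisfying $\|Du\|_{L^\infty(\Omega_T)}=\|Du_0\|_{L^\infty(\Omega)}$; any such solution has $|Du|\le M_0$, hence is a classical solution of the modified uniformly parabolic problem (\ref{ib-parabolic}), whose solution is unique by Theorem \ref{existence-gr-max} — and that is the entire uniqueness proof. The stronger statement you set out to prove, uniqueness among \emph{all} Lipschitz solutions of (\ref{ib-P}), is false under the very hypotheses of the theorem: since $u_0$ is non-constant, $\sigma(M_0)>0$, and Theorem \ref{thm:PM-1} produces infinitely many Lipschitz solutions (the paper explicitly remarks after Theorem \ref{thm:PM-2} that none of them coincide with the special classical solution). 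Consequently the step you flag as needing "De Giorgi--Nash--Moser machinery or a direct comparison argument" — propagating the gradient bound to an arbitrary Lipschitz solution — cannot be closed; the paper's own discussion of the critical threshold of uniqueness notes that even the one-dimensional a priori gradient estimate (\ref{apriori}) of H\"ollig--Nohel, which is the tool you would need, does not generalize to $n\ge 2$. You should simply drop the attempt at full-class uniqueness and state the restricted uniqueness, which follows in one line from the uniqueness for the modified parabolic problem.
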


For the given initial datum $u_0$ with $M_0<s_0$, by Theorem \ref{thm:PM-1}, there are infinitely many Lipschitz solutions to problem (\ref{ib-P}). On the other hand, by the theorem here, we have a \emph{special} Lipschitz solution $u$ to (\ref{ib-P}), which is also classical. It may be interesting to observe that none of the Lipschitz solutions from Theorem \ref{thm:PM-1} coincide with this special solution $u$.

\subsection{Case II: H\"ollig type of equations} In this case, we impose  the following structural condition on the profile   $\sigma(s).$

\textbf{Hypothesis (H):} (See Figures \ref{fig2}, \ref{fig2-2} and \ref{fig5}.)

\begin{itemize}
\item[(i)]  There exist two numbers $s_2>s_1>0$ such that
\[
f\in C^{0}([0,\infty))\cap C^{1+\alpha}([0,s_1^2)\cup(s_2^2,\infty)).
\]

\item[(ii)] $\sigma'(s)>0\;\;\forall s\in[0,s_1)\cup(s_2,\infty)$, $\sigma(s_1)>\sigma(s_2)\ge 0$, $\lambda\le\sigma'(s)\le\Lambda\;\;\forall s\ge 2s_2$, where $\Lambda\ge\lambda>0$ are constants.

\item[(iii)]  Let $s_1^*\in[0,s_1)$ and $s_2^*\in(s_2,\infty)$ denote the unique numbers with
$
\sigma(s_1^*)=\sigma(s_2)
$
and $\sigma(s_2^*)=\sigma(s_1)$, respectively.
\end{itemize}
Note that the cases $\sigma(s_2)>0$ and $\sigma(s_2)=0$ correspond to Figures \ref{fig2} and \ref{fig2-2}, respectively. In addition to Hypothesis (H), if we suppose $\sigma(s)\ge 0$ for all $s\in(s_1,s_2)$, then $A(p)\cdot p\ge 0$ for all $p\in \R^n$, and so (\ref{minmax}) is satisfied by any Lipschitz solution $u$ to (\ref{ib-P}); but we do not explicitly assume such positivity for \textbf{Case II} unless otherwise stated.

With Hypothesis (H),  for  each $r\in(\sigma(s_2),\sigma(s_1))$, let $s_-(r)\in(s_1^*,s_1)$ and $s_+(r)\in(s_2,s_2^*)$ denote the unique numbers with $r=\sigma(s_\pm(r))$.

The following theorem is the second main result of this paper that generalizes those of \cite{Ho,Zh1} to any dimension $n\ge 1$.

\begin{thm}\label{thm:H-1}
Let $M_0=\|Du_0\|_{L^\infty(\Omega)}$, $M_0'=\min\{M_0,s_1\}$, and $|Du_0(x_0)|\in(s_1^*,s_2^*)$ for
some $x_0\in\Omega.$    Then for each $r\in (\sigma(s_2),\sigma(M_0'))$, there exists a number $l=l_r\in (\sigma(s_2),r)$ such that for all $\tilde r\in (l,r)$ and all but countably many  $\bar r_1 \in (\sigma(s_2),\tilde r)$, $\bar r_3 \in (r,\sigma(s_1))$, there exist three disjoint open sets $\Omega^1_T,\,\Omega^2_T,\,\Omega^3_T \subset \Omega_T$ with $|\Omega^1_T\cup\Omega^2_T\cup\Omega^3_T|=|\Omega_T|$ and infinitely many Lipschitz solutions $u$ to \emph{(\ref{ib-P})} satisfying
%\[
%\mbox{$\Omega^1_T,\,\Omega^3_T$ are both non-empty,}
%\]
%\[
%\mbox{$\Omega^2_T$ is non-empty whenever $M_0>s_+(\bar r_2)$,}
%\]
\[
u\in C^{2+\alpha,1+\alpha/2}(\bar{\Omega}^1_T\cup \bar{\Omega}^3_T),\quad u_t=\dv(A(Du))\;\; \mbox{pointwise in}\;\;\Omega_T^1\cup \Omega_T^3,
\]
\[
|Du(x,t)|<s_-(\bar r_1)\;\;\forall(x,t)\in\Omega_T^1, \quad \Omega_0^{\bar r_1}\subset\partial\Omega_T^1,
\]
\[
|Du(x,t)|>s_+(\bar r_3)\;\;\forall(x,t)\in\Omega_T^3, \quad \Omega_0^{\bar r_3}\subset\partial\Omega_T^3,
\]
\begin{equation*}%\label{PM-Gradsets}
|S|+|L|=|\Omega^2_T|,\;\;|S|>0,\;\;|L|>0,
\end{equation*}
where
\[
\Omega_0^{\bar r_1}=\{(x,0) \,|\, x\in\Omega,\,|Du_0(x)|<s_-(\bar r_1) \},
\]
\[
\Omega_0^{\bar r_3}=\{(x,0) \,|\, x\in\Omega,\,|Du_0(x)|>s_+(\bar r_3) \},
\]
\[
S=\{(x,t)\in \Omega_T^2\;|\; s_-(\bar r_1) \le |Du(x,t)|\le s_-(r)\},
\]
and
\[
L=\{(x,t)\in \Omega_T^2\;|\;  s_+(\tilde r)\le|Du(x,t)|\le s_+(\bar r_3)\}.
\]
\end{thm}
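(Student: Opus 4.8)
\textbf{Proof proposal for Theorem \ref{thm:H-1}.}

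The plan is to reduce Theorem \ref{thm:H-1} to the abstract existence machinery of Theorem \ref{thm:main} by producing, for each admissible choice of the parameters $r,\tilde r,\bar r_1,\bar r_3$, a boundary function $\Phi=(u^*,v^*)$ and an admissible set $\mathcal U\subset W^{1,\infty}_{u^*}(\Omega_T)$ satisfying the density property \eqref{den-0}; this is exactly the program announced at the end of Section \ref{sec:approach} and carried out in detail in Sections \ref{sec:add-set}--\ref{sec:den-proof}. First I would fix $r\in(\sigma(s_2),\sigma(M_0'))$ and construct the reference function $u^*$ as the classical solution of a modified, globally parabolic Neumann problem obtained by replacing $\sigma$ outside the intervals $(s_1^*,s_-(\tilde r))$ and $(s_+(\tilde r),s_2^*)$ by a uniformly monotone function agreeing with $\sigma$ on those two intervals; standard linear/quasilinear parabolic theory (using $\partial\Omega\in C^{2+\alpha}$, $u_0\in C^{2+\alpha}(\bar\Omega)$ with $Du_0\cdot\n=0$, and the compatibility condition) gives $u^*\in C^{2+\alpha,1+\alpha/2}(\bar\Omega_T)$ with $\|Du^*\|_{L^\infty}=M_0$, and the hypothesis $|Du_0(x_0)|\in(s_1^*,s_2^*)$ ensures the modified flux is genuinely active, so $u^*$ is \emph{not} a Lipschitz solution of the original problem. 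The threshold $l=l_r\in(\sigma(s_2),r)$ is chosen so that for $\tilde r\in(l,r)$ the modified problem stays parabolic and $u^*$ ranges into both the subcritical and supercritical phases. Then $v^*$ is produced by solving $\dv v^* = u^*$ with $v^*\cdot\n|_{\partial\Omega}=0$ and $v^*_t = A_{\mathrm{mod}}(Du^*)$ (the latter makes sense because $u^*$ solves the modified equation), using the standard Bogovskii-type construction on the convex/regular domain; this is essentially Proposition \ref{gen-lem} read backwards for the modified flux.

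Next I would carve $\Omega_T$ into the three pieces $\Omega^1_T=\{|Du^*|<s_-(\bar r_1)\}$, $\Omega^3_T=\{|Du^*|>s_+(\bar r_3)\}$, and $\Omega^2_T$ the complement, where $\bar r_1\in(\sigma(s_2),\tilde r)$, $\bar r_3\in(r,\sigma(s_1))$ are picked outside the at most countable bad set of levels $c$ for which $|\{|Du^*|=s_-(c)\}|>0$ or $|\{|Du^*|=s_+(c)\}|>0$ (only countably many level sets of a fixed Lipschitz function can have positive measure, which is where "all but countably many" comes from, cf.\ Remark \ref{remk-PM-1}). On $\Omega^1_T\cup\Omega^3_T$ the modified and original fluxes coincide, so any $u$ agreeing with $u^*$ there already solves the original equation classically; the construction therefore only needs to perturb inside $\Omega^2_T$, where $|Du^*|$ lies in the "two-well'' range $[s_-(\bar r_1),s_-(r)]\cup[s_+(\tilde r),s_+(\bar r_3)]$ between the two monotone branches. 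I define $\mathcal U$ to consist of $u=u^*+w$, $w\in W^{1,\infty}_0$, supported in $\Omega^2_T$, with a uniform $W^{1,\infty}$ bound and with an accompanying $v\in W^{1,\infty}_{v^*}$ solving $\dv v=u$, $\|v_t\|_{L^\infty}\le R$; the admissibility constraints (divergence equation and $v_t$ bound) are the ones from Subsection \ref{subsec-density}.

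The core of the argument, and the step I expect to be the main obstacle, is verifying the density property \eqref{den-0}: that $\mathcal U_\epsilon$ is $L^\infty$-dense in $\mathcal U$ for every $\epsilon>0$. This is where the geometry of the relevant matrix set $K(l)$ from \eqref{set-K-l} enters: one must show that the constraint $\nabla w\in K(u)$ can be approximately satisfied in $L^1$ while the $W^{1,\infty}$ bound is preserved, i.e.\ one needs the target set to lie in the interior of its rank-one (or lamination) convex hull relative to the admissible gradient range, so that a convex-integration / in-approximation step applies. The relaxation result Theorem \ref{main-lemma} (proved in Section \ref{sec:geometry} via a careful analysis of the matrix sets attached to the H\"ollig profile) is exactly the tool that makes this work; I would invoke it to produce, given $u\in\mathcal U$ and $\epsilon>0$, a nearby $u_\epsilon\in\mathcal U_\epsilon$ by a piecewise "ramping'' construction oscillating between the two monotone branches $s_-$ and $s_+$, controlling $\int_{\Omega_T}|v_t-A(Du_\epsilon)|$ by $\epsilon|\Omega_T|$. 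Once \eqref{den-0} is established, Theorem \ref{thm:main} applied with any $\varphi\in\mathcal U$ yields infinitely many Lipschitz solutions $u$ with $\|u-u^*\|_{L^\infty}$ small; these automatically inherit $u\equiv u^*$ on $\overline{\Omega^1_T}\cup\overline{\Omega^3_T}$ (hence the stated $C^{2+\alpha,1+\alpha/2}$ regularity and pointwise equation there), the boundary inclusions $\Omega_0^{\bar r_1}\subset\partial\Omega^1_T$ and $\Omega_0^{\bar r_3}\subset\partial\Omega^3_T$ follow from the definitions of these sets together with $u^*(\cdot,0)=u_0$, and the oscillation built into the convex-integration step forces both $|S|>0$ and $|L|>0$ inside $\Omega^2_T$ with $|S|+|L|=|\Omega^2_T|$. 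The remaining bookkeeping — that the interface $\Omega_T\setminus(\Omega^1_T\cup\Omega^2_T\cup\Omega^3_T)$ is null by the choice of $\bar r_1,\bar r_3$, and that at least one element of $\mathcal U$ (e.g.\ $u^*$ itself) is not a solution so the infinitude clause of Theorem \ref{thm:main} fires — is routine.
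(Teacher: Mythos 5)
Your overall route coincides with the paper's: modify the profile to a uniformly parabolic one, solve the resulting Neumann problem to get $u^*$, build $v^*$ by integrating the modified flux in time, pick $\bar r_1,\bar r_3$ off the at most countable set of levels where $|Du^*|$ has a fat level set, pin the admissible class to $(u^*,v^*)$ outside $\Omega^2_T$, prove the density property via Theorem \ref{main-lemma}, and invoke Theorem \ref{thm:main}. However, two of the theorem's structural conclusions are not reached by the mechanism you describe, and these are genuine gaps rather than deferred bookkeeping.

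First, $|S|+|L|=|\Omega^2_T|$ does not follow from the generic admissible-set framework of Subsection \ref{subsec-density} that you adopt (uniform $W^{1,\infty}$ bound, $\dv v=u$, $\|v_t\|_{L^\infty}\le R$, and $L^1$-closeness of $v_t$ to $A(Du)$). With only those constraints, the limiting gradient could land in the backward range $(s_-(r),s_+(\tilde r))$, or below $s_-(\bar r_1)$, on a set of positive measure in $\Omega^2_T$. The paper's definition of $\mathcal U_\epsilon$ therefore carries the \emph{additional} requirement $\int_{\Omega^2_T}\operatorname{dist}((Du,v_t),\mathcal C)\,dxdt\le\epsilon|\Omega^2_T|$, where $\mathcal C$ is the graph of $A$ over the two compact annuli $[s_-(\bar r_1),s_-(r)]$ and $[s_+(\tilde r),s_+(\bar r_3)]$, and the relaxation lemma must be strengthened accordingly (conclusion (e) of Theorem \ref{main-lemma}). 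Passing this estimate to the limit along $u_j\in\mathcal U_{1/j}$, using $Du_j\to Du$ in $L^1$ at continuity points of the gradient operator and $(v_j)_t\to v_t$ a.e., is what forces $(Du,v_t)\in\mathcal C$ a.e.\ in $\Omega^2_T$. Your setup omits this ingredient, so the full-measure dichotomy between $S$ and $L$ is not established.

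Second, the claim that ``the oscillation built into the convex-integration step forces both $|S|>0$ and $|L|>0$'' is not an argument: the solutions are obtained as points of a residual set, not as explicit laminates, and a priori such a solution could have its gradient confined to a single well. The paper proves $|L|>0$ by contradiction: if $|L|=0$, then $|Du|\in[0,s_-(r)]\cup[s_+(\bar r_3),\infty)$ a.e., and one replaces $\sigma$ by a uniformly increasing profile agreeing with $\sigma$ on that range; every solution in $\mathcal G$ then solves the same monotone-flux problem with the same initial datum, contradicting the uniqueness of Proposition \ref{unique} since $\mathcal G$ is infinite. A parallel modification handles $|S|=0$. Some such uniqueness-based device is needed; it does not come for free from the construction.
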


In regard to this theorem, an explanation similar to Remark \ref{remk-PM-1} can be made; but we omit this.
As a byproduct, we also have the following simple existence result whose proof appears after that of Theorem \ref{thm:H-1}.

\begin{coro}\label{coro:H-3}
For any initial datum $u_0\in C^{2+\alpha}(\bar\Omega)$ with $Du_0\cdot\n|_{\partial\Omega}=0$,   \emph{(\ref{ib-P})} has at least one Lipschitz solution.
\end{coro}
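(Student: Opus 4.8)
The plan is to deduce Corollary~\ref{coro:H-3} from Theorem~\ref{thm:H-1} by separating the possible behaviors of the initial gradient $Du_0$ and, in the degenerate cases, by falling back on the classical parabolic theory and on Theorem~\ref{thm:2}/Theorem~\ref{thm:PM-2}-type arguments. First I would observe that, under Hypothesis (H), problem~(\ref{ib-P}) is genuinely forward parabolic on the gradient range $|p|<s_1$ and on the range $|p|>s_2$, and it is only in the intermediate band $s_1<|p|<s_2$ that backward parabolicity occurs. So I would split into cases according to where $M_0=\|Du_0\|_{L^\infty(\Omega)}$ sits relative to the thresholds $s_1^*$ and $s_2$.

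The main case is the one directly covered by Theorem~\ref{thm:H-1}: if there exists $x_0\in\Omega$ with $|Du_0(x_0)|\in(s_1^*,s_2^*)$, then that theorem already produces (infinitely many, hence at least one) Lipschitz solutions and there is nothing more to prove. So I would assume the contrary, namely that $|Du_0(x)|\notin(s_1^*,s_2^*)$ for \emph{every} $x\in\Omega$. Since $\Omega$ is connected and $Du_0$ is continuous (indeed $u_0\in C^{2+\alpha}(\bar\Omega)$), the image $\{|Du_0(x)|:x\in\bar\Omega\}$ is a connected subset of $[0,\infty)\setminus(s_1^*,s_2^*)=[0,s_1^*]\cup[s_2^*,\infty)$, hence entirely contained in one of the two closed pieces. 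In the first subcase $\|Du_0\|_{L^\infty(\Omega)}\le s_1^*<s_1$, equation~(\ref{ib-P}) is uniformly parabolic along the relevant gradient range, and a maximum-principle / a priori gradient bound argument (exactly the mechanism behind Theorem~\ref{thm:2}, whose Perona--Malik instance is Theorem~\ref{thm:PM-2} and whose proof is promised in Subsection~\ref{subsec-modi}) yields a classical solution $u\in C^{2+\alpha,1+\alpha/2}(\bar\Omega_T)$ with $\|Du\|_{L^\infty(\Omega_T)}=\|Du_0\|_{L^\infty(\Omega)}\le s_1^*$; this $u$ is in particular a Lipschitz solution. In the second subcase $\|Du_0\|_{L^\infty(\Omega)}\ge s_2^*>s_2$, one instead keeps the gradient bounded \emph{below} away from $s_2$: here I would consider the shifted/modified flux obtained by truncating $\sigma$ so that it is monotone increasing and stays in the forward-parabolic regime for $|p|\ge$ some threshold, use $\lambda\le\sigma'(s)\le\Lambda$ for $s\ge 2s_2$ from Hypothesis~(H)(ii) to get uniform parabolicity at infinity, and again invoke the classical Schauder theory for the Neumann problem together with a gradient maximum principle to produce $u\in C^{2+\alpha,1+\alpha/2}(\bar\Omega_T)$ with $\|Du\|_{L^\infty(\Omega_T)}=\|Du_0\|_{L^\infty(\Omega)}$, so that the modified and original equations coincide along the solution.

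There remains the boundary subcase where $\|Du_0\|_{L^\infty}=s_1^*$ exactly, or where the image of $|Du_0|$ touches $s_2^*$ from above but also dips to $s_2^*$; these are handled by the same parabolic-regularity arguments since the closed gradient ranges $[0,s_1^*]$ and $[s_2^*,\infty)$ still avoid the open backward band $(s_1,s_2)$, so on a neighborhood of the solution trajectory the equation is (degenerate-)parabolic with the sign of $\sigma'$ under control, and one can apply a standard approximation in the flux together with the a priori estimates. The step I expect to be the genuine obstacle is \emph{not} the case analysis but the classical-solvability input: establishing the a priori $L^\infty$-gradient estimate $\|Du\|_{L^\infty(\Omega_T)}\le \|Du_0\|_{L^\infty(\Omega)}$ (or the corresponding lower bound in the large-gradient subcase) for the Neumann problem~(\ref{ib-P}) on a general $C^{2+\alpha}$ domain, since this is what keeps the solution in the parabolic regime and licenses the bootstrap to $C^{2+\alpha,1+\alpha/2}$. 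This is exactly the content underpinning Theorem~\ref{thm:2}, so in the final write-up I would simply cite the proof given in Subsection~\ref{subsec-modi} (and the parallel argument for the $|p|>s_2$ regime) rather than redo it, reducing Corollary~\ref{coro:H-3} to: either Theorem~\ref{thm:H-1} applies, or the hypothesis of Theorem~\ref{thm:2} (suitably stated for both ends of the gradient spectrum) applies, and in every case a Lipschitz solution exists.
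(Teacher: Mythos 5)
Your case decomposition (either some $|Du_0(x_0)|\in(s_1^*,s_2^*)$, so Theorem \ref{thm:H-1} applies directly, or by connectedness the range of $|Du_0|$ lies entirely in $[0,s_1^*]$ or in $[s_2^*,\infty)$) is exactly the paper's, but the way you close the two degenerate subcases has a genuine gap. You propose to invoke the gradient maximum principle behind Theorem \ref{thm:2} to keep $\|Du\|_{L^\infty(\Omega_T)}=\|Du_0\|_{L^\infty(\Omega)}$ and thereby trap the solution in the forward-parabolic range. But Corollary \ref{coro:H-3} makes no convexity assumption on $\Omega$, while the gradient maximum principle (\ref{gr-max-P}) in Theorem \ref{existence-gr-max} is proved only for convex $\Omega$ (its proof uses \cite[Lemma 2.1]{AR}, which needs $\partial v/\partial\n\le 0$ on $\partial\Omega$, a consequence of convexity); the paper explicitly notes after Theorem \ref{thm:H-2} that convexity cannot be dropped there, precisely in contrast with the corollary. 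Moreover, even granting convexity, your second subcase needs the gradient to stay \emph{bounded below} by something $\ge s_+(r_2)$, i.e.\ a gradient \emph{minimum} principle, which no result in the paper supplies and which is false in general.

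The paper's actual proof avoids any gradient estimate by a stopping-time and patching argument. In the subcase $\|Du_0\|_{L^\infty}\le s_1^*$ one solves the modified uniformly parabolic problem (\ref{ib-parabolic}) with $\tilde A$ from Lemma \ref{lem:modi-H} to get $u^*\in C^{2+\alpha,1+\alpha/2}(\bar\Omega_T)$. If $|Du^*|\le s_1^*$ on all of $\Omega_T$, then $\tilde A(Du^*)=A(Du^*)$ and $u^*$ itself solves (\ref{ib-P}). If not, by continuity there is a first time $\bar t$ at which $|Du^*|$ reaches $\bar s=\tfrac{s_1^*+s_-(r_1)}{2}$ at some $\bar x$, so that $|Du^*(\bar x,\bar t)|\in(s_1^*,s_2^*)$; one then treats $u_1:=u^*(\cdot,\bar t)$ as a new initial datum, applies Theorem \ref{thm:H-1} on $\Omega\times(\bar t,T)$, and glues the classical piece on $(0,\bar t)$ to the Lipschitz solutions on $(\bar t,T)$. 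The subcase $\min_{\bar\Omega}|Du_0|\ge s_2^*$ is handled symmetrically with $\bar s=\tfrac{s_2^*+s_+(r_2)}{2}$. You should replace your appeal to the maximum principle by this restart mechanism; as written, your argument only works for convex domains and, in the large-gradient subcase, not even then.
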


We now restate Theorem \ref{thm:2} only for \textbf{Case II}.

\begin{thm}\label{thm:H-2}
Let $\Omega$ be convex and $M_0=\|Du_0\|_{L^\infty(\Omega)}$. Assume further that $f\in C^3([0,s_1^2)\cup(s_2^2,\infty))$. If $M_0<s_1$, then   \emph{(\ref{ib-P})} has a unique solution $u\in C^{2+\alpha,1+\alpha/2}(\bar \Omega_T)$ satisfying  $\|Du\|_{L^\infty(\Omega_T)}=\|Du_0\|_{L^\infty(\Omega)}$.
\end{thm}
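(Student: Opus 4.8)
The statement in question is Theorem \ref{thm:H-2}: for a convex domain, Hölder-regular $f$ with the extra $C^3$ assumption on $f$ away from the transition interval, if $M_0 = \|Du_0\|_{L^\infty(\Omega)} < s_1$, then (\ref{ib-P}) has a unique classical solution $u \in C^{2+\alpha, 1+\alpha/2}(\bar\Omega_T)$ with $\|Du\|_{L^\infty(\Omega_T)} = \|Du_0\|_{L^\infty(\Omega)}$.

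The key observation is that the condition $M_0 < s_1$ places us entirely in the forward-parabolic (subcritical) regime: $\sigma'(s) > 0$ for $s \in [0, s_1)$, so the diffusion matrix $(A^i_{p_j}(p))$ has both eigenvalues $f(|p|^2)$ and $f(|p|^2) + 2|p|^2 f'(|p|^2) = \sigma'(|p|)/|p| \cdot$ (appropriate positive factor) positive whenever $|p| < s_1$. So on the set $\{|p| \le M_0\}$ the equation is uniformly parabolic. The natural approach is to modify $A$ outside the ball $\{|p| \le M_0\}$ to obtain a globally uniformly parabolic, smooth flux $\tilde A$ that agrees with $A$ on $\{|p| \le M_0\}$; then invoke standard quasilinear parabolic theory (Ladyzhenskaya–Solonnikov–Uraltseva) to get a unique classical solution $\tilde u$ to the modified Neumann problem; then prove an a priori gradient bound $\|D\tilde u\|_{L^\infty(\Omega_T)} \le M_0$, which forces $\tilde u$ to actually solve the original problem and gives $\tilde u \in C^{2+\alpha,1+\alpha/2}(\bar\Omega_T)$. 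Uniqueness among \emph{all} Lipschitz solutions with controlled gradient does not follow from Proposition \ref{unique} (the modified flux is monotone, but the original is not), so uniqueness must be argued separately — presumably by showing any Lipschitz solution automatically has $\|Du\|_{L^\infty} \le M_0$ (perhaps this needs the convexity of $\Omega$) and hence solves the uniformly parabolic problem, where uniqueness is classical.

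Here is the plan in order. First, construct the modified flux: choose a smooth cutoff and define $\tilde A(p) = \tilde f(|p|^2) p$ where $\tilde f$ agrees with $f$ on $[0, M_0^2]$ (recall $M_0 < s_1$ so $f \in C^3$ there) and is extended so that the profile $\tilde\sigma(s) = s\tilde f(s^2)$ satisfies $\lambda' \le \tilde\sigma'(s) \le \Lambda'$ for all $s \ge 0$ with positive constants — this is where the $C^3$ hypothesis on $f$ (rather than merely $C^{1+\alpha}$) is used, to keep $\tilde A \in C^2$ or better globally so that quasilinear Schauder theory applies. Second, apply the classical existence theorem for the Neumann initial-boundary value problem $\tilde u_t = \dv(\tilde A(D\tilde u))$ with $\tilde A(D\tilde u)\cdot\n = 0$ on $\partial\Omega\times(0,T)$ and $\tilde u = u_0$ at $t=0$; the compatibility condition $Du_0 \cdot \n = 0$ on $\partial\Omega$ from (\ref{assume-1}) is exactly what is needed for the conormal boundary condition to be compatible at the corner, yielding a unique $\tilde u \in C^{2+\alpha, 1+\alpha/2}(\bar\Omega_T)$. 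Third, and this is the crux, establish the a priori estimate $\|D\tilde u\|_{L^\infty(\Omega_T)} \le \|Du_0\|_{L^\infty(\Omega)} = M_0$: differentiate the equation, test against a suitable power of $|D\tilde u|^2$ or use a Bernstein-type argument, and crucially use the convexity of $\Omega$ to control the boundary term in the integration by parts (for the conormal problem on a convex domain the boundary contribution has a favorable sign). Together with the conservation/maximum bound $\|\tilde u\|_{L^\infty} \le \|u_0\|_{L^\infty}$ from Proposition \ref{pro:minmax} (the modified profile keeps $\tilde A(p)\cdot p \ge 0$), this shows $D\tilde u$ stays in $\{|p| \le M_0\}$ where $\tilde A = A$, so $\tilde u$ solves the original problem. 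Fourth, for uniqueness: show any Lipschitz solution $u$ to (\ref{ib-P}) with the regularity asserted, or more strongly any Lipschitz solution at all, must have gradient bounded by $M_0$ — if one can show every Lipschitz solution satisfies $\|Du\|_{L^\infty(\Omega_T)} \le M_0$ (again using convexity and the structure), then $u$ solves the uniformly parabolic modified problem and coincides with $\tilde u$ by Proposition \ref{unique} applied to the monotone $\tilde A$; otherwise uniqueness is only claimed within the class of solutions satisfying the stated gradient identity, and then it follows directly.

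The main obstacle is the gradient bound in step three — proving $|D\tilde u|$ does not escape the ball of radius $M_0$. This is a quasilinear parabolic maximum principle for the gradient, and the subtlety is the conormal (Neumann-type) boundary condition: one must show the boundary term arising when one applies the maximum principle to $|D\tilde u|^2$ (or to individual derivatives) has the right sign, which is precisely where the convexity of $\Omega$ enters (the second fundamental form of $\partial\Omega$ is nonnegative). A secondary subtlety is making the modification $\tilde A$ smooth enough for Schauder estimates while preserving both uniform parabolicity and the positivity $\tilde A(p)\cdot p \ge 0$; this is routine but explains the extra $C^3$ assumption on $f$ compared with Theorem \ref{thm:H-1}. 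I would expect the proof to cite \cite{LSU} for the existence and regularity and to give the gradient bound as the one genuinely new computation, deferring the boundary-term sign to the convexity hypothesis.
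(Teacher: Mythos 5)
Your proposal matches the paper's proof: the paper modifies the profile via Lemma \ref{lem:modi-H} (choosing $r_1$ with $s_-(r_1)\ge M_0$, which is where the extra $C^3$ hypothesis enters) to get a uniformly parabolic flux $\tilde A$ agreeing with $A$ on $\{|p|\le s_-(r_1)\}$, then invokes Theorem \ref{existence-gr-max} — whose gradient maximum principle is exactly your Bernstein-type argument for $|Du|^2$, with convexity of $\Omega$ handling the conormal boundary term via the Alikakos--Rostamian/Kahane sign condition $\partial|Du|^2/\partial\n\le 0$ — and concludes uniqueness only within the class of classical solutions satisfying $\|Du\|_{L^\infty(\Omega_T)}=M_0$, as in your final reading. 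No essential differences.
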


Unlike Theorem \ref{thm:H-1} and its corollary, the convexity assumption on $\Omega$ cannot be dropped in this theorem. If $s_1^*< M_0 < s_1$, Theorems \ref{thm:H-1} and \ref{thm:H-2} apply to give infinitely many Lipschitz solutions and one \emph{special} Lipschitz (classical) solution to problem (\ref{ib-P}) under the same initial datum $u_0$, respectively. However, a subtlety arises when $M_0\le s_1^*$: Is there a Lipschitz solution to (\ref{ib-P}) other than the classical one in this case? Let us discuss more on this in the remark below.

\begin{remk}[Breakdown of uniqueness]
In this remark, let  $\sigma(s)$ satisfy
\begin{equation}\label{breakdown}
\sigma(s)> 0\quad\forall s\in(s_1,s_2]
\end{equation}
in addition to Hypothesis (H) (as in Figure \ref{fig2}), and let $f$ be as in Theorem \ref{thm:H-2}.  Then by Corollary \ref{coro:H-3}, for any $u_0\in C^{2+\alpha}(\bar\Omega)$ with $Du_0\cdot\n|_{\partial\Omega}=0$,  problem (\ref{ib-P}) has at least one Lipschitz solution. In particular, if $M_0=0$ with $M_0:=\|Du_0\|_{L^\infty(\Omega)}$, that is, $u_0$ is constant in $\Omega$, then by Proposition \ref{pro:minmax}, the constant function $u\equiv u_0$ in $\Omega_T$ is a unique solution to (\ref{ib-P}). So a natural question is to ask if there is a number $\tilde M_0>0$ such that (\ref{ib-P}) has a unique Lipschitz solution for all initial data  $u_0\in C^{2+\alpha}(\bar\Omega)$ with $Du_0\cdot\n|_{\partial\Omega}=0$ and $M_0\le \tilde M_0$. To address this question, define
\[
M^*_0=\sup\left\{\tilde M_0\ge 0 \, \Big| \, \begin{array}{l}
                                  \mbox{$\forall u_0\in C^{2+\alpha}(\bar\Omega)$ with $Du_0\cdot\n|_{\partial\Omega}=0$ and} \\

                                  \mbox{$M_0\le\tilde M_0$, (\ref{ib-P}) has a unique Lipschitz solution}
                                \end{array}
 \right\},
\]
which we may call as the \emph{critical threshold of uniqueness} (CTU) for (\ref{ib-P}).
The CTU $M^*_0$ may depend not only on the profile $\sigma(s)$ but also on the dimension $n$, final time $T$, and domain $\Omega$ of problem (\ref{ib-P}). %; so we write $M^*_0$ as $M^*_0=M^*_0(\sigma,n,T,\Omega)$ if necessary.
By Theorem \ref{thm:H-1}, we have an estimate:
\[
0\le M^*_0\le s^*_1,
\]
which is valid for all dimensions $n\ge 1$, final times $T>0$, and domains $\Omega$ satisfying (\ref{assume-1}). Our question now boils down to the point of asking if $M^*_0>0$.

When $n=1$, the answer is affirmative; in this case, the CTU $M^*_0$  is positive and depends only on the profile $\sigma(s)$. This fact can be derived by using an \emph{a priori} estimate on the spatial derivatives of Lipschitz solutions.
From \cite{HN}, when $n=1$, any Lipschitz solution $u$ to (\ref{ib-P}) satisfies
\begin{equation}\label{apriori}
\|u_x\|_{L^\infty(\Omega_T)}\le\frac{\|\sigma(|u_0'|)\|_{L^\infty(\Omega)}}{c},
\end{equation}
where $c:=\inf_{s\in(0,\infty)}\frac{\sigma(s)}{s}>0$, by Hypothesis (H) and (\ref{breakdown}). Note
\[
\frac{\sigma(s_1^*)}{s_1}>\frac{\sigma(s_2)}{s_2}\ge c\,;
\]
that is, $\sigma(s_1^*)> cs_1$. As $\sigma(0)=0$ and $\sigma(s)$ is strictly increasing on $s\in[0,s_1^*]$, there is a unique number $M^{l}_0\in(0,s_1^*)$ with $\sigma(M^{l}_0)=cs_1$. Suppose now that $\|u_0'\|_{L^\infty(\Omega)}<M^{l}_0$. Then by Theorem \ref{thm:H-2}, there exists a classical solution $\tilde u\in C^{2+\alpha,1+\alpha/2}(\bar\Omega_T)$ to (\ref{ib-P}) with $\|\tilde u_x\|_{L^\infty(\Omega_T)}=\|u_0'\|_{L^\infty(\Omega)}<M^l_0.$ On the other hand, if $u$ is any Lipschitz solution to (\ref{ib-P}),   we have from (\ref{apriori}) that  $\|u_x\|_{L^\infty(\Omega_T)}< s_1.$ Modifying the profile $\sigma(s)$ to the right of the threshold $\max\{\|u_0'\|_{L^\infty(\Omega)},\|u_x\|_{L^\infty(\Omega_T)}\}<s_1$, both $\tilde u$ and $u$ become Lipschitz solutions to some uniformly parabolic  problem of \emph{type} (\ref{ib-P}) with monotone flux under the same initial datum $u_0$. By Proposition \ref{unique}, we thus have $\tilde u\equiv u$ in $\Omega_T$; hence, the Lipschitz solution $\tilde u$ to (\ref{ib-P}) is unique. Therefore,   we have an improved estimate for the CTU $M^{*}_0$ in $n=1$:
\[
0<M^{l}_0\le M^{*}_0\le s^*_1.
\]

As a toy example for $n=1$, we consider the general piecewise linear profile $\sigma(s)$ of the H\"ollig type. Let $s_1,\,s_2,\,k_1,\,k_2$ and $k_3$ be positive numbers with $s_2>s_1$ and $-k_2 (s_2-s_1)+k_1s_1>0$.
Let us take
\[
\sigma(s) = \left\{ \begin{array}{ll}
                      k_1 s, & 0\le s\le s_1, \\
                      -k_2(s-s_1)+k_1 s_1, & s_1\le s\le s_2, \\
                      k_3(s-s_2)-k_2(s_2-s_1)+k_1 s_1, & s\ge s_2;
                    \end{array}
 \right.
\]
then, if $-k_2 (s_2-s_1)+k_1s_1 < k_3s_2 $, we have
\[
0<-s_1\frac{k_2}{k_1}+\frac{s_1^2}{s_2}\big(1+\frac{k_2}{k_1}\big)=M^{l}_0\le M^{*}_0\le s^*_1 = -s_2\frac{k_2}{k_1}+s_1\big(1+\frac{k_2}{k_1}\big),
\]
or if $-k_2 (s_2-s_1)+k_1s_1 \ge k_3s_2 $, it follows that
\[
0<s_1\frac{k_3}{k_1}=M^{l}_0\le M^{*}_0\le s^*_1 = -s_2\frac{k_2}{k_1}+s_1\big(1+\frac{k_2}{k_1}\big).
\]

Unfortunately,  estimate (\ref{apriori}) from \cite{HN} is not directly generalized to dimensions $n\ge2$. However, we still expect that the CTU $M^*_0>0$ even in dimensions $n\ge 2$ for all $T>0$ as long as $\Omega$ is convex. Since this study is beyond the scope of this paper, we leave it to the interested readers.

%When $n\ge 2$ and $\Omega$ is non-convex, the question becomes quite subtle to answer. Let us first fix the dimension $n\ge 2$, and choose a profile $\sigma(s)$ with the following properties in addition to Hypothesis (H):
%\[
%\sigma(s)\,\left\{ \begin{array}{ll}
%             =s, & 0\le s\le 1=s_1^*, \\
%             >s, & s_1^*<s\le s_1.
%           \end{array}\right.
%\]

%it is not true in general that $M_0^*>0.$ By \cite[Theorem 4.1]{AR}, for each $M>0$, there exists a non-convex bounded smooth domain $\Omega\subset \R^n$
\end{remk}

\section{Some useful results}\label{sec:prelim}

This section prepares some essential ingredients for the proofs of existence theorems, Theorems \ref{thm:PM-1} and \ref{thm:H-1}.

\subsection{Uniformly parabolic equations} We refer to the standard references (e.g., \cite{LSU, Ln}) for some notations concerning functions and domains of class $C^{k+\alpha}$ with an integer $k\ge 0$.

Assume $\tilde f\in C^{1+\alpha}([0,\infty))$ is a function satisfying
\begin{equation}\label{para}
  \theta \le \tilde f(s)+2s\tilde f'(s)\le \Theta \quad \forall\;  s\ge 0,
\end{equation}
where $\Theta\ge\theta>0$ are constants. This condition is equivalent to $\theta\le (s\tilde f(s^2))'\le \Theta$ for all $s\in\R;$ hence, $\theta\le \tilde f(s)\le\Theta$ for all $s\ge 0.$ Let
\[
\tilde A(p)=\tilde f(|p|^2)p \quad (p\in \R^n).
\]
Then we have
\[
\tilde A^i_{p_j}(p)  = \tilde f(|p|^2)\delta_{ij} + 2\tilde f'(|p|^2) p_ip_j \quad (i,j=1,2,\cdots,n;\; p\in \R^n)
\]
and hence  the \emph{uniform ellipticity condition}:
\begin{equation}\label{para-0}
\theta |q|^2 \le \sum_{i,j=1}^n \tilde A^i_{p_j}(p) q_iq_j\le \Theta |q|^2\quad \forall\; p,\;q\in\R^n.
\end{equation}

\begin{thm}\label{existence-gr-max} Assume  $(\ref{assume-1})$.
Then the initial-Neumann boundary value problem
\begin{equation}\label{ib-parabolic}
  \begin{cases}
  u_t=\dv (\tilde A(Du)) & \mbox{in }\Omega_T, \\
  {\partial u}/{\partial \n}=0 & \mbox{on }\partial\Omega\times(0,T), \\
  u(x,0)=u_0(x) & \mbox{for } x\in\Omega
\end{cases}
\end{equation}
has a unique solution $u\in C^{2+\alpha,1+\alpha/2}(\bar\Omega_T)$. Moreover, if $\tilde f\in C^{3}([0,\infty))$ and $\Omega$ is convex, then the \emph{gradient maximum principle} holds:
\begin{equation}\label{gr-max-P}
\|Du\|_{L^\infty(\Omega_T)}=\|Du_0\|_{L^\infty(\Omega)}.
\end{equation}
\end{thm}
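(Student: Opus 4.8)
\textbf{Proof plan for Theorem \ref{existence-gr-max}.} The statement splits into two parts: existence and uniqueness of a classical solution $u\in C^{2+\alpha,1+\alpha/2}(\bar\Omega_T)$ to the uniformly parabolic initial-Neumann problem \eqref{ib-parabolic}, and the gradient maximum principle \eqref{gr-max-P} under the extra hypotheses $\tilde f\in C^3$ and $\Omega$ convex. For the first part, the plan is to invoke the standard theory of quasilinear uniformly parabolic equations with conormal (Neumann-type) boundary conditions, as developed in \cite{LSU, Ln}. Since $\tilde A(p)=\tilde f(|p|^2)p$ with $\tilde f\in C^{1+\alpha}([0,\infty))$, the coefficients $\tilde A^i_{p_j}(p)$ are $C^\alpha$ in $p$; the uniform ellipticity \eqref{para-0} holds; the compatibility condition $Du_0\cdot\n=0$ on $\partial\Omega$ from \eqref{assume-1} matches the boundary datum; and $\partial\Omega\in C^{2+\alpha}$, $u_0\in C^{2+\alpha}(\bar\Omega)$. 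One first obtains an a priori $L^\infty$ bound on $u$ (by the maximum principle, or by the $L^{2k}$-argument already used in Proposition \ref{pro:minmax}), then a global gradient bound via De Giorgi–Nash–Moser / boundary gradient estimates, then Hölder estimates for $Du$, and finally Schauder estimates up to the boundary; a Leray–Schauder fixed-point or continuation argument then yields a solution in $C^{2+\alpha,1+\alpha/2}(\bar\Omega_T)$. Uniqueness follows from the monotonicity of $\tilde A$ (which is immediate from $\theta\le(s\tilde f(s^2))'$) exactly as in Proposition \ref{unique}, or from a standard energy estimate on the difference of two solutions.

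For the gradient maximum principle, the plan is the classical Bernstein-type argument. Assume $\tilde f\in C^3$ so that $u\in C^{3}$ in the interior (by bootstrapping Schauder estimates) and the computation below is legitimate. Set $w=|Du|^2$. Differentiating the equation $u_t=\dv(\tilde A(Du))=\tilde A^i_{p_j}(Du)u_{x_ix_j}$ in $x_k$, multiplying by $u_{x_k}$ and summing, one derives a parabolic differential inequality of the form
\[
w_t = \tilde A^i_{p_j}(Du)\,w_{x_ix_j} - 2\tilde A^i_{p_j}(Du)\,u_{x_ix_k}u_{x_jx_k} + (\text{lower order}),
\]
where the second term is $\le 0$ by ellipticity \eqref{para-0} and, because the equation is in divergence form with $\tilde A$ depending only on $Du$ (no explicit $x$-dependence), there are in fact no genuinely lower-order bad terms — the drift terms have no sign obstruction. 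Hence $w$ is a subsolution of a linear uniformly parabolic operator with no zeroth-order term, so by the parabolic maximum principle $\max_{\bar\Omega_T}w$ is attained on the parabolic boundary $\bar\Omega\times\{0\}\ \cup\ \partial\Omega\times[0,T]$.

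The genuine obstacle — and the only place convexity of $\Omega$ enters — is ruling out that the maximum of $w=|Du|^2$ occurs on the lateral boundary $\partial\Omega\times(0,T]$. On the parabolic bottom $\{t=0\}$ one has $w=|Du_0|^2\le\|Du_0\|_{L^\infty(\Omega)}^2$ directly. On $\partial\Omega\times(0,T]$, suppose $w$ attains an interior-in-time lateral maximum at $(x_0,t_0)$; using the Neumann condition $\partial u/\partial\n=0$ and choosing principal coordinates for the second fundamental form of $\partial\Omega$ at $x_0$, a computation of the normal derivative $\partial w/\partial\n$ at $x_0$ yields $\partial w/\partial\n = -2\,\mathrm{II}(D_{\tan}u, D_{\tan}u) \le 0$ precisely because $\Omega$ is convex (so the second fundamental form $\mathrm{II}$ with respect to the outer normal is $\le 0$). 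By Hopf's lemma applied to the subsolution $w$, a strict interior maximum on the lateral boundary would force $\partial w/\partial\n>0$ there, a contradiction; hence the maximum of $w$ lies on $\{t=0\}$, giving $\|Du\|_{L^\infty(\Omega_T)}\le\|Du_0\|_{L^\infty(\Omega)}$. The reverse inequality is trivial since $u(\cdot,0)=u_0$, so \eqref{gr-max-P} follows. I would cite \cite{LSU, Ln} for the quasilinear existence/regularity theory and carry out the Bernstein computation and the convexity-based boundary sign estimate in detail, as the latter is the crux.
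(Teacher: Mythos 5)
Your proposal is correct and follows essentially the same route as the paper: standard quasilinear uniformly parabolic theory for existence and uniqueness, then the Bernstein argument on $w=|Du|^2$, Hopf's lemma, and the convexity-based lateral boundary inequality $\partial w/\partial\n\le 0$ (which the paper cites from Alikakos--Rostamian and Kahane rather than deriving). Only watch your sign convention: for the displayed identity $\partial w/\partial\n=-2\,\mathrm{II}(D_{\tan}u,D_{\tan}u)$ to yield $\partial w/\partial\n\le0$ you need the second fundamental form of $\partial\Omega$ with respect to the outer normal to be nonnegative on a convex domain, not $\le 0$ as you wrote.
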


\begin{proof}  1. As problem (\ref{ib-parabolic}) is uniformly parabolic by (\ref{para-0}),  the existence of unique  classical solution $u$ in $C^{2+\alpha,1+\alpha/2}(\bar\Omega_T)$  follows from the standard theory; see \cite[Theorem 13.24]{Ln}.

2. To prove  the gradient  maximum principle (\ref{gr-max-P}),  assume hereafter $\tilde f\in C^{3}([0,\infty))$ and $\Omega$ is convex. Note that, since $\tilde A\in C^3 (\R^n)$, a standard bootstrap argument based on the regularity theory of linear parabolic equations \cite{LSU, Ln} shows that the solution $u$ has all continuous partial derivatives $u_{x_ix_jx_k}$ and $u_{x_it}$   within $\Omega_T$  for $1\le i,j,k\le n.$

3. Let $v=|Du|^2.$ Then, within $\Omega_T,$ we compute
\begin{eqnarray*}
&\Delta v =2 Du \cdot D(\Delta u) + 2 |D^2u|^2,&\\
&u_t=\dv (\tilde A(Du))= \dv (\tilde f(v)Du)= \tilde f'(v) Dv \cdot Du + \tilde f(v) \Delta u,&\\
&
Du_t =  \tilde f''(v) (Dv\cdot Du) Dv + \tilde f'(v) (D^2u)  Dv &\\
&+ \tilde f'(v) (D^2v) Du + \tilde f'(v) (\Delta u) Dv + \tilde f(v) D(\Delta u).
\end{eqnarray*}
Putting these equations into $v_t= 2Du\cdot Du_t$, we obtain
\begin{equation}\label{para-2}
v_t- \mathcal L (v) - B\cdot Dv =-2\tilde f(|Du|^2) |D^2u|^2\le 0 \quad \mbox{in $\Omega_T$,}
\end{equation}
where operator $\mathcal L(v)$ and  coefficient $B$ are defined by
\begin{eqnarray*}
&\mathcal L(v)= \tilde f(|Du|^2)\Delta v + 2 \tilde f'(|Du|^2) Du\cdot (D^2 v)Du,&\\
&B= 2\tilde f''(v) (Dv\cdot Du)Du   + 2 \tilde f'(v) (D^2u)  Du +
 2 \tilde f'(v) (\Delta u)  Du.&
\end{eqnarray*}
We write  $\mathcal L (v)= \sum_{i,j=1}^n a_{ij} v_{x_ix_j},$
 with   coefficients $a_{ij}=a_{ij}(x,t)$ given by
\[
a_{ij}=\tilde A^i_{p_j}(Du)=\tilde f(|Du|^2)\delta_{ij} + 2 \tilde f'(|Du|^2)u_{x_i}u_{x_j} \quad  (i,j=1,\cdots,n).
\]
Note that on $\bar\Omega_T$  all eigenvalues of the matrix $(a_{ij})$ lie in $[\theta,\Theta]$.

4. We  show
\[
 \max_{(x,t)\in\bar\Omega_T}v(x,t) = \max_{x\in \bar\Omega} v(x,0),
\]
which proves (\ref{gr-max-P}).  We prove this by contradiction. Suppose
\begin{equation}\label{claim-1}
M:=\max_{(x,t)\in\bar\Omega_T}v(x,t) > \max_{x\in \bar\Omega} v(x,0).
\end{equation}
  Let  $(x_0,t_0)\in\bar\Omega_T$ be such that  $v(x_0,t_0)=M;$ then $t_0>0.$ If $x_0\in \Omega$, then the strong maximum principle applied to (\ref{para-2})  would imply  that $v$ is constant on $\Omega_{t_0},$ which yields  $v(x,0)\equiv M$ on $\bar\Omega$, a contradiction to (\ref{claim-1}). Consequently $x_0\in \partial\Omega$ and thus $v(x_0,t_0)=M>v(x,t)$ for all $(x,t)\in \Omega_T.$   We can then apply Hopf's Lemma for parabolic equations \cite{PW}  to (\ref{para-2}) to deduce $
 \partial v(x_0,t_0) /\partial \n >0.$ However, mainly due the convexity of $\Omega$, a result of \cite[Lemma 2.1]{AR} (see also \cite[Theorem 2]{Ka}) asserts  that $\partial v/\partial\n\le 0$ on $\partial\Omega\times [0,T],$ which gives a desired contradiction.
 \end{proof}

\subsection{Modification of the  profile functions}\label{subsec-modi}

The following elementary results can be proved in a similar way as in \cite{CZ, Zh}; we omit the proofs.

\begin{lem}[\textbf{Case I:} Perona-Malik type; see Figure \ref{fig4}]\label{lem:modi-PM}
Assume \emph{Hypothesis (PM)}.
For every $0< r_1<r_2<\sigma(s_0)$, there exists a function  $\tilde\sigma\in C^{3}([0,\infty))$  such that
\begin{equation}
\label{modi-PM-1}
\begin{cases} \tilde \sigma(s)
                =\sigma(s), & 0\leq s \leq s_-(r_1), \\
                   \tilde\sigma(s)   <\sigma(s), & s_-(r_1)< s < s_+(r_2),\\
  \theta \le \tilde\sigma'(s) \le \Theta, & 0\leq s<\infty
\end{cases}
\end{equation}
for some constants  $\Theta\ge\theta>0.$
With such a function $\tilde \sigma$, define $\tilde f(s)=\tilde\sigma(\sqrt s)/\sqrt s$ ($s>0$) and $\tilde f(0)=f(0);$ then $\tilde f\in C^3([0,\infty))$ fulfills condition $(\ref{para})$.
\end{lem}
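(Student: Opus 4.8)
The plan is to construct $\tilde\sigma$ by explicitly surgering the graph of $\sigma$: we keep $\sigma$ unchanged on $[0, s_-(r_1)]$, then splice in a carefully designed replacement that stays strictly below $\sigma$ on $(s_-(r_1), s_+(r_2))$ while maintaining a derivative trapped between two positive constants, and finally we match back to a linear (or gently increasing) branch for large $s$. First I would fix a small $\eta > 0$ and note that, by Hypothesis (PM)(ii), on the compact interval $[0, s_-(r_1)]$ the derivative $\sigma'$ is continuous and strictly positive, so $\sigma'(s) \ge \theta_0 > 0$ there; also $\sigma(s_-(r_1)) = r_1$ and $\sigma'(s_-(r_1)) > 0$. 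The idea is to let $\tilde\sigma$ agree with $\sigma$ on $[0, s_-(r_1)]$ and to let $\tilde\sigma$ continue past $s_-(r_1)$ with a strictly positive but small slope, so that for $s$ slightly above $s_-(r_1)$ we have $\tilde\sigma(s) = r_1 + (\text{small slope})(s - s_-(r_1)) < \sigma(s)$, using that $\sigma$ itself is still strictly increasing near $s_-(r_1)$ (so $\sigma(s) > r_1$ there). Past $s_0$, where $\sigma$ turns around and eventually tends to $0$, staying below $\sigma$ becomes automatic as long as $\tilde\sigma$ does not climb too fast; the real constraint is just near $s_-(r_1)$ and near $s_+(r_2)$.

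The key steps, in order, are: (1) choose a slope parameter $m \in (0, \theta_0)$ small enough and a crossover profile so that $\tilde\sigma$ leaves $\sigma$ tangentially (or near-tangentially) at $s_-(r_1)$ from below — this is arranged by taking $\tilde\sigma' $ to decrease smoothly from $\sigma'(s_-(r_1))$ down to $m$ over a short interval to the right of $s_-(r_1)$, which forces $\tilde\sigma < \sigma$ immediately since both start at $r_1$ but $\tilde\sigma$ has smaller slope; (2) on the middle range, where $\sigma$ rises to $\sigma(s_0)$ and then falls back toward $0$, keep $\tilde\sigma'$ between $m$ and some $\Theta_0$ and check $\tilde\sigma(s) < \sigma(s)$ — on the rising part this follows from the slope comparison, and on the falling part (where $\sigma(s) \le \sigma(s_0)$ and $\sigma \to 0$) we must ensure $\tilde\sigma$ has not accumulated too much height; since $\tilde\sigma(s_+(r_2)) \le r_1 + \Theta_0(s_+(r_2) - s_-(r_1))$, we just need this bound to be $< r_2 = \sigma(s_+(r_2))$, which we secure by choosing the slopes small and, if necessary, interposing a nearly-flat stretch; (3) for $s \ge s_+(r_2)$ extend $\tilde\sigma$ by any $C^3$ function with derivative in $[\theta, \Theta]$ (e.g. eventually affine), glued in a $C^3$ fashion via a standard partition-of-unity / mollified-interpolation argument; (4) set $\theta = \min(m, \dots)$ and $\Theta = \max(\Theta_0, \dots)$ to record $\theta \le \tilde\sigma' \le \Theta$ on $[0,\infty)$, noting $\tilde\sigma'(0) = \sigma'(0) = f(0) > 0$ so the lower bound is genuinely positive; (5) define $\tilde f(s) = \tilde\sigma(\sqrt s)/\sqrt s$ for $s>0$ and $\tilde f(0) = f(0)$, and verify $\tilde f \in C^3([0,\infty))$ and condition (\ref{para}) — the latter is immediate since $\tilde f(s) + 2s\tilde f'(s) = (s\tilde f(s^2))'\big|_{\text{eval}} = \tilde\sigma'(\sqrt s) \in [\theta,\Theta]$, and the $C^3$ regularity at $0$ is the usual fact that an odd $C^3$ function of $\sqrt s \cdot$ (i.e. $\tilde\sigma$ being $C^3$ with the right parity behavior, which holds because $\tilde\sigma$ coincides with $\sigma = s f(s^2)$ near $0$ and $f \in C^3$) yields $\tilde\sigma(\sqrt s)/\sqrt s \in C^3$.

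The main obstacle is a bookkeeping one rather than a conceptual one: simultaneously enforcing the three clauses of (\ref{modi-PM-1}) — exact agreement on $[0, s_-(r_1)]$, strict inequality $\tilde\sigma < \sigma$ on the whole open interval $(s_-(r_1), s_+(r_2))$, and two-sided bounds on $\tilde\sigma'$ globally — while keeping $\tilde\sigma$ of class $C^3$. The delicate points are the two endpoints of the gap: at $s_-(r_1)$ we need $\tilde\sigma$ to detach from $\sigma$ going downward without breaking $C^3$ continuity (handled by matching $\tilde\sigma$, $\tilde\sigma'$, $\tilde\sigma''$, $\tilde\sigma'''$ to $\sigma$ there and then bending the slope down), and near $s_+(r_2)$ we need the accumulated height of $\tilde\sigma$ to stay strictly below $\sigma(s_+(r_2)) = r_2$, which constrains how large the intermediate slopes can be — resolvable because the horizontal extent $s_+(r_2) - s_-(r_1)$ is finite and fixed, so a uniformly small enough slope does the job. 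I would remark that this is essentially the construction in \cite{CZ, Zh} and therefore only sketch it; the statement that the proof is omitted is justified. For $\tilde f$, the only nontrivial check is $C^3$-regularity at $s = 0$, which reduces to the parity/Taylor-expansion argument above and uses $f \in C^3([0, s_0^2))$ from Hypothesis (PM)(i).
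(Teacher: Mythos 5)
Your construction is correct and is precisely the standard surgery argument that the paper itself omits (it only refers the reader to \cite{CZ,Zh}), so there is nothing in the paper to diverge from. One precision worth recording: the pointwise slope comparison $\tilde\sigma'\le\sigma'$ can only be enforced on $[s_-(r_1),s_-(r_2)]$, where $\sigma'$ has a positive lower bound --- it may tend to $0$ as $s\to s_0^-$ (e.g.\ for $\sigma(s)=se^{-s^2/2s_0^2}$), so it cannot dominate a $\tilde\sigma'\ge\theta>0$ all the way to $s_0$ --- and from $s_-(r_2)$ onward one must instead use the accumulated-height bound $\tilde\sigma(s)<r_2\le\sigma(s)$, valid on $[s_-(r_2),s_+(r_2))$ by unimodality of $\sigma$; both ingredients appear in your sketch, just attached to slightly the wrong subintervals (``rising part'' vs.\ ``falling part'' instead of $[s_-(r_1),s_-(r_2)]$ vs.\ $[s_-(r_2),s_+(r_2))$).
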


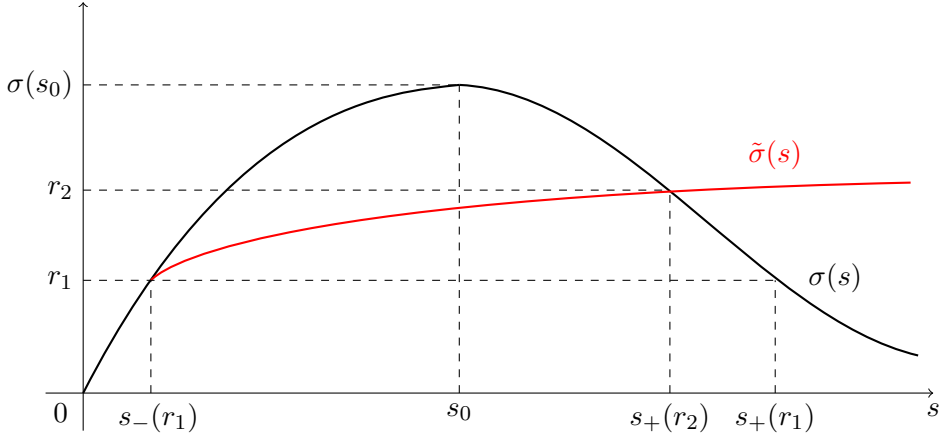
\begin{figure}[ht]
\begin{center}
\begin{tikzpicture}[scale = 1]
    \draw[->] (-.5,0) -- (11.3,0);
	\draw[->] (0,-.5) -- (0,5.2);
 \draw[dashed] (0,4.1)--(5,4.1);
    \draw[dashed] (5, 0)  --  (5, 4.1) ;
	\draw[thick]   (0, 0) .. controls  (2, 3.9) and (4.1,4) ..(5,4.1);
	\draw[thick]   (5, 4.1) .. controls  (7.2, 4) and (9,1) ..(11.1,0.5 );
\draw[thick,red]   (0.9,1.5) .. controls  (1,1.6) and (2,2.6) ..(11,2.8);
	\draw (11.3,0) node[below] {$s$};
    \draw (-0.3,0) node[below] {{$0$}};
    \draw (10, 1.2) node[above] {$\sigma(s)$};
    \draw (0, 4.1) node[left] {$\sigma(s_0)$};
   \draw (0, 1.5) node[left] {$r_1$};
   \draw (0, 2.7) node[left] {$r_2$};
 \draw[dashed] (0,1.5)--(9.2, 1.5);
 \draw[dashed] (0,2.7)--(7.8, 2.7);
 \draw[dashed] (7.8,0)--(7.8, 2.7);
\draw[dashed] (9.2,0)--(9.2, 1.5);
 \draw (9.2,0) node[below] {$s_+(r_1)$};
\draw (9.2,3.5) node[red,below] {$\tilde\sigma(s)$};
 \draw (7.8,0) node[below] {$s_+(r_2)$};
    \draw (5, 0) node[below] {$s_0$};
 \draw[dashed] (0.9,0)--(0.9, 1.5);
 \draw (1, 0) node[below] {$s_-(r_1)$};
    \end{tikzpicture}
\end{center}
\caption{ {\bf Case I:} Perona-Malik type of profile  $\sigma(s)$ and modified function \textcolor{red}{$\tilde\sigma(s)$}. %For each $0<a<\rho(s_1)$, let $0<s_-(a)<s_1<s_+(a)$ be such that $\rho(s_\pm(a))=a.$
%The modified function $\rho^*$ agrees with $\rho$ on $[0,s_-(a)]\cup\{s_+(b)\}$.}
}
\label{fig4}
\end{figure}

\begin{lem}[\textbf{Case II:} H\"ollig type; see Figure \ref{fig5}]\label{lem:modi-H}
Assume \emph{Hypothesis (H)}.
For every $\sigma(s_2)< r_1<r_2<\sigma(s_1)$, there exists a function  $\tilde\sigma\in C^{1+\alpha}([0,\infty))$  such that
\begin{equation}\label{modi-C-1}
\begin{cases}
                     \tilde\sigma(s)=\sigma(s), & 0\leq s \leq s_-(r_1), \\
                     \tilde\sigma(s)<\sigma(s), & s_-(r_1)< s \leq s_-(r_2), \\
                    \tilde\sigma(s) >\sigma(s), & s_+(r_1)\le s < s_+(r_2), \\
                    \tilde\sigma(s) =\sigma(s), & s_+(r_2)\le s < \infty,\\
                   \theta \le \tilde\sigma'(s) \le \Theta, & 0\leq s<\infty
\end{cases}
\end{equation}
for some constants  $\Theta\ge\theta>0.$
With such a function $\tilde \sigma$, define $\tilde f(s)=\tilde\sigma(\sqrt s)/\sqrt s$ ($s>0$) and $\tilde f(0)=f(0);$ then $\tilde f\in C^{1+\alpha}([0,\infty))$ fulfills condition $(\ref{para})$. Moreover, if $f\in C^{3}([0,s_1^2)\cup(s_2^2,\infty))$ in addition, then $\tilde\sigma,\,\tilde f$ can be chosen to be also in $C^3([0,\infty))$.
\end{lem}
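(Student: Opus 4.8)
\textbf{Proof proposal for Lemma \ref{lem:modi-H}.}

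The plan is to construct $\tilde\sigma$ by interpolating between the graph of $\sigma$ and a carefully chosen increasing curve on the interval $[s_-(r_1),s_+(r_2)]$, while leaving $\sigma$ untouched outside it. First I would fix the data: the values $\sigma(s_-(r_1))=r_1=\sigma(s_+(r_1))$ and $\sigma(s_-(r_2))=r_2=\sigma(s_+(r_2))$, together with the derivatives $\sigma'(s_-(r_1))>0$ and $\sigma'(s_+(r_2))>0$ (positive by Hypothesis (H)(ii), since $s_-(r_1)<s_1$ and $s_+(r_2)>s_2$). I want a $C^{1+\alpha}$ function $\tilde\sigma$ on $[s_-(r_1),s_+(r_2)]$ with $\theta\le\tilde\sigma'\le\Theta$ that matches $\sigma$ and $\sigma'$ at the two endpoints, stays strictly below $\sigma$ on $(s_-(r_1),s_-(r_2)]$ and strictly above $\sigma$ on $[s_+(r_1),s_+(r_2))$; outside the interval set $\tilde\sigma=\sigma$. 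Because $r_1<r_2$, a strictly increasing function joining the point $(s_-(r_1),r_1)$ to $(s_+(r_2),r_2)$ must pass \emph{below} $\sigma$ near the left endpoint (where $\sigma$ first rises to $\sigma(s_1)>r_2$ and only later comes back down) and \emph{above} $\sigma$ near the right endpoint (where $\sigma$ dips down to $\sigma(s_2)<r_1$ before rising again); so the sign conditions are automatically compatible with monotonicity once the slopes are kept in a fixed band $[\theta,\Theta]$.

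The key steps, in order, are: (1) choose $\theta>0$ small and $\Theta$ large enough that the affine secant slope $(r_2-r_1)/(s_+(r_2)-s_-(r_1))$ as well as $\sigma'(s_-(r_1))$ and $\sigma'(s_+(r_2))$ all lie strictly inside $(\theta,\Theta)$; (2) build a strictly increasing $C^\infty$ (or $C^{1+\alpha}$) function $g$ on $[s_-(r_1),s_+(r_2)]$ with the prescribed endpoint values and first derivatives and with $g'$ valued in $[\theta,\Theta]$ — a standard construction, e.g. integrate a smooth derivative profile that equals $\sigma'$ at the endpoints, equals the secant slope on a large middle portion, and whose integral over the interval equals $r_2-r_1$ (the mean-value constraint is met by adjusting the transition regions, using $\theta<$ secant slope $<\Theta$); (3) verify $g<\sigma$ on $(s_-(r_1),s_-(r_2)]$ and $g>\sigma$ on $[s_+(r_1),s_+(r_2))$: on $[s_-(r_1),s_-(r_2)]$ we have $g(s)\le r_2 < \sigma(s_1)$ and more precisely $g(s)<\sigma(s)$ since $\sigma$ climbs strictly above $r_2$ immediately past $s_-(r_2)$ while $g$ does not reach $r_2$ until $s_+(r_2)$ — here I should argue using $g(s_-(r_2))<r_2=\sigma(s_-(r_2))$ and monotonicity plus the slope bound, possibly shrinking the initial part of the derivative profile so that $g$ stays just below $\sigma$; symmetrically on the right using $g(s_+(r_1))>r_1=\sigma(s_+(r_1))$; (4) set $\tilde\sigma=\sigma$ outside $[s_-(r_1),s_+(r_2)]$ and $\tilde\sigma=g$ inside, checking $C^{1+\alpha}$ matching at the two junction points (values and derivatives agree by construction, and Hölder continuity of $\tilde\sigma'$ follows from that of $\sigma'$ — guaranteed by Hypothesis (H)(i) since $s_-(r_1)<s_1$ and $s_+(r_2)>s_2$, so both junctions lie in the $C^{1+\alpha}$ region of $f$); (5) conclude $\tilde f(s)=\tilde\sigma(\sqrt s)/\sqrt s$ extended by $\tilde f(0)=f(0)$ is $C^{1+\alpha}$ on $[0,\infty)$ and satisfies $(\ref{para})$, exactly as in the analogous computation preceding the lemma, since $\theta\le(s\tilde f(s^2))'=\tilde\sigma'(s)\le\Theta$; and (6) for the last sentence, if $f\in C^3$ near the junctions, replace "smooth" by "$C^3$" throughout steps (2)–(4) — no new idea is needed, only that the transition/derivative profile be taken $C^2$ rather than merely continuous, and that the matching at $s_-(r_1)$ and $s_+(r_2)$ now be checked up to second derivatives, which is possible because one is still free to prescribe $g''$ at those points to equal $\sigma''$.

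The main obstacle I anticipate is step (3): ensuring simultaneously that $g$ stays strictly below $\sigma$ on the whole left subinterval $(s_-(r_1),s_-(r_2)]$ \emph{and} strictly above $\sigma$ on the whole right subinterval $[s_+(r_1),s_+(r_2))$ \emph{and} has slope confined to $[\theta,\Theta]$ \emph{and} integrates to the correct total increment $r_2-r_1$. These are four competing demands, and making the derivative profile explicit enough to see they are mutually satisfiable — essentially, that there is enough "room" between the secant line and the graph of $\sigma$ — is the only genuinely delicate point; it is handled by choosing $r_2-r_1$-compatible transition widths and by exploiting that $\sigma$ is strictly monotone (hence strictly separated from any horizontal level) on each of the intervals $[0,s_1]$ and $[s_2,\infty)$ where the relevant pieces of its graph live. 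Everything else is routine and parallels \cite{CZ,Zh}, which is why the authors omit it.
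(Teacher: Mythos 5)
Your construction is correct and is essentially the interpolation argument the paper has in mind (the proof is omitted there with a pointer to the analogous constructions in \cite{CZ,Zh}): leave $\sigma$ alone outside $[s_-(r_1),s_+(r_2)]$, and on that interval integrate a prescribed derivative profile that matches $\sigma'$ at the two junctions, stays in a band $[\theta,\Theta]$, and carries the required total increment $r_2-r_1$. The only points worth pinning down in your step (3) are: near each junction define $g'=\sigma'-\rho$ with $\rho\ge 0$ vanishing only at the junction (an arbitrary ramp down from $\sigma'(s_-(r_1))$ need not stay below $\sigma'$ pointwise), then take $g'=\theta\le\min_{[s_-(r_1),s_-(r_2)]}\sigma'$ on the rest of the left subinterval and symmetrically on the right, so that $g-\sigma$ is strictly decreasing into $(s_-(r_1),s_-(r_2)]$ and strictly increasing backwards from $s_+(r_2)$, and place the compensating bump needed for the integral constraint entirely in the unconstrained middle zone $(s_-(r_2),s_+(r_1))$; also note that a single pair $\theta,\Theta$ works globally because Hypothesis (H) bounds $\sigma'$ above and below by positive constants on $[0,s_-(r_1)]\cup[s_+(r_2),\infty)$, and that the final $C^3$ matching must be checked up to third (not second) derivatives at the junctions.
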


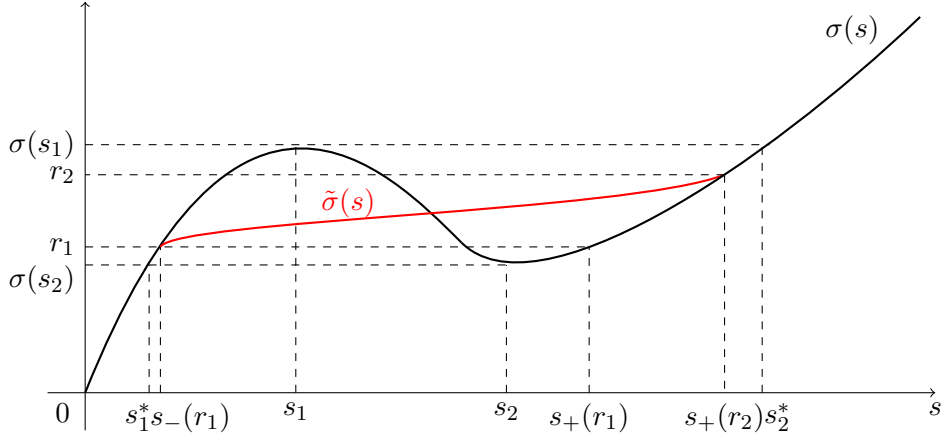
\begin{figure}[ht]
\begin{center}
\begin{tikzpicture}[scale = 1]
    \draw[->] (-.5,0) -- (11.3,0);
	\draw[->] (0,-.5) -- (0,5.2);
 \draw[dashed] (0,1.7)--(5.6,1.7);
 \draw[dashed] (0,3.3)--(9.1,3.3);
 \draw[dashed] (9,0)--(9,3.3);
    \draw[dashed] (5.6, 0)  --  (5.6, 1.7) ;
   \draw[dashed] (0.85, 0)  --  (0.85, 1.7) ;
\draw[dashed] (1, 0)  --  (1, 1.94) ;
\draw[dashed] (0,1.94)  --  (6.7, 1.94) ;
\draw[dashed] (6.7,0)  --  (6.7, 1.94) ;
   \draw[dashed] (2.8, 0)  --  (2.8, 3.3) ;
   %\draw[dashed] (10.0, 0)  --  (10.0, 4.03) ;
	\draw[thick]   (0, 0) .. controls (2,5) and  (4, 3)   ..(5,2);
          \draw[thick,red]   (1, 1.94) .. controls (1.2,2.3) and  (7.2, 2.4)   ..(8.5,2.9);
          %\draw[thick,red]   (10.0, 4.03) .. controls (10.3,4.3) and  (10.8, 4.4)   ..(11.1,4.5);
	\draw[thick]   (5, 2) .. controls  (6, 1) and (9,3) ..(11.1, 5 );
%\draw[thick]   (0.9,1.5) .. controls  (1,1.6) and (2,2.6) ..(11,2.8);
	\draw (11.3,0) node[below] {$s$};
    \draw (-0.3,0) node[below] {{$0$}};
    \draw (10.2, 4.5) node[above] {$\sigma(s)$};
 \draw (0, 3.3) node[left] {$\sigma(s_1)$};
 \draw (0, 1.5) node[left] {$\sigma(s_2)$};
 \draw (0, 1.94) node[left] {$r_1$};
 \draw (0, 2.9) node[left] {$r_2$};
% \draw[dashed] (0,1.5)--(9.2, 1.5);
% \draw[dashed] (0,3)--(7.4, 3);
\draw[dashed] (8.5,2.9)--(0,2.9);
\draw[dashed] (8.5,2.9)--(8.5, 0);
   \draw (5.6, 0) node[below] {$s_2$};
   \draw (3.5, 2.2) node[red,above] {$\tilde\sigma(s)$};
   \draw (2.8, 0) node[below] {$s_1$};
   \draw (0.7, 0) node[below] {$s_1^*$};
   \draw (1.4, 0) node[below] {$s_-(r_1)$};
   \draw (9.2, 0) node[below] {$s_2^*$};
   %\draw (10.0, 0) node[below] {$s_3$};
   \draw (8.5, 0) node[below] {$s_+(r_2)$};
   \draw (6.7, 0) node[below] {$s_+(r_1)$};
% \draw[dashed] (0.9,0)--(0.9, 1.5);
    \end{tikzpicture}
\end{center}
\caption{{\bf Case II:}  H\"ollig type of profile   $\sigma(s)$ and modified function \textcolor{red}{$\tilde\sigma(s)$}.  %For each $a\in (\rho(s_2),\rho(s_1))$, let $s_-(a)\in (s_0,s_1)$ and $ s_+(a)\in (s_2,s_3)$ be such that $\rho(s_\pm(a))=a.$ The modified function $\rho^*$ agrees with $\rho$ on $[0,s_-(a)]\cup [s_+(b),\infty)$.}
}
\label{fig5}
\end{figure}

 \subsection{Proofs of Theorems \ref{thm:PM-2} and \ref{thm:H-2}}  Let $M_0=\|Du_0\|_{L^\infty(\Omega)}.$ In \textbf{Case I} (Perona-Malik type), we have $0<M_0<s_0$; so we can select $0<r_1<r_2<\sigma(s_0)$  such   that $s_-(r_1)=M_0.$ In \textbf{Case II} (H\"ollig type), we have $0<M_0<s_1$; so we can select $\sigma(s_2)<r_1<r_2<\sigma(s_1)$ such that $s_-(r_1)\ge M_0.$   Use such a choice of $r_1,\, r_2$   in Lemma \ref{lem:modi-PM} (\textbf{Case I}), Lemma \ref{lem:modi-H} (\textbf{Case II}) to obtain a $C^3$ function $\tilde f$ as stated in the lemma. Let $\tilde A(p)=\tilde f(|p|^2)p.$ For this $\tilde A(p)$, problem (\ref{ib-parabolic}) is uniformly parabolic; hence, by Theorem \ref{existence-gr-max}, (\ref{ib-parabolic}) has a unique solution $u\in C^{2+\alpha,1+\alpha/2}(\bar\Omega_T)$ satisfying
\[
\|Du\|_{L^\infty(\Omega_T)}=\|Du_0\|_{L^\infty(\Omega)}.
\]
Since $\tilde A(p)=A(p)$ on $|p|\le s_-(r_1)$ and $s_-(r_1)\ge M_0$,   it follows that $\tilde A(Du(x,t))=A(Du(x,t))$ in $\Omega_T;$ this proves $u$ is a classical solution to problem (\ref{ib-P}). On the other hand, we easily see that any classical solution to (\ref{ib-P}) satisfying $\|Du\|_{L^\infty(\Omega_T)}=M_0$ is also a classical solution to (\ref{ib-parabolic}) and hence must be unique. This completes the proofs of Theorems \ref{thm:PM-2} and \ref{thm:H-2}.

\subsection{Right inverse of the  divergence operator} To deal with   linear constraint $\dv v=u$, we follow an argument of \cite[Lemma 4]{BB} to construct  a right inverse $\mathcal R$ of the divergence operator: $\dv \mathcal R=Id$ (in the sense of distributions in $\Omega_T$). For the purpose of this paper, the construction of $\mathcal R$ is restricted to  {\em box} domains, by which we mean  domains given by $Q=J_1\times J_2\times \cdots\times J_n$, where $J_i=(a_i,b_i)\subset\R$ is a finite open  interval.

Given a  box  $Q$, we define a linear operator $\mathcal R_n\colon L^\infty(Q)\to L^\infty(Q;\R^n)$ inductively on dimension $n$ as follows.
If $n=1$, for $u\in L^\infty(J_1)$, we define $v=\mathcal R_1 u$ by
\[
v(x_1)=\int_{a_1}^{x_1} u(s)ds\quad (x_1\in J_1).
\]
Assume $n=2$. Let $u\in L^\infty(J_1\times J_2).$ Set
$\tilde u(x_1)=\int_{a_2}^{b_2} u(x_1,s)\,ds$ for $x_1\in J_1.$ Then $\tilde u\in L^\infty(J_1).$ Let $\tilde v=\mathcal R_1\tilde u$; that is,
\[
\tilde v(x_1)=\int_{a_1}^{x_1}\tilde u(s)ds=\int_{a_1}^{x_1}\int_{a_2}^{b_2} u(s,\tau)\,d\tau ds \quad (x_1\in J_1).
\]
 Let $\rho_2\in C^\infty_c(a_2,b_2)$ be such that $0\le \rho_2(s)\le\frac{C_0}{b_2-a_2}$ and $\int_{a_2}^{b_2} \rho_2(s) ds=1.$
Define $v=\mathcal R_2 u\in L^\infty(J_1\times J_2;\R^2)$  by $v=(v^1,v^2)$ with $v^1 (x_1,x_2)=\rho_2(x_2) \tilde v(x_1)$ and
\[
v^2(x_1,x_2) =\int_{a_2}^{x_2}u(x_1,s)ds -\tilde u(x_1) \int_{a_2}^{x_2} \rho_2(s) ds.
\]
Note that if $u\in W^{1,\infty}(J_1\times J_2)$ then $\tilde u\in W^{1,\infty}(J_1)$; hence $v=\mathcal R_2 u\in W^{1,\infty}(J_1\times J_2;\R^2)$ and $\dv v = u$ a.e.\,in $J_1\times J_2.$ Moreover, if $u\in C^1(\overline{J_1\times J_2})$ then $v$ is in $C^1(\overline{J_1\times J_2};\R^2).$

Assume that we have defined the operator $\mathcal R_{n-1}$. Let $u\in L^\infty(Q)$ with $Q=J_1\times J_2\times \cdots\times J_n$  and $x=(x',x_n)\in Q$, where $x'\in Q'=J_1\times\cdots\times J_{n-1}$ and $x_n\in J_n.$ Set
$\tilde u(x')=\int_{a_n}^{b_n} u(x',s)\,ds$ for $x'\in Q'.$
Then $\tilde u\in L^{\infty}(Q').$ By  the  assumption, $\tilde v =\mathcal R_{n-1}\tilde u\in L^\infty(Q';\R^{n-1})$ is defined.  Write $\tilde v(x')=(Z^1(x'),\cdots,Z^{n-1}(x')),$ and let $\rho_n\in C^\infty_c(a_n,b_n)$ be a  function satisfying
$0\le \rho_n(s)\le\frac{C_0}{b_n-a_n}$ and $\int_{a_n}^{b_n} \rho_n(s) ds=1.$
Define $v=\mathcal R_n u\in L^\infty(Q;\R^n)$ as follows.  For $x=(x',x_n)\in Q,$  $v(x)=(v^1(x),v^2(x),\cdots,v^n(x))$ is defined by
\[
\begin{split}
& v^k(x',x_n)=\rho_n(x_n) Z^k(x') \quad  (k=1,2,\cdots, n-1),\\
&v^n (x',x_n)=\int_{a_n}^{x_n}u(x',s)ds -\tilde u(x') \int_{a_n}^{x_n} \rho_n(s) ds.
\end{split}
\]
Then  $\mathcal R_n \colon L^\infty(Q)\to L^\infty(Q;\R^n)$  is a well-defined linear operator; moreover,
\begin{equation}\label{div-0}
 \|\mathcal R_n  u\|_{L^\infty(Q)}  \le C_n \,(|J_1|+\cdots+|J_n|) \|u\|_{L^\infty(Q)},
\end{equation}
where $C_n>0$ is a constant depending only on $n.$

As in the case $n=2$, we  see that if $u\in W^{1,\infty}(Q)$ then $v=\mathcal R_n u\in W^{1,\infty}(Q;\R^n)$ and
$\dv v = u$ a.e.\,in $Q$. Also, if $u\in C^1(\bar Q)$ then $v=\mathcal R_n u$ is in $C^1(\bar Q;\R^n).$ Moreover,   if $u\in W^{1,\infty}_0(Q)$ satisfies  $\int_Q u(x)dx=0$, then one can easily show that $v=\mathcal R_n u\in W_0^{1,\infty}(Q;\R^n).$

Let  $I$ be a finite open interval in $\R$. We now  extend the operator $\mathcal R_n$ to an operator $\mathcal R$ on $L^\infty(Q\times I)$ by defining,  for a.e.\,$(x,t)\in Q\times I$,
\begin{equation}\label{def-R}
(\mathcal R u)(x,t)=(\mathcal R_n u(\cdot,t))(x) \quad \forall\; u\in L^\infty(Q\times I).
\end{equation}
 Then $\mathcal R \colon  L^\infty(Q\times I)\to L^\infty(Q\times I;\R^n)$ is a bounded linear operator.

We have the following result.

\begin{thm}\label{div-inv} Let $u\in W^{1,\infty}_0(Q\times I)$ satisfy  $\int_{Q}u(x,t)\,dx=0$ for all $t\in I$. Then $v=\mathcal R u\in W^{1,\infty}_0(Q\times I;\R^n)$,  $\dv v=u$ a.e.\,in $Q\times I$, and
\begin{equation}\label{div-1}
\|v_t \|_{L^\infty(Q\times I)}  \le C_n \,(|J_1|+\cdots+|J_n|) \|u_t\|_{L^\infty(Q\times I)},
\end{equation}
where $Q=J_1\times\cdots\times J_n$ and $C_n$ is the same constant as in $(\ref{div-0})$.  Moreover, if $u\in C^1(\overline{Q\times I})$ then $v =\mathcal R u \in C^1(\overline{Q\times I};\R^{n}).$
\end{thm}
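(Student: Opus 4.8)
The plan is to establish Theorem \ref{div-inv} by unwinding the inductive construction of $\mathcal{R}_n$ and tracking how each of the three claimed properties---membership in $W^{1,\infty}_0(Q\times I;\R^n)$, the identity $\dv v = u$, and the quantitative bound \eqref{div-1}---propagates through the recursion. The base case $n=1$ is immediate: $v(x_1,t)=\int_{a_1}^{x_1}u(s,t)\,ds$ visibly satisfies $v_{x_1}=u$, and since $\int_{J_1}u(s,t)\,ds=0$ for every $t$ we get $v(b_1,t)=0$, so $v$ vanishes on both ends $x_1=a_1$ and $x_1=b_1$; together with $u\in W^{1,\infty}_0$ this gives $v\in W^{1,\infty}_0(J_1\times I)$, and $v_t(x_1,t)=\int_{a_1}^{x_1}u_t(s,t)\,ds$ yields $\|v_t\|_{L^\infty}\le |J_1|\,\|u_t\|_{L^\infty}$, which is \eqref{div-1} with $C_1=1$.

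For the inductive step I would fix $t\in I$ and work with the spatial slices, exactly as in the definition. Writing $\tilde u(x',t)=\int_{a_n}^{b_n}u(x',s,t)\,ds$, one first checks $\tilde u\in W^{1,\infty}_0(Q'\times I)$ and, crucially, $\int_{Q'}\tilde u(x',t)\,dx' = \int_Q u(x,t)\,dx = 0$ for all $t$, so the inductive hypothesis applies to $\tilde u$ on $Q'\times I$ and produces $\tilde v = \mathcal{R}_{n-1}\tilde u \in W^{1,\infty}_0(Q'\times I;\R^{n-1})$ with the bound on $\tilde v_t$. Then I would verify $\dv v = u$ by direct differentiation: $\sum_{k=1}^{n-1}\partial_{x_k}v^k = \rho_n(x_n)\sum_{k=1}^{n-1}\partial_{x_k}Z^k = \rho_n(x_n)\,\dv_{x'}\tilde v = \rho_n(x_n)\tilde u(x')$, while $\partial_{x_n}v^n = u(x',x_n) - \tilde u(x')\rho_n(x_n)$, and these add to $u$. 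The boundary vanishing is the delicate bookkeeping: on the faces $x_k=a_k$ or $x_k=b_k$ for $k\le n-1$, every $v^j$ is a product involving $Z^k$ or a quantity that inherits the vanishing of $\tilde v$ from the inductive hypothesis (and $\rho_n\in C^\infty_c(a_n,b_n)$ handles the $v^n$ component since $v^n$ then still depends on $u$, but one must note the trace on those faces is controlled because $u\in W^{1,\infty}_0$); on the faces $x_n=a_n$ and $x_n=b_n$, the components $v^k$ ($k<n$) vanish because $\rho_n(a_n)=\rho_n(b_n)=0$, and $v^n$ vanishes at $x_n=a_n$ trivially and at $x_n=b_n$ because $\int_{a_n}^{b_n}u\,ds=\tilde u$ and $\int_{a_n}^{b_n}\rho_n=1$. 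For the time-derivative estimate, differentiate the formulas for $v^k$ in $t$: $\partial_t v^k = \rho_n(x_n)\partial_t Z^k$ so $\|\partial_t v^k\|_{L^\infty}\le \frac{C_0}{|J_n|}\|\tilde v_t\|_{L^\infty} \le \frac{C_0}{|J_n|}C_{n-1}(|J_1|+\cdots+|J_{n-1}|)\|\tilde u_t\|_{L^\infty}$, and since $\|\tilde u_t\|_{L^\infty}\le |J_n|\,\|u_t\|_{L^\infty}$ the factors of $|J_n|$ cancel; meanwhile $\partial_t v^n = \int_{a_n}^{x_n}u_t\,ds - (\int_{a_n}^{x_n}\rho_n)\tilde u_t$ gives $\|\partial_t v^n\|_{L^\infty}\le (1+C_0)|J_n|\,\|u_t\|_{L^\infty}$. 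Collecting terms produces the bound \eqref{div-1} with a new constant $C_n$ depending only on $n$ (and $C_0$, which is itself a fixed universal choice). The $C^1$ regularity claim follows by the same differentiation, noting each building block is $C^1$ up to the closure when $u$ is.

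Finally, to pass from the spatial operator $\mathcal{R}_n$ to $\mathcal{R}$ via \eqref{def-R}, I would observe that the construction is linear and local in $t$, so all pointwise-in-$x$ identities ($\dv v = u$, boundary vanishing in the $x$-variables) hold for a.e.\ $t$; one then checks that differentiating the explicit formulas in $t$ commutes with the operations defining $\mathcal{R}_n$ (each is an integration or multiplication by a $t$-independent function), which justifies the formulas for $v_t$ used above and shows $v\in W^{1,\infty}_0(Q\times I;\R^n)$ rather than merely $L^\infty((0,T);W^{1,\infty})$. The main obstacle, in my estimation, is not any single inequality but the careful verification that $v$ lies in $W^{1,\infty}_0$ of the full space-time cylinder---i.e., that it vanishes on the \emph{entire} parabolic-type boundary $\partial(Q\times I)$ including the spatial faces at all times and that the time-regularity is genuine $W^{1,\infty}$---since this requires simultaneously invoking $u\in W^{1,\infty}_0$, the cutoff properties of each $\rho_j$, and the inductive hypothesis applied to $\tilde u$, all while keeping the constant $C_n$ independent of $u$ and of $I$.
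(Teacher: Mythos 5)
Your proposal is correct and follows essentially the same route as the paper: the paper's (much terser) proof simply invokes the properties already established during the construction of $\mathcal R_n$ — the divergence identity, the fact that $v=\mathcal R_n u\in W^{1,\infty}_0$ when $u\in W^{1,\infty}_0$ has zero mean, and the bound (\ref{div-0}) — together with the key observation $v_t=\mathcal R(u_t)$, which then yields (\ref{div-1}) by applying (\ref{div-0}) to $u_t(\cdot,t)$. Your inductive differentiation in $t$ is exactly the verification of that commutation, so the two arguments coincide in substance.
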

\begin{proof} Given $u\in  W^{1,\infty}_0(Q\times I)$, let $v=\mathcal R u.$ We easily verify that $v$ is Lipschitz continuous in $t$ and hence  $v_t$ exists. It also follows that $v_t=\mathcal R (u_t).$ Clearly, if $\int_Q u(x,t)dx=0$ then $v(x,t)=0$ whenever $t\in \partial I$ or $x\in \partial Q$. This proves $v\in W^{1,\infty}_0(Q\times I;\R^{n})$ and  the estimate (\ref{div-1}) follows from (\ref{div-0}). Finally, from the definition of $\mathcal R u$, we see that if $u\in C^1(\overline{Q\times I})$ then $v =\mathcal R u \in C^1(\overline{Q\times I};\R^{n}).$
\end{proof}

\section{Geometry of the relevant matrix sets}\label{sec:geometry}

Let $A(p)$ be given by (\ref{fun-A}). We assume Hypothesis (PM) or  (H) unless one is chosen specifically.  Recall the definition (\ref{set-K-l}) with $l=0$:
\begin{equation}\label{set-K}
 K_0=K(0)= \left\{ \begin{pmatrix} p & c\\ B & A(p)\end{pmatrix}\,\Big | \,
  p\in\R^n,\, c\in\R,\,B\in\mathbb{M}^{n\times n},\,  \tr B=0
 \right\}.
\end{equation}

Under Hypothesis  (PM) or (H), certain structures  of  the set $K_0$ turn out to be still quite useful, especially when it comes to the relaxation of homogeneous partial differential inclusion $\nabla \omega(z)\in K_0$ with $z=(x,t)$ and $\omega=(\varphi,\psi)$. We investigate these structures and establish such relaxation results throughout this section.

\subsection{Geometry of the matrix set $K_0$} We study some subsets of $K_0$, depending on the different types of profiles.

\subsubsection*{\bf Case I: Hypothesis (PM)} (See Figure \ref{fig4}.) In this case, we assume the following. Fix any two numbers $0<r_1<r_2<\sigma(s_0)$, and let $F_0=F_{r_1,r_2}(0)$ be the subset of $K_0$ defined by
\[
F_0=\left\{ \begin{pmatrix} p & c\\ B & A(p)\end{pmatrix}\,\Big|\, \begin{array}{l}
                                                                                         p\in\R^n, \, |p|\in(s_-(r_1),s_-(r_2))\cup(s_+(r_2),s_+(r_1)), \\
                                                                                         c\in\R, \, B\in \mathbb M^{n\times n},\, \tr B=0
                                                                                       \end{array}
   \right\}.
\]
We decompose the set $F_0$ into two disjoint subsets as follows:
\[
F_-=\left\{ \begin{pmatrix} p & c\\ B & A(p)\end{pmatrix}\,\Big|\, \begin{array}{l}
                                                                                         p\in\R^n, \, |p|\in(s_-(r_1),s_-(r_2)), \\
                                                                                         c\in\R, \, B\in \mathbb M^{n\times n},\, \tr B=0
                                                                                       \end{array}
   \right\},
\]
\[
F_+=\left\{ \begin{pmatrix} p & c\\ B & A(p)\end{pmatrix}\,\Big|\, \begin{array}{l}
                                                                                         p\in\R^n, \, |p|\in(s_+(r_2),s_+(r_1)), \\
                                                                                         c\in\R, \, B\in \mathbb M^{n\times n},\, \tr B=0
                                                                                       \end{array}
   \right\}.
\]

\subsubsection*{\bf Case II:  Hypothesis (H)} (See Figure \ref{fig5}.) In this case, we assume the following. Fix any two numbers $\sigma(s_2)<r_1<r_2<\sigma(s_1)$, and let $F_0=F_{r_1,r_2}(0)$ be the subset of $K_0$ given by
\[
F_0=\left\{ \begin{pmatrix} p & c\\ B & A(p)\end{pmatrix}\,\Big|\, \begin{array}{l}
                                                                                         p\in\R^n, \, |p|\in(s_-(r_1),s_-(r_2))\cup(s_+(r_1),s_+(r_2)), \\
                                                                                         c\in\R, \, B\in \mathbb M^{n\times n},\, \tr B=0
                                                                                       \end{array}
   \right\}.
\]
The set $F_0$ is also decomposed into two disjoint subsets as follows:
\[
F_-=\left\{ \begin{pmatrix} p & c\\ B & A(p)\end{pmatrix}\,\Big|\, \begin{array}{l}
                                                                                         p\in\R^n, \, |p|\in(s_-(r_1),s_-(r_2)), \\
                                                                                         c\in\R, \, B\in \mathbb M^{n\times n},\, \tr B=0
                                                                                       \end{array}
   \right\},
\]
\[
F_+=\left\{ \begin{pmatrix} p & c\\ B & A(p)\end{pmatrix}\,\Big|\, \begin{array}{l}
                                                                                         p\in\R^n, \, |p|\in(s_+(r_1),s_+(r_2)), \\
                                                                                         c\in\R, \, B\in \mathbb M^{n\times n},\, \tr B=0
                                                                                       \end{array}
   \right\}.
\]

%Let
%\[
%\mathcal K=\left\{(p,\sigma(|p|)\frac{p}{|p|}) \,\Big|\, |p|\in[s_1^*,s_1]\cup[s_2,s_2^*]\right\}.
%\]

In order to study the homogeneous differential inclusion $\nabla \omega(z)\in K_0$, we first scrutinize the rank-one structure of the set $F_0\subset K_0$. We introduce the following notation.

\begin{defn}
For a given set $E\subset \mathbb{M}^{(1+n)\times(n+1)}$, $L(E)$ is defined to be the set of all matrices $\xi\in \mathbb M^{(1+n)\times (n+1)}$ that are not in $E$ but are representable by $\xi=\lambda\xi_1+(1-\lambda)\xi_2$ for some $\lambda\in (0,1)$ and $\xi_1,\xi_2\in E$  with  $\rank(\xi_1-\xi_2)=1$, or equivalently,
\[
L(E)=\{ \xi\not\in E\; |\; \mbox{$\xi+t_\pm\eta \in E$ for some  $t_-<0<t_+$ and $\rank\eta=1$}\}.
\]
\end{defn}

For the matrix set $F_0$, we define
\[
R(F_0)=\bigcup_{\xi_\pm\in F_\pm,\,\mathrm{rank}(\xi_+-\xi_-)=1}(\xi_-,\xi_+),
\]
where $(\xi_-,\xi_+)$ is the open line segment in $\mathbb{M}^{(1+n)\times(n+1)}$ joining $\xi_\pm$. Then from careful analyses, one can actually deduce
\begin{equation}\label{laminate-PM}
L(F_0)=R(F_0)\cup L(F_+)\quad \mbox{in {\bf Case I}}
\end{equation}
and
\begin{equation}\label{laminate-C}
L(F_0)=R(F_0) \quad \mbox{in {\bf Case II}}.
\end{equation}
In (\ref{laminate-PM}), due to the backward nature of $\sigma(s)$ on $(s_+(r_2),s_+(r_1))$ for \textbf{Case I}, the set $L(F_+)$ turns out to be non-empty. On the other hand, as only forward parts of $\sigma$ are involved in $F_0$ for \textbf{Case II}, no such set appears in (\ref{laminate-C}). Regardless of this discrepancy, we can only stick to the analysis of the set $R(F_0)$ for both cases towards the existence results, Theorems \ref{thm:PM-1} and \ref{thm:H-1}.

We perform the step-by-step analysis of the set $R(F_0)$ for both cases simultaneously.

\subsubsection{\bf Alternate expression for $R(F_0)$} We  investigate more specific criteria for  matrices in  $R(F_0)$.
\begin{lem}\label{lem-form}
Let $\xi\in\mathbb{M}^{(1+n)\times(n+1)}$. Then
$\xi\in R(F_0)$ if and only if there exist numbers $t_-<0<t_+$ and vectors $q,\,\gamma\in\R^n$ with $|q|=1,\,\gamma\cdot q=0$ such that for each $b\in\R\setminus\{0\}$, if
$\eta=\begin{pmatrix} q & b\\ \frac{1}{b}q\otimes\gamma & \gamma\end{pmatrix}$, then
$\xi+t_\pm\eta\in F_\pm$.
\end{lem}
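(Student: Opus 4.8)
The statement characterizes membership in $R(F_0)$ via a rank-one direction of a very specific block form. The plan is to start from the definition of $R(F_0)$: a matrix $\xi$ lies in $R(F_0)$ precisely when $\xi=\lambda\xi_++(1-\lambda)\xi_-$ with $\xi_\pm\in F_\pm$, $\lambda\in(0,1)$, and $\operatorname{rank}(\xi_+-\xi_-)=1$. Writing $\xi_\pm=\begin{pmatrix} p_\pm & c_\pm\\ B_\pm & A(p_\pm)\end{pmatrix}$ with $\tr B_\pm=0$ and $|p_-|\in(s_-(r_1),s_-(r_2))$, $|p_+|$ in the appropriate interval, I would set $\eta=\xi_+-\xi_-$, so that $\xi=\xi_-+t_+\eta$ and $\xi=\xi_+ +t_-\eta$ for suitable $t_-<0<t_+$ (namely $t_+=1-\lambda$, $t_-=-\lambda$, after rescaling $\eta$); the whole content is to determine what rank-one matrices $\eta$ can arise as differences of two elements of $F_0$ projecting to the same $\xi$.

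First I would analyze the rank-one condition on $\eta=\begin{pmatrix} p_+-p_- & c_+-c_-\\ B_+-B_- & A(p_+)-A(p_-)\end{pmatrix}$. A matrix in $\mathbb M^{(1+n)\times(n+1)}$ has rank one iff it is an outer product $a\otimes e$ for column vector $a\in\R^{1+n}$ and row vector $e\in\R^{n+1}$. Splitting $a=(\text{scalar},\ \gamma)$ and $e=(q,\ b)$ with $q\in\R^n$, $b\in\R$, rank-one of $\eta$ forces $p_+-p_-=s\, q$ (parallel to $q$), $B_+-B_-=\gamma\otimes q$ (so its trace $\gamma\cdot q$ must vanish since $\tr B_+=\tr B_-=0$ — this is where $\gamma\cdot q=0$ comes from), $c_+-c_-=s b$, and $A(p_+)-A(p_-)=\gamma\, b$. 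Normalizing $|q|=1$ and absorbing the scalar $s$ into $t_\pm$, one gets exactly the claimed form $\eta=\begin{pmatrix} q & b\\ \frac1b q\otimes\gamma & \gamma\end{pmatrix}$ up to the relation between the blocks; the coefficient $\frac1b$ in front of $q\otimes\gamma$ and the appearance of $\gamma$ in both the lower-left and lower-right blocks is precisely the compatibility $A(p_+)-A(p_-)\parallel \gamma$ combined with $B_+-B_-\parallel q$. I would check both implications: given $\xi\in R(F_0)$ produce $t_\pm,q,\gamma,b$; conversely, given such data and \emph{any} $b\ne 0$, verify $\xi\pm$-shifts land in $F_\pm$ — the point being that $b$ is a free parameter (the lower-left block scales like $1/b$ to compensate), reflecting the freedom in choosing the time-component direction.

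The main obstacle, and the part deserving real care, is establishing the crucial geometric fact that for the given $p_-$ (with $|p_-|$ in the small regime) there actually \emph{exists} $p_+$ in the large regime with $p_+-p_-$ parallel to a prescribed unit vector $q$, $A(p_+)-A(p_-)$ parallel to the prescribed $\gamma\perp q$, and the segment from $p_-$ to $p_+$ passing through the projection of $\xi$ — equivalently, that the one-dimensional slice $s\mapsto A(p_-+sq)$ has the right monotonicity/range behavior dictated by the profile $\sigma$. This is where Hypothesis (PM) or (H) enters: on the relevant $s$-interval the profile $\sigma$ takes the same value $r$ at $s_-(r)$ and $s_+(r)$, which is exactly what makes $F_-$ and $F_+$ rank-one connected through the common normal component of the flux. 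I expect the proof to reduce this to a short computation showing $A(p_+)-A(p_-)$ decomposes into a component along $p_-$ (absorbed using $\sigma(s_\pm(r))$ being equal across a level set) and a component along $q$, with the transverse piece defining $\gamma$; the independence of the final condition from $b$ then follows by direct substitution. I would phrase the argument so that it applies verbatim in both \textbf{Case I} and \textbf{Case II}, since $R(F_0)$ is defined identically and only the admissible $|p_+|$-interval differs.
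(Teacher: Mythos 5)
Your first two paragraphs are exactly the paper's argument: write the rank-one difference $\xi_+-\xi_-$ as an outer product $(a,\alpha)\otimes(q,\tilde b)$, observe that $aq\ne 0$ (so one may normalize $a=1$, $|q|=1$ by rescaling $t_\pm$), extract $\gamma\cdot q=0$ from the trace-free constraint on the lower-left blocks, set $\gamma=\tilde b\alpha$, and note that $b$ is free because the defining conditions of $F_\pm$ never see the $(1,n+1)$ entry and only require the lower-left block to be trace-free, so replacing $\tilde b\alpha$ by $b\cdot\frac{1}{b}\gamma$ changes nothing. The converse is immediate from the definition of $R(F_0)$. However, your third paragraph mislocates the difficulty: this lemma contains no existence claim about $p_+$, rank-one connections, or the level-set structure $\sigma(s_-(r))=\sigma(s_+(r))$ of the profile. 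The existence of the connecting pair $\xi_\pm\in F_\pm$ is the \emph{hypothesis} $\xi\in R(F_0)$; the lemma is purely a normalization of the form such a connection must take. The geometric content you describe (when such connections actually exist, and how they vary with $(p,\beta)$) is the subject of the later results, Theorem \ref{lem-inv} and Theorem \ref{thm:main2}, not of this lemma, so no appeal to Hypothesis (PM) or (H) beyond the bare definitions of $F_\pm$ is needed here. With that caveat, the plan is correct and coincides with the paper's proof.
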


\begin{proof}
Assume $\xi=\begin{pmatrix} p & c\\ B & \beta\end{pmatrix}\in R(F_0)$. By definition, $
\xi+t_\pm\tilde\eta\in F_\pm,
$
where $t_-<0<t_+$ and $\tilde\eta$ is a rank-one matrix given by
\[
\tilde\eta=\begin{pmatrix} a\\ \alpha \end{pmatrix}\otimes (q,\tilde b)=\begin{pmatrix} aq & a\tilde b\\ \alpha\otimes q & \tilde b\alpha\end{pmatrix},\quad a^2+|\alpha|^2\neq0, \quad \tilde b^2+|q|^2\neq0,
\]
for some $a,\,\tilde b\in\R$ and $\alpha,\,q\in\R^n$; here $\alpha\otimes q$ denotes the rank-one or zero matrix $(\alpha_iq_j)$ in $\mathbb{M}^{n\times n}$. Condition $\xi+t_\pm\tilde\eta\in F_\pm$ with $t_-<0<t_+$ is equivalent to the following:
For \textbf{Case I},
\begin{equation}\label{form-PM}
\begin{split}
&\tr B=0,\quad \alpha\cdot q=0,\quad A(p+t_\pm aq)=\beta+t_\pm \tilde b\alpha,\\  |p+t&_+ aq|\in(s_+(r_2),s_+(r_1)),\quad |p+t_- aq|\in(s_-(r_1),s_-(r_2)).
\end{split}
\end{equation}
For \textbf{Case II},
\begin{equation}\label{form-C}
\begin{split}
&\tr B=0,\quad \alpha\cdot q=0,\quad A(p+t_\pm aq)=\beta+t_\pm \tilde b\alpha,\\  |p+t&_+ aq|\in(s_+(r_1),s_+(r_2)),\quad |p+t_- aq|\in(s_-(r_1),s_-(r_2)).
\end{split}
\end{equation}
Therefore, $aq\neq 0$. Upon rescaling $\tilde\eta$ and $t_\pm$, we can assume $a=1$ and $|q|=1$; namely,
\[
\tilde\eta=\begin{pmatrix} q & \tilde b\\ \alpha\otimes q & \tilde b\alpha\end{pmatrix},\quad |q|=1,\quad \alpha\cdot q =0.
\]
We now let $\gamma=\tilde b\alpha$. Let $b\in\R\setminus\{0\}$ and
\begin{equation*}
\eta=\begin{pmatrix} q & b\\ \frac{1}{b}\gamma\otimes q & \gamma\end{pmatrix}.
\end{equation*}
From (\ref{form-PM}) or (\ref{form-C}), it follows that $\xi+t_\pm\eta\in F_\pm$.

The converse directly follows from the definition of $R(F_0)$.
\end{proof}

\subsubsection{\bf Diagonal components of matrices in $R(F_0)$.}
The following gives a  description for the diagonal components of matrices in  $R(F_0)$.
\begin{lem}\label{lem-rough}
\begin{equation}\label{rough-1}
R(F_0)=\left\{ \begin{pmatrix} p & c\\ B & \beta\end{pmatrix}\,\Big|\, c\in\R,\,B\in\mathbb{M}^{n\times n},\,\tr B=0, \, (p,\beta)\in\mathcal{S}
   \right\}
\end{equation}
for some set $\mathcal{S}=\mathcal{S}_{r_1,r_2}\subset\R^{n+n}$.
\end{lem}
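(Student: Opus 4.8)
The plan is to show that membership of a matrix $\xi=\begin{pmatrix} p & c\\ B & \beta\end{pmatrix}$ in $R(F_0)$ depends only on the pair $(p,\beta)$ and not on the off-diagonal data $c\in\R$ and $B\in\mathbb M^{n\times n}$ with $\tr B=0$. In other words, I want to define
\[
\mathcal S=\mathcal S_{r_1,r_2}=\left\{(p,\beta)\in\R^{n+n}\,\Big|\, \begin{pmatrix} p & 0\\ 0 & \beta\end{pmatrix}\in R(F_0)\right\}
\]
and verify the two inclusions in (\ref{rough-1}).

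First I would observe, directly from Lemma \ref{lem-form}, that whether $\xi\in R(F_0)$ is governed by the existence of $t_-<0<t_+$ and unit $q\perp\gamma$ such that, taking $\eta=\begin{pmatrix} q & b\\ \frac1b\,q\otimes\gamma & \gamma\end{pmatrix}$ for any fixed $b\neq0$, we have $\xi+t_\pm\eta\in F_\pm$. Unwinding the definition of $F_\pm$, the conditions $\xi+t_\pm\eta\in F_\pm$ read: $\tr(B+\frac{t_\pm}{b}\,q\otimes\gamma)=0$, i.e. $\tr B + \frac{t_\pm}{b}\gamma\cdot q=0$, which holds automatically since $\tr B=0$ and $\gamma\cdot q=0$; the flux constraint $A(p+t_\pm q)=\beta+t_\pm\gamma$; and the gradient-size constraints $|p+t_+q|\in(s_+(r_2),s_+(r_1))$ (resp. $(s_+(r_1),s_+(r_2))$ in Case II) and $|p+t_-q|\in(s_-(r_1),s_-(r_2))$. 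Crucially, none of these constraints involves $c$ or $B$ at all — the $c$-entry and the $B$-block of $\xi$ simply never enter, because the only place $B$ could matter is the trace condition, and that is satisfied identically. Hence the predicate ``$\xi\in R(F_0)$'' factors through $(p,\beta)$, which is exactly (\ref{rough-1}) with the $\mathcal S$ defined above.

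To make this airtight I would argue both inclusions explicitly. For ``$\subseteq$'': if $\xi=\begin{pmatrix} p & c\\ B & \beta\end{pmatrix}\in R(F_0)$, pick the witnessing $t_\pm,q,\gamma$ from Lemma \ref{lem-form}; the verification above shows the same $t_\pm,q,\gamma$ witness $\begin{pmatrix} p & 0\\ 0 & \beta\end{pmatrix}\in R(F_0)$ (the trace conditions still hold since $\tr 0=0$), so $(p,\beta)\in\mathcal S$, and of course $\tr B=0$, $c\in\R$, $B\in\mathbb M^{n\times n}$. For ``$\supseteq$'': given $(p,\beta)\in\mathcal S$ and arbitrary $c\in\R$, $B\in\mathbb M^{n\times n}$ with $\tr B=0$, take the $t_\pm,q,\gamma$ witnessing $\begin{pmatrix} p & 0\\ 0 & \beta\end{pmatrix}\in R(F_0)$; since $\tr B + \frac{t_\pm}{b}\gamma\cdot q = 0$ as well, the matrices $\begin{pmatrix} p & c\\ B & \beta\end{pmatrix}+t_\pm\eta$ also lie in $F_\pm$, so $\xi\in R(F_0)$ by Lemma \ref{lem-form}. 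This handles Cases I and II uniformly, since the only difference between them is which interval the ``$+$'' gradient size must lie in, and that plays no role in the argument.

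The one point requiring a little care — and the closest thing to an obstacle — is making sure the trace condition for the perturbed matrix genuinely decouples from $B$: the perturbation $t_\pm\eta$ contributes $\frac{t_\pm}{b}\,q\otimes\gamma$ to the $B$-block, whose trace is $\frac{t_\pm}{b}\,\gamma\cdot q$, and this vanishes precisely because Lemma \ref{lem-form} builds in $\gamma\cdot q=0$. So $\tr(B+\frac{t_\pm}{b}q\otimes\gamma)=\tr B$, and the membership in $F_\pm$ is insensitive to $B$ as claimed. Everything else is bookkeeping, and the lemma follows. I should note that this lemma only records the \emph{shape} of $R(F_0)$; the actual description of the set $\mathcal S$ (which is where the genuine geometry lives) is presumably taken up in the subsequent lemmas.
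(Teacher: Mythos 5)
Your proof is correct and rests on the same observation as the paper's: the sets $F_\pm$ are invariant under adding any matrix of the form $\begin{pmatrix} 0 & \tilde c\\ \tilde B & 0\end{pmatrix}$ with $\tr\tilde B=0$, so membership of $\xi$ in $R(F_0)$ decouples from the off-diagonal data $(c,B)$. The only cosmetic difference is that you route the argument through the characterization in Lemma \ref{lem-form}, whereas the paper translates the rank-one endpoints $\xi_\pm$ directly from the definition of $R(F_0)$; both hinge on the identical trace computation $\tr(B+\tfrac{t_\pm}{b}\gamma\otimes q)=\tr B$.
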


\begin{proof}
Let $(c,B),\,(c',B')\in\R\times\mathbb{M}^{n\times n}$ be such that $\tr B=\tr B'=0$, and
define
\[
\mathcal{S}_{(c,B)}=\left\{(p,\beta)\in\R^{n+n} \,\Big|\, \begin{pmatrix} p & c\\ B & \beta\end{pmatrix}\in R(F_0) \right\},
\]
\[
\mathcal{S}_{(c',B')}=\left\{(p,\beta)\in\R^{n+n} \,\Big|\, \begin{pmatrix} p & c'\\ B' & \beta\end{pmatrix}\in R(F_0) \right\}.
\]
It is  sufficient to show that $\mathcal{S}_{(c,B)}=\mathcal{S}_{(c',B')}=:\mathcal{S}.$
Let $(p,\beta)\in\mathcal{S}_{(c,B)}$, that is, $\xi=\begin{pmatrix} p & c\\ B & \beta\end{pmatrix}\in R(F_0)$. Then $\xi_\pm:=\xi+t_\pm\eta\in F_\pm$ for some $t_-<0<t_+$ and $\mathrm{rank}\,\eta=1$. Observe that $\xi=\lambda\xi_++(1-\lambda)\xi_-$ with $\lambda=\frac{-t_-}{t_+-t_-}\in(0,1)$ and that
\[
\xi':=\begin{pmatrix} p & c'\\ B' & \beta\end{pmatrix}=\xi+\begin{pmatrix} 0 & \tilde{c}\\ \tilde B & 0\end{pmatrix}=\lambda\tilde{\xi}_++(1-\lambda)\tilde{\xi}_-
\]
where $\tilde{c}=c'-c$, $\tilde{B}=B'-B$, and $\tilde{\xi}_\pm=\xi_\pm+\begin{pmatrix} 0 & \tilde{c}\\ \tilde B & 0\end{pmatrix}$. Since  $\xi_\pm\in F_\pm$  and $\tr \tilde{B}=0$, we have  $\tilde{\xi}_\pm\in F_\pm$, and so $\xi'\in R(F_0)$. This implies $(p,\beta)\in\mathcal{S}_{(c',B')}$; hence $\mathcal{S}_{(c,B)}\subset \mathcal{S}_{(c',B')}$. Likewise, $\mathcal{S}_{(c',B')}\subset \mathcal{S}_{(c,B)}$; that is, $\mathcal{S}_{(c,B)}=\mathcal{S}_{(c',B')}$.
\end{proof}

\subsubsection{\bf Selection of approximate collinear rank-one connections for $R(F_0)$.}
We begin with a 2-dimensional description for the rank-one connections of diagonal components of matrices in $R(F_0)$ in a general form. The following lemma deals with \textbf{Cases  I} and  \textbf{II} in a parallel manner.

\begin{lem}\label{lem-2d-rank}
For all positive numbers $a,b,c$ with $b>a$, there exists a continuous function
\[
h_{1}(a,b,c;\cdot,\cdot,\cdot):I^1_{a,b,c}=[0,a)\times[0,\infty)\times[0,c)\to[0,\infty)\quad\mbox{\emph{(\textbf{Case I})}}
\]
\[
h_{2}(a,b,c;\cdot,\cdot,\cdot):I^2_{a,b,c}=[0,a)\times[0,b-a)\times[0,c)\to[0,\infty)\quad\mbox{\emph{(\textbf{Case II})}}
\]
with $h_i(a,b,c;0,0,0)=0$
satisfying the following:

Let $\delta_1,\delta_2$ and $\eta$ be any positive numbers with
\[
0<a-\delta_1<a<b<b+\delta_2,\quad 0<c-\eta<c,\quad\mbox{\emph{(\textbf{Case I})}}
\]
\[
0<a-\delta_1<a<b-\delta_2<b,\quad 0<c-\eta<c,\quad\mbox{\emph{(\textbf{Case II})}}
\]
and let $R_1\in[a-\delta_1,a]$, $R_2\in[b,b+\delta_2]$ \emph{(\textbf{Case I})}, $R_2\in[b-\delta_2,b]$ \emph{(\textbf{Case II})}, and $\tilde R_1,\tilde R_2\in[c-\eta,c]$.
Suppose $\theta\in[-\pi/2,\pi/2]$ and
\[
\begin{split}
\Big(\tilde R_1\big( & \cos(\frac{\pi}{2}+\theta),\sin(\frac{\pi}{2}+\theta)\big)-\tilde R_2\big(\cos(\frac{\pi}{2}-\theta),\sin(\frac{\pi}{2}-\theta)\big)\Big)\\
 & \cdot\Big(R_1\big(\cos(\frac{\pi}{2}+\theta),\sin(\frac{\pi}{2}+\theta)\big)-R_2\big(\cos(\frac{\pi}{2}-\theta),\sin(\frac{\pi}{2}-\theta)\big)\Big)=0.
\end{split}
\]
Then $-\frac{\pi}{2}<\theta<\frac{\pi}{2}$, $\tilde R_1\ge \tilde R_2$, and
\[
\max\Big\{\big|(0,a)-R_1\big(\cos(\frac{\pi}{2}+\theta),\sin(\frac{\pi}{2}+\theta)\big)\big|,\,\big|(0,b)-R_2\big(\cos(\frac{\pi}{2}-\theta),\sin(\frac{\pi}{2}-\theta)\big)\big|
\]
\[
\big|(0,c)-\tilde R_1\big(\cos(\frac{\pi}{2}+\theta),\sin(\frac{\pi}{2}+\theta)\big)\big|,\,\big|(0,c)-\tilde R_2\big(\cos(\frac{\pi}{2}-\theta),\sin(\frac{\pi}{2}-\theta)\big)\big|\Big\}
\]
\[
\le h_1(a,b,c;\delta_1,\delta_2,\eta),\quad \emph{(\textbf{Case I})}
\]
\[
\max\Big\{\big|(0,a)-R_1\big(\cos(\frac{\pi}{2}+\theta),\sin(\frac{\pi}{2}+\theta)\big)\big|,\,\big|(0,b)-R_2\big(\cos(\frac{\pi}{2}-\theta),\sin(\frac{\pi}{2}-\theta)\big)\big|
\]
\[
\big|(0,c)-\tilde R_1\big(\cos(\frac{\pi}{2}+\theta),\sin(\frac{\pi}{2}+\theta)\big)\big|,\,\big|(0,c)-\tilde R_2\big(\cos(\frac{\pi}{2}-\theta),\sin(\frac{\pi}{2}-\theta)\big)\big|\Big\}
\]
\[
\le h_2(a,b,c;\delta_1,\delta_2,\eta).\quad \emph{(\textbf{Case II})}
\]
\end{lem}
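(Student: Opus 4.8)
The plan is to reduce the orthogonality hypothesis to a single scalar identity for $\cos 2\theta$, read off the first two conclusions directly, and then define $h_i$ by an explicit elementary formula. Write $u_\theta=(\cos(\tfrac\pi2+\theta),\sin(\tfrac\pi2+\theta))=(-\sin\theta,\cos\theta)$ and $w_\theta=(\cos(\tfrac\pi2-\theta),\sin(\tfrac\pi2-\theta))=(\sin\theta,\cos\theta)$, so that $|u_\theta|=|w_\theta|=1$, $u_\theta\cdot(0,1)=w_\theta\cdot(0,1)=\cos\theta$ and $u_\theta\cdot w_\theta=\cos 2\theta$. Expanding $(\tilde R_1u_\theta-\tilde R_2w_\theta)\cdot(R_1u_\theta-R_2w_\theta)=0$ with these inner products gives
\[
\tilde R_1R_1+\tilde R_2R_2=(\tilde R_1R_2+\tilde R_2R_1)\cos 2\theta ,
\]
hence $\cos 2\theta=(\tilde R_1R_1+\tilde R_2R_2)/(\tilde R_1R_2+\tilde R_2R_1)$, where numerator and denominator are both $>0$ since $R_1\ge a-\delta_1>0$, $R_2>0$ and $\tilde R_1,\tilde R_2\ge c-\eta>0$. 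Thus $\cos 2\theta>0$, so $\theta\in(-\pi/4,\pi/4)\subset(-\pi/2,\pi/2)$, which is the first claim. Rearranging, $1-\cos 2\theta=(\tilde R_1-\tilde R_2)(R_2-R_1)/(\tilde R_1R_2+\tilde R_2R_1)\ge 0$; since $R_2-R_1>0$ (using $R_1\le a<b\le R_2$ in \textbf{Case I} and $R_1\le a<b-\delta_2\le R_2$ in \textbf{Case II}) and the denominator is positive, this forces $\tilde R_1\ge\tilde R_2$, the second claim.

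For the quantitative part I would estimate the last identity using $\tilde R_1-\tilde R_2\le\eta$, $R_2-R_1\le b-a+\delta_1+\delta_2$ in \textbf{Case I} (resp.\ $\le b-a+\delta_1$ in \textbf{Case II}), and $\tilde R_1R_2+\tilde R_2R_1\ge(c-\eta)(R_1+R_2)\ge(c-\eta)(a+b-\delta_1)$ in \textbf{Case I} (resp.\ $\ge(c-\eta)(a+b-\delta_1-\delta_2)$ in \textbf{Case II}), together with the trivial bound $1-\cos 2\theta\le 1$, to obtain
\[
0\le 1-\cos 2\theta\le\min\Bigl\{1,\ \tfrac{\eta(b-a+\delta_1+\delta_2)}{(c-\eta)(a+b-\delta_1)}\Bigr\}=:E_1
\]
in \textbf{Case I}, and the analogous $E_2=\min\{1,\ \tfrac{\eta(b-a+\delta_1)}{(c-\eta)(a+b-\delta_1-\delta_2)}\}$ in \textbf{Case II}. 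The $\min$ with $1$ is what keeps this bound usable on the whole (half-open, and in \textbf{Case I} unbounded in $\delta_2$) domain, where the fraction alone would degenerate; $E_i$ is continuous there because $c-\eta>0$ and $a+b-\delta_1>0$ (resp.\ $a+b-\delta_1-\delta_2>0$, using $\delta_1<a$, $\delta_2<b-a$), and $E_i$ vanishes at $(\delta_1,\delta_2,\eta)=(0,0,0)$.

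Next I would use the exact identity $|(0,\mu)-\nu v|^2=(\mu-\nu)^2+2\mu\nu(1-\cos\theta)$, valid for any $\mu,\nu>0$ and $v\in\{u_\theta,w_\theta\}$ since $v\cdot(0,1)=\cos\theta$ and $|v|=1$, applied to $(\mu,\nu)=(a,R_1),(b,R_2),(c,\tilde R_1),(c,\tilde R_2)$. Bounding $|\mu-\nu|$ by $\delta_1,\delta_2,\eta,\eta$ respectively, using $1-\cos\theta\le 1-\cos 2\theta$ (true for $|\theta|\le\pi/2$) and the crude bounds $\mu\nu\le a^2$, $\le b(b+\delta_2)$ in \textbf{Case I} or $\le b^2$ in \textbf{Case II}, $\le c^2$, one gets that each of the four distances squared is $\le\delta_1^2+\delta_2^2+\eta^2+2\bigl(a^2+b(b+\delta_2)+c^2\bigr)E_1$ in \textbf{Case I} (resp.\ with $b^2$ and $E_2$ in \textbf{Case II}). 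I would therefore simply \emph{define}
\[
h_1(a,b,c;\delta_1,\delta_2,\eta)=\Bigl[\delta_1^2+\delta_2^2+\eta^2+2\bigl(a^2+b(b+\delta_2)+c^2\bigr)E_1\Bigr]^{1/2}
\]
on $I^1_{a,b,c}=[0,a)\times[0,\infty)\times[0,c)$, and the analogue $h_2$ (with $b^2$, $E_2$) on $I^2_{a,b,c}=[0,a)\times[0,b-a)\times[0,c)$. Then the maximum of the four distances is $\le h_i$ by the above; $h_i$ is continuous on its half-open domain and nonnegative; and $h_i(a,b,c;0,0,0)=0$ since $E_i$ vanishes there. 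Combined with the two claims from the first paragraph, this is all that is required.

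There is no deep obstacle here — the argument is trigonometric bookkeeping plus elementary estimates — but the one point genuinely worth isolating is that $1-\cos 2\theta$ is controlled \emph{both} by the constant $1$ globally (which tames the degenerate and, in \textbf{Case I}, unbounded parts of the $(\delta_1,\delta_2,\eta)$-domain) \emph{and} by $\eta$ near the origin (which is exactly what forces $h_i$ to vanish at the base point); getting these two facts to coexist in one continuous formula is the only thing that takes a moment of thought. A less explicit alternative would be to set $h_i$ equal to the supremum of the four distances over all admissible $(R_1,R_2,\tilde R_1,\tilde R_2,\theta)$ and argue by compactness — the admissible configurations form a compact set shrinking to the single point $R_1=a,\ R_2=b,\ \tilde R_1=\tilde R_2=c,\ \theta=0$ as $(\delta_1,\delta_2,\eta)\to 0$, which gives upper semicontinuity and decay to $0$ — but one would still have to pass to a continuous majorant, so the explicit formula is the cleaner route.
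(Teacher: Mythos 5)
Your proposal is correct and follows essentially the same route as the paper: expand the orthogonality condition into the scalar identity $(R_2-R_1)(\tilde R_1-\tilde R_2)\cos^2\theta=(R_1+R_2)(\tilde R_1+\tilde R_2)\sin^2\theta$ (your $\cos 2\theta$ formula is an equivalent rewriting), read off $\theta\ne\pm\pi/2$ and $\tilde R_1\ge\tilde R_2$ from positivity, bound the angle explicitly in terms of $\delta_1,\delta_2,\eta$, and then convert the angle bound into a continuous majorant $h_i$ of the four distances vanishing at the origin. The only difference is bookkeeping: the paper bounds $|\theta|$ by an arctangent expression $g_i$ and takes a maximum of several explicit distance bounds, whereas you package everything into a single formula via $1-\cos 2\theta$ with a $\min\{1,\cdot\}$ cutoff — both are valid.
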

\begin{proof}
By assumption,
\[
0=(\tilde R_1(-\sin\theta,\cos\theta)-\tilde R_2(\sin\theta,\cos\theta))\cdot(R_1(-\sin\theta,\cos\theta)-R_2(\sin\theta,\cos\theta))
\]
\[
=(-(\tilde R_1+\tilde R_2)\sin\theta,(\tilde R_1-\tilde R_2)\cos\theta)\cdot(-(R_1+R_2)\sin\theta,(R_1-R_2)\cos\theta)
\]
\[
=(\tilde R_1+\tilde R_2)(R_1+R_2)\sin^2\theta+(\tilde R_1-\tilde R_2)(R_1-R_2)\cos^2\theta,
\]
that is,
\[
(R_2-R_1)(\tilde R_1-\tilde R_2)\cos^2\theta=(R_1+R_2)(\tilde R_1+\tilde R_2)\sin^2\theta;
\]
hence, $\theta\ne \pm\frac{\pi}{2}$, $\tilde R_1\ge \tilde R_2$, and
\[
\theta=\pm\tan^{-1}\left(\sqrt\frac{(R_2-R_1)(\tilde R_1-\tilde R_2)}{(R_1+R_2)(\tilde R_1+\tilde R_2)}\right).
\]
So
\[
|\theta|\le\tan^{-1}\left(\sqrt{\frac{(b-a+\delta_1+\delta_2)\eta}{2(a+b-\delta_1)(c-\eta)}}\right)=:g_1(a,b,c;\delta_1,\delta_2,\eta),\;\;\mbox{(\textbf{Case I})}
\]
\[
|\theta|\le\tan^{-1}\left(\sqrt{\frac{(b-a+\delta_1)\eta}{2(a+b-\delta_1-\delta_2)(c-\eta)}}\right)=:g_2(a,b,c;\delta_1,\delta_2,\eta).\;\;\mbox{(\textbf{Case II})}
\]
Note that the function $g_i(a,b,c;\cdot,\cdot,\cdot):I^i_{a,b,c}\to[0,\pi/2)$ is well-defined and continuous and that $g_i(a,b,c;\delta_1,\delta_2,\eta)=0$\, for all $(\delta_1,\delta_2,\eta)\in I^i_{a,b,c}$ with $\eta=0.$

Observe now that
\[
|(0,a)-R_1(\cos(\frac{\pi}{2}+\theta),\sin(\frac{\pi}{2}+\theta))|
\]
\[
\le\max\{|(0,a)-a(-\sin\theta,\cos\theta)|,
|(0,a)-(a-\delta_1)(-\sin\theta,\cos\theta)|\}
\]
\[
=\max\left\{\sqrt{a^2\sin^2\theta+a^2(1-\cos\theta)^2},
\sqrt{(a-\delta_1)^2\sin^2\theta+(a-(a-\delta_1)\cos\theta)^2}\right\}
\]
\[
=\max\left\{\sqrt{2}a\sqrt{1-\cos\theta},\sqrt{(a-\delta_1)^2+a^2-2a(a-\delta_1)\cos\theta}\right\}
\]
\[
\begin{split}
\le\max\Big\{\sqrt{2}a & \sqrt{1-\cos(g_i(a,b,c;\delta_1,\delta_2,\eta))},\\
& \sqrt{(a-\delta_1)^2+a^2-2a(a-\delta_1)\cos(g_i(a,b,c;\delta_1,\delta_2,\eta))}\Big\}
\end{split}
\]
\[
=:h_{i,1}(a,b,c;\delta_1,\delta_2,\eta),\quad (\textbf{i=1,\,2})
\]
\[
|(0,b)-R_2(\cos(\frac{\pi}{2}-\theta),\sin(\frac{\pi}{2}-\theta))|
\]
\[
\le\max\{|(0,b)-b(\sin\theta,\cos\theta)|,
|(0,b)-(b+\delta_2)(\sin\theta,\cos\theta)|\}
\]
\[
=\max\left\{\sqrt{b^2\sin^2\theta+b^2(1-\cos\theta)^2},
\sqrt{(b+\delta_2)^2\sin^2\theta+(b-(b+\delta_2)\cos\theta)^2}\right\}
\]
\[
=\max\left\{\sqrt{2}b\sqrt{1-\cos\theta},\sqrt{(b+\delta_2)^2+b^2-2b(b+\delta_2)\cos\theta}\right\}
\]
\[
\begin{split}
\le\max\Big\{\sqrt{2}b & \sqrt{1-\cos(g_1(a,b,c;\delta_1,\delta_2,\eta))},\\
& \sqrt{(b+\delta_2)^2+b^2-2b(b+\delta_2)\cos(g_1(a,b,c;\delta_1,\delta_2,\eta))}\Big\}
\end{split}
\]
\[
=:h_{1,2}(a,b,c;\delta_1,\delta_2,\eta),\quad\mbox{(\textbf{Case I})}
\]
\[
|(0,b)-R_2(\cos(\frac{\pi}{2}-\theta),\sin(\frac{\pi}{2}-\theta))|
\]
\[
\begin{split}
\le\max\Big\{\sqrt{2}b & \sqrt{1-\cos(g_2(a,b,c;\delta_1,\delta_2,\eta))},\\
& \sqrt{(b-\delta_2)^2+b^2-2b(b-\delta_2)\cos(g_2(a,b,c;\delta_1,\delta_2,\eta))}\Big\}
\end{split}
\]
\[
=:h_{2,2}(a,b,c;\delta_1,\delta_2,\eta),\quad\mbox{(\textbf{Case II})}
\]
\[
|(0,c)-\tilde R_1(\cos(\frac{\pi}{2}+\theta),\sin(\frac{\pi}{2}+\theta))|
\]
\[
\begin{split}
\le\max\Big\{\sqrt{2}c & \sqrt{1-\cos(g_i(a,b,c;\delta_1,\delta_2,\eta))},\\
& \sqrt{(c-\eta)^2+c^2-2c(c-\eta)\cos(g_i(a,b,c;\delta_1,\delta_2,\eta))}\Big\}
\end{split}
\]
\[
=:h_{i,3}(a,b,c;\delta_1,\delta_2,\eta),\quad (\textbf{i=1,\,2})
\]
\[
|(0,c)-\tilde R_2(\cos(\frac{\pi}{2}-\theta),\sin(\frac{\pi}{2}-\theta))|\le h_{i,3}(a,b,c;\delta_1,\delta_2,\eta). \quad (\textbf{i=1,\,2})
\]
Define $h_i(a,b,c;\delta_1,\delta_2,\eta)=\max_{1\le j\le 3} h_{i,j}(a,b,c;\delta_1,\delta_2,\eta)$ for $i=1,\,2$ corresponding to \textbf{Cases I} and \textbf{II}, respectively.
Then it is easy to see that the function $h_i(a,b,c;\cdot,\cdot,\cdot):I^i_{a,b,c}\to[0,\infty)$ is well-defined and satisfies the desired properties.
\end{proof}

We now apply the previous lemma to  choose \emph{approximate} collinear rank-one connections for the diagonal components of matrices in $R(F_0)$.

\begin{thm}\label{lem-inv}
Let $p_\pm\in\R^n$ satisfy
\[
s_-(r_1)<|p_-|<s_-(r_2)<s_+(r_2)<|p_+|<s_+(r_1),\;\;\mbox{\emph{(\textbf{Case I})}}
\]
\[
s_-(r_1)<|p_-|<s_-(r_2)<s_+(r_1)<|p_+|<s_+(r_2),\;\;\mbox{\emph{(\textbf{Case II})}}
\]
and
$(A(p_+)-A(p_-))\cdot(p_+-p_-)=0$. Then there exists a vector $\zeta^0\in\mathbb{S}^{n-1}$ such that, with $p^0_{\pm}=s_\pm(r_2)\zeta^0$,
$A(p^0_\pm)=r_2\zeta^0$, we have
\[
\max\{|p^0_--p_-|,|p^0_+-p_+|,|A(p^0_-)-A(p_-)|,|A(p^0_+)-A(p_+)|\}
\]
\[
\le h_1(s_-(r_2),s_+(r_2),r_2; s_-(r_2)-s_-(r_1),s_+(r_1)-s_+(r_2),r_2-r_1),\;\;\mbox{\emph{(\textbf{Case I})}}
\]
\[
\max\{|p^0_--p_-|,|p^0_+-p_+|,|A(p^0_-)-A(p_-)|,|A(p^0_+)-A(p_+)|\}
\]
\[
\le h_2(s_-(r_2),s_+(r_2),r_2; s_-(r_2)-s_-(r_1),s_+(r_2)-s_+(r_1),r_2-r_1),\;\;\mbox{\emph{(\textbf{Case II})}}
\]
where $h_1,\,h_2$ are the functions in Lemma \ref{lem-2d-rank}.
\end{thm}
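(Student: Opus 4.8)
The plan is to pass to the two-dimensional plane through $p_-$ and $p_+$ and then quote Lemma~\ref{lem-2d-rank}. Since $A(p)=\frac{\sigma(|p|)}{|p|}p$, I would write $p_-=R_1\zeta_-$ and $p_+=R_2\zeta_+$ with $R_1=|p_-|$, $R_2=|p_+|$, $\zeta_\pm=p_\pm/|p_\pm|\in\mathbb{S}^{n-1}$, so that $A(p_-)=\sigma(R_1)\zeta_-$, $A(p_+)=\sigma(R_2)\zeta_+$; put $\tilde R_1=\sigma(R_1)$, $\tilde R_2=\sigma(R_2)$. From the hypotheses on $|p_\pm|$, the monotonicity of $\sigma$ on the relevant intervals and the definitions of $s_\pm$, one reads off $R_1\in(s_-(r_1),s_-(r_2))$, $\tilde R_1\in(r_1,r_2)$, $R_2\in(s_+(r_2),s_+(r_1))$ (\textbf{Case I}) or $R_2\in(s_+(r_1),s_+(r_2))$ (\textbf{Case II}), and $\tilde R_2\in(r_1,r_2)$ in either case; in particular $R_1,R_2,\tilde R_1,\tilde R_2>0$. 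Expanding the hypothesis $(A(p_+)-A(p_-))\cdot(p_+-p_-)=0$ gives
\[
\tilde R_2 R_2+\tilde R_1 R_1=(\tilde R_2 R_1+\tilde R_1 R_2)\,\zeta_+\cdot\zeta_-,
\]
whose left-hand side and the coefficient on the right are both positive, so $\zeta_+\cdot\zeta_->0$; in particular $\zeta_+\neq-\zeta_-$.

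Because $\zeta_+\neq-\zeta_-$, I would choose an orthonormal pair $e_1,e_2\in\R^n$ spanning a plane $\Pi$ containing $\zeta_\pm$ (any plane through $\zeta_-$ if $\zeta_+=\zeta_-$) with $e_2$ along the bisector of $\zeta_\pm$, together with a number $\theta\in[0,\pi/2)$, so that, identifying $\Pi$ with $\R^2$ by $e_1\mapsto(1,0)$, $e_2\mapsto(0,1)$, one has $\zeta_-=(\cos(\tfrac\pi2+\theta),\sin(\tfrac\pi2+\theta))$ and $\zeta_+=(\cos(\tfrac\pi2-\theta),\sin(\tfrac\pi2-\theta))$. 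Under this isometric identification, $p_-,A(p_-),p_+,A(p_+)$ become $R_1 u_+$, $\tilde R_1 u_+$, $R_2 u_-$, $\tilde R_2 u_-$ with $u_\pm=(\cos(\tfrac\pi2\pm\theta),\sin(\tfrac\pi2\pm\theta))$, and the orthogonality relation turns into exactly the vanishing-product hypothesis of Lemma~\ref{lem-2d-rank}. I would then apply that lemma with
\[
a=s_-(r_2),\quad b=s_+(r_2),\quad c=r_2,\quad \delta_1=s_-(r_2)-s_-(r_1),\quad \eta=r_2-r_1,
\]
and $\delta_2=s_+(r_1)-s_+(r_2)$ in \textbf{Case I}, $\delta_2=s_+(r_2)-s_+(r_1)$ in \textbf{Case II}. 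The structural inequalities in Hypotheses (PM) and (H) (strict monotonicity of $s_\mp$ in $r$, and $s_-(r_2)<s_0<s_+(r_2)$ in Case I, $s_-(r_2)<s_1<s_2<s_+(r_1)$ in Case II) give $b>a>0$, $c>0$, $\delta_1,\delta_2,\eta>0$, the required orderings $0<a-\delta_1<a<b<b+\delta_2$ (resp.\ $0<a-\delta_1<a<b-\delta_2<b$) and $0<c-\eta<c$, and the memberships $R_1\in[a-\delta_1,a]$, $R_2\in[b,b+\delta_2]$ (resp.\ $[b-\delta_2,b]$), $\tilde R_1,\tilde R_2\in[c-\eta,c]$.

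Lemma~\ref{lem-2d-rank} then yields $\theta\in(-\tfrac\pi2,\tfrac\pi2)$ (consistent with the choice above), $\tilde R_1\ge\tilde R_2$, and that the maximum of $|(0,a)-R_1 u_+|$, $|(0,b)-R_2 u_-|$, $|(0,c)-\tilde R_1 u_+|$, $|(0,c)-\tilde R_2 u_-|$ is at most $h_1(a,b,c;\delta_1,\delta_2,\eta)$ in Case I and $h_2(a,b,c;\delta_1,\delta_2,\eta)$ in Case II. To finish, I would take $\zeta^0=e_2\in\mathbb{S}^{n-1}$; then $p^0_\pm=s_\pm(r_2)\zeta^0$ satisfy $A(p^0_\pm)=\sigma(s_\pm(r_2))\zeta^0=r_2\zeta^0$, and under the identification $p^0_-\mapsto(0,a)$, $p^0_+\mapsto(0,b)$, $A(p^0_\pm)\mapsto(0,c)$, while $p_-\mapsto R_1 u_+$, $A(p_-)\mapsto\tilde R_1 u_+$, $p_+\mapsto R_2 u_-$, $A(p_+)\mapsto\tilde R_2 u_-$. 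Hence $|p^0_--p_-|$, $|p^0_+-p_+|$, $|A(p^0_-)-A(p_-)|$, $|A(p^0_+)-A(p_+)|$ are precisely the four quantities just bounded, which gives the asserted inequality.

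I expect the only delicate points to be bookkeeping ones: the degenerate configuration $\zeta_+=\zeta_-$, where the spanning plane is not unique and must be fixed by hand; the exclusion of the antipodal configuration $\zeta_+=-\zeta_-$, which is the one case incompatible both with the orthogonality relation (it would force the positive quantity $\tilde R_2 R_2+\tilde R_1 R_1$ to be negative) and with the conclusion $\theta\in(-\tfrac\pi2,\tfrac\pi2)$ of Lemma~\ref{lem-2d-rank}; and the correct matching of the six parameters $a,b,c,\delta_1,\delta_2,\eta$ to the two cases together with the interval memberships. All the quantitative content — the angle estimate and the four distance estimates — is already packaged in Lemma~\ref{lem-2d-rank}, so no further computation is needed.
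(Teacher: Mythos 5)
Your proof is correct and follows essentially the same route as the paper: pass to the plane spanned by $p_\pm$, take $\zeta^0$ to be the normalized bisector of $p_+/|p_+|$ and $p_-/|p_-|$, and invoke Lemma \ref{lem-2d-rank} with exactly the parameter choices $a=s_-(r_2)$, $b=s_+(r_2)$, $c=r_2$, $\delta_1=s_-(r_2)-s_-(r_1)$, $\delta_2=\pm(s_+(r_1)-s_+(r_2))$, $\eta=r_2-r_1$, $R_i=|p_\mp|$, $\tilde R_i=\sigma(|p_\mp|)$. Your explicit exclusion of the antipodal configuration via the sign of $\zeta_+\cdot\zeta_-$ is a small point of rigor the paper leaves implicit, but it does not change the argument.
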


\begin{proof}
Let $\Sigma_2$ denote the 2-dimensional linear subspace of $\R^n$ spanned by the two vectors $p_\pm$. (In the case that $p_\pm$ are collinear, we choose $\Sigma_2$ to be any 2-dimensional space containing $p_\pm$.)
Set
\[
\zeta^0=\frac{\frac{p_+}{|p_+|}+\frac{p_-}{|p_-|}}{\big|\frac{p_+}{|p_+|}+\frac{p_-}{|p_-|}\big|}\in\mathbb{S}^{n-1}\cap\Sigma_2.
\]
Since the vectors $p_\pm$, $A(p_\pm)$ and $\zeta^0$ all lie in $\Sigma_2$, we can recast the problem into the setting of the previous lemma via one of the two  linear isomorphisms of
$\Sigma_2$ onto $\R^2$ with correspondence $\zeta^0\leftrightarrow (0,1)\in\R^2.$ Then the results follow with the following choices in applying Lemma \ref{lem-2d-rank}: $a=s_-(r_2)$, $b=s_+(r_2)$, $c=r_2$,
$\delta_1=s_-(r_2)-s_-(r_1)$, $\delta_2=s_+(r_1)-s_+(r_2)$  (\textbf{Case I}), $\delta_2=s_+(r_2)-s_+(r_1)$  (\textbf{Case II}), $\eta=r_2-r_1$, $R_1=|p_-|$, $R_2=|p_+|$, $\tilde R_1=\sigma(|p_-|)$,
$\tilde R_2=\sigma(|p_+|)$, and $\theta\in[0,\pi/2]$ is the half of the angle between $p_+$ and $p_-$.
\end{proof}

\subsubsection{\bf Final characterization of $R(F_0)$.}
We are now ready to establish the  result concerning  the essential structures of $R(F_0)$. For this purpose, it suffices to stick only  to the diagonal components of matrices in $R(F_0)$.

\begin{thm}\label{thm:main2}
Let $0<r_2<\sigma(s_0)$ \emph{(\textbf{Case I})}, $\sigma(s_2)<r_2<\sigma(s_1)$ \emph{(\textbf{Case II})}. Then there exists a number $l_2=l_{r_2}\in(0,r_2)$ \emph{(\textbf{Case I})}, a number $l_2=l_{r_2}\in(\sigma(s_2),r_2)$ \emph{(\textbf{Case II})} such that for any $l_2<r_1<r_2$, the set $\mathcal{S}=\mathcal{S}_{r_1,r_2}\subset\R^{n+n}$ in \emph{(\ref{rough-1})} satisfies the following:
\begin{itemize}
\item[(i)] $\sup_{(p,\beta)\in\mathcal S}|p|\le s_+(r_1)$ \emph{(\textbf{Case I})}, $\sup_{(p,\beta)\in\mathcal S}|p|\le s_+(r_2)$ \emph{(\textbf{Case II})} and   $\,\sup_{(p,\beta)\in\mathcal S}|\beta|\le r_2$; hence $\mathcal{S}$ is bounded.

\item[(ii)] $\mathcal{S}$ is open.

\item[(iii)] For each $(p_0,\beta_0)\in\mathcal{S},$ there exist an open set $\mathcal{V}\subset\subset\mathcal{S}$ containing $(p_0,\beta_0)$ and $C^1$ functions $q:\bar{\mathcal{V}}\to\mathbb{S}^{n-1}$, $\gamma:\bar{\mathcal{V}}\to\R^n$, $t_\pm:\bar{\mathcal{V}}\to\R$ with $\gamma\cdot q=0$ and $t_-<0<t_+$  on $\bar{\mathcal{V}}$ such that for every $\xi=\begin{pmatrix} p & c\\ B & \beta\end{pmatrix}\in R(F_0)=R(F_{r_1,r_2}(0))$ with $(p,\beta)\in\bar{\mathcal{V}}$, we have
\[
\xi+t_\pm\eta\in F_\pm,
\]
where $t_\pm=t_\pm(p,\beta)$, $\eta=\begin{pmatrix} q(p,\beta) & b\\ \frac{1}{b}\gamma(p,\beta)\otimes q(p,\beta) & \gamma(p,\beta)\end{pmatrix}$, and $b\neq 0$ is arbitrary.
\end{itemize}
\end{thm}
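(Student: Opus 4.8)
The plan is to encode membership in $\mathcal{S}$ purely through rank-one connections between the diagonal parts of $F_-$ and $F_+$, and then to realize $\mathcal{S}$ as the image of an explicit $C^1$ map to which the inverse function theorem applies, the hypotheses being checked by comparison with the ``radial'' connections supplied by Theorem \ref{lem-inv}. \emph{Reformulation.} Discarding the $c$- and $B$-entries via Lemmas \ref{lem-form} and \ref{lem-rough}, $(p,\beta)\in\mathcal{S}$ iff there are $p_\pm\in\R^n$ with $|p_-|\in(s_-(r_1),s_-(r_2))$ and $|p_+|\in I_+$, where $I_+=(s_+(r_2),s_+(r_1))$ in \textbf{Case I} and $I_+=(s_+(r_1),s_+(r_2))$ in \textbf{Case II}, such that $(A(p_+)-A(p_-))\cdot(p_+-p_-)=0$, together with a $\lambda\in(0,1)$ for which $p=\lambda p_++(1-\lambda)p_-$ and $\beta=\lambda A(p_+)+(1-\lambda)A(p_-)$; under this correspondence $q=\tfrac{p_+-p_-}{|p_+-p_-|}$, $t_\pm=(p_\pm-p)\cdot q$ (so $t_+-t_-=|p_+-p_-|$, $\lambda=-t_-/(t_+-t_-)$), and $\gamma=\tfrac{A(p_+)-A(p_-)}{t_+-t_-}$, matching Lemma \ref{lem-form}. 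Item (i) is then immediate: $|p|\le\max\{|p_+|,|p_-|\}$, which is at most $s_+(r_1)$ in \textbf{Case I} and at most $s_+(r_2)$ in \textbf{Case II}; and since $\sigma>0$ on $(s_-(r_1),s_-(r_2))$ and on $I_+$, where it is monotone with values in $(r_1,r_2)$, we get $f(|p_\pm|^2)>0$ and $|A(p_\pm)|=f(|p_\pm|^2)|p_\pm|=\sigma(|p_\pm|)<r_2$, hence $|\beta|<r_2$.

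\emph{The map and its linearization.} Define the $C^1$ map
\[
\Phi(p_+,p_-,\lambda)=\bigl(\lambda p_++(1-\lambda)p_-,\ \lambda A(p_+)+(1-\lambda)A(p_-),\ (A(p_+)-A(p_-))\cdot(p_+-p_-)\bigr)
\]
on the open set $\mathcal{O}=\{|p_-|\in(s_-(r_1),s_-(r_2)),\ |p_+|\in I_+,\ \lambda\in(0,1)\}\subset\R^{2n+1}$, so that $\mathcal{S}=\{(p,\beta):(p,\beta,0)\in\Phi(\mathcal{O})\}$. At a \emph{radial} configuration $(p_+^0,p_-^0,\lambda)=(s_+(r_2)\zeta,s_-(r_2)\zeta,\lambda)$, $\zeta\in\mathbb{S}^{n-1}$, $\lambda\in(0,1)$, one has $A(p_\pm^0)=r_2\zeta$, so the constraint entry vanishes and $DA(p_\pm^0)$ acts as $\sigma'(s_\pm(r_2))$ on $\R\zeta$ and as $f(s_\pm(r_2)^2)$ on $\zeta^\perp$; decomposing $\delta p_\pm$ along $\zeta$ and $\zeta^\perp$, $D\Phi$ block-diagonalizes, with a $3\times3$ block of determinant $\pm(s_+(r_2)-s_-(r_2))^2\sigma'(s_+(r_2))\sigma'(s_-(r_2))$ and an $(n-1)$-fold block of determinant a nonzero multiple of $\bigl(\lambda(1-\lambda)(f(s_-(r_2)^2)-f(s_+(r_2)^2))\bigr)^{n-1}$. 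Both are nonzero: $\sigma'(s_\pm(r_2))\ne0$ on the branches involved by Hypotheses (PM)/(H), $\lambda\in(0,1)$, and $f(s_+(r_2)^2)\ne f(s_-(r_2)^2)$ since otherwise $s_\pm(r_2)f(s_\pm(r_2)^2)=r_2$ would force $s_+(r_2)=s_-(r_2)$. Hence $D\Phi$ is invertible at every radial configuration.

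\emph{Choice of $l_2$ and the inverse function theorem.} Keeping $r_2$ fixed, choose $l_2=l_{r_2}$ (in $(0,r_2)$ for \textbf{Case I}, in $(\sigma(s_2),r_2)$ for \textbf{Case II}) close enough to $r_2$ that the bounds $h_1,h_2$ of Theorem \ref{lem-inv} — evaluated at arguments $(\delta_1,\delta_2,\eta)$ tending to $0$ as $r_1\uparrow r_2$, by Lemma \ref{lem-2d-rank} — are so small that, for every $r_1\in(l_2,r_2)$, each admissible connection $(p_+,p_-)$ lies (by Theorem \ref{lem-inv}, within distance $h_i$ of $(s_+(r_2)\zeta^0,s_-(r_2)\zeta^0)$) in a neighborhood of the radial configurations on which, by continuity of $D\Phi$, invertibility persists. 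Given $(p_0,\beta_0)\in\mathcal{S}$ with underlying configuration $(\check p_+,\check p_-,\check\lambda)\in\mathcal{O}$, the inverse function theorem makes $\Phi$ a $C^1$-diffeomorphism of a neighborhood of $(\check p_+,\check p_-,\check\lambda)$ onto an open neighborhood of $(p_0,\beta_0,0)$ in $\R^{2n+1}$; intersecting with $\{(\cdot,\cdot,0)\}$ yields an open $\mathcal{V}_0\subset\mathcal{S}$ with $(p_0,\beta_0)\in\mathcal{V}_0$ — this is (ii) — and $C^1$ maps $p_\pm(\cdot,\cdot),\lambda(\cdot,\cdot)$ on $\mathcal{V}_0$. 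Put $q=\tfrac{p_+-p_-}{|p_+-p_-|}$, $t_\pm=(p_\pm-p)\cdot q$, $\gamma=\tfrac{A(p_+)-A(p_-)}{t_+-t_-}$, and choose open $\mathcal{V}\subset\subset\mathcal{V}_0$ with $(p_0,\beta_0)\in\mathcal{V}$; then $q,\gamma,t_\pm\in C^1(\bar{\mathcal{V}})$, $\gamma\cdot q=\tfrac{(A(p_+)-A(p_-))\cdot(p_+-p_-)}{|p_+-p_-|^2}=0$, and $t_-<0<t_+$ because $p$ is interior to $[p_-,p_+]$ (as $\lambda\in(0,1)$). For any $\xi=\begin{pmatrix}p&c\\B&\beta\end{pmatrix}\in R(F_0)$ with $(p,\beta)\in\bar{\mathcal{V}}$ and any $b\ne0$, the matrix $\xi+t_\pm\eta$ has $(1,1)$-block $p+t_\pm q=p_\pm$ (since $p_\pm-p$ is parallel to $q$ with $(p_\pm-p)\cdot q=t_\pm$), $(2,2)$-block $\beta+t_\pm\gamma=A(p_\pm)$, arbitrary $(1,2)$-entry $c+t_\pm b$, and $(2,1)$-block of trace $\tr B+\tfrac{t_\pm}{b}\,\gamma\cdot q=0$; as $|p_\pm|$ lies in the required range, $\xi+t_\pm\eta\in F_\pm$, which is (iii).

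\emph{Main obstacle.} The crux is the invertibility of $D\Phi$ at (near-)radial configurations and, more delicately, its \emph{uniform} persistence under the $O(h_i)$ perturbation off the radial set: the transverse block of $D\Phi$ degenerates like $\lambda(1-\lambda)$, so when $(p_0,\beta_0)$ approaches $\partial\mathcal{S}$ (equivalently $\check\lambda$ near $0$ or $1$) one must control the perturbation terms against this small quantity. This is exactly what dictates how close to $r_2$ the threshold $l_2$ must be taken, and it is where the quantitative estimates of Lemma \ref{lem-2d-rank} and Theorem \ref{lem-inv} are used most sharply.
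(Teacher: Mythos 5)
Your reformulation of $\mathcal{S}$ via the configuration map
\[
\Phi(p_+,p_-,\lambda)=\bigl(\lambda p_++(1-\lambda)p_-,\ \lambda A(p_+)+(1-\lambda)A(p_-),\ (A(p_+)-A(p_-))\cdot(p_+-p_-)\bigr)
\]
is correct, your computation of $D\Phi$ at radial configurations is right, and the deduction of (i), (ii) and (iii) from local invertibility of $\Phi$ would be fine. This is a genuinely different route from the paper, which instead applies the implicit function theorem to a map $F(\gamma',q',s';p,\beta)$ solving for the connection data $(\gamma',q',s')$ in terms of $(p,\beta)$, and then reduces invertibility of $D_{(\gamma',q',s')}F$ by row/column operations to the non-vanishing of a determinant $\mathrm{DET}(p_+,p_-,q,\gamma)$ that depends \emph{only} on the endpoints of the connection and not on the barycentric parameter $\lambda$. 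That independence is exactly what your parametrization loses, and it is where your proof has a genuine gap.

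The gap is the step you yourself label the main obstacle, and your proposed resolution does not close it. Your determinant at a radial configuration is $\pm(s_+(r_2)-s_-(r_2))^2\sigma'(s_+(r_2))\sigma'(s_-(r_2))\bigl[\lambda(1-\lambda)(f(s_-(r_2)^2)-f(s_+(r_2)^2))\bigr]^{n-1}$, which tends to $0$ as $\lambda\to 0$ or $1$; since $\lambda$ ranges over the open interval $(0,1)$, the radial set is not compact and $\det D\Phi$ is \emph{not} bounded away from zero on it. Consequently no fixed neighborhood of the radial configurations lies in the invertibility set, and "continuity of $D\Phi$" plus "take $l_2$ close to $r_2$" cannot work: once $r_1$ is fixed, the perturbation bound $h_i(r_1)>0$ is a fixed positive number, while points of $\mathcal{S}_{r_1,r_2}$ near the endpoints of their segments have $\lambda(1-\lambda)$ arbitrarily small, so a size-$h_i$ perturbation can swamp a determinant of size $[\lambda(1-\lambda)]^{n-1}$ for $n\ge 2$. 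The argument can be repaired, but it needs an extra idea you do not supply: since at $\lambda=0$ (resp.\ $\lambda=1$) the $\delta p_+$ (resp.\ $\delta p_-$) columns of $D\Phi$ are supported only in the last row, a column expansion shows $\det D\Phi=\lambda^{n-1}(1-\lambda)^{n-1}\bigl[\lambda(1-\lambda)D_{00}+(1-\lambda)D_{10}+\lambda D_{01}\bigr]$ with $D_{00},D_{10},D_{01}$ depending continuously on $(p_+,p_-)$ alone; at radial configurations $D_{00}=0$ and $D_{10}=D_{01}$ equal the nonzero constant above, so the bracketed cofactor is bounded away from zero uniformly in $\lambda\in[0,1]$ after an $O(h_i)$ perturbation, and invertibility on $\mathcal{O}$ (where $\lambda\in(0,1)$ strictly) follows. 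Either carry out this exact factorization, or switch to a parametrization (as the paper does) in which the invertibility criterion is manifestly $\lambda$-free. As written, the crux of the proof is asserted rather than proved.
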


\begin{proof}
Fix any $0<r_2<\sigma(s_0)$ (\textbf{Case I}), $\sigma(s_2)<r_2<\sigma(s_1)$ (\textbf{Case II}). For the moment, we let $r_1$ be any number in $(0,r_2)$ (\textbf{Case I}), in $(\sigma(s_2),r_2)$ (\textbf{Case II}) and prove (i). Then we choose later a lower bound $l_2=l_{r_2}\in(0,r_2)$ (\textbf{Case I}), $l_2=l_{r_2}\in(\sigma(s_2),r_2)$ (\textbf{Case II}) of $r_1$ for the validity of (ii) and (iii) above.

We divide the proof into several steps.

1. To show that (i) holds, choose any $(p,\beta)\in\mathcal S$. By Lemma \ref{lem-rough}, $\xi:=\begin{pmatrix} p & 0\\ O & \beta\end{pmatrix}\in R(F_0)=R(F_{r_1,r_2}(0))$, where $O$ is the $n\times n$ zero matrix. By the definition of $R(F_0)$, there exist two matrices $\xi_\pm=\begin{pmatrix} p_\pm & c_\pm\\ B_\pm & \sigma(|p_\pm|)\frac{p_\pm}{|p_\pm|}\end{pmatrix}\in F_\pm$ and a number $0<\lambda<1$ such that
$
\xi=\lambda\xi_++(1-\lambda)\xi_-.
$
So
\[
|p|=|\lambda p_++(1-\lambda)p_-|\le s_+(r_1),\quad\mbox{(\textbf{Case I})}
\]
\[
|p|=|\lambda p_++(1-\lambda)p_-|\le s_+(r_2),\quad\mbox{(\textbf{Case II})}
\]
\[
|\beta|=\left|\lambda\sigma(|p_+|)\frac{p_+}{|p_+|}+(1-\lambda)\sigma(|p_-|) \frac{p_-}{|p_-|}\right|\le r_2\, ;
\]
%\[
%|(p,\beta)|=\left|\lambda(p_+,\sigma(|p_+|)\frac{p_+}{|p_+|})+(1-\lambda)(p_-,\sigma(|p_-|)\frac{p_-}{|p_-|})\right|
%\le  s_+(r_2) + r_2  ;
%\]
hence, $\sup_{(p,\beta)\in\mathcal S}|p|\le s_+(r_1)$ (\textbf{Case I}), $\sup_{(p,\beta)\in\mathcal S}|p|\le s_+(r_2)$ (\textbf{Case II}),  $\sup_{(p,\beta)\in\mathcal S}|\beta|\le r_2,$ and $\mathcal S$ is bounded. So (i) is proved.
%\[
%\sup_{(p,\beta)\in\mathcal S}|(p,\beta)|\le\sqrt{(s_+(\bar\sigma))^2+\bar{\sigma}^2},\quad \sup_{(p,\beta)\in\mathcal S}|p|\le s_+(\bar\sigma).
%\]

2. We now turn to the remaining assertions that the set $\mathcal S=\mathcal S_{r_1,r_2}$ fulfills (ii) and (iii) for all $r_1<r_2$ sufficiently close to $r_2$.  In this step, we still assume $r_1$ is any fixed number in $(0,r_2)$ (\textbf{Case I}), in $(\sigma(s_2),r_2)$ (\textbf{Case II}).

Let $(p_0,\beta_0)\in\mathcal S$.
Since $\xi_0:=\begin{pmatrix} p_0 & 0\\ O & \beta_0\end{pmatrix}\in R(F_0)$, it follows from Lemma \ref{lem-form} that there exist numbers $s_0<0<t_0$ and vectors $q_0,\,\gamma_0\in\R^n$ with $|q_0|=1$, $\gamma_0\cdot q_0=0$ such that $\xi_0+s_0\eta_0\in F_-$ and $\xi_0+t_0\eta_0\in F_+$, where $\eta_0=\begin{pmatrix} q_0 & b\\ \frac{1}{b}q_0\otimes\gamma_0 & \gamma_0\end{pmatrix}$ and $b\neq 0$ is any fixed number. Let $q'_0=t_0q_0\ne 0$, $\gamma'_0=t_0\gamma_0$, and $s'_0=s_0/t_0<0$; then
\begin{equation}\label{thm:main2-1}
\begin{cases}
\gamma'_0\cdot q'_0=0,\quad s_-(r_1)<|p_0+s'_0q'_0|<s_-(r_2),\\
s_+(r_2)<|p_0+q'_0|<s_+(r_1),\quad\mbox{(\textbf{Case I})}\\
s_+(r_1)<|p_0+q'_0|<s_+(r_2),\quad\mbox{(\textbf{Case II})}\\
\sigma(|p_0+s'_0q'_0|)\frac{p_0+s'_0q'_0}{|p_0+s'_0q'_0|}=\beta_0+s'_0\gamma'_0,\\
\sigma(|p_0+q'_0|)\frac{p_0+q'_0}{|p_0+q'_0|}=\beta_0+\gamma'_0.\end{cases}
\end{equation}
Observe also
\begin{equation}\label{thm:main2-2}
\begin{split}
&t_0-s_0\ge |(p_0+t_0q_0)|-|(p_0+s_0q_0)|>s_+(r_2)-s_-(r_2),\quad\mbox{(\textbf{Case I})}\\
&t_0-s_0\ge |(p_0+t_0q_0)|-|(p_0+s_0q_0)|>s_+(r_1)-s_-(r_2).\quad\mbox{(\textbf{Case II})}
\end{split}
\end{equation}
%\[
%|\gamma_0|\le \frac{\bar\sigma+|\beta_0|}{\max\{t_0,-s_0\}}\le \frac{2\bar\sigma}{\frac{1}{2}(s_2-s_1)}\le\frac{4\sigma(s_1)}{s_2-s_1}.**
%\]

Next, consider the function $F$ defined by
\begin{equation}
F(\gamma',q',s';p,\beta) = \begin{pmatrix}  \sigma(|p+s'q'|)\frac{p+s'q'}{|p+s'q'|}-\beta-s'\gamma'  \\
 \sigma(|p+q'|)\frac{p+q'}{|p+q'|}-\beta-\gamma' \\
  \gamma'\cdot q'\end{pmatrix} \in\R^{n+n+1}
\end{equation}
for all $\gamma',\,q',\,p,\,\beta\in\R^n$ and $s'\in\R$ with  $s_-(r_1)<|p+s'q'|<s_-(r_2)$, $s_+(r_2)<|p+q'|<s_+(r_1)$ (\textbf{Case I}), $s_+(r_1)<|p+q'|<s_+(r_2)$ (\textbf{Case II}). Then  $F$ is $C^1$ in the described open subset of $\R^{n+n+1+n+n}$, and the observation (\ref{thm:main2-1}) yields that
\[
F(\gamma'_0,q'_0,s'_0;p_0,\beta_0)=0.%\in\R^{n+n+1}.
\]

Suppose for the moment that the Jacobian matrix $D_{(\gamma',q',s')}F$ %in $\mathbb{M}^{(n+n+1)\times(n+n+1)}$
is invertible at the point $(\gamma'_0,q'_0,s'_0;p_0,\beta_0)$; then the Implicit Function Theorem implies the following: There exist a bounded domain $\tilde{\mathcal{V}}=\tilde{\mathcal{V}}_{(p_0,\beta_0)}\subset\R^{n+n}$ containing $(p_0,\beta_0)$ and $C^1$ functions $\tilde{q},\,\tilde\gamma\in\R^n$, $\tilde s\in\R$ of $(p,\beta)\in\tilde{\mathcal{V}}$ such that
\[
\tilde{\gamma}(p_0,\beta_0)=\gamma'_0,\;\; \tilde{q}(p_0,\beta_0)=q'_0,\;\; \tilde{s}(p_0,\beta_0)=s'_0
\]
and that
\[
\tilde s(p,\beta)<0,\;\; s_-(r_1)<|p+\tilde s(p,\beta)\tilde q(p,\beta)|<s_-(r_2),
\]
\[
s_+(r_2)<|p+\tilde q(p,\beta)|<s_+(r_1),\quad\mbox{(\textbf{Case I})}
\]
\[
s_+(r_1)<|p+\tilde q(p,\beta)|<s_+(r_2),\quad\mbox{(\textbf{Case II})}
\]
\[
F(\tilde\gamma(p,\beta),\tilde q(p,\beta),\tilde s(p,\beta);p,\beta)=0\quad \forall(p,\beta)\in\tilde{\mathcal{V}}.
\]
Define functions
\[
\gamma=\frac{\tilde\gamma}{|\tilde q|}, \;\; q=\frac{\tilde q}{|\tilde q|},\;\; t_-=\tilde s |\tilde q|,\;\; t_+=|\tilde q| \quad\mbox{in $\tilde{\mathcal{V}}$};
\]
then
\[
s_-(r_1)<|p+ t_-q|<s_-(r_2),
\]
\[
s_+(r_2)<|p+t_+ q|<s_+(r_1),\quad\mbox{(\textbf{Case I})}
\]
\[
s_+(r_1)<|p+t_+ q|<s_+(r_2),\quad\mbox{(\textbf{Case II})}
\]
\[
\sigma(|p+t_\pm q|)\frac{p+t_\pm q}{|p+t_\pm q|}=\beta+t_\pm \gamma,\;\; |q|=1,\;\; \gamma\cdot q=0,\;\; t_-<0<t_+,
\]
where $(p,\beta)\in\tilde{\mathcal{V}}$, $\gamma=\gamma(p,\beta)$, $q=q(p,\beta),$  and $t_\pm=t_\pm(p,\beta)$.

Let $(p,\beta)\in\tilde{\mathcal V}$, $B\in\mathbb{M}^{n\times n}$, $\tr B=0$, $b,c\in\R$, $b\neq 0$, $q=q(p,\beta)$, $\gamma=\gamma(p,\beta)$, $t_\pm=t_\pm(p,\beta)$, $\xi=\begin{pmatrix} p & c\\ B & \beta\end{pmatrix}$, and $\eta=\begin{pmatrix} q & b\\ \frac{1}{b}\gamma\otimes q & \gamma\end{pmatrix}$. Then $\xi_\pm:=\xi+t_\pm\eta\in F_\pm$. By the definition of $R(F_0)$, $\xi\in(\xi_-,\xi_+)\subset R(F_0)$. By Lemma \ref{lem-rough}, we thus have $(p,\beta)\in\mathcal{S}$; hence $\tilde{\mathcal{V}}\subset\mathcal S$. This proves that $\mathcal S$ is open. Choosing any open set $\mathcal V\subset\subset\tilde{\mathcal{V}}$ with $(p_0,\beta_0)\in\mathcal{V}$,  the assertion  (iii) will hold.

3. In this step, we continue Step 2 to deduce an equivalent condition for the invertibility of the Jacobian matrix $D_{(\gamma',q',s')}F$  at  $(\gamma'_0,q'_0,s'_0;p_0,\beta_0)$. By direct computation,
\[
D_{(\gamma',q',s')}F= \begin{pmatrix} -s'I_n & M_{s'} & \omega^-_{s'} \\ -I_n & M_1 & 0  \\ q' & \gamma' & 0  \end{pmatrix}\in\mathbb{M}^{(n+n+1)\times(n+n+1)},
\]
where $I_n$ is the $n\times n$ identity matrix,
\[
M_{s'}=s'\left (\sigma'(|p+s'q'|)-\frac{\sigma(|p+s'q'|)}{|p+s'q'|}\right )\frac{p+s'q'}{|p+s'q'|}\otimes \frac{p+s'q'}{|p+s'q'|}+s'\frac{\sigma(|p+s'q'|)}{|p+s'q'|}I_n,
\]
\[
\omega^\pm_{s'}=\left (\sigma'(|p+s'q'|)-\frac{\sigma(|p+s'q'|)}{|p+s'q'|}\right )\left (\frac{p+s'q'}{|p+s'q'|}\cdot q'\right )\frac{p+s'q'}{|p+s'q'|}+\frac{\sigma(|p+s'q'|)}{|p+s'q'|}q'\pm\gamma'.
\]
Here the prime only in $\sigma'$ denotes the derivative.
For notational simplicity,  we write $(\gamma',q',s';p,\beta)=(\gamma'_0,q'_0,s'_0;p_0,\beta_0)$.
Applying suitable elementary row operations, as $s'<0$, we have
\[
D_{(\gamma',q',s')}F \,\to\, \begin{pmatrix} -s'I_n & M_{s'} & \omega^-_{s'} \\ O & M_1-\frac{1}{s'}M_{s'} & -\frac{1}{s'}\omega^-_{s'} \\ 0 & \gamma'+\frac{q'_1}{s'}(M_{s'})^1+\cdots+\frac{q'_n}{s'}(M_{s'})^n & \frac{1}{s'}q'\cdot\omega^-_{s'}\end{pmatrix}
\]
\[
\to\, \begin{pmatrix} -s'I_n & M_{s'} & \omega^-_{s'} \\ O & s'M_1-M_{s'} & -\omega^-_{s'} \\ 0 & s'\gamma'+q'_1(M_{s'})^1+\cdots+q'_n(M_{s'})^n & q'\cdot\omega^-_{s'}\end{pmatrix},
\]
where $O$ is the $n\times n$ zero matrix, and $(M_{s'})^i$ is the $i$th row of $M_{s'}$. Since $|q'|=t_0$, $\gamma'\cdot q'=0$, and $s_-(r_1)<|p+s'q'|<s_-(r_2)$, we have %from Hypothesis (H) that
\[
q'\cdot\omega^-_{s'}=\left (\sigma'(|p+s'q'|)-\frac{\sigma(|p+s'q'|)}{|p+s'q'|}\right ) \left (\frac{p+s'q'}{|p+s'q'|}\cdot q'\right )^2+\frac{\sigma(|p+s'q'|)}{|p+s'q'|}t_0^2
\]
\[
=t_0^2\left(\cos^2\theta'\sigma'(|p+s'q'|)+(1-\cos^2\theta')\frac{\sigma(|p+s'q'|)}{|p+s'q'|}\right)>0,
\]
where $\theta'\in[0,\pi]$ is the angle between $p+s'q'$ and $q'$. Observe here that the forward part of $\sigma$ in the definition of $F_-$ becomes essential to guarantee that $\sigma'(|p+s'q'|)>0$.
After some elementary column operations to the last matrix from the above row operations, we obtain
\[
D_{(\gamma',q',s')}F \,\to\,  \begin{pmatrix} -s'I_n & M_{s'}-N_{s'} & \omega^-_{s'} \\ O & s'M_1-M_{s'}+N_{s'} & -\omega^-_{s'} \\ 0 & 0 & q'\cdot\omega^-_{s'} \end{pmatrix},
\]
where the $j$th column of  $N_{s'}\in\mathbb{M}^{n\times n}$ is
$\frac{s'\gamma'_j+q'\cdot(M_{s'})_j}{q'\cdot\omega^-_{s'}}\omega^-_{s'}$. So $D_{(\gamma',q',s')}F$ is invertible if and only if the $n\times n$ matrix  $M_1-\frac{1}{s'}M_{s'}+\frac{1}{s'}N_{s'}$ is invertible. We compute
\[
M_1-\frac{1}{s'}M_{s'}+\frac{1}{s'}N_{s'} = \left(\sigma'(|p+q'|)-\frac{\sigma(|p+q'|)}{|p+q'|}\right)\frac{p+q'}{|p+q'|}\otimes \frac{p+q'}{|p+q'|}
\]
\[
+\frac{\sigma(|p+q'|)}{|p+q'|}I_n-\left(\sigma'(|p+s'q'|)-\frac{\sigma(|p+s'q'|)}{|p+s'q'|}\right )\frac{p+s'q'}{|p+s'q'|}\otimes \frac{p+s'q'}{|p+s'q'|}
-\frac{\sigma(|p+s'q'|)}{|p+s'q'|}I_n
\]
\[
+\frac{1}{q'\cdot\omega^-_{s'}}\omega^-_{s'}\otimes \left [\gamma'
+\left(\sigma'(|p+s'q'|)-\frac{\sigma(|p+s'q'|)}{|p+s'q'|}\right)\left(\frac{p +s'q'}{|p+s'q'|}\cdot q'\right)\frac{p+s'q'}{|p+s'q'|}+\frac{\sigma(|p+s'q'|)}{|p+s'q'|}q'\right]
\]
\[
\begin{split}
=(a_1-a_{s'})I_n & +(b_1-a_1)\frac{p+q'}{|p+q'|}\otimes\frac{p+q'}{|p+q'|}\\
& -(b_{s'}-a_{s'})
\frac{p+s'q'}{|p+s'q'|}\otimes\frac{p+s'q'}{|p+s'q'|}
+
\frac{1}{q'\cdot\omega^-_{s'}}\omega^-_{s'}\otimes\omega^+_{s'},
\end{split}
\]
and set (with an assumption $a_1\neq a_{s'}$)
\[
\begin{split} B:=&\frac{1}{a_1-a_{s'}}(M_1-\frac{1}{s'}M_{s'}+\frac{1}{s'}N_{s'})\\
= &
I_n+\frac{b_1-a_1}{a_1-a_{s'}}\frac{p+q'}{|p+q'|}\otimes\frac{p+q'}{|p+q'|}\\
 &-\frac{b_{s'}-a_{s'}}{a_1-a_{s'}}
\frac{p+s'q'}{|p+s'q'|}\otimes\frac{p+s'q'}{|p+s'q'|} +
\frac{1}{(a_1-a_{s'})q'\cdot\omega^-_{s'}}\omega^-_{s'}\otimes\omega^+_{s'},\end{split}
\]
where
\[
a_{s'}=\frac{\sigma(|p+s'q'|)}{|p+s'q'|},\quad b_{s'}=\sigma'(|p+s'q'|);
\]
 then
$D_{(\gamma',q',s')}F$ is invertible if and only if the matrix
$B\in\mathbb{M}^{n\times n}$ is invertible.

4. To close the arguments in Step 2 and thus to finish the proof, we choose a suitable $l_2=l_{r_2}\in(0,r_2)$ (\textbf{Case I}), $l_2=l_{r_2}\in(\sigma(s_2),r_2)$ (\textbf{Case II}), depending on $r_2$, in such a way that for any $r_1\in(l_2,r_2)$, the matrix $B$, determined through Steps 2 and 3 for any given $(p_0,\beta_0)\in\mathcal S=\mathcal S_{r_1,r_2}$, is invertible.

First, by Hypothesis (PM) or (H), $\tilde r_2\in(0,r_2)$ (\textbf{Case I}), $\tilde r_2\in(\sigma(s_2),r_2)$ (\textbf{Case II}) can be chosen close enough to $r_2$ so that
\[
\frac{\sigma(k)}{k}<\frac{\sigma(l)}{l}\quad\forall  l\in[s_-(\tilde r_2),s_-(r_2)],
\]
\[
\forall k\in[s_+(r_2),s_+(\tilde r_2)]\;\;\mbox{(\textbf{Case I})},\quad \forall k\in[s_+(\tilde r_2),s_+(r_2)]\;\;\mbox{(\textbf{Case II})}.
\]
Then define a real-valued continuous function (to express the determinant of the matrix $B$ from Step 3)
\[
\mbox{DET}(u,v,q,\gamma)=\det\Big(I_n+\frac{\sigma'(|u|)-\frac{\sigma(|u|)}{|u|}}
{\frac{\sigma(|u|)}{|u|}-\frac{\sigma(|v|)}{|v|}}\frac{u}{|u|}\otimes\frac{u}{|u|}
-\frac{\sigma'(|v|)-\frac{\sigma(|v|)}{|v|}}
{\frac{\sigma(|u|)}{|u|}-\frac{\sigma(|v|)}{|v|}}\frac{v}{|v|}\otimes\frac{v}{|v|}
\]
\[
+\frac{1}{(\frac{\sigma(|u|)}{|u|}-\frac{\sigma(|v|)}{|v|})((\sigma'(|v|)-\frac{\sigma(|v|)}{|v|})
(\frac{v}{|v|}\cdot q)^2 + \frac{\sigma(|v|)}{|v|})}
\big((\sigma'(|v|)-\frac{\sigma(|v|)}{|v|})(\frac{v}{|v|}\cdot q)\frac{v}{|v|}
\]
\[
+\frac{\sigma(|v|)}{|v|}q-\gamma \big)\otimes \big((\sigma'(|v|)-\frac{\sigma(|v|)}{|v|})(\frac{v}{|v|}\cdot q)\frac{v}{|v|}
+\frac{\sigma(|v|)}{|v|}q+\gamma \big)\Big)
\]
on the compact set $\mathcal M$ of points $(u,v,q,\gamma)\in\R^n\times\R^n\times\mathbb{S}^{n-1}\times\R^n$ with
\[
|v|\in[s_-(\tilde r_2),s_-(r_2)],\;\;|\gamma|\le 1,
\]
\[
|u|\in[s_+(r_2),s_+(\tilde r_2)]\;\;\mbox{(\textbf{Case I})},\;\;|u|\in[s_+(\tilde r_2),s_+(r_2)]\;\;\mbox{(\textbf{Case II})}.
\]
With $\bar k=s_+(r_2)$ and $\bar l=s_-(r_2)$, for each $q\in\mathbb{S}^{n-1},$
\[
\mbox{DET}(\bar k q,\bar lq,q,0)=\det\Big(I_n+\frac{\sigma'(\bar k)-\frac{\sigma(\bar k)}{\bar k}+\frac{\sigma(\bar l)}{\bar l}}{\frac{\sigma(\bar k)}{\bar k}-\frac{\sigma(\bar l)}{\bar l}}q\otimes q \Big)\ne 0,
\]
since $\sigma'(\bar k)\neq0$ and hence the fraction in front of $q\otimes q$ is different from $-1$. So
\[
d:=\min_{q\in\mathbb{S}^{n-1}}|\mbox{DET}(\bar kq,\bar lq,q,0)|>0.
\]

Next, choose a number $\delta>0$ such that for all $(u,v,q,\gamma),(\tilde u,\tilde v,\tilde q,\tilde \gamma)\in\mathcal M$ with
$|u-\tilde u|,\,|v-\tilde v|,\,|q-\tilde q| ,\,|\gamma-\tilde\gamma|<\delta$, we have
\begin{equation}\label{thm:main2-3}
|\mbox{DET}(u,v,q,\gamma)-\mbox{DET}(\tilde u,\tilde v,\tilde q,\tilde \gamma)|<d/2.
\end{equation}
Let $l_2\in(\tilde r_2,r_2)$ be sufficiently close to $r_2$ so that for all $r_1\in(l_2,r_2)$,
\[
h_1(s_-(r_2),s_+(r_2),r_2; s_-(r_2)-s_-(r_1),s_+(r_1)-s_+(r_2),r_2-r_1)<\tau,\;\mbox{(\textbf{Case I})}
\]
\[
h_2(s_-(r_2),s_+(r_2),r_2; s_-(r_2)-s_-(r_1),s_+(r_2)-s_+(r_1),r_2-r_1)<\tau,\;\mbox{(\textbf{Case II})}
\]
where $h_i$'s are the functions in Theorem \ref{lem-inv}, and
\[
\tau:=\min\{\delta,\delta(s_+(r_2)-s_-(r_2))/4\},\;\;\mbox{(\textbf{Case I})}
\]
\[
\tau:=\min\{\delta,\delta(s_2-s_1)/4\}.\;\;\mbox{(\textbf{Case II})}
\]

Now, fix  any $r_1\in(l_2,r_2)$, and let $B$ be the $n\times n$ matrix determined through Steps 2 and 3 in terms of any given $(p_0,\beta_0)\in\mathcal{S}=\mathcal{S}_{r_1,r_2}$. Let $p_+=p_0+t_0 q_0$ and $p_-=p_0+s_0 q_0$ from Step 2; then $p_\pm$ and $A(p_\pm)$ fulfill the conditions in Theorem \ref{lem-inv}. So this theorem implies that there exists a vector $\zeta^0\in\mathbb{S}^{n-1}$ such that
\[
\max\{|p^0_--p_-|,|p^0_+-p_+|,|A(p^0_-)-A(p_-)|,|A(p^0_+)-A(p_+)|\}<\tau,
\]
where $p^0_+=\bar{k}\zeta^0$, $p^0_-=\bar{l}\zeta^0$, and $A(p_\pm^0)=r_2\zeta^0$. Using (\ref{thm:main2-1}) and (\ref{thm:main2-2}),
\[
|p_+ -\bar{k}\zeta^0|<\delta,\;\;|p_- -\bar{l}\zeta^0|<\delta,
\]
\[
|q_0-\zeta^0|=|\frac{p_+-p_-}{t_0-s_0}-\zeta^0|\le\frac{|(p_+-p_-)-(\bar{k}-\bar{l})\zeta^0|+|(\bar{k}-\bar{l})-(t_0-s_0)|}{t_0-s_0}
\]
\[
\le \frac{2\tau+||p_+^0-p_-^0|-|p_+-p_-||}{t_0-s_0}<\frac{4\tau}{t_0-s_0}<\delta,
\]
\[
|\gamma_0|=|\frac{A(p_+)-A(p_-)}{t_0-s_0}|\le\frac{|A(p_+)-A(p_+^0)|+|A(p_-^0)-A(p_-)|}{t_0-s_0}<\delta.
\]
Since $\det(B)=\mbox{DET}(p_+,p_-,q_0,\gamma_0)$ and $|\mbox{DET}(\bar{k}\zeta^0,\bar{l}\zeta^0,\zeta^0,0)|\ge d$, it  follows from (\ref{thm:main2-3}) that
\[
|\det(B)|>d/2>0.
\]

The proof is now complete.
\end{proof}

\subsection{Relaxation of  $\nabla \omega(z)\in K_0$}

The following result is important for the convex integration with linear constraint; the function $\varphi$ determined here plays a similar role as the tile function $g$ used in \cite{Zh, Zh1}.  For a more general case, see \cite[Lemma 2.1]{Po}.

\begin{lem}\label{approx-lem} Let $\lambda_1,\lambda_2>0$ and $\eta_1=-\lambda_1\eta, \; \eta_2=\lambda_2\eta$ with
\[
\eta=\begin{pmatrix} q &  b\\\frac{1}{b}\gamma\otimes q & \gamma\end{pmatrix},\quad |q|=1,\; \gamma\cdot q=0,\;   b\ne 0.
\]
Let $G\subset \R^{n+1}$ be a bounded domain. Then for each $\epsilon>0$, there exists a function $\omega=(\varphi,\psi)\in C_c^{\infty}(\R^{n+1};\R^{1+n})$ with $\mathrm{supp}(\omega)\subset\subset G$  that satisfies the following properties:

(a) \; $\dv \psi =0$  in $G$,

(b) \; $|\{z\in G\;|\; \nabla\omega(z)\notin \{\eta_1,\;\eta_2\}\}|<\epsilon,$

(c) \; $\dist(\nabla \omega(z),[\eta_1,\eta_2])<\epsilon$ for all $z\in G,$

(d) \; $\|\omega\|_{L^\infty(G)}<\epsilon,$

(e) \; $\int_{\R^n}\varphi(x,t)\,dx=0$ for all $t\in\R$.

\end{lem}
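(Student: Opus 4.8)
The plan is to build $\omega$ by a standard laminate-type construction along the rank-one direction $\eta$, handling the extra requirements (a) and (e) by exploiting the special block structure of $\eta$. First I would reduce to a one-dimensional problem: since $\eta_1,\eta_2$ are scalar multiples of the single rank-one matrix $\eta = \binom{q}{\frac1b\gamma\otimes q}\otimes(q,b)$ (viewing $\eta$ as a column vector tensored with the row $(q,b)$ up to the identification of the block form), every matrix on the segment $[\eta_1,\eta_2]$ is of the form $\binom{s q}{\frac sb\gamma\otimes q}\otimes\cdots$; more precisely $\nabla\omega$ will be forced to lie near $\mathbb{R}\eta$, so I should look for $\omega$ of the form $\omega(z) = \binom{1}{\frac1b\gamma}\, \Gamma\big((x,t)\cdot(q,b)\big) + (\text{small correction})$ for a scalar Lipschitz function $\Gamma$ of the single variable $\tau := (x,t)\cdot(q,b) = x\cdot q + bt$. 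With this ansatz $\nabla\omega = \binom{1}{\frac1b\gamma}\otimes(q,b)\,\Gamma'(\tau)$, which is exactly $\Gamma'(\tau)\,\eta$; choosing $\Gamma$ to be (a smoothing of) a sawtooth whose derivative takes only the two values $-\lambda_1$ and $\lambda_2$ on a proportion $\frac{\lambda_2}{\lambda_1+\lambda_2}$ and $\frac{\lambda_1}{\lambda_1+\lambda_2}$ of its period gives properties (b) and (c), while keeping $\Gamma$ itself uniformly small (amplitude $O(\text{period})$) gives (d).

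The main work is then to localize this (essentially one-dimensional, plane-wave) construction inside the bounded domain $G$ while preserving (a) and (e). For (a): note that $\dv_x\psi$ for $\psi(z) = \frac1b\gamma\,\Gamma(\tau)$ equals $\frac1b(\gamma\cdot q)\Gamma'(\tau) = 0$ since $\gamma\cdot q = 0$ by hypothesis — so the plane-wave profile automatically satisfies the divergence constraint, and I only need to check that the cut-off procedure does not destroy it. The standard device (used e.g. in \cite{Zh,Zh1} and going back to the convex-integration literature) is to multiply by a cut-off $\chi\in C_c^\infty(G)$ and then \emph{correct} the resulting error: $\chi\omega$ has gradient $\chi\nabla\omega + \omega\otimes\nabla\chi$, and the term $\omega\otimes\nabla\chi$ is supported in the thin region where $\nabla\chi\ne 0$ and can be made small in $L^\infty$ because $\|\omega\|_{L^\infty}$ is small; the divergence error $\psi\cdot\nabla_x\chi$ is likewise small and supported in a thin region, and can be killed exactly by subtracting $\mathcal{R}_n$ (a right inverse of $\dv_x$, as in Theorem \ref{div-inv}) applied to it — or, more simply, one arranges the sawtooth to live on a family of parallel slabs exhausting $G$ up to a set of small measure and uses a partition-of-unity argument so that on the bulk of each slab $\omega$ is the exact plane wave (hence $\dv\psi = 0$ and $\nabla\omega\in\{\eta_1,\eta_2\}$ there) and only near slab boundaries, whose total measure is $<\epsilon$, is $\omega$ modified, still staying within $\epsilon$ of the segment $[\eta_1,\eta_2]$ and divergence-free after an explicit small correction.

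For (e): the requirement $\int_{\R^n}\varphi(x,t)\,dx = 0$ for every $t$ is automatic in the slab construction provided each slab is taken thin in the direction $q$ and the sawtooth profile $\Gamma$ is chosen with zero mean over each full tooth — then for each fixed $t$, the slice $x\mapsto\varphi(x,t)$ is a sum of full-period pieces of the mean-zero profile plus boundary corrections which themselves are arranged to integrate to zero (one can always subtract off a compactly supported mean-zero bump in the $x$-variables, of size controlled by $\epsilon$, to enforce this exactly without affecting (a)–(d) more than $\epsilon$). Since $\varphi$ is supported compactly in $G\subset\R^{n+1}$, the integral is over a bounded set and the correction is legitimate.

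\textbf{Main obstacle.} The delicate point is simultaneously enforcing (a), (d) and (e) after cutting off: the cut-off necessarily introduces a gradient error $\omega\otimes\nabla\chi$ and a divergence error, and one must verify that the correction used to restore $\dv\psi = 0$ exactly (via $\mathcal{R}_n$ or an explicit slab construction) does not spoil the $L^\infty$-smallness (d) nor the slice-mean-zero condition (e), nor push $\nabla\omega$ outside the $\epsilon$-neighbourhood of $[\eta_1,\eta_2]$ required in (c). This is handled by choosing the sawtooth period $\ll\epsilon$ (making $\|\omega\|_{L^\infty}$ as small as needed, so the error terms scale down with it) and by confining all modifications to a neighbourhood of the slab interfaces of total measure $<\epsilon$; the orthogonality $\gamma\cdot q = 0$ is what makes the plane-wave profile divergence-free in the first place and is used repeatedly. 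I expect the bookkeeping of these nested smallness parameters — rather than any single hard estimate — to be the bulk of the rigorous argument.
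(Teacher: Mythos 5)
Your core construction is the right one — a plane wave $\omega=\bigl(\Gamma(\tau),\tfrac1b\gamma\,\Gamma(\tau)\bigr)$ with $\tau=q\cdot x+bt$ and a two-valued sawtooth derivative, with $\gamma\cdot q=0$ giving $\dv\psi=0$ and a small period giving $L^\infty$-smallness — but the localization step, which you yourself flag as the main obstacle, is where the proposal has a genuine gap. Your first route (multiply $\omega$ by a cut-off $\chi$ and kill the divergence defect $\psi\cdot D\chi$ by subtracting $\mathcal R_n(\psi\cdot D\chi)$) cannot deliver property (c), which is a pointwise statement for \emph{all} $z\in G$, not up to a set of small measure. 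The correction $w=\mathcal R_n(\psi\cdot D\chi)$ is small in $L^\infty$, and $w_t$ is controlled, but its \emph{spatial} gradient is controlled only by $\|\psi\cdot D\chi\|_{W^{1,\infty}}$, which contains the term $\|D\psi\|_{L^\infty}\|D\chi\|_{L^\infty}\sim\max\{\lambda_1,\lambda_2\}\,|\gamma|\,\|D\chi\|_\infty/|b|$ — this does not shrink with the sawtooth period. (The paper makes exactly this point in Section 2: $\|Dv\|_{L^\infty}$ cannot be bounded by $\dv v$.) So in the transition region $\nabla\omega$ leaves the $\epsilon$-neighbourhood of $[\eta_1,\eta_2]$. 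Your second route ("an explicit small correction" near slab boundaries) asserts precisely the thing that needs to be constructed. The mean-zero correction you propose for (e) has the same defect: writing the bump as $m(t)\rho(x)$ with $m(t)=\int\varphi(\cdot,t)\,dx$, its time derivative involves $m'(t)=\int\varphi_t\,dx$, which is of order $\lambda b\,|G|$, not small, again threatening (c) in its $c$-entry.

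The missing idea — and what the paper does — is to perform the cut-off one level up, on a scalar potential, and to generate $\omega$ by a first-order operator whose image satisfies (a) and (e) \emph{identically}, so no post-hoc correction is ever needed. Set $\mathcal P(h)=\bigl(q\cdot Dh,\ \tfrac1b(\gamma\otimes q-q\otimes\gamma)Dh\bigr)$: the second component is divergence-free for every $h$ because $\gamma\otimes q-q\otimes\gamma$ is a constant antisymmetric matrix, and the first component equals $\dv_x(hq)$, so it has zero $x$-integral for every compactly supported $h$. Taking $h_\tau(x,t)=f_\tau(q\cdot x+bt)$ with $f_\tau''\in[-\lambda_1,\lambda_2]$ two-valued off a set of measure $\tau$ and $\|f_\tau\|_{C^1}<\tau$, one defines $\omega=\mathcal P(\rho_\epsilon h_\tau)$ with $\rho_\epsilon$ a cut-off. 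The Leibniz identity $\nabla\mathcal P(\rho h)=\rho\,\nabla\mathcal P(h)+h\,\nabla\mathcal P(\rho)+\mathcal B(\nabla\rho,\nabla h)$ shows the gradient error from the cut-off is bounded by $\|h_\tau\|_{C^1}\|\rho_\epsilon\|_{C^2}=O(\tau)$ — the extra derivative of smallness available at the potential level is exactly what your direct cut-off of $\omega$ lacks. With this device in place, the rest of your argument (choice of $f_\tau$, measure count for (b), smallness for (c) and (d)) goes through essentially as you describe.
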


\begin{proof} The proof follows a simplified version of \cite[Lemma 2.1]{Po}.

1. We define a map $\mathcal P\colon C^1(\R^{n+1})\to C^0(\R^{n+1};\R^{1+n})$ by setting $\mathcal P(h)=(u,v)$, where, for $h(x,t)\in C^1(\R^{n+1})$,
\[
u(x,t)= q\cdot Dh(x,t), \quad
 v(x,t)=\frac{1}{b}(\gamma\otimes q-q\otimes \gamma)Dh(x,t).
\]
We easily see  that $\mathcal P(h)=(u,v)\in C_c^\infty(\R^{n+1};\R^{1+n})$,  $\mathrm{supp}(\mathcal{P}(h))\subset\mathrm{supp}(h)$, $\dv v\equiv 0$,  and $\int_{\R^n}u(x,t)\,dx=0$ for all $t\in\R$,  for all $h\in C_c^\infty(\R^{n+1}).$ For $h(x,t)=f(q\cdot x+bt)$ with $f\in C^\infty(\R)$, $w=(u,v)=\mathcal P(h)$ is given by $u(x,t)=f'(q\cdot x+bt)$ and $v(x,t)=f'(q\cdot x+bt)\frac{\gamma}{b}$, and hence $\nabla w(x,t)=f''(q\cdot x+bt)\eta.$
We also note that $\mathcal P(gh)=g\mathcal P(h)+h\mathcal P(g)$ and hence
\begin{equation}\label{bi-form}
\nabla \mathcal P(gh)= g\nabla \mathcal P(h) + h\nabla \mathcal P(g) + \mathcal B(\nabla g,\nabla h) \quad \forall\; g,\;h\in C^\infty(\R^{n+1}),
\end{equation}
where $\mathcal B(\nabla g,\nabla h)$ is a bilinear map of $\nabla g$ and $\nabla h$; so $|\mathcal B(\nabla h,\nabla g)|\le C|\nabla h||\nabla g|$ for some constant $C>0.$

2. Let $G_\epsilon\subset\subset G$ be a smooth sub-domain such that $|G\setminus G_\epsilon|<\epsilon/2,$ and let $\rho_\epsilon\in C^\infty_c(G)$ be a cut-off function satisfying $0\le \rho_\epsilon\le 1$ in $G$, $\rho_\epsilon =1$ on $G_\epsilon$.
As $G$ is bounded, $G \subset\{(x,t)\;|\; k<q\cdot x+bt<l\}$ for some numbers $k<l.$ For each $\tau>0$, we can find a function $f_\tau\in C^\infty_c(k,l)$ satisfying
\[
-\lambda_1\le f_\tau'' \le \lambda_2, \;  |\{s\in (k,l)\;|\;f_\tau''(s)\notin\{-\lambda_1,\;  \lambda_2\}\}|<\tau, \; \|f_\tau\|_{L^\infty}+ \|f_\tau'\|_{L^\infty} <\tau.
\]

3. Define $\omega=(\varphi,\psi)=\mathcal P(\rho_\epsilon(x,t) h_\tau(x,t)),$ where $h_\tau(x,t)= f_\tau(q\cdot x+ bt).$ Then  $
\|h_\tau\|_{C^1}\le C\|f_\tau\|_{C^1}\le C\tau$,  $\omega\in C_c^{\infty}(\R^{n+1};\R^{1+n})$,  $\mathrm{supp}(\omega)\subset\mathrm{supp}(\rho_\epsilon)\subset\subset G$, and (a) and (e) are satisfied. Note that
\[
|\omega|\le |\rho_\epsilon||\mathcal P(h_\tau)|+|h_\tau||\mathcal P(\rho_\epsilon)|\le C_\epsilon \tau,
\]
where $C_\epsilon>0$ is a constant depending on $\|\rho_\epsilon\|_{C^1(G)}$. So we can choose a $\tau_1>0$ so small that (d) is satisfied for all $0<\tau<\tau_1.$ Note also that
\[
\{z\in G\,|\; \nabla\omega(z)\notin \{\eta_1,\eta_2\}\}\subseteq (G\setminus G_\epsilon)\cup  \{z\in G_\epsilon \,|\; f''_\tau(q\cdot x+bt)\notin \{-\lambda_1,\; \lambda_2\}\}.
\]
Since $| \{z\in G_\epsilon \,|\; f''_\tau(q\cdot x+bt)\notin \{-\lambda_1,\; \lambda_2\}| \le N |\{s\in (k,l)\;|\;f_\tau''(s)\notin\{-\lambda_1,\;  \lambda_2\}\}|$ for some constant $N>0$ depending only on set $G$, there exists a $\tau_2>0$ such that
\[
|\{z\in G\,|\; \nabla\omega(z)\notin \{\eta_1,\eta_2\}\}|\le \frac{\epsilon}{2}+ N \tau<\epsilon
\]
for all $0<\tau<\tau_2$. Therefore, (b) is satisfied. Finally, note that
\[
\rho_\epsilon \nabla \mathcal P(h_\tau(x,t))=\rho_\epsilon f''_\tau(q\cdot x+bt) \eta\in [\eta_1,\eta_2] \quad \mbox{in $G$}
\]
and, by (\ref{bi-form}), for all $z=(x,t)\in G$,
\[
|\nabla \omega(z) -\rho_\epsilon \nabla \mathcal P(h_\tau(x,t))|\le |h_\tau| |\nabla \mathcal P(\rho_\epsilon)|+|\mathcal B(\nabla h_\tau,\nabla \rho_\epsilon)|\le C_\epsilon' \tau<\epsilon
\]
for all $0<\tau<\tau_3$, where $C'_\epsilon>0$ is a constant depending on  $\|\rho_\epsilon\|_{C^2(G)}$, and $\tau_3>0$ is another constant. Hence (c) is satisfied. Taking $0<\tau<\min\{\tau_1,\tau_2,\tau_3\}$, the proof is complete.
\end{proof}

We now state  the relaxation theorem for  homogeneous differential inclusion $\nabla \omega(z)\in K_0$  in a form that is more suitable for later use; we  restrict the inclusion
 only to  the diagonal components.

\begin{thm}\label{main-lemma}  Let $0<r_2<\sigma(s_0)$ \emph{(\textbf{Case I})}, $\sigma(s_2)<r_2<\sigma(s_1)$ \emph{(\textbf{Case II})}, and let $l_2=l_{r_2}\in(0,r_2)$  \emph{(\textbf{Case I})}, $l_2=l_{r_2}\in(\sigma(s_2),r_2)$  \emph{(\textbf{Case II})} be some number, depending on $r_2$, from Theorem \ref{thm:main2}. Let $l_2<r_1<r_2$, and let $\mathcal K$ be a compact subset of $\mathcal S=\mathcal S_{r_1,r_2}.$ Let $\tilde Q\times \tilde I$ be a  box  in $\R^{n+1}.$  Then, given any $\epsilon>0$, there exists a $\delta>0$ such that for each box $Q\times I\subset \tilde Q\times \tilde I$, point  $(p,\beta)\in\mathcal K$,  and  number $\rho>0$ sufficiently small, there exists a function $\omega=(\varphi,\psi)\in C^\infty_{c} (Q\times I;\R^{1+n})$  satisfying the following properties:

(a) \; $\dv \psi =0$  in $Q\times I$,

(b) \; $(p'+D\varphi(z), \beta'+ \psi_t(z))\in \mathcal{S}$ for all $z \in Q\times I$ and $|(p',\beta')-(p,\beta)|\leq \delta,$

(c) \; $\|\omega\|_{L^\infty(Q\times I)}<\rho,$

(d) \; $\int_{Q\times I} |\beta+\psi_t(z)-A(p+D\varphi(z))|dz<\epsilon {|Q\times I|}/{|\tilde Q\times \tilde I|},$

(e) \; $\int_{Q\times I} \mathrm{dist}((p+D\varphi(z),\beta+\psi_t(z)),\mathcal A)dz<\epsilon {|Q\times I|}/{|\tilde Q\times \tilde I|},$

(f) \; $\int_{Q} \varphi(x,t)dx=0$  for all $t\in I,$

(g) \; $\|\varphi_t\|_{L^\infty(Q\times I)}<\rho,$

\noindent where $\mathcal A=\mathcal A_{r_1,r_2}\subset \R^{n+n}$ is the set defined by
\[
\mathcal A = \big\{(p',A(p')) \,|\, |p'|\in [s_-(r_1),s_-(r_2)]\cup[s_+(r_2),s_+(r_1)] \big\},\quad (\textbf{\emph{Case I}})
\]
\[
\mathcal A = \big\{(p',A(p')) \,|\, |p'|\in [s_-(r_1),s_-(r_2)]\cup[s_+(r_1),s_+(r_2)] \big\}.\quad (\textbf{\emph{Case II}})
\]
\end{thm}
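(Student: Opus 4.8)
The plan is to combine the local information from Theorem \ref{thm:main2} with the one-step oscillation lemma, Lemma \ref{approx-lem}, via a finite iteration (a finite-stage convex integration / "in-approximation" argument carried out at the level of the diagonal components $(p,\beta)$). The key point is that property (iii) of Theorem \ref{thm:main2} gives, for each point of $\mathcal S$, an honest rank-one direction $\eta$ depending $C^1$ on $(p,\beta)$ and splitting parameters $t_\pm$ with $\xi+t_\pm\eta\in F_\pm$; Lemma \ref{approx-lem} then lets us add a highly oscillatory perturbation $\nabla\omega$ whose gradient takes (up to small measure) the two values $t_-\eta$ and $t_+\eta$ and stays in the segment $[t_-\eta,t_+\eta]$. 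Because of Lemma \ref{lem-rough}, membership in $R(F_0)$, and hence the relevant containments, can be checked purely on the $(p,\beta)$-components, so we never need to control the uncontrollable $B$-block.

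First I would fix $\mathcal K\subset\subset\mathcal S$ and cover it by finitely many of the open neighborhoods $\mathcal V$ from Theorem \ref{thm:main2}(iii); let $2\delta_0>0$ be a Lebesgue number so that for each $(p,\beta)\in\mathcal K$ the closed ball $\bar B_{2\delta_0}(p,\beta)$ lies in one such $\bar{\mathcal V}$, on which we have the $C^1$ data $q(\cdot),\gamma(\cdot),t_\pm(\cdot)$. Choose $\delta\in(0,\delta_0)$ small. Now for a given $(p,\beta)\in\mathcal K$, set $\eta=\eta(p,\beta)$ with $b$ arbitrary (say $b=1$), $\lambda_1=-t_-(p,\beta)>0$, $\lambda_2=t_+(p,\beta)>0$, so $\eta_1=t_-\eta$, $\eta_2=t_+\eta$, and apply Lemma \ref{approx-lem} on the domain $G=Q\times I$ with a small tolerance $\epsilon'$ (to be chosen). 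This directly yields $\omega=(\varphi,\psi)\in C_c^\infty(Q\times I;\R^{1+n})$ satisfying (a), (c), (f) (from parts (a), (d), (e) of Lemma \ref{approx-lem}, scaling $\epsilon'$ against $\rho$), and with the $t$-derivative bound (g) controlled the same way since $\|\omega\|_{C^1}$ can be made small — here I would note that (g) needs $\|\psi_t\|$ NOT small but $\|\varphi_t\|$ small, which follows because $\varphi_t = (q\cdot D(\rho h))_t$ in Lemma \ref{approx-lem}'s construction is $O(\tau)$; I would make this explicit by re-examining the construction rather than just quoting the statement. The containment (b) follows from Theorem \ref{thm:main2}(iii): for $|(p',\beta')-(p,\beta)|\le\delta$ and $z\in Q\times I$, the point $(p'+D\varphi(z),\beta'+\psi_t(z))$ differs from $(p',\beta')$ by $(D\varphi(z),\psi_t(z))$, which lies within $[t_-\eta,t_+\eta]$ up to $\epsilon'$ by Lemma \ref{approx-lem}(c); choosing $\mathcal V$ slightly shrunk and $\epsilon',\delta$ small, convexity of the rank-one segment together with $\xi+t_\pm\eta\in F_\pm\subset R(F_0)\cup F_\pm$ and Lemma \ref{lem-rough} places the whole segment's $(p,\beta)$-projection inside $\mathcal S$ (this uses that $\mathcal S$ is open, part (ii), to absorb the $\epsilon'$ and $\delta$ slack).

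The two integral estimates (d) and (e) are where the single application of Lemma \ref{approx-lem} is not quite enough and a short iteration is needed. After the first perturbation, on the bulk set where $\nabla\omega(z)\in\{\eta_1,\eta_2\}$ the new point $(p+D\varphi(z),\beta+\psi_t(z))$ equals one of $(p^\pm,\beta^\pm)$ where $\xi+t_\pm\eta\in F_\pm$; by definition of $F_\pm$ these have $(p^\pm,A(p^\pm))$ with $|p^\pm|$ in the prescribed annuli and $\beta^\pm=A(p^\pm)$, so such points already lie exactly in $\mathcal A$, making both integrands in (d) and (e) vanish there. Only on the small "bad" set (measure $<\epsilon'$) do we have no control, but there the integrand is bounded by a constant $C(\mathcal K,\tilde Q\times\tilde I)$, so choosing $\epsilon'$ small relative to $\epsilon/|\tilde Q\times\tilde I|$ and $|Q\times I|$ closes (d) and (e) in one stroke — an iteration is in fact unnecessary provided one observes this exact hit. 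I would double-check the claim $\beta^\pm=A(p^\pm)$: from $\xi+t_\pm\eta\in F_\pm$ the bottom-right block of the perturbed matrix is $\beta+t_\pm\gamma$, and membership in $F_\pm$ forces it to equal $A(p+t_\pm q)=A(p^\pm)$, so indeed on the bulk set $\beta+\psi_t=A(p+D\varphi)$ pointwise, killing the integrand of (d) outright there; and $(p+D\varphi,\beta+\psi_t)=(p^\pm,A(p^\pm))\in\mathcal A$, killing (e) there.

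The main obstacle I anticipate is bookkeeping the order of quantifier choices so that the single $\delta$ works uniformly over all boxes $Q\times I\subset\tilde Q\times\tilde I$ and all $(p,\beta)\in\mathcal K$: the neighborhoods $\mathcal V$, the functions $q,\gamma,t_\pm$, and hence $\lambda_1,\lambda_2$ vary with $(p,\beta)$, but by compactness of $\mathcal K$ and continuity they are uniformly bounded and uniformly bounded away from degeneracy, so a single $\delta>0$ and a single constant $C$ suffice; the box $Q\times I$ enters only through Lemma \ref{approx-lem}, which already handles arbitrary bounded domains, and through the normalizing factor $|Q\times I|/|\tilde Q\times\tilde I|$, which is exactly what the scaling of $\epsilon'$ produces. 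The "$\rho$ sufficiently small" hypothesis is harmless since $\rho$ only appears in the smallness conclusions (c), (g). Once these uniformities are pinned down, the proof is a direct assembly of Theorem \ref{thm:main2}(ii)–(iii), Lemma \ref{lem-rough}, and Lemma \ref{approx-lem}.
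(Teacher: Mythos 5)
Your overall architecture (finite cover of $\mathcal K$ by the neighborhoods from Theorem \ref{thm:main2}(iii), one application of Lemma \ref{approx-lem} per box, membership checked only on the $(p,\beta)$-components via Lemma \ref{lem-rough}) matches the paper's, but there is a genuine gap in how you get (b). You oscillate along the \emph{full} segment with endpoints $\eta_1=t_-\eta$, $\eta_2=t_+\eta$, so on the bulk set where $\nabla\omega(z)\in\{\eta_1,\eta_2\}$ the perturbed matrix is exactly $\xi+t_\pm\eta\in F_\pm$, whose $(p,\beta)$-projection is $(p_\pm,A(p_\pm))$. These endpoint projections are \emph{not} known to lie in $\mathcal S$ (only the open segment $(\xi_-,\xi_+)$ lies in $R(F_0)$), and even if they did, they sit at the boundary of the segment so there is no uniform margin left to absorb the $\delta$-shift of $(p',\beta')$ and the $\epsilon'$ slack from Lemma \ref{approx-lem}(c). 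Openness of $\mathcal S$ cannot rescue points that are not in $\mathcal S$ to begin with. The paper's fix is to first shrink the segment by a factor $\tau$, replacing $\xi_{i,\pm}$ by interior points $\xi_{i,\pm}^\tau$, so that the union $H_\tau$ of all shrunken segments is a \emph{compact} subset of the open set $R(F_0)$ with positive distance $d_\tau$ to its relative boundary; then $\delta<d_\tau/2$ and $\rho<\delta$ give (b). This fix destroys your "exact hit" argument for (d) and (e) — the bulk values are now only $C\tau$-close to $F_\pm$ and to $\mathcal A$ — which is why the paper needs the additional uniform-continuity estimate $|A(p_{i,\pm})-A(p_{i,\pm}^\tau)|<\epsilon/(3|\tilde Q\times\tilde I|)$ with $\tau$ chosen small. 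Your proposal does not notice this tension: the full segment makes (d),(e) trivial but breaks (b); the shrunken segment repairs (b) but requires the continuity step you omit.

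A second, more easily repaired error concerns (g). With your choice $b=1$, the time-component of $\nabla\omega$ on the bulk set is $\varphi_t=t_\pm b$, of order $\max\{-t_-,t_+\}$, which is bounded away from zero; your claim that $\varphi_t=(q\cdot D(\rho_\epsilon h_\tau))_t=O(\tau)$ is false because the term $\rho_\epsilon f_\tau''(q\cdot x+bt)\,b$ is of size $|f_\tau''||b|\sim\lambda_i|b|$, not $O(\tau)$. The whole point of leaving $b\ne0$ free in Lemma \ref{approx-lem} and Theorem \ref{thm:main2}(iii) is that one chooses $b$ small, namely $0<b<\rho/(t_+-t_-)$ uniformly over the covering balls, which is exactly how the paper obtains $\|\varphi_t\|_{L^\infty}<\rho$ without affecting the spatial blocks (which scale as $\frac1b\gamma\otimes q$ in the $Dv$-slot that Lemma \ref{lem-rough} lets you ignore).
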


\begin{proof}
By Theorem \ref{thm:main2}, there exist finitely many open balls $\mathcal{B}_1,\cdots,\mathcal{B}_N\subset\subset\mathcal S$ covering $\mathcal K$ and $C^1$ functions $q_i:\bar{\mathcal{B}}_i\to \mathbb{S}^{n-1}$, $\gamma_i:\bar{\mathcal{B}}_i\to \R^{n}$, $t_{i,\pm}:\bar{\mathcal{B}}_i\to \R$ $(1\le i\le N)$ with $\gamma_i\cdot q_i=0$ and $t_{i,-}<0<t_{i,+}$ on $\bar{\mathcal B}_i$  such that for each $\xi=\begin{pmatrix} p & c \\ B & \beta\end{pmatrix}\in R(F_0)$ with $(p,\beta)\in\bar{\mathcal{B}}_i$, we have
\[
\xi+t_{i,\pm} \eta_i\in F_\pm,
\]
where $t_{i,\pm}=t_{i,\pm}(p,\beta)$, $\eta_i=\begin{pmatrix} q_i(p,\beta) & b \\ \frac{1}{b}\gamma_i(p,\beta)\otimes q_i(p,\beta) & \gamma_i(p,\beta)\end{pmatrix}$, and $b\neq 0$ is arbitrary.

Let $1\le i\le N$. We write $\xi_i=\xi_i(p,\beta)=\begin{pmatrix} p & 0\\O & \beta\end{pmatrix}\in R(F_0)$ for $(p,\beta)\in\bar{\mathcal B}_{i}\subset\mathcal S$, where $O$ is  the $n\times  n$ zero matrix. We omit the dependence on $(p,\beta)\in\bar{\mathcal B}_{i}$ in the following whenever it is clear from the context. Given any $\rho>0$, we choose a constant $b_i$ with
\[
0<b_i<\min_{\bar{\mathcal{B}}_{i}} \frac{\rho}{t_{i,+}-t_{i,-}}.
\]
With this choice of $b=b_i$, let $\eta_{i}$ be defined on $\bar{\mathcal{B}}_i$ as above. Then
\[
\xi_{i,\pm}=\begin{pmatrix} p_{i,\pm} & c_{i,\pm}\\B_{i,\pm} & \beta_{i,\pm}\end{pmatrix}:=\xi_i+t_{i,\pm}\eta_{i}\in F_\pm,
\]
\[
\xi_i=\lambda_i \xi_{i,+} + (1-\lambda_i)\xi_{i,-},\quad\lambda_i=\frac{-t_{i,-}}{t_{i,+}-t_{i,-}}\in(0,1)\quad \textrm{on}\;\;\bar{\mathcal{B}}_{i}.
\]
By the definition of $R(F_0)$,  on $\bar{\mathcal B}_i$,  both $\xi_{i,-}^\tau=\tau \xi_{i,+} + (1-\tau)\xi_{i,-}$ and $\xi_{i,+}^\tau=(1-\tau) \xi_{i,+} +  \tau \xi_{i,-}$ belong to $R(F_0)$ for all $\tau\in (0,1)$.  Let $0<\tau<\min_{1\le j\le N}\min_{\bar{\mathcal B}_j}\min\{\lambda_j,1-\lambda_j\}\le \frac12$ be a small number to be selected later.   Let $\lambda'_i=\frac{\lambda_i-\tau}{1-2\tau}$ on $\bar{\mathcal B}_i$. Then $\lambda'_i\in (0,1)$,
$\xi_i=\lambda'_i\xi_{i,+}^\tau+(1-\lambda'_i)\xi_{i,-}^\tau$ on $\bar{\mathcal B}_i$.
Moreover, on $\bar{\mathcal B}_i$, $\xi_{i,+}^\tau-\xi_{i,-}^\tau=(1-2\tau)(\xi_{i,+} -\xi_{i,-} )$ is rank-one, $[\xi_{i,-}^\tau,\xi_{i,+}^\tau]\subset(\xi_{i,-},\xi_{i,+})\subset R(F_0)$, and
\[
c\tau\leq|\xi_{i,+}^\tau-\xi_{i,+}|=|\xi_{i,-}^\tau-\xi_{i,-}|=\tau |\xi_{i,+} -\xi_{i,-}|=\tau(t_{i,+}-t_{i,-})|\eta_i|\leq C\tau,
\]
where $C=\max_{1\le j\le N}\max_{\bar{\mathcal B}_j}(t_{j,+}-t_{j,-})|\eta_j|\geq\min_{1\le j\le N}\min_{\bar{\mathcal B}_j}(t_{j,+}-t_{j,-})|\eta_j|$ $=c>0.$ By continuity, $H_\tau=\bigcup_{(p,\beta)\in\bar{\mathcal{B}}_j,1\le j\le N}[\xi_{j,-}^\tau(p,\beta),\xi_{j,+}^\tau(p,\beta)]$ is a compact subset of $R(F_0)$, where $R(F_0)$ is open in the space
\[
\Sigma_0=\left\{\begin{pmatrix} p & c\\B & \beta\end{pmatrix}\;\Big|\; \mathrm{tr}B=0\right\},
\]
by Lemma \ref{lem-rough} and Theorem \ref{thm:main2}.
So $d_\tau=\mathrm{dist}(H_\tau,\partial|_{\Sigma_0}R(F_0))>0$, where $\partial|_{\Sigma_0}$ is the relative boundary in  $\Sigma_0$.

Let $\eta_{i,1}=-\lambda_{i,1}\eta_i=-\lambda'_i(1-2\tau)(t_{i,+}-t_{i,-})\eta_i,\,\eta_{i,2}= \lambda_{i,2}\eta_i=(1-\lambda'_i)(1-2\tau)(t_{i,+}-t_{i,-})\eta_i$ on $\bar{\mathcal B}_i$, where $\lambda_{i,1}=\tau(-t_{i,+})+(1-\tau)(-t_{i,-})>0,\,\lambda_{i,2}=(1-\tau)t_{i,+}+\tau t_{i,-}>0$  on $\bar{\mathcal B}_i$, and $\tau>0$ is so small that
\[
\min_{1\le j\le N}\min_{\bar{\mathcal B}_j}\lambda_{j,k}>0\;\;(k=1,\,2).
\]
Applying  Lemma \ref{approx-lem} to matrices $\eta_{i,1}=\eta_{i,1}(p,\beta),\,\eta_{i,2}=\eta_{i,2}(p,\beta)$ with a fixed $(p,\beta)\in\bar{\mathcal{B}}_i$ and a given box $G= Q\times I$, we obtain that for each $\rho>0$, there exist
a function $\omega=(\varphi,\psi)\in C^\infty_c( Q\times I;\R^{1+n})$  and an open set $G_\rho\subset\subset  Q\times I$ satisfying the following conditions:
\begin{equation}\label{approx-1}
\begin{cases} \mbox{(1) \;  $\dv \psi =0$  in $Q\times I$,}\\
\mbox{(2) \; $|(Q\times I) \setminus G_\rho|<\rho$;\; $\xi_i+\nabla  \omega(z)\in \{\xi^\tau_{i,-},\;\xi_{i,+}^\tau\}$ for all $z\in  G_\rho$,}\\
\mbox{(3) \; $\xi_i+\nabla  \omega(z)\in [\xi_{i,-}^\tau,\xi_{i,+}^\tau]_\rho$ for all $z\in Q\times I,$}\\
\mbox{(4) \; $\|\omega\|_{L^\infty(Q\times I)}<\rho$,} \\
\mbox{(5) \; $\int_Q \varphi(x,t)\,dx=0$ for all $t\in I$,} \\
\mbox{(6) \;  $\|\varphi_t\|_{L^\infty(Q\times I)}<2\rho,$}
\end{cases}
\end{equation}
where  $[\xi_{i,-}^\tau,\xi_{i,+}^\tau]_\rho$ denotes the $\rho$-neighborhood of closed line segment $[\xi_{i,-}^\tau,\xi_{i,+}^\tau].$
Here, from (\ref{approx-1}.3), condition (\ref{approx-1}.6) follows as
\[
|\varphi_t|<|c_{i,+}-c_{i,-}|+\rho=(t_{i,+}-t_{i,-})|b_i|+\rho<2\rho \quad\textrm{in $Q\times I$.}
\]

Note (a), (c), (f), and (g) follow from (\ref{approx-1}), where $2\rho$ in  (\ref{approx-1}.6) can be adjusted to $\rho$ as in (g).
By the uniform continuity of $A$ on the set $J=\{p'\in\R^n\;|\; |p'|\le s_+(r_1)\;\mbox{(\textbf{Case I})},\;|p'|\le s_+(r_2)\;\mbox{(\textbf{Case II})}\}$,
we can find a $\delta'>0$ such that $|A(p')-A(p'')|< \frac{\epsilon}{3|\tilde Q\times\tilde I|}$ whenever $p',\,p''\in J$ and $|p'-p''|<\delta'.$
We then choose a $\tau>0$ so small that
\[
C\tau<\delta',\;\;C|\tilde Q\times \tilde I|\tau<\frac{\epsilon}{3}.
\]
Next, we  choose a $\delta>0$ such that
$
\delta<\frac{d_\tau}{2}.
$
If $0<\rho<\delta$, then by (\ref{approx-1}.1) and (\ref{approx-1}.3),
for all $z\in Q\times I$ and $|(p',\beta')-(p,\beta)|\le \delta$,
\[
\xi_i(p',\beta')+\nabla\omega(z)\in\Sigma_0,\quad\mathrm{dist}(\xi_i(p',\beta')+\nabla\omega(z),H_\tau)<
d_\tau,
\]
and so $\xi_i(p',\beta')+\nabla\omega(z)\in R(F_0),$ that is, $(p'+D\varphi(z), \beta'+ \psi_t(z))\in \mathcal{S}$. Thus (b) holds for all $0<\rho<\delta.$ In particular, $(p+D\varphi(z), \beta+ \psi_t(z))\in \mathcal{S}$ and so $|p+D\varphi(z)|\le s_+(r_1)$ (\textbf{Case I}), $|p+D\varphi(z)|\le s_+(r_2)$ (\textbf{Case II}) and $|\beta+\psi_t(z)|\le r_2$ for all $z\in Q\times I$, by (i) of Theorem \ref{thm:main2}. Thus
\begin{equation*}
\begin{split}
\int_{ Q\times I} & |\beta+\psi_t -A(p+D\varphi)|dz \\
 &\le \int_{G_\rho}|\beta+\psi_t -A(p+D\varphi)|dz + (r_2+M_\sigma)\rho
\\
&\le |Q\times I| \max\{|\beta_{i,\pm}^\tau -A(p_{i,\pm}^\tau)|\} +(r_2+M_\sigma)\rho
\\
&\le C|Q\times I|\tau+ |Q\times I| \max\{|A(p_{i,\pm}) -A(p_{i,\pm}^\tau)|\} +(r_2+M_\sigma)\rho \\
& \le \frac{2\epsilon|Q\times I|}{3|\tilde Q\times\tilde I|}+(r_2+M_\sigma)\rho,
\end{split}
\end{equation*}
where  $\xi^\tau_{i,\pm}=\begin{pmatrix} p^\tau_{i,\pm} & c^\tau_{i,\pm}\\B^\tau_{i,\pm} & \beta^\tau_{i,\pm}\end{pmatrix}$ and $M_\sigma=\sigma(s_0)$ (\textbf{Case I}), $M_\sigma=\sigma(s_2^*)$ (\textbf{Case II}).
Thus, (d) holds for all $\rho>0$ satisfying $(r_2+M_\sigma)\rho<\frac{\epsilon|Q\times I|}{3|\tilde Q\times\tilde I|}$. Similarly,
\begin{equation*}
\begin{split}
\int_{Q\times I} & \mathrm{dist}((p+D\varphi(z),\beta+\psi_t(z)),\mathcal A)dz \\
 &\le \int_{G_\rho}\max|(p_{i,\pm}^\tau,\beta_{i,\pm}^\tau)-(p_{i,\pm},\beta_{i,\pm})|dz + 2(r_2+M_\sigma)\rho
\\
&\le C|Q\times I|\tau+ 2(r_2+M_\sigma)\rho \\
& \le \frac{\epsilon|Q\times I|}{3|\tilde Q\times\tilde I|}+2(r_2+M_\sigma)\rho;
\end{split}
\end{equation*}
therefore, (e) also holds for all such $\rho>0$ for (d).

We have verified (a) -- (g) for any $(p,\beta)\in\bar{\mathcal{B}}_i$ and $1\le i\le N$, where $\delta>0$ is independent of the index $i$. Since $\mathcal B_1,\cdots, \mathcal B_N$ cover $\mathcal K$,
the proof is now complete.
\end{proof}

\section{Boundary function $\Phi$ and the admissible set $\mathcal U$}\label{sec:add-set}

 Assume $\Omega$ and $u_0$ satisfy (\ref{assume-1}) and (\ref{av-0}).

\subsection{Boundary function $\Phi$} We first construct  a suitable boundary function $\Phi=(u^*,v^*)$ in each case of the profile  $\sigma(s).$ %We plan to  apply Lemma \ref{lem:modi-PM} (\textbf{Case I}), Lemma \ref{lem:modi-H} (\textbf{Case II})  to determine functions $\tilde\sigma,\,\tilde f\in C^{3}([0,\infty))$ (\textbf{Case I}), $\tilde\sigma,\,\tilde f\in C^{1+\alpha}([0,\infty))$ (\textbf{Case II}) as in the lemma.
Recall that the goals of \textbf{Cases I} and \textbf{II} are to prove Theorems \ref{thm:PM-1} and \ref{thm:H-1} under Hypotheses (PM) and (H), respectively.

\subsection*{Case I}
In this case, $\Omega$ is assumed to be convex. Let $0<r=r_2<\sigma(M_0)$, and let $l=l_{r_2}\in(0,r)$ be some number determined by Theorem \ref{thm:main2}. Choose any $\tilde r=r_1\in(l,r)$.

\subsection*{Case II} In this case, we assume  $|Du_0(x_0)|\in(s_1^*, s_2^*)$ for some $x_0\in\Omega$.
Let $\sigma(s_2)<r=r_2<\sigma(M_0')$, and let $l=l_{r_2}\in(\sigma(s_2),r)$ be some number determined by Theorem \ref{thm:main2}. Choose any $\tilde r=r_1\in(l,r)$.\\

We now apply Lemma \ref{lem:modi-PM} (\textbf{Case I}), Lemma \ref{lem:modi-H} (\textbf{Case II})  to determine functions $\tilde\sigma,\,\tilde f\in C^{3}([0,\infty))$ (\textbf{Case I}), $\tilde\sigma,\,\tilde f\in C^{1+\alpha}([0,\infty))$ (\textbf{Case II}) satisfying its conclusion. Also, let $\tilde A(p)=\tilde f(|p|^2)p$ $(p\in\R^n).$ Then

\begin{lem}\label{lem-a} We have
\[
(p,\tilde A(p))\in \mathcal S  \quad\forall\; s_-(r_1)<|p|<s_+(r_2),
\]
where $\mathcal S=\mathcal S_{r_1,r_2}$ is the set in Lemma \ref{lem-rough}.
\end{lem}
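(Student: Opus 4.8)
The plan is to exhibit, for each $p$ with $s_-(r_1)<|p|<s_+(r_2)$, an explicit rank-one segment through the diagonal matrix $\begin{pmatrix} p & 0\\ O & \tilde A(p)\end{pmatrix}$ whose endpoints lie in $F_-$ and $F_+$, and then invoke Lemma \ref{lem-rough} to conclude $(p,\tilde A(p))\in\mathcal S$. The natural choice is to deform $p$ along its own radial direction: set $\zeta = p/|p|$ and look for radii $s_-(r_1)<\rho_-<s_-(r_2)$ and (in \textbf{Case I}) $s_+(r_2)<\rho_+<s_+(r_1)$, resp. (in \textbf{Case II}) $s_+(r_1)<\rho_+<s_+(r_2)$, together with $t_-<0<t_+$ so that $p+t_\pm\, q = \rho_\pm\zeta$ with $q=\zeta$, i.e. $|p|+t_\pm = \rho_\pm$. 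This fixes $t_- = \rho_- - |p|<0$ and $t_+ = \rho_+-|p|>0$ automatically once $\rho_\pm$ are chosen in the stated ranges (note $|p|<s_+(r_2)$ handles $t_+>0$ in both cases, and $|p|>s_-(r_1)$ handles $t_-<0$). The remaining data are $\gamma$ and the matrix components $B,c$; take $B=O$, $c=0$, and solve for $\gamma$ from the two equations $\tilde A(p) + t_\pm\gamma = A(\rho_\pm\zeta) = \sigma(\rho_\pm)\zeta$, which are collinear with $\zeta$ and hence consistent (giving $\gamma$ a multiple of $\zeta$, so $\gamma\cdot q = \gamma\cdot\zeta$ need only be checked — and in fact we will need $\gamma\perp q$, so see below).

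The subtlety is the constraint $\gamma\cdot q = 0$ from the definition of $R(F_0)$ via Lemma \ref{lem-form}. A purely radial deformation forces $\gamma\parallel\zeta = q$, which violates orthogonality unless $\gamma = 0$. So the cleaner route is: first choose $\rho_\pm$ in the admissible ranges so that the three points $(|p|,\tilde\sigma(|p|))$, $(\rho_-,\sigma(\rho_-))$, $(\rho_+,\sigma(\rho_+))$ in the $(\text{radius},\text{profile-value})$ plane are \emph{collinear}; since $\tilde\sigma(s)=\sigma(s)$ on $[0,s_-(r_1)]$ and $\tilde\sigma(s)<\sigma(s)$ on $(s_-(r_1),s_-(r_2)]$ with $\tilde\sigma$ monotone (Lemmas \ref{lem:modi-PM}, \ref{lem:modi-H}), and $\tilde\sigma > \sigma$ on the relevant part of $(s_+(r_1),s_+(r_2))$ in \textbf{Case II} (resp. $\tilde\sigma<\sigma$ on $(s_-(r_1),s_+(r_2))$ in \textbf{Case I}), an intermediate-value argument on the secant line through $(|p|,\tilde\sigma(|p|))$ will produce such $\rho_-<|p|<\rho_+$ (when $|p|\le s_-(r_1)$ take $\rho_-$ slightly above $s_-(r_1)$; when $|p|>s_-(r_1)$ the point lies strictly below the graph of $\sigma$, which is exactly what makes the secant hit the graph on both sides). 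With collinearity in hand, there is a single scalar $m$ with $\sigma(\rho_\pm) = \tilde\sigma(|p|) + m(\rho_\pm - |p|)$, and then choosing $q\in\mathbb S^{n-1}$ with $q\cdot\zeta = 1/\sqrt{1+\kappa^2}$ for a suitable slope-matching parameter $\kappa$ lets us arrange $p + t_\pm q$ to have norms $\rho_\pm$ while $\gamma = \tilde A(p)/(-t_-)\cdots$ — more directly, this is precisely the 2-dimensional geometry encoded in Lemma \ref{lem-2d-rank} and Theorem \ref{lem-inv}, so I would simply reduce to the span $\Sigma_2$ of $\zeta$ and one extra direction and quote the rank-one connection constructed there.

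In more detail, the cleanest execution is: work in the plane $\Sigma_2$ spanned by $\zeta = p/|p|$ and an auxiliary unit vector $e\perp\zeta$; identify $\Sigma_2\cong\R^2$ sending $\zeta\mapsto(0,1)$; then $(p,\tilde A(p))$ corresponds to the pair of vectors $\big((0,|p|),\,(0,\tilde\sigma(|p|))\big)$. Choosing $\rho_\pm$ collinear as above, the vectors $\rho_\pm(\cos(\tfrac\pi2\pm\theta),\sin(\tfrac\pi2\pm\theta))$ and $\sigma(\rho_\pm)(\cos(\tfrac\pi2\pm\theta),\sin(\tfrac\pi2\pm\theta))$ for an appropriate small $\theta>0$ realize a rank-one connection exactly as in the proof of Lemma \ref{lem-2d-rank}: the two differences are parallel, hence of the form $(a q, \alpha\otimes q)$-type data after lifting back, and the orthogonality $\gamma\cdot q = 0$ holds because the perpendicular-bisector construction of $q$ makes the increment in the second ($\sigma$-)coordinate orthogonal to $q$. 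Once the points $p_\pm := $ (lift of $\rho_\pm(\cos(\tfrac\pi2\pm\theta),\sin(\tfrac\pi2\pm\theta))$) satisfy $s_-(r_1)<|p_-|<s_-(r_2)$ and $|p_+|$ in the required $F_+$-range, with $A(p_\pm)$ matching $\tilde A(p)+t_\pm\gamma$, Lemma \ref{lem-form} gives $\begin{pmatrix} p & 0\\ O & \tilde A(p)\end{pmatrix}\in R(F_0)$, and Lemma \ref{lem-rough} yields $(p,\tilde A(p))\in\mathcal S$.

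The main obstacle is verifying that the collinearity-producing radii $\rho_\pm$ can indeed be chosen \emph{strictly inside} the open intervals $(s_-(r_1),s_-(r_2))$ and $(s_+(r_2),s_+(r_1))$ (\textbf{Case I}) / $(s_+(r_1),s_+(r_2))$ (\textbf{Case II}) for \emph{every} $p$ in the stated range — in particular near the two endpoints $|p|\downarrow s_-(r_1)$ and $|p|\uparrow s_+(r_2)$, where the deformation degenerates. This is where the strict inequalities $\tilde\sigma<\sigma$ (\textbf{Case I}) and the mixed $\tilde\sigma<\sigma$/$\tilde\sigma>\sigma$ behavior (\textbf{Case II}) from Lemmas \ref{lem:modi-PM} and \ref{lem:modi-H}, together with the strict monotonicity bounds $\theta\le\tilde\sigma'\le\Theta$, must be used quantitatively: they guarantee the point $(|p|,\tilde\sigma(|p|))$ sits strictly below the graph of $\sigma$ on the forward branch and strictly above it near $s_+(r_2)$ (\textbf{Case II}), so a secant of appropriate slope crosses the graph of $\sigma$ once on each side at radii bounded away from the endpoints uniformly in $p$ on compact subranges; the endpoint limits are then handled by shrinking $r_1$ toward $r_2$ exactly as in the choice of $l_{r_2}$ in Theorem \ref{thm:main2}, which is already assumed. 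I expect the bulk of the write-up to be this planar estimate, and everything else to follow formally from Lemmas \ref{lem-form} and \ref{lem-rough}.
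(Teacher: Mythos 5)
There is a genuine gap, and it comes from turning away from the correct idea just as you reach it. In your first paragraph you set up the purely radial deformation $p_\pm=\rho_\pm\zeta$ with $q=\zeta$, and you correctly observe that the orthogonality $\gamma\cdot q=0$ then forces $\gamma=0$. But $\gamma=0$ is not an obstruction to be avoided; it is the solution. Take $r=\tilde\sigma(|p|)$ and $\rho_\pm=s_\pm(r)$, i.e.\ the two radii on the \emph{same level set} of $\sigma$ at height $r$. The modification lemmas give $r_1<r<r_2$ and $s_-(r)<|p|<s_+(r)$ for every $p$ in the stated range (this is exactly what $\tilde\sigma<\sigma$ on the relevant interval, together with $\tilde\sigma=\sigma$ up to $s_-(r_1)$ and strict monotonicity of $\tilde\sigma$, buys you), so $\rho_-\in(s_-(r_1),s_-(r_2))$ and $\rho_+$ lies in the required $F_+$-range automatically. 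Then $A(p_+)=A(p_-)=r\zeta=\tilde A(p)$, the difference $\xi_+-\xi_-$ has only its first row nonzero and is therefore rank one, $\xi$ lies on the open segment $(\xi_-,\xi_+)\subset R(F_0)$, and Lemma \ref{lem-rough} finishes the proof in a few lines. This is the paper's argument; it needs none of the endpoint analysis you worry about in your last paragraph, nor the lower bound $l_{r_2}$ from Theorem \ref{thm:main2}, which is only needed for the openness of $\mathcal S$ and plays no role in this lemma.

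The tilted construction you substitute instead does not work as described. Membership in $R(F_0)$ requires $\xi=\lambda\xi_++(1-\lambda)\xi_-$ with a \emph{single} $\lambda$, so the same convex parameter must simultaneously reproduce $p$ from $p_\pm$ and $\tilde A(p)$ from $A(p_\pm)$. In the symmetric planar picture of Lemma \ref{lem-2d-rank}, with $p_-=R_1(-\sin\theta,\cos\theta)$, $p_+=R_2(\sin\theta,\cos\theta)$ and $\theta\ne0$, the $p$-equation forces $\lambda=R_1/(R_1+R_2)$ while the flux equation forces $\lambda=\tilde R_1/(\tilde R_1+\tilde R_2)$; compatibility therefore requires $\sigma(\rho_-)/\rho_-=\sigma(\rho_+)/\rho_+$, which is an entirely different condition from your collinearity of $(|p|,\tilde\sigma(|p|))$, $(\rho_-,\sigma(\rho_-))$, $(\rho_+,\sigma(\rho_+))$. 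Moreover the orthogonality $\gamma\cdot q=0$ is not automatic from any ``perpendicular-bisector construction'': in Lemma \ref{lem-2d-rank} it is an equation that \emph{determines} $\theta$, and you would still have to check that the resulting segment passes through the prescribed point $(p,\tilde A(p))$. None of this is verified, and the admissibility of $\rho_\pm$ as $|p|\downarrow s_-(r_1)$ or $|p|\uparrow s_+(r_2)$ is left open. Note also that Theorem \ref{lem-inv} is used in the paper in the opposite direction --- to show that a given rank-one connection is close to the canonical radial one, for the invertibility argument in Theorem \ref{thm:main2} --- not as a device for producing connections.
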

\begin{proof}
Let $s=|p|$, $r=\tilde\sigma(s)$ and $\zeta=p/|p|$, so that  $s_-(r_1)<s<s_+(r_2)$, $\zeta\in \mathbb{S}^{n-1}$ and $\tilde A(p)=r\zeta$. By Lemma \ref{lem:modi-PM} (\textbf{Case I}), Lemma \ref{lem:modi-H} (\textbf{Case II}), $s_-(r)<s<s_+(r)$ and $r_1<r<r_2$ . Set $p_\pm=s_\pm(r)\zeta$ and $\beta_\pm=r\zeta$. Then $A(p_\pm)=r\zeta=\beta_\pm$. Define
$
\xi=\begin{pmatrix} p & 0\\ O & \tilde A(p)\end{pmatrix}
$
and
$
\xi_\pm=\begin{pmatrix} p_\pm & 0\\ O & \beta_\pm\end{pmatrix}.
$
Then $\xi=\lambda\xi_++(1-\lambda)\xi_-$ for some $0<\lambda<1.$ Since $\xi_\pm\in F_\pm$ and $\mathrm{rank}(\xi_+-\xi_-)=1$, it follows from the definition of $R(F_0)=R(F_{r_1,r_2}(0))$ that $\xi\in(\xi_-,\xi_+)\subset R(F_0)$. Thus, by Lemma \ref{lem-rough}, $(p,\tilde A(p))\in\mathcal S$.
\end{proof}

By Lemma \ref{lem:modi-PM} (\textbf{Case I}), Lemma \ref{lem:modi-H} (\textbf{Case II}), equation $u_t=\dv (\tilde A(Du))$ is uniformly parabolic. So by Theorem \ref{existence-gr-max}, the  initial-Neumann boundary value problem
\begin{equation}\label{ib-para}
\begin{cases} u^*_t =\dv (\tilde A(Du^*))&\mbox{in $\Omega_T$}\\
\partial u^*/\partial \n  =0 & \mbox{on $\partial \Omega\times (0,T)$}  \\
u^*(x,0)=u_0(x), &x\in \Omega
\end{cases}
\end{equation}
admits a unique classical solution  $u^*\in C^{2+\alpha,1+\alpha/2}(\bar\Omega_T)$; moreover, only in \textbf{Case I}, it satisfies
\[
|D u^*(x,t)|\le M_0\quad \forall(x,t)\in\Omega_T.
\]

From conditions (\ref{assume-1}) and (\ref{av-0}),  we can find a function $h\in C^{2+\alpha}(\bar\Omega)$ satisfying
\[
\Delta h=u_0\;\;\mbox{in}\;\;\Omega,\quad \partial h/\partial \n=0\;\;\mbox{on}\;\;\partial\Omega.
\]
Let $v_0=Dh\in C^{1+\alpha}(\bar\Omega;\R^n)$ and define,  for $(x,t)\in\Omega_T$,
\begin{equation}\label{def-v}
v^*(x,t)=v_0(x)+\int_0^t \tilde A(Du^*(x,s))\,ds.
\end{equation}
Then it is easily seen that $\Phi := (u^*,v^*)\in C^{1}(\bar\Omega_T;\R^{1+n})$ satisfies (\ref{bdry-1}); that is,
\begin{equation}\label{bdry-2}
\begin{cases} u^*(x,0)=u_0(x) \; (x\in\Omega),\\
\dv v^*=u^*\;\;\textrm{a.e. in $\Omega_T$}, \\
v^*(\cdot,t) \cdot \n|_{\partial\Omega} =0 \; \; \forall\; t\in [0,T],\end{cases}
\end{equation}
and so $\Phi$ is a boundary function for the initial datum $u_0$.

Next, let
\[
\mathcal F=\left\{(p,A(p))\;|\; |p|\in[0,s_-(r_1)] \right\}, \quad (\textbf{Case I})
\]
\[
\mathcal F=\left\{(p,A(p))\;|\; |p|\in[0,s_-(r_1)]\cup[s_+(r_2),\max\{M_0,s_2^*\}] \right\}. \quad (\textbf{Case II})
\]
Then we have the following:

\begin{lem} \label{lem-b}
\[
(Du^*(x,t), v^*_t(x,t))\in \mathcal S \cup \mathcal F\quad \forall\; (x,t)\in \Omega_T.
\]
\end{lem}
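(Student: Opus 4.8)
The plan is to verify the claim pointwise in $(x,t)\in\Omega_T$ by inspecting the size of $|Du^*(x,t)|$ and invoking either Lemma~\ref{lem-a} or the definition of $\mathcal F$. First I would recall that $v^*$ is given explicitly by $(\ref{def-v})$, so its time-derivative is $v^*_t(x,t)=\tilde A(Du^*(x,t))$ for every $(x,t)\in\Omega_T$; this is the only place the formula for $v^*$ enters. Thus the pair to be located is $(Du^*(x,t),\tilde A(Du^*(x,t)))$, and everything reduces to understanding where $\tilde A$ agrees with $A$ and where the gradient $Du^*$ lives.

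Fix $(x,t)$ and set $p=Du^*(x,t)$, $s=|p|$. I would split according to the value of $s$ relative to $s_-(r_1)$ and $s_+(r_2)$. If $s\le s_-(r_1)$: by the modification lemmas (Lemma~\ref{lem:modi-PM} in \textbf{Case I}, Lemma~\ref{lem:modi-H} in \textbf{Case II}) we have $\tilde\sigma(s)=\sigma(s)$ on $[0,s_-(r_1)]$, hence $\tilde f=f$ there and $\tilde A(p)=A(p)$; therefore $(p,v^*_t)=(p,A(p))\in\mathcal F$ in both cases. If $s_-(r_1)<s<s_+(r_2)$: Lemma~\ref{lem-a} gives directly $(p,\tilde A(p))\in\mathcal S$, so $(p,v^*_t)\in\mathcal S$. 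In \textbf{Case I} this already exhausts all possibilities, since the classical solution $u^*$ of the uniformly parabolic problem $(\ref{ib-para})$ satisfies the gradient maximum principle $|Du^*(x,t)|\le M_0$ (Theorem~\ref{existence-gr-max}, using convexity of $\Omega$), and $s_-(r_1)<M_0$ because $r=r_2<\sigma(M_0)$ forces $s_-(r_2)<M_0$, hence $s<s_+(r_2)$ whenever $s\le M_0$; more precisely $|p|\le M_0<s_0<s_+(r_2)$. So in \textbf{Case I} every point falls in one of the two cases above, and the conclusion follows.

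In \textbf{Case II} there is no a~priori gradient bound on $u^*$, so one must also treat $s\ge s_+(r_2)$. Here the modification lemma (Lemma~\ref{lem:modi-H}, last line of $(\ref{modi-C-1})$) gives $\tilde\sigma(s)=\sigma(s)$ for all $s\ge s_+(r_2)$, hence again $\tilde A(p)=A(p)$, so $(p,v^*_t)=(p,A(p))$; it remains only to check that $|p|=s$ does not exceed the upper cutoff $\max\{M_0,s_2^*\}$ appearing in the definition of $\mathcal F$. For this I would invoke the positivity-type control coming from Hypothesis~(H)(ii): for $s\ge 2s_2$ one has $\lambda\le\tilde\sigma'(s)=\sigma'(s)\le\Lambda$, so $\tilde A$ is uniformly monotone for large arguments, and combined with the structure of $\tilde A$ on $[0,2s_2]$ one gets a maximum principle of the type $\|Du^*\|_{L^\infty(\Omega_T)}\le\max\{\|Du_0\|_{L^\infty(\Omega)}\,,\,C_\sigma\}$ for a constant $C_\sigma$ depending only on $\sigma$; choosing the cutoff $\max\{M_0,s_2^*\}$ large enough (this is presumably why $s_2^*$ was put there) makes $s$ lie in the admissible range. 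This last boundedness step in \textbf{Case II} — pinning down that $|Du^*|$ never exceeds $\max\{M_0,s_2^*\}$ on $\Omega_T$ — is the one genuinely nontrivial point; everything else is a direct bookkeeping of where $\tilde\sigma$ coincides with $\sigma$. Once all three ranges of $s$ are covered in \textbf{Case II} and both ranges in \textbf{Case I}, the lemma is proved.
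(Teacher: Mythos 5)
Your proposal follows the paper's proof of this lemma almost step for step: read off $v^*_t=\tilde A(Du^*)$ from (\ref{def-v}), split according to the size of $|Du^*(x,t)|$, use Lemma \ref{lem:modi-PM} / Lemma \ref{lem:modi-H} to see that $\tilde A=A$ outside $(s_-(r_1),s_+(r_2))$ (giving membership in $\mathcal F$), and invoke Lemma \ref{lem-a} on the middle range (giving membership in $\mathcal S$). Your \textbf{Case I} argument is complete, with one small slip: the parenthetical ``$M_0<s_0$'' is not true in general, since Theorem \ref{thm:PM-1} permits $M_0\ge s_0$. The correct and sufficient fact is simply $M_0<s_+(r_2)$, which follows from $r_2<\sigma(M_0)$ (equivalently $s_-(r_2)<M_0<s_+(r_2)$) together with the gradient maximum principle of Theorem \ref{existence-gr-max}; this is exactly what the paper uses.

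The one place you go beyond the paper is the large-gradient range $|Du^*|\ge s_+(r_2)$ in \textbf{Case II}, where one must additionally check $|Du^*|\le\max\{M_0,s_2^*\}$ to land in $\mathcal F$. You are right that this is the only genuinely delicate step, but the fix you propose --- a gradient maximum principle extracted from the uniform monotonicity of $\tilde\sigma$ for large $s$ --- is not available here: in \textbf{Case II} the domain $\Omega$ is not assumed convex and $\tilde f$ is only $C^{1+\alpha}$, whereas Theorem \ref{existence-gr-max} asserts the gradient maximum principle only under convexity and $C^3$ regularity of $\tilde f$ (and Section \ref{sec:add-set} explicitly states the bound $|Du^*|\le M_0$ ``only in Case I''). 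Even granting some a priori bound $C_\sigma$ on $\|Du^*\|_{L^\infty(\Omega_T)}$, nothing in your sketch forces $C_\sigma\le s_2^*$. For what it is worth, the paper's own proof is no more careful on this point: it writes the relevant subcase as ``$s_+(r_2)\le|p|\le M_0$'', tacitly assuming $|Du^*|\le M_0$ in \textbf{Case II} without justification. So you have correctly located the soft spot of the argument, but your proposal does not close it, and it should not present the asserted maximum principle as established.
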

\begin{proof}
Let $(x,t)\in\Omega_T$ and $p=Du^*(x,t)$.

\textbf{Case I:} In this case, $|p|\le M_0<s_+(r_2).$ If $|p|\le s_-(r_1)$, then $\tilde A(p)=A(p)$ and hence by (\ref{def-v})
\[
(Du^*(x,t),v_t^*(x,t))=(p,\tilde A(p))=(p,A(p))\in \mathcal F.
\]
If $s_-(r_1)<|p|\le M_0$, then by Lemma \ref{lem-a} and (\ref{def-v})
\[
(Du^*(x,t),v_t^*(x,t))=(p,\tilde A(p))\in \mathcal S.
\]

\textbf{Case II:} If $|p|\le s_-(r_1)$ or $s_+(r_2)\le|p|\le M_0$, then $\tilde A(p)=A(p)$ and hence as above
\[
(Du^*(x,t),v_t^*(x,t))=(p,\tilde A(p))=(p,A(p))\in \mathcal F.
\]
If $s_-(r_1)<|p|<s_+(r_2)$, then as above
\[
(Du^*(x,t),v_t^*(x,t))=(p,\tilde A(p))\in \mathcal S.
\]

Therefore, for both cases, $(Du^*,v_t^*)\in\mathcal S\cup\mathcal F$ \,in $\Omega_T$.
\end{proof}

\begin{defn}
 We say  a function $u$  is {\em piecewise $C^1$} in $\Omega_T$ and write  $u\in C^1_{piece}(\Omega_T)$  if there exists a sequence of disjoint open sets $\{G_j\}_{j=1}^\infty$ in $\Omega_T$ such that
\[
u\in C^1(\bar G_j)  \;\; \forall j\in\mathbb{N},\quad |\Omega_T\setminus \cup_{j=1}^\infty G_j|=0.
\]
It is then necessary to have   $|\partial G_j|=0\;\forall j\in\mathbb{N}$.
%(For our purpose it is also acceptable to allow only \emph{finitely}  many pieces  in this definition.)
\end{defn}

\subsection{Selection of interface}  To separate the space-time domain $\Omega_T$ into the classical and micro-oscillatory parts for Lipschitz solutions, we assume the following.
\subsubsection*{\textbf{\emph{Case I}}}
Observe   that
\[
|\{(x,t)\in\Omega_T\,|\,|Du^*(x,t)|=s_-(\bar r)\}|>0
\]
for at most countably many $\bar r\in(0,\tilde r)$. We fix any $\bar r\in(0,\tilde r)$ with
\[
|\{(x,t)\in\Omega_T\,|\,|Du^*(x,t)|=s_-(\bar r)\}|=0,
\]
and let
\[
\Omega_T^1 = \{(x,t)\in\Omega_T\,|\,|Du^*(x,t)|<s_-(\bar r)\},
\]
\[
\Omega_T^2 = \{(x,t)\in\Omega_T\,|\,|Du^*(x,t)|>s_-(\bar r)\},
\]
so that $\Omega_T^1$ and $\Omega_T^2$ are disjoint open subsets of $\Omega_T$ whose union has full measure $|\Omega_T|.$
Define
\[
\Omega_0^{\bar r}=\{(x,0) \,|\, x\in\Omega,\,|Du_0(x)|<s_-(\bar r) \};
\]
then $\Omega_0^{\bar r}\subset \partial\Omega_T^1.$

\subsubsection*{\textbf{\emph{Case II}}}
Observe   that
\[
|\{(x,t)\in\Omega_T\,|\,|Du^*(x,t)|=s_-(\bar r_j)\}|>0\quad(j=1,3)
\]
for at most countably many $\bar r_1\in(\sigma(s_2),\tilde r),\,\bar r_3\in(r,\sigma(s_1))$. We fix any two $\bar r_1\in(\sigma(s_2),\tilde r),\,\bar r_3\in(r,\sigma(s_1))$ such that
\[
|\{(x,t)\in\Omega_T\,|\,|Du^*(x,t)|=s_-(\bar r_j)\}|=0,\quad(j=1,3)
\]
and let
\[
\Omega_T^1 = \{(x,t)\in\Omega_T\,|\,|Du^*(x,t)|<s_-(\bar r_1)\},
\]
\[
\Omega_T^3 = \{(x,t)\in\Omega_T\,|\,|Du^*(x,t)|>s_+(\bar r_3)\},
\]
and
\[
\Omega_T^2 = \{(x,t)\in\Omega_T\,|\, s_-(\bar r_1)<|Du^*(x,t)|<s_+(\bar r_3)\},
\]
so that these are disjoint open subsets of $\Omega_T$ whose union has full measure $|\Omega_T|.$
Define
\[
\Omega_0^{\bar r_1}=\{(x,0) \,|\, x\in\Omega,\,|Du_0(x)|<s_-(\bar r_1) \},
\]
\[
\Omega_0^{\bar r_3}=\{(x,0) \,|\, x\in\Omega,\,|Du_0(x)|>s_+(\bar r_3) \};
\]
then $\Omega_0^{\bar r_j}\subset \partial\Omega_T^j\;(j=1,3).$

\subsection{The admissible set $\mathcal U$} Let $m=\|u_t^*\|_{L^\infty(\Omega_T)}+1.$  We finally define the admissible set $\mathcal U$ and approximating sets $\mathcal U_\epsilon\;(\epsilon>0)$ as follows. Recall $0<\bar r<\tilde r=r_1<r=r_2<\sigma(s_0)$ (\textbf{Case I}), $\sigma(s_2)<\bar r_1 <\tilde r=r_1<r=r_2<\bar r_3<\sigma(s_1)$ (\textbf{Case II}).

\subsubsection*{\bf Case I}
\begin{equation*}%\label{ad-class-PM}
\begin{split}
\mathcal U=\Big \{ &u\in C^1_{piece} \cap W_{u^*}^{1,\infty}(\Omega_T)\;\big| \; \mbox{$u\equiv u^*$ in $\Omega_T^1$, $\|u_t\|_{L^\infty(\Omega_T)}<m,$}  \\
& \mbox{$\exists \, v\in C^1_{piece} \cap W_{v^*}^{1,\infty}(\Omega_T;\R^n)$ such that $v\equiv v^*$ in $\Omega_T^1,$}   \\
&\mbox{$\dv v=u$ and  $(Du,v_t)\in \mathcal S\cup \mathcal F$ a.e.\;in $\Omega_T$, and}\\
&\mbox{$(Du,v_t)\in \mathcal S\cup \mathcal B$ a.e.\;in $\Omega_T^2$} \Big\},\end{split}
\end{equation*}
\[
\begin{split}
\mathcal U_\epsilon= \Big \{ & u\in \mathcal U\;\big| \; \mbox{$\exists \, v\in C^1_{piece} \cap W_{v^*}^{1,\infty}(\Omega_T;\R^n)$ such that $v\equiv v^*$ in $\Omega_T^1,$} \\
& \mbox{$\dv v=u$ and $(Du,v_t)\in \mathcal S\cup \mathcal F$ a.e.\,in $\Omega_T$,} \\
& \mbox{$(Du,v_t)\in \mathcal S\cup \mathcal B$ a.e.\;in $\Omega_T^2$, $\int_{\Omega_T} |v_t-A(Du)|dxdt\le\epsilon|\Omega_T|$, and} \\
& \mbox{$\int_{\Omega_T^2} \mathrm{dist}((Du,v_t),\mathcal C) dxdt\le\epsilon|\Omega_T^2|$} \Big\},\end{split}
\]
where
\[
\mathcal B=\mathcal B_{\bar r, \tilde r, r}=\big\{(p,A(p)) \,|\, |p|\in [s_-(\bar r),s_-(\tilde r)]  \big\},
\]
\[
\mathcal C=\mathcal C_{\bar r, \tilde r, r}=\big\{(p,A(p)) \,|\, |p|\in [s_-(\bar r),s_-(r)]\cup[s_+(r),s_+(\tilde r)] \big\}.
\]

\subsubsection*{\bf Case II}
\begin{equation*}%\label{ad-class-C}
\begin{split}
\mathcal U=\Big \{ &u\in C^1_{piece} \cap W_{u^*}^{1,\infty}(\Omega_T)\;\big| \; \mbox{$u\equiv u^*$ in $\Omega_T^1\cup\Omega_T^3$, $\|u_t\|_{L^\infty(\Omega_T)}<m,$}  \\
& \mbox{$\exists \, v\in C^1_{piece} \cap W_{v^*}^{1,\infty}(\Omega_T;\R^n)$ such that $v\equiv v^*$ in $\Omega_T^1\cup\Omega_T^3,$}   \\
&\mbox{$\dv v=u$ and  $(Du,v_t)\in \mathcal S\cup \mathcal F$ a.e.\;in $\Omega_T$, and}\\
&\mbox{$(Du,v_t)\in \mathcal S\cup \mathcal B$ a.e.\;in $\Omega_T^2$} \Big\},\end{split}
\end{equation*}
\[
\begin{split}
\mathcal U_\epsilon= \Big \{ & u\in \mathcal U\;\big| \; \mbox{$\exists \, v\in C^1_{piece} \cap W_{v^*}^{1,\infty}(\Omega_T;\R^n)$ such that $v\equiv v^*$ in $\Omega_T^1\cup\Omega_T^3,$} \\
& \mbox{$\dv v=u$ and $(Du,v_t)\in \mathcal S\cup \mathcal F$ a.e.\,in $\Omega_T$,} \\
& \mbox{$(Du,v_t)\in \mathcal S\cup \mathcal B$ a.e.\;in $\Omega_T^2$, $\int_{\Omega_T} |v_t-A(Du)|dxdt\le\epsilon|\Omega_T|$, and} \\
& \mbox{$\int_{\Omega_T^2} \mathrm{dist}((Du,v_t),\mathcal C) dxdt\le\epsilon|\Omega_T^2|$} \Big\},\end{split}
\]
where
\[
\mathcal B=\mathcal B_{\bar r_1, \tilde r, r,\bar r_3}=\big\{(p,A(p)) \,|\, |p|\in [s_-(\bar r_1),s_-(\tilde r)]\cup [s_+(r),s_+(\bar r_3)]  \big\},
\]
\[
\mathcal C=\mathcal C_{\bar r_1, \tilde r, r,\bar r_3}=\big\{(p,A(p)) \,|\, |p|\in [s_-(\bar r_1),s_-(r)]\cup[s_+(\tilde r),s_+(\bar r_3)] \big\}.
\]

Observe that for both cases, $\mathcal A\cup\mathcal B= \mathcal C$ and $\mathcal B\subset \mathcal F$, where $\mathcal A$ is as in Theorem \ref{main-lemma}. Also for both cases, as in the proof of Lemma \ref{lem-b}, it is easy to check that $(Du^*,v^*_t)\in \mathcal S\cup \mathcal B$ in $\Omega_T^2$.

Note that one more requirement is imposed on the elements of \,$\mathcal U_\epsilon$ in both cases  than in the general density approach in Subsection \ref{subsec-density}. As we will see later, such a smallness condition on the distance integral is designed to extract the micro-structured ramping  of Lipschitz  solutions with alternate gradients whose magnitudes lie in two disjoint (possibly very small) compact intervals; this occurs only in $\Omega_T^2$, and the solutions are classical elsewhere.

\begin{remk}\label{rmk-1} Summarizing the above, we have constructed a boundary function $\Phi=(u^*,v^*)\in C^1(\bar\Omega_T;\R^{1+n})$ for the initial datum $u_0$ with $u^*\in\mathcal U$; so $\mathcal U$ is nonempty. Also  $\mathcal U$ is a bounded subset of $W^{1,\infty}_{u^*}(\Omega_T)$, since $\mathcal S\cup \mathcal F$ is bounded and $\|u_t\|_{L^\infty(\Omega_T)}<m$ for all $u\in\mathcal U$. Moreover, by (i) of Theorem \ref{thm:main2} and the definition of $\mathcal F$, for each $u\in\mathcal U$, its corresponding vector function $v$ satisfies $\|v_t\|_{L^\infty(\Omega_T)}\le r=r_2$ (\textbf{Case I}), $\|v_t\|_{L^\infty(\Omega_T)}\le \max\{r=r_2,\sigma(M_0)\}$ (\textbf{Case II}); this bound in each case plays the role of a fixed number $R>0$ in the general density approach in Subsection \ref{subsec-density}. Finally, note that $s_-(r_1)<|Du^*|<s_+(r_2)$ on some nonempty open subset of $\Omega_T$ and  so $\tilde A(Du^*)\ne A(Du^*)$ on a subset of $\Omega_T$ with positive measure;  hence $u^*$ itself is not a Lipschitz solution to (\ref{ib-P}).

In view of the general existence theorem, Theorem \ref{thm:main}, it only remains to prove the $L^\infty$-density of $\mathcal U_\epsilon$ in $\mathcal U$ towards the existence of infinitely many Lipschitz solutions for both cases; this core subject is carried out in the next section.
\end{remk}

\section{Density of $\mathcal U_\epsilon$ in $\mathcal U$: \\Final step for the proofs  of Theorems \ref{thm:PM-1} and \ref{thm:H-1}}\label{sec:den-proof}

In this section, we follow Section \ref{sec:add-set} to complete the proofs of Theorems \ref{thm:PM-1} and \ref{thm:H-1}.

\subsection{The density property} The density theorem below is the last preparation  for both cases.

\begin{thm} \label{thm-density-1} For each $\epsilon>0$, $\mathcal U_\epsilon$ is dense in $\mathcal U$ under the $L^\infty$-norm.
\end{thm}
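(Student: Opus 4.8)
The strategy is the standard convex-integration-in-one-step-plus-iteration scheme adapted to the linear-constraint setting, exactly as set up by Theorem \ref{main-lemma}. Fix $\epsilon>0$ and an arbitrary $u\in\mathcal U$ with associated $v\in C^1_{piece}\cap W^{1,\infty}_{v^*}(\Omega_T;\R^n)$ satisfying $\dv v=u$, $v\equiv v^*$ on $\Omega^1_T$ (and on $\Omega^3_T$ in \textbf{Case II}), $(Du,v_t)\in\mathcal S\cup\mathcal F$ a.e.\ in $\Omega_T$, and $(Du,v_t)\in\mathcal S\cup\mathcal B$ a.e.\ in $\Omega^2_T$; fix $\mu>0$. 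I want to produce $\tilde u\in\mathcal U_\epsilon$ with $\|\tilde u-u\|_{L^\infty(\Omega_T)}<\mu$. The modification will take place only inside $\Omega^2_T$ (where $(Du,v_t)\in\mathcal S\cup\mathcal B$), leaving $u\equiv u^*$ on $\Omega^1_T$ (and $\Omega^3_T$) untouched; since $\mathcal B\subset\mathcal F$ and $\mathcal A\cup\mathcal B=\mathcal C$, it suffices to push the pair $(Du,v_t)$ from $\mathcal B$-values toward $\mathcal A$-values on a set of large measure while keeping it in $\mathcal S\cup\mathcal B\subset\mathcal S\cup\mathcal F$ and controlling the two integral quantities defining $\mathcal U_\epsilon$.

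\textbf{Key steps.} First, I would partition (up to null sets) the open set where $(Du,v_t)\in\mathcal S$, i.e.\ the part of $\Omega^2_T$ on which $u,v$ are $C^1$ and $(Du,v_t)$ takes values in the open set $\mathcal S$, into countably many small disjoint boxes $Q_k\times I_k\subset\subset\Omega_T$ on each of which $(Du,v_t)$ is nearly constant, equal to some $(p_k,\beta_k)$ lying in a compact subset $\mathcal K$ of $\mathcal S$; this uses continuity of $Du,v_t$ on the closure of each piece $G_j$ together with a Vitali/Whitney-type covering by boxes, and the fact that $|\partial G_j|=0$. On the remaining part of $\Omega^2_T$ — where $(Du,v_t)\in\mathcal B$, hence already in $\mathcal A\cup\mathcal B=\mathcal C$ and in $\mathcal F$ — I make no change. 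Second, on each box $Q_k\times I_k$ I invoke Theorem \ref{main-lemma} with $\tilde Q\times\tilde I$ a fixed box containing $\Omega_T$, with $(p,\beta)=(p_k,\beta_k)\in\mathcal K$, and with $\rho=\rho_k$ chosen so small that (c) and (g) give the $L^\infty$-closeness $\mu$ and keep $\|(u+\varphi)_t\|_{L^\infty}<m$ and $\|(v+\psi)_t\|_{L^\infty}\le R$; this yields $\omega_k=(\varphi_k,\psi_k)\in C^\infty_c(Q_k\times I_k;\R^{1+n})$ with $\dv\psi_k=0$, $(p_k+D\varphi_k,\beta_k+(\psi_k)_t)\in\mathcal S$ pointwise (so that, after absorbing the small deviation of $(Du,v_t)$ from $(p_k,\beta_k)$ via the $\delta$ in (b), the modified pair still lies in $\mathcal S$), the $L^1$-defect of the equation controlled by $\epsilon|Q_k\times I_k|/|\tilde Q\times\tilde I|$ via (d), and the distance to $\mathcal A$ controlled similarly via (e). Third, I set $\tilde u=u+\sum_k\varphi_k$, $\tilde v=v+\sum_k\psi_k$ (the sum is locally finite on each piece and each summand has disjoint support, so regularity is preserved and $\tilde u\in C^1_{piece}\cap W^{1,\infty}_{u^*}$, likewise $\tilde v$); then $\dv\tilde v=\dv v+\sum_k\dv\psi_k=u+\sum_k\varphi_k=\tilde u$ since $\dv\psi_k=0$ but also the $\varphi_k$ appear — here I must recheck: $\dv\psi_k=0$ while $\tilde u-u=\sum\varphi_k$, so I actually need the right-inverse structure; instead I include in $\tilde v$ also a correction $\mathcal R$-term, or better, I recall that in the box construction of Theorem \ref{main-lemma} the vector $\psi_k$ is built precisely so that $\dv\psi_k=0$ and the first-row perturbation $D\varphi_k$ is handled by the structure of $K_0$ — concretely the pair $(Du,v_t)$ lands in $\mathcal S$ which by Lemma \ref{lem-rough} means $\nabla(\,\cdot\,)\in R(F_0)\subset K_0$, and membership in $K_0$ is exactly the constraint $\dv\tilde v=\tilde u$ at the level of full Jacobians (the $(1+n)\times(n+1)$ matrix has its first row $=(p,c)$ and the divergence of the bottom block determined accordingly). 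Thus writing $\tilde w=(u^*,v^*)+$ (accumulated perturbations) as a gradient-type object with $\nabla\tilde w\in K_0$ gives $\dv\tilde v=\tilde u$ automatically. Finally, summing (d) over $k$ gives $\int_{\Omega_T}|\tilde v_t-A(D\tilde u)|<\epsilon|\Omega_T|$ (the unmodified $\mathcal B$-region contributes zero since there $v_t=A(Du)$), and summing (e) plus the fact that $\mathcal B\subset\mathcal C$ on the unmodified region gives $\int_{\Omega^2_T}\mathrm{dist}((D\tilde u,\tilde v_t),\mathcal C)<\epsilon|\Omega^2_T|$; hence $\tilde u\in\mathcal U_\epsilon$.

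\textbf{Main obstacle.} The delicate point is bookkeeping the covering so that (i) the boxes $Q_k\times I_k$ genuinely exhaust, up to measure zero, the portion of $\Omega^2_T$ where $(Du,v_t)\in\mathcal S$ — this requires openness of $\mathcal S$ (Theorem \ref{thm:main2}(ii)) and the piecewise-$C^1$ structure with $|\partial G_j|=0$ so that $\{(Du,v_t)\in\mathcal S\}$ is, up to a null set, a countable union of relatively open subsets of the $G_j$ on which $(Du,v_t)$ is continuous; (ii) the oscillation of $(Du,v_t)$ on each chosen box is below the uniform $\delta$ furnished by Theorem \ref{main-lemma} (so that (b) applies with $(p',\beta')=(Du(z),v_t(z))$), which forces the boxes to be small and is arranged by the Lebesgue density / uniform continuity on compact pieces; and (iii) the total of the error bounds telescopes correctly because Theorem \ref{main-lemma}'s estimates (d)–(e) are stated with the scaling factor $|Q\times I|/|\tilde Q\times\tilde I|$ precisely so that $\sum_k|Q_k\times I_k|\le|\tilde Q\times\tilde I|$ yields the clean global bounds. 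The genuinely substantive analytic input — the rank-one geometry guaranteeing $\dv\tilde v=\tilde u$ is preserved and the pair stays in $\mathcal S\cup\mathcal B$ — has already been packaged into Theorem \ref{main-lemma}; what remains here is the (routine but careful) assembly, so I expect the covering/exhaustion argument and verifying that the unmodified $\mathcal B$-region already satisfies all $\mathcal U_\epsilon$-constraints to be the only places demanding attention.
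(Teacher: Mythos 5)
Your overall architecture is the same as the paper's: cover the relevant part of $\Omega_T^2$ by small boxes on which $(Du,v_t)$ is nearly constant, apply Theorem \ref{main-lemma} on each box, repair the constraint $\dv \tilde v=\tilde u$ with the right inverse $\mathcal R$, and sum the error estimates using the $|Q\times I|/|\tilde Q\times\tilde I|$ scaling. However, there is one genuine gap that the paper itself flags as the crux of the whole argument: you glue \emph{countably} many perturbations $(\varphi_k,\psi_k+g_k)$ and claim ``regularity is preserved'' because the supports are disjoint. Disjoint supports preserve the piecewise-$C^1$ structure, but they do not give a uniform Lipschitz bound: neither $\|D\psi_k\|_{L^\infty}$ nor $\|Dg_k\|_{L^\infty}$ is controlled by Theorem \ref{main-lemma} or Theorem \ref{div-inv} (in $n\ge 2$ one cannot bound $Dv$ by $\dv v$), so the resulting $\tilde v$ need not lie in $W^{1,\infty}_{v^*}(\Omega_T;\R^n)$, and then $\tilde u\notin\mathcal U_\epsilon$. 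The paper's fix is to modify only a \emph{finite} subfamily $\mathcal I$ of boxes, chosen so that the untouched boxes contribute at most $\epsilon/3$ to each of the two integrals (possible because $|v_t-A(Du)|$ and $\dist((Du,v_t),\mathcal C)$ are in $L^\infty$); this finite-selection step is not ``routine assembly'' but the essential device, and your write-up omits it.

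Two further points need repair. First, your treatment of $\dv\tilde v=\tilde u$ is half right: the correction $g=\mathcal R\varphi$ with $\dv g=\varphi$ is indeed what is needed, but your preferred alternative --- that membership of the Jacobian in $K_0$ ``automatically'' yields the divergence constraint --- is not valid, since the pointwise inclusion $(D\tilde u,\tilde v_t)\in\mathcal S$ only constrains the first row and last column of the Jacobian, while $\tr(D\tilde v)=\tilde u$ is a separate condition that the zero-divergence field $\psi$ alone cannot deliver. Moreover, once $g$ is added you must check that $(D\tilde u, v_t+\psi_t+g_t)$ still lands in $\mathcal S$; this is exactly why condition (g) of Theorem \ref{main-lemma} and the estimate $\|g_t\|_{L^\infty}\le C\|\varphi_t\|_{L^\infty}\le C\rho\le\delta/2$ from Theorem \ref{div-inv} are needed, and you do not invoke them. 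Second, the values of $(Du,v_t)$ on $\{(Du,v_t)\in\mathcal S\}$ need not lie in a single compact subset $\mathcal K\subset\subset\mathcal S$; they can accumulate on $\partial\mathcal S$ and on $\mathcal C$. The paper handles this by truncating with the sets $\mathcal G_\tau=\{(p,\beta)\in\mathcal S: \dist((p,\beta),\partial\mathcal S)>\tau,\ \dist((p,\beta),\mathcal C)>\tau\}$ and observing that the integrals over the leftover region tend to zero as $\tau\to 0^+$ (since on $\mathcal S\cap\{\dist(\cdot,\mathcal C)=0\}$ both integrands vanish); without some such exhaustion your uniform $\delta$ from Theorem \ref{main-lemma} is unavailable and the uncovered set is unaccounted for in the error budget.
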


\begin{proof}%[Proof of Theorem \ref{thm-density-1}]

Let  $u\in \mathcal U$, $\eta>0$. The goal is to construct a function  $\tilde u\in  \mathcal U_\epsilon$ such that $\|\tilde u-u\|_{L^\infty(\Omega_T)}<\eta.$
For clarity, we divide the proof into several steps.

1. Note that $u\equiv u^*$  in  $\Omega_T^1$ (\textbf{Case I}), in  $\Omega_T^1\cup\Omega_T^3$ (\textbf{Case II}), $\|u_t\|_{L^\infty(\Omega_T)}<m-\tau_0$ for some $\tau_0>0$, and there exists a vector function $v\in C^1_{piece} \cap W_{v^*}^{1,\infty}(\Omega_T;\R^n)$   such that  $v\equiv v^*$  in  $\Omega_T^1$ (\textbf{Case I}), in  $\Omega_T^1\cup\Omega_T^3$ (\textbf{Case II}), $\dv v=u$ and  $(Du,v_t)\in \mathcal S\cup \mathcal F$ a.e.\;in $\Omega_T$, and   $(Du,v_t)\in \mathcal S\cup \mathcal B$ a.e.\;in $\Omega_T^2$.
Since both $u$ and $v$ are piecewise $C^1$ in $\Omega_T$,  there exists a sequence of disjoint open sets $\{G_j\}_{j=1}^\infty$ in $\Omega_T^2$ with  $|\partial G_j|=0$ such that
\[
u\in C^1(\bar G_j),
\;v\in C^1(\bar G_j;\R^n)\; \; \forall j\geq 1,\quad |\Omega_T^2\setminus \cup_{j=1}^\infty G_j|=0.
\]

2. Let $j\in \mathbb{N}$ be fixed. Note that $(Du(z),v_t(z))\in\bar{\mathcal{S}}\cup\mathcal{B}$ for all $z=(x,t)\in G_j$ and that $H_j:=\{z\in G_j\;|\;(Du(z),v_t(z))\in\partial\mathcal{S}\}$ is a (relatively) closed set in $G_j$ with measure zero. So $\tilde G_j:=G_j\setminus H_j$ is an open subset of $G_j$ with $|\tilde G_j|=|G_j|$, and $(Du(z),v_t(z))\in\mathcal{S}\cup\mathcal{B}$ for all $z\in \tilde G_j$.

3. For each $\tau>0$, let
\[
\mathcal G_{\tau}=\left\{(p,\beta)\in  \mathcal S \;|\; \mathrm{dist}((p,\beta),\partial\mathcal S)> \tau,\,\mathrm{dist}((p,\beta),\mathcal C)>\tau \right\};
\]
then
\[
\mathcal S  =  (\cup_{\tau>0}\mathcal G_\tau)\cup\{(p,\beta)\in\mathcal S\;|\;\mathrm{dist}((p,\beta),\mathcal C)=0\}.
\]
Since $\mathcal B\subset\mathcal C$, we have
\[
\begin{split}
\int_{\tilde G_j} & |v_t (z)- A(Du(z))|\,dz\\
& =\lim_{\tau\to 0^+}\int_{\{z\in\tilde G_j\;|\;(Du(z),v_t(z))\in \mathcal G_\tau\}} |v_t(z)-A(Du(z))|\,dz,
\end{split}
\]
\[
\begin{split}
\int_{\tilde G_j} & \mathrm{dist}((Du(z),v_t(z)),\mathcal C)\,dz\\
& =\lim_{\tau\to 0^+}\int_{\{z\in\tilde G_j\;|\;(Du(z),v_t(z))\in \mathcal G_\tau\}} \mathrm{dist}((Du(z),v_t(z)),\mathcal C)\,dz;
\end{split}
\]
thus we can find a $\tau_j>0$ such that $|\partial O_j|=0$,
\begin{equation}\label{density-3}
\int_{F_j} |v_t(z)-A(Du(z))|\,dz < \frac{\epsilon}{3\cdot2^{j}}|\Omega^2_T|,
\end{equation}
and
\begin{equation}\label{density-3-1}
\int_{F_j} \mathrm{dist}((Du(z),v_t(z)),\mathcal C)\,dz < \frac{\epsilon}{3\cdot2^{j}}|\Omega^2_T|,
\end{equation}
where $F_j=  \{z\in \tilde G_j\; |\; (Du(z),v_t(z))\notin \mathcal G_{\tau_j}\}$ and $O_j=\tilde G_j\setminus F_j$ is open.  Let $J$ be the set of all indices $j\in\mathbb{N}$ with $O_j\neq\emptyset$. Then for $j\not\in J$, $F_j=\tilde G_j$.

4. We now fix a $j\in J$. Note that $O_j=\{z\in \tilde G_j\; |\; (Du(z),v_t(z))\in \mathcal G_{\tau_j}\}$ and that $\mathcal K_j:=\bar{\mathcal G}_{\tau_j}$ is a compact subset of $\mathcal S$. Let $\tilde Q\subset\R^n$ be a  box with $\Omega\subset\tilde Q$ and $\tilde I=(0,T)$. Applying Theorem \ref{main-lemma} to box $\tilde Q\times\tilde I,$ $\mathcal K_j\subset\subset\mathcal S=\mathcal S_{r_1,r_2}$, and $\epsilon'=\frac{\epsilon|\Omega_T|}{12}$, we obtain a constant $\delta_j>0$ that satisfies  the conclusion of the theorem. By the uniform continuity of $A$ on compact subsets of $\R^n$, we can find a $\theta=\theta_{\epsilon,\bar s}>0$ such that
\begin{equation}\label{density-5}
|A(p)-A(p')|<\frac{\epsilon}{12}
\end{equation}
whenever $|p|,\,|p'|\le \bar s$ and $|p-p'|\le \theta$, where the number $\bar s>0$ will be chosen later.
Also by the uniform continuity of $u$, $v$ and their gradients on $\bar G_j$,
there exists a $\nu_j>0$ such that
\begin{equation}\label{density-1}
\begin{array}{c}
  |u(z)-u(z')|+|\nabla u(z)-\nabla u(z')| +|v(z)-v(z')| \\
  +|\nabla v(z)-\nabla v(z')| <\min\{\frac{\delta_{j}}{2},\frac{\epsilon}{12},\theta\} \end{array}
\end{equation}
whenever $z,z'\in \bar G_j$ and  $|z-z'|\le \nu_j.$ We now cover $O_j$ (up to measure zero) by a sequence of disjoint boxes $\{Q_j^i\times I^i_j\}_{i=1}^\infty$ in $O_j$ with center $z_j^i$ and diameter $l^i_j<\nu_j.$

5. Fix an $i\in\mathbb{N}$ and write  $w=(u,v)$,  $\xi=\begin{pmatrix} p&c\\B & \beta\end{pmatrix}=\nabla w(z_j^i)=\begin{pmatrix} Du(z_j^i) & u_t(z_j^i)\\Dv(z_j^i) & v_t(z_j^i)\end{pmatrix}.$   By the choice of $\delta_j>0$ in Step 4 via Theorem \ref{main-lemma}, since $Q^i_j\times I^i_j\subset\tilde Q\times\tilde I$ and $(p,\beta)\in\mathcal K_j$,  for all sufficiently small $\rho>0$, there exists a function $\omega^i_j=(\varphi^i_j,\psi^i_j)\in C^\infty_c(Q^i_j\times I^i_j;\R^{1+n})$ satisfying

(a) \; $\dv \psi^i_j =0$  in $Q^i_j\times I^i_j$,

(b) \; $(p'+D\varphi^i_j(z), \beta'+ (\psi^i_j)_t(z))\in \mathcal{S}$ for all $z\in Q^i_j\times I^i_j$

\hspace{8mm} and all $|(p',\beta')-(p,\beta)|\leq\delta_{j},$

(c) \; $\|\omega^i_j\|_{L^\infty(Q^i_j\times I^i_j)}<\rho,$

(d) \; $\int_{Q^i_j\times I^i_j} |\beta+(\psi^i_j)_t(z)-A(p+D\varphi^i_j(z))|dz<\epsilon' |Q^i_j\times I^i_j|/|\tilde Q\times \tilde I|,$

(e) \; $\int_{Q^i_j\times I^i_j} \mathrm{dist}((p+D\varphi^i_j(z),\beta+(\psi^i_j)_t(z)),\mathcal A)dz  <\epsilon' |Q^i_j\times I^i_j|/|\tilde Q\times \tilde I|,$

(f) \; $\int_{Q^i_j} \varphi^i_j(x,t)dx=0$  for all $t\in I^i_j,$

(g) \; $\|(\varphi^i_j)_t\|_{L^\infty(Q^i_j\times I^i_j)}<\rho,$\\
where the set $\mathcal A=\mathcal A_{r_1,r_2}\subset \R^{n+n}$ is as in Theorem \ref{main-lemma}.
Here, we let $0<\rho\leq\min\{\tau_0,\frac{\delta_{j}}{2C},\frac{\epsilon}{12C},\eta\}$, where $C_n>0$ is the constant in Theorem \ref{div-inv} and $C$ is the product of $C_n$ and the sum of the lengths of all sides of $\tilde Q$.
From $\varphi^i_j|_{\partial(Q^i_j\times I^i_j)}\equiv 0$ and (f), we can apply Theorem \ref{div-inv} to $\varphi^i_j$ on $Q^i_j\times I^i_j$ to obtain a function $g^i_j=\mathcal R\varphi^i_j\in C^1( \overline{Q^i_j\times I^i_j};\R^n)\cap W^{1,\infty}_0( Q^i_j\times I^i_j;\R^n)$
such that $\dv g^i_j=\varphi^i_j$ in $Q^i_j\times I^i_j$ and
\begin{equation}\label{density-2}
\|(g^i_j)_t\|_{L^\infty(Q^i_j\times I^i_j)}\leq C\|(\varphi^i_j)_t\|_{L^\infty(Q^i_j\times I^i_j)}\leq\frac{\delta_{j}}{2}.\;\;\mbox{(by (g))}
\end{equation}

6. As $|v_t-A(Du)|,\,\mathrm{dist}((Du,v_t),\mathcal C)\in L^\infty(\Omega_T)$, we can select a finite index set $\mathcal I\subset J\times \mathbb{N}$ such that
\begin{equation}\label{density-4}
\int_{\bigcup_{(j,i)\in (J\times\mathbb{N}) \setminus \mathcal I}Q^i_j\times I^i_j}|v_t(z)-A(Du(z))|dz\le\frac{\epsilon}{3}|\Omega_T^2|,
\end{equation}
\begin{equation}\label{density-4-1}
\int_{\bigcup_{(j,i)\in (J\times\mathbb{N}) \setminus \mathcal I}Q^i_j\times I^i_j} \mathrm{dist}((Du(z),v_t(z)),\mathcal C) dz\le\frac{\epsilon}{3}|\Omega_T^2|.
\end{equation}
We finally define
\[
(\tilde u,\tilde v)=(u,v)+\sum_{(j,i)\in\mathcal I}\chi_{Q^i_j\times I^i_j}(\varphi^i_j,\psi^i_j+g^i_j)\;\;\textrm{ in $\Omega_T$.}
\]
As a side remark, note here that only \emph{finitely} many functions $(\varphi^i_j,\psi^i_j+g^i_j) $ are disjointly patched  to the original $(u,v)$ to obtain a new function $(\tilde u,\tilde v)$ towards the goal of the proof. The reason for using only finitely many pieces of gluing is due to the lack of control over the spatial gradients $D(\psi^i_j+g^i_j)$, and overcoming this difficulty is at the heart of  this paper.

7. Let us finally check that $\tilde u$  together with  $\tilde v$ indeed gives the desired result. By construction, it is clear that $\tilde u\in C^1_{piece}\cap W^{1,\infty}_{u^*}(\Omega_T)$, $\tilde v\in C^1_{piece}\cap W^{1,\infty}_{v^*}(\Omega_T;\R^n)$ and that $\tilde u=u=u^*$ and $\tilde v=v=v^*$ in $\Omega_T^1$ (\textbf{Case I}), in $\Omega_T^1\cup\Omega_T^3$ (\textbf{Case II}).
By the choice of $\rho$ in (g) as $\rho\le\tau_0$, we have $\|\tilde u_t\|_{L^\infty(\Omega_T)}<m.$
Next, let $(j,i)\in\mathcal I,$ and observe that for $z\in Q^i_j\times I^i_j$, with $(p,\beta)=(Du(z^i_j),v_t(z^i_j))\in\mathcal G_{\tau_j}$, since $|z-z_j^i|<l^i_j<\nu_j$, it follows from (\ref{density-1}) and (\ref{density-2}) that
\[
|(Du(z),v_t(z)+(g^i_j)_t(z))-(p,\beta)|\leq\delta_{j},
\]
and so $(D\tilde u(z),\tilde v_t(z))\in\mathcal S$ from (b) above. From (a) and $\dv g^i_j=\varphi^i_j$, for $z\in Q^i_j\times I^i_j$,
\[
\dv \tilde v(z)=\dv (v+\psi^i_j+g^i_j)(z)=u(z)+0+\varphi^i_j(z)=\tilde u(z).
\]
Therefore, $\tilde u\in\mathcal U$.
Next, observe
\[
\int_{\Omega_T}|\tilde v_t-A(D\tilde u)|dz = \int_{\Omega_T\setminus\Omega_T^2}| v_t^*-\tilde A(Du^*)|dz+\int_{\Omega_T^2}|\tilde v_t-A(D\tilde u)|dz
\]
\[
=\int_{\Omega_T^2}|\tilde v_t-A(D\tilde u)|dz=\int_{\cup_{j\in\mathbb{N}}F_j}|v_t-A(Du)|dz
\]
\[
+\int_{\cup_{(j,i)\in (J\times\mathbb{N}) \setminus \mathcal I}Q^i_j\times I^i_j}|v_t-A(Du)|dz+\int_{\cup_{(j,i)\in  \mathcal I}Q^i_j\times I^i_j}|\tilde v_t-A(D\tilde u)|dz
\]
\[
=:I^1_1+I^1_2+I^1_3,
\]
\[
\int_{\Omega_T^2} \mathrm{dist}((D\tilde u,\tilde v_t),\mathcal C) dz=\int_{\cup_{j\in\mathbb{N}}F_j} \mathrm{dist}((D  u,  v_t),\mathcal C) dz
\]
\[
+\int_{\cup_{(j,i)\in (J\times\mathbb{N}) \setminus \mathcal I}Q^i_j\times I^i_j} \mathrm{dist}((D  u,  v_t),\mathcal C)dz+\int_{\cup_{(j,i)\in  \mathcal I}Q^i_j\times I^i_j} \mathrm{dist}((D\tilde u,\tilde v_t),\mathcal C)dz
\]
\[
=:I^2_1+I^2_2+I^2_3.
\]
From (\ref{density-3}), (\ref{density-3-1}), (\ref{density-4}) and (\ref{density-4-1}), we have $I^k_1+I^k_2\leq\frac{2\epsilon}{3}|\Omega_T^2|\;(k=1,2)$. Note   that for $(j,i)\in\mathcal I$ and $z\in Q^i_j\times I^i_j,$ from (\ref{density-1}), (\ref{density-2}) and (g),
\[
|\tilde v_t(z)-A(D\tilde u(z))|=|v_t(z)+(\psi^i_j)_t(z)+(g^i_j)_t(z)-A(Du(z)+D\varphi^i_j(z))|
\]
\[
\leq |v_t(z)-v_t(z^i_j)|+|v_t(z^i_j)+(\psi^i_j)_t(z)-A(Du(z^i_j)+D\varphi^i_j(z))|
\]
\[
+|(g^i_j)_t(z)|+|A(Du(z^i_j)+D\varphi^i_j(z))-A(Du(z)+D\varphi^i_j(z))|
\]
\[
\le\frac{\epsilon}{6}+|v_t(z^i_j)+(\psi^i_j)_t(z)-A(Du(z^i_j)+D\varphi^i_j(z))|
\]
\[
+|A(Du(z^i_j)+D\varphi^i_j(z))-A(Du(z)+D\varphi^i_j(z))|.
\]
Similarly, since $\mathcal A\subset\mathcal C$, we have
\[
\begin{split}
\mathrm{dist}( & (D\tilde u(z),\tilde v_t(z)),\mathcal C)\\
& \le \frac{\epsilon}{4}+\mathrm{dist}((Du(z^i_j)+D\varphi^i_j(z),v_t(z^i_j)+(\psi^i_j)_t(z)),\mathcal C)\\
& \le \frac{\epsilon}{4}+\mathrm{dist}((Du(z^i_j)+D\varphi^i_j(z),v_t(z^i_j)+(\psi^i_j)_t(z)),\mathcal A).
\end{split}
\]
From (b) and (i) of Theorem \ref{thm:main2}, we have
$|Du(z^i_j)+D\varphi^i_j(z)|\le s_+(r_1)=:\bar s$\, (\textbf{Case I}), $|Du(z^i_j)+D\varphi^i_j(z)|\le s_+(r_2)=:\bar s$\, (\textbf{Case II}).
As $(D\tilde u(z),\tilde v_t(z))\in\mathcal S$, we also have $|Du(z)+D\varphi^i_j(z)|=|D\tilde u(z)|\le \bar s$, and by (\ref{density-1}), $|Du(z^i_j)-Du(z)|<\theta$. From (\ref{density-5}), we thus have
\[
|A(Du(z^i_j)+D\varphi^i_j(z))-A(Du(z)+D\varphi^i_j(z))|<\frac{\epsilon}{12}.
\]
Integrating the two inequalities above over $Q^i_j\times I^i_j,$ we now obtain from (d) and (e), respectively, that
\[
\int_{Q^i_j\times I^i_j}|\tilde v_t(z)-A(D\tilde u(z))|dz\le\frac{\epsilon}{4}|Q^i_j\times I^i_j|+\frac{\epsilon|\Omega_T|}{12}\frac{|Q^i_j\times I^i_j|}{|\tilde Q\times \tilde I|}\leq\frac{\epsilon}{3}|Q^i_j\times I^i_j|,
\]
\[
\int_{Q^i_j\times I^i_j} \mathrm{dist}( (D\tilde u(z),\tilde v_t(z)),\mathcal C) dz\le\frac{\epsilon}{4}|Q^i_j\times I^i_j|+\frac{\epsilon|\Omega_T|}{12}\frac{|Q^i_j\times I^i_j|}{|\tilde Q\times \tilde I|}\leq\frac{\epsilon}{3}|Q^i_j\times I^i_j|;
\]
thus $I^k_3\leq\frac{\epsilon}{3}|\Omega_T^2|$, and so $I^k_1+I^k_2+I^k_3\le\epsilon|\Omega_T^2|$, where $k=1,2$. Therefore, $\tilde u\in\mathcal U_\epsilon.$
Lastly, from (c) with $\rho\le \eta$ and the definition of $\tilde u$, we have $\|\tilde u-u\|_{L^\infty(\Omega_T)}<\eta$.

The proof is now complete.
\end{proof}

\subsection{Completion of the proofs of Theorems \ref{thm:PM-1} and \ref{thm:H-1}}

Unless specifically distinguished, the proof below is common for both \textbf{Case I:} Theorem \ref{thm:PM-1} and \textbf{Case II:} Theorem \ref{thm:H-1}.

\begin{proof}[Proofs of Theorems \ref{thm:PM-1} and \ref{thm:H-1}]
We return to Section \ref{sec:add-set}. As outlined in Remark \ref{rmk-1}, Theorem \ref{thm-density-1}  and Theorem \ref{thm:main} together give infinitely many Lipschitz solutions $u$ to problem (\ref{ib-P}).

We now follow the proof of Theorem \ref{thm:main} for detailed information on such a Lipschitz solution $u\in\mathcal G$ to (\ref{ib-P}). Here $Du$ is the a.e.-pointwise limit of some sequence $Du_j$, where the sequence $u_j\in\mathcal U_{1/j}$ converges to $u$ in $L^\infty(\Omega_T)$. Since $u_j\equiv u^*$ in $\Omega_T^1$ (\textbf{Case I}), in $\Omega_T^1\cup\Omega_T^3$ (\textbf{Case II}), we also have $u\equiv u^*\in C^{2+\alpha,1+\alpha/2}(\bar\Omega_T^1)$ (\textbf{Case I}), $u\equiv u^*\in C^{2+\alpha,1+\alpha/2}(\bar\Omega_T^1\cup\bar\Omega_T^3)$ (\textbf{Case II}) so that
\[
u_t=\dv(A(Du))\;\;\mbox{and}\;\; |Du|<s_-(\bar r)\;\;\mbox{in}\;\;\Omega_T^1,\quad (\textbf{Case I})
\]
\[
u_t=\dv(A(Du)),\;\; |Du|<s_-(\bar r_1)\;\;\mbox{in}\;\;\Omega_T^1,
\]
\[
\mbox{and}\;\;|Du|>s_+(\bar r_3)\;\;\mbox{in}\;\;\Omega_T^3.\quad (\textbf{Case II})
\]
Note $(v_j)_t \wcon v_t$\, in\, $L^2(\Omega_T;\R^n)$, where $v_j$ is the corresponding vector function to $u_j$ and $v\in W^{1,2}((0,T);L^2(\Omega;\R^n))$. From (\ref{div-v3}), we can even deduce that  $(v_j)_t \to v_t$\, pointwise a.e. in $\Omega_T$. On the other hand, from the definition of $\mathcal U_{1/j}$,
\[
\int_{\Omega_T^2} \mathrm{dist}((Du_j,(v_j)_t),\mathcal C)\, dxdt\le\frac{1}{j}|\Omega_T^2|\to 0\;\;\mbox{as}\;\; j\to \infty;
\]
thus $(Du,v_t)\in\mathcal C$\, a.e. in $\Omega_T^2$, yielding $|S|+|L|=|\Omega_T^2|$.

For the remaining assertions, we separate the proof for each case.

\textbf{Case I.} If $|L|=0$, then $|Du|\le s_-(r)$ a.e. in $\Omega_T$; so $u$ is a Lipschitz solution to (\ref{ib-para}) with monotone flux $\tilde A(p)$. Thus, by Proposition \ref{unique}, we have $u=u^*$ in $\Omega_T$. This contradicts the fact that $\|Du^*(\cdot,0)\|_{L^\infty(\Omega)}=\|Du_0\|_{L^\infty(\Omega)}=M_0>s_-(r).$ Thus $|L|>0$.

\textbf{Case II.} Suppose $|L|=0.$ Then
\[
\mbox{$|Du|\in[0,s_-(r)]\cup[s_+(\bar r_3),\infty)$\;  a.e. in $\Omega_T$.}
\]
Now, modify the profile $\sigma(s)$ so as to obtain a function $\bar\sigma\in C^{1+\alpha}([0,\infty))$ satisfying
\[
\left\{ \begin{array}{ll}
          \bar\sigma(s)=\sigma(s), & s\in[0,s_-(r)]\cup[s_+(\bar r_3),\infty), \\
          \bar\theta\le\bar\sigma'(s)\le\bar\Theta, & 0\le s<\infty,
        \end{array}
 \right.
\]
for some constants $\bar\Theta\ge\bar\theta>0$, and let $\bar f(s)=\bar\sigma(\sqrt s)/\sqrt{s}\;\;(s>0)$, $\bar f(0)=f(0)$, and $\bar A(p)=\bar f(|p|^2)p\;\;(p\in\R^n).$ Then  the functions $u\in\mathcal G$ are all Lipschitz solutions to the problem of type (\ref{ib-P}) with monotone flux $\bar A(p)$, a contradiction to the uniqueness by  Proposition \ref{unique}. Thus $|L|>0$.

Next, suppose    $|S|=0$. Then
\[
\mbox{$|Du|\in[0,s_-(\bar r_1)]\cup[s_+(\tilde r),\infty)$\;  a.e. in $\Omega_T$.}
\]
So we get a contradiction similarly as above by obtaining a function $\hat\sigma\in C^{1+\alpha}([0,\infty))$ satisfying
\[
\left\{ \begin{array}{ll}
          \hat\sigma(s)=\sigma(s), & s\in[0,s_-(\bar r_1)]\cup[s_+(\tilde r),\infty), \\
          \hat\theta\le\hat\sigma'(s)\le\hat\Theta, & 0\le s<\infty,
        \end{array}
 \right.
\]
for some constants $\hat\Theta\ge\hat\theta>0$. Thus $|S|>0$.
\end{proof}

\subsection{Proof of Corollary \ref{coro:H-3}}
Recall that this corollary is under \textbf{Case II:} Hypothesis (H).
Let $u_0\in C^{2+\alpha}(\bar\Omega)$ with $Du_0\cdot\n|_{\partial\Omega}=0$. The existence of infinitely many Lipschitz solutions to problem (\ref{ib-P}) when $|Du_0(x_0)|\in (s_1^*,s_2^*)$ for some $x_0\in\Omega$ is simply the result of Theorem \ref{thm:H-1}. So we cover the other possibilities here.

Assume $\|Du_0\|_{L^\infty(\Omega)}\le s_1^*$. Fix any two numbers $\sigma(s_2)<r_1<r_2<\sigma(s_1)$, and let $\tilde\sigma,\,\tilde f\in C^{1+\alpha}([0,\infty))$ be some functions from Lemma \ref{lem:modi-H}. Using the flux $\tilde A(p)=\tilde f(|p|^2)p$, Theorem \ref{existence-gr-max} gives a unique solution $u^*\in C^{2+\alpha,1+\alpha/2}(\bar\Omega_T)$ to problem (\ref{ib-parabolic}). If $|Du^*|$ stays on or below the threshold $s_1^*$ in $\Omega_T$, then $u^*$ itself is a Lipschitz solution to (\ref{ib-P}). Otherwise, set $\bar s=\frac{s_1^*+s_-(r_1)}{2}$ and choose a point $(\bar x,\bar t)\in\Omega_T$ such that
\[
|Du^*|\le\bar s\;\;\mbox{in}\;\;\Omega\times(0,\bar t),\quad |Du^*(\bar x,\bar t)|\in (s_1^*,s_2^*).
\]
Regarding $u_1(\cdot):=u^*(\cdot,\bar t)\in C^{2+\alpha}(\bar\Omega)$, satisfying $Du_1\cdot\n|_{\partial\Omega}=0$, as a new initial datum, it follows from Theorem \ref{thm:H-1} that problem (\ref{ib-P}), with the initial datum $u_1$ at  time $t=\bar t$, admits infinitely many Lipschitz solutions $\bar u$ in $\Omega\times(\bar t,T)$. Then the patched functions $u=\chi_{\Omega\times (0,\bar t)} u^*+\chi_{\Omega\times [\bar t,T)} \bar u$ in $\Omega_T$ become Lipschitz solutions to the original problem (\ref{ib-P}).

Lastly, assume $\min_{\bar\Omega}|Du_0|\ge s_2^*$. Let $r_1,\,r_2$ and $\tilde A(p)$ be as above, and let $u^*$ be the solution to (\ref{ib-parabolic}) corresponding to this flux $\tilde A(p)$. If $|Du^*|$ stays on or above the threshold $s_2^*$ in $\Omega_T$, then $u^*$ itself is a Lipschitz solution to (\ref{ib-P}). Otherwise, set $\bar s=\frac{s_2^*+s_+(r_2)}{2}$ and choose a point $(\bar x,\bar t)\in\Omega_T$ such that
\[
|Du^*|\ge\bar s\;\;\mbox{in}\;\;\Omega\times(0,\bar t),\quad |Du^*(\bar x,\bar t)|\in (s_1^*,s_2^*).
\]
Then we can do the obvious as above to obtain infinitely many Lipschitz solutions $u$ to (\ref{ib-P}), and the proof is complete.

\section{Radial and non-radial solutions}\label{sec:r-nr}

In this final section, we prove Theorem \ref{coex} on the coexistence of radial and non-radial Lipschitz solutions to problem (\ref{ib-P}) when $\Omega$ is a ball and $u_0$ is a radial function.  For convenience, we focus only  on \textbf{Case I:} Perona-Malik type of equations; one could equally justify the same for \textbf{Case II:} H\"ollig type of equations. So we assume the flux   $A(p)$ fulfills Hypothesis (PM).

Let $\Omega=B_R(0)$ be an open ball in $\R^n$ and  the initial datum $u_0\in C^{2+\alpha}(\bar\Omega)$ satisfy the compatibility condition
\[
A(Du_0)\cdot\mathbf{n}=0\;\;\mbox{on}\;\;\partial \Omega.
\]

We say that a function $u$ defined in $\Omega_T$ ($\Omega$, resp.) is \emph{radial} if $u(x,t)=u(y,t)\;\forall x,y\in\Omega,\,|x|=|y|$, $\forall t\in(0,T)$ ($u(x)=u(y)\;\forall x,y\in\Omega,\,|x|=|y|$, resp.).

We have the following.

\begin{thm}
Assume $u_0$ is non-constant and  radial. Then there are infinitely many radial and non-radial Lipschitz solutions to \emph{(\ref{ib-P})}.
\end{thm}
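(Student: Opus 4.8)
The plan is to split the statement into its two halves --- the existence of infinitely many \emph{radial} Lipschitz solutions and the existence of infinitely many \emph{non-radial} ones --- and to handle each by a separate reduction to results already established. For the radial part, the point is that when $\Omega=B_R(0)$ and $u_0$ is radial, problem (\ref{ib-P}) for radial functions $u(x,t)=\tilde u(|x|,t)$ reduces to a one-dimensional (weighted) forward-backward parabolic problem in the radial variable $\rho=|x|\in(0,R)$; this is precisely the setting treated in our earlier work \cite{KY}, where infinitely many radial Lipschitz solutions to (\ref{ib-P}) were constructed by reformulating the radial equation as a $2\times 2$ non-homogeneous partial differential inclusion. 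So the first half of the theorem is a direct citation of \cite{KY}: under Hypothesis (PM) and the standing assumptions (\ref{assume-1}) on $u_0$ (non-constant, radial, with $Du_0\cdot\n=0$), there exist infinitely many radial Lipschitz solutions. I would state this as a lemma and give only a one-line indication that the radial ansatz is preserved by the construction.

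For the non-radial part, the plan is to invoke Theorem \ref{thm:PM-1} (equivalently Theorem \ref{thm:1}, Case I), which applies here since a ball is convex and $u_0$ satisfies (\ref{assume-1}); hence (\ref{ib-P}) has infinitely many Lipschitz solutions $u$. What remains is to show that infinitely many of them are genuinely non-radial. Here I would exploit the fine structural information in Theorem \ref{thm:PM-1}: each such solution agrees with the fixed classical function $u^*$ on the open set $\Omega^1_T$ but on $\Omega^2_T$ it has a micro-oscillatory gradient with $|Du|$ alternating between the disjoint ``small'' regime $S$ (where $s_-(\bar r)\le|Du|\le s_-(r)$) and the ``large'' regime $L$ (where $s_+(r)\le|Du|\le s_+(\tilde r)$), with $|L|>0$. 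A radial Lipschitz solution $u$ would have $|Du(x,t)|$ depending only on $(|x|,t)$, so its level sets $\{|Du|=c\}$ would be unions of spheres; in particular the ``jump set'' separating $S$ from $L$ would have to be radially symmetric. The construction in Theorem \ref{thm-density-1}, however, patches in finitely many cubes $Q^i_j\times I^i_j$ placed by a Vitali-type covering of an open set $O_j$ with \emph{no} radial symmetry built in, and the residual/Baire-category selection of solutions does not impose any symmetry either; so generically the resulting solutions cannot be radial. To turn this heuristic into a proof I would argue by a counting/cardinality contradiction: the radial Lipschitz solutions form (at most) a set governed by the one-dimensional construction, and if all but finitely many solutions from Theorem \ref{thm:PM-1} were radial, one could derive a contradiction with the freedom in the covering --- more cleanly, I would instead directly \emph{build} non-radial ones by running the density argument of Theorem \ref{thm-density-1} with an admissible set $\mathcal U$ enlarged to allow non-radial competitors (e.g. starting from $u^*$, which is radial, but perturbing in a single small cube placed off-center so that $D\tilde u$ fails to be a function of $|x|$ on a set of positive measure), and then applying Theorem \ref{thm:main}; the resulting residual set of Lipschitz solutions is infinite, and each such solution has a non-radial gradient on a positive-measure set, hence is non-radial.

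Concretely the steps I would carry out are: (1) state and cite the radial existence result from \cite{KY}, noting that the radial ansatz reduces (\ref{ib-P}) to the $1$-d inclusion solved there; (2) recall that $B_R(0)$ is convex so Theorem \ref{thm:PM-1} applies and yields infinitely many Lipschitz solutions with the stated $S$/$L$ structure; (3) set up a modified admissible set $\mathcal U^{\mathrm{nr}}$ of piecewise-$C^1$ competitors that coincide with $u^*$ on $\Omega^1_T$, satisfy the same inclusion $(Du,v_t)\in\mathcal S\cup\mathcal F$, but in addition contain at least one function whose gradient is non-radial on a positive-measure subset of $\Omega^2_T$ --- such a function is produced by a single application of Lemma \ref{approx-lem}/Theorem \ref{main-lemma} in one off-center cube; (4) verify $\mathcal U^{\mathrm{nr}}_\epsilon$ is $L^\infty$-dense in $\mathcal U^{\mathrm{nr}}$ by re-running verbatim the argument of Theorem \ref{thm-density-1} (nothing in that proof uses radial symmetry); (5) apply Theorem \ref{thm:main} to get infinitely many Lipschitz solutions in the $L^\infty$-closure of $\mathcal U^{\mathrm{nr}}$, each of which, being an a.e.-limit of gradients that are non-radial on a fixed positive-measure set, is itself non-radial; (6) combine (1) and (5).

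The main obstacle I anticipate is step (4)--(5): making sure that the non-radiality is \emph{preserved in the limit}. The Baire-category solutions arise as $L^\infty$-limits with only the gradient operator continuous on a residual set, so pointwise-a.e. gradient information passes to the limit only along the specific approximating sequence $u_j\in\mathcal U^{\mathrm{nr}}_{1/j}$; I must ensure the off-center perturbation (and hence a definite lower bound on the measure of the set where $|Du|$ is not a function of $|x|$, or more robustly where $Du(x,t)\ne Du(Rx,t)$ for a fixed rotation $R$) is \emph{uniform} along the sequence and not destroyed by subsequent patching. The clean way to guarantee this is to fix, once and for all, a small ball $B\subset\subset\Omega^2_T$ that is \emph{disjoint from its image under some nontrivial rotation} $R\in SO(n)$, require every $u\in\mathcal U^{\mathrm{nr}}$ to have $Du$ equal to a \emph{prescribed} non-radial gradient field on $B$ (a fixed lamination with two rank-one-connected values whose $|{\cdot}|$ lie in $(s_+(r),s_+(\tilde r))$, hence compatible with $\mathcal S\cup\mathcal B$), and observe that this constraint is affine, closed under the convex-integration patching used in Theorem \ref{thm-density-1} away from $B$, and passes to the $L^\infty$-closure. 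Then every resulting Lipschitz solution has $Du|_B$ differing from $Du|_{RB}$, so it is non-radial, and the infinitude follows exactly as in Theorem \ref{thm:main}'s last step. I would flag this fixed-constraint device as the key idea and leave the (routine) verification that it does not interfere with density to a short paragraph.
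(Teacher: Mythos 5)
Your treatment of the radial half is exactly the paper's: both simply cite \cite{KY}. For the non-radial half, however, your main device has a genuine gap. You propose to hard-code non-radiality into a modified admissible set $\mathcal U^{\mathrm{nr}}$ by requiring every competitor to have $Du$ \emph{equal} to a prescribed non-radial (laminated or constant) gradient field on a fixed off-center ball $B\subset\subset\Omega^2_T$, and you say such a competitor ``is produced by a single application of Lemma \ref{approx-lem}/Theorem \ref{main-lemma}.'' But those results produce only \emph{approximate} laminates: the perturbed gradient lies in $\{\eta_1,\eta_2\}$ only off an exceptional set of small measure and merely near the segment $[\eta_1,\eta_2]$ elsewhere (properties (b), (c) of Lemma \ref{approx-lem}), so the function you construct does not satisfy the exact constraint you impose. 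Realizing an exact prescribed gradient $p_1$ with $|p_1|\in(s_+(r),s_+(\tilde r))$ on $B$ while transitioning to $u^*$ (whose gradient has much smaller modulus) through values that stay in $\mathcal S\cup\mathcal B$, together with a compatible $v$ with $\dv v=u$, is precisely the kind of exact two-gradient problem that convex integration avoids; you have not shown $\mathcal U^{\mathrm{nr}}\neq\emptyset$, and without that the whole scheme (4)--(5) has nothing to run on.

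The paper's argument is much lighter and close to the ``cardinality contradiction'' you mentioned and then abandoned. Keep the original $\mathcal U$, $\mathcal X$, $\mathcal G$ from the proof of Theorem \ref{thm:PM-1}. Apply the surgery of the density proof \emph{once}, in a single box at positive distance from the central axis, to the radial pair $(u^*,v^*)$; this yields $u^*_{nr}\in\mathcal U$ which is non-radial (it differs from $u^*$ on the box but agrees with the radial $u^*$ on the rotated, disjoint box) and is not a Lipschitz solution. If only finitely many Lipschitz solutions were non-radial, then a sequence in the dense residual set $\mathcal G$ converging to $u^*_{nr}$ in $L^\infty$ could, after discarding finitely many terms, be taken radial (it cannot be eventually constant, since $u^*_{nr}$ is not a solution), and an $L^\infty$-limit of radial functions is radial --- contradicting the non-radiality of $u^*_{nr}$. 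No modification of the admissible set, no exact gradient prescription, and no preservation-in-the-limit issue arise. If you want to salvage your route, replace the exact constraint on $Du|_B$ by this soft density argument.
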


\begin{proof}
The existence of infinitely many \emph{radial} Lipschitz solutions to (\ref{ib-P}) follows from the authors' recent paper \cite{KY}. We remark that these radial solutions are not obtained through the existence result of this paper, Theorem \ref{thm:PM-1}.

We now prove the existence of infinitely many \emph{non-radial} Lipschitz solutions to  (\ref{ib-P}).
In the current situation, it is easy to see that the function $u^*\in\mathcal U$ constructed in Section 6 is radial in $\Omega_T$. Our strategy is to imitate the procedure of the density proof in Section 7 to the function $(u^*,v^*).$ We  choose a space-time box in $\Omega_T$ having positive distance from the central axis $\{0\}\times[0,T]$ of $\Omega_T$, where $v_t^*$ is sufficiently away from $A(Du^*)$ in $L^1$-sense. Then as in the density proof, we perform the surgery on $(u^*,v^*)$ only in the box to obtain a function $(u^*_{nr},v^*_{nr})$ with membership $u^*_{nr}\in\mathcal U$ maintained. Such a surgery breaks down the radial symmetry of $u^*$; hence, $u^*_{nr}$ is non-radial. Note also that this $u^*_{nr}$ cannot be a Lipschitz solution to (\ref{ib-P}).

Suppose there are only finitely many (possibly zero) non-radial Lipschitz solutions to (\ref{ib-P}). This forces for the non-radial function $u^*_{nr}\in\mathcal X$ to be the $L^\infty$-limit of some sequence of radial functions in $\mathcal G$ in the context of the proof of Theorem \ref{thm:main}; a contradiction. Therefore, there are infinitely many non-radial Lipschitz solutions to (\ref{ib-P}).
\end{proof}

\end{document}